\numberwithin{equation}{section}
\theoremstyle{plain}
\newtheorem{thm}{Theorem}[section]
\newtheorem{lem}[thm]{Lemma}
\newtheorem{corollary}[thm]{Corollary}
\newtheorem{prop}[thm]{Proposition}
\theoremstyle{definition}
\newtheorem{rmk}[thm]{Remark}
\newtheorem{definition}[thm]{Definition}
\newtheorem{exm}[thm]{Example}
\newtheorem{defn-thm}[thm]{Definition-Theorem}
\newtheorem{defn-pro}[thm]{Definition-Proposition}
\newcommand{\sA}{{\mathcal A}}
\newcommand{\sB}{{\mathcal B}}
\newcommand{\C}{{\mathbb C}}
\newcommand{\sC}{{\mathcal C}}
\newcommand{\sD}{{\mathcal D}}
\newcommand{\sfD}{\mathsf{D}}
\newcommand{\E}{{\mathrm E}}
\newcommand{\sF}{{\mathcal F}}
\newcommand{\sG}{{\mathcal G}}
\newcommand{\J}{{\mathbb J}}
\renewcommand{\k}{\Bbbk}
\newcommand{\bfL}{{\mathbf L}}
\newcommand{\sL}{{\mathcal L}}
\newcommand{\m}{{\mathfrak m}}
\newcommand{\sM}{{\mathcal M}}
\newcommand{\sN}{{\mathcal N}}
\newcommand{\sfN}{\mathsf{N}}
\newcommand{\sO}{{\mathcal O}}
\newcommand{\bfP}{{\mathbf P}}
\newcommand{\sP}{{\mathcal P}}
\newcommand{\sR}{{\mathcal R}}
\newcommand{\sfR}{\mathsf{R}}
\newcommand{\sS}{{\mathcal S}}
\newcommand{\sT}{{\mathcal T}}
\newcommand{\sfT}{{\mathsf T}}
\newcommand{\sU}{{\mathcal U}}
\newcommand{\sV}{{\mathcal V}}
\newcommand{\X}{{\mathbb X}}
\newcommand{\Z}{{\mathbb Z}}
\newcommand{\bfZ}{{\mathbf Z}}
\newcommand{\sZ}{{\mathcal Z}} 
\newcommand{\id}{\mathrm{id}}
\newcommand{\pr}{\mathrm{pr}}
\newcommand{\xs}{\xrightarrow{\sim}}
\newcommand{\Hom}{\mathrm{Hom}}
\newcommand{\End}{\mathrm{End}}
\newcommand{\Ext}{\mathrm{Ext}}
\newcommand{\gr}{{\mathrm{gr}}}
\newcommand{\dg}{{\mathrm{dg}}}
\newcommand{\op}{{\mathrm{op}}}
\newcommand{\Mod}{\mathrm{Mod}}
\renewcommand{\mod}{\mathrm{mod}}
\newcommand{\Rep}{\mathrm{Rep}}
\newcommand{\Coh}{\mathrm{Coh}}
\newcommand{\DGQCoh}{\mathrm{DGQCoh}}
\newcommand{\DGCoh}{\mathrm{DGCoh}}
\newcommand{\DGMod}{\mathrm{DGMod}}
\newcommand{\QCoh}{\mathrm{QCoh}}
\newcommand{\IndCoh}{\mathrm{QCoh}_{\mathrm{colim.coh.}}}
\newcommand{\IndMod}{\mathrm{Mod}_{\mathrm{colim.coh.}}}
\newcommand{\IndMof}{\mathrm{Mod}_{\mathrm{colim.fg.}}}
\newcommand{\Db}{D^{\mathrm{b}}}
\newcommand{\Spec}{\mathrm{Spec}}
\newcommand{\act}{\mathrm{act}}
\newcommand{\Gm}{\mathbb{G}_{\mathrm{m}}}
\newcommand{\Ga}{\mathbb{G}_{\mathrm{a}}}
\newcommand{\g}{{\mathfrak{g}}}
\renewcommand{\b}{{\mathfrak{b}}}
\renewcommand{\t}{{\mathfrak{t}}}
\newcommand{\n}{{\mathfrak{n}}}
\newcommand{\h}{{\mathfrak{h}}}
\newcommand{\p}{{\mathfrak{p}}}
\newcommand{\sEnd}{\mathcal{E}nd}
\newcommand{\sHom}{\mathcal{H}om}
\renewcommand{\for}{\mathrm{for}}
\newcommand{\res}{\mathrm{res}}
\newcommand{\Ind}{\mathrm{Ind}}
\newcommand{\ex}{\mathrm{ex}}
\newcommand{\hb}{\mathrm{hb}}
\newcommand{\Dist}{\mathrm{Dist}}
\newcommand{\Br}{\mathfrak{Br}}
\newcommand{\Spr}{\widetilde{\mathcal{N}}}
\newcommand{\Groth}{\widetilde{\mathfrak{g}}}
\newcommand{\Simp}{\mathsf{L}}
\newcommand{\bV}{\mathsf{Z}}
\newcommand{\ZFr}{\mathrm{Z}_{\mathrm{Fr}}}
\newcommand{\ZHC}{\mathrm{Z}_{\mathrm{HC}}}
\newcommand{\Pro}{\mathsf{P}}
\newcommand{\HC}{\mathsf{HC}}
\newcommand{\aff}{\mathrm{aff}}
\newcommand{\simto}{\xrightarrow{\sim}}
\newcommand{\hchi}{\widehat{\chi}}
\newcommand{\hla}{\widehat{\lambda}}
\newcommand{\hmu}{\widehat{\mu}}
\newcommand{\hnu}{\widehat{\nu}}
\newcommand{\ho}{\widehat{0}}
\newcommand{\Phis}{\Phi^{\mathrm{s}}}
\newcommand{\tDelta}{\widetilde{\Delta}}
\newcommand{\hDelta}{\widehat{\Delta}}
\newcommand{\tD}{\widetilde{\sD}}
\newcommand{\tpi}{\widetilde{\pi}}
\newcommand{\hpi}{\widehat{\pi}}
\newcommand{\hM}{\widehat{\mathcal{M}}}
\newcommand{\tM}{\widetilde{\mathcal{M}}}
\newcommand{\rH}{\mathrm{H}}
\begin{document}

\title{Equivariant Koszul duality, modular category $\mathcal{O}$, and periodic Kazhdan--Lusztig polynomials}

\makeatletter
\let\MakeUppercase\relax
\makeatother

\author{Simon Riche}
\address{Universit\'e Clermont Auvergne, CNRS, LMBP, F-63000 Clermont-Ferrand, France.}
\email{simon.riche@uca.fr}

\author{Quan Situ}
\address{Universit\'e Clermont Auvergne, CNRS, LMBP, F-63000 Clermont-Ferrand, France.}
\email{quan.situ@uca.fr}


\date{}

\begin{abstract}
Let $G$ be a connected reductive algebraic group over an algebraically closed field of positive characteristic, $\g$ be its Lie algebra, and $B$ be a Borel subgroup.
We prove a formula for the dimensions of extension groups, in the principal block of the category of strongly $B$-equivariant $\g$-modules (also called \emph{modular category $\mathcal{O}$}), from a simple object to a costandard object, under the assumption that Lusztig's conjecture holds (which is known in large characteristic). The answer is given by a coefficient of a periodic Kazhdan--Lusztig polynomial associated with the corresponding affine Weyl group. Among other things, the proof uses a torus-equivariant version of the Koszul duality for $\g$-modules constructed by the first author.
\end{abstract}

\maketitle

{\setcounter{tocdepth}{1} \tableofcontents} 

\section{Introduction} 

\subsection{Kazhdan--Lusztig combinatorics and representations of reductive grou\-ps and their Lie algebras} 
\label{ss:intro-KL-combinatorics}

It has been a fruitful philosophy, first suggested by Verma~\cite{verma}, that the combinatorial data one can extract from representations of a connected reductive algebraic group $G$ over a field of positive characteristic $\k$ should be closely related to data attached to the associated affine Weyl group $W_\aff$, endowed with its natural structure of Coxeter group. This idea was made more precise by Lusztig~\cite{lusztig-pbs}, after his introduction (with Kazhdan) of the Kazhdan--Lusztig polynomials attached to any Coxeter group, in the form a conjecture expressing (when $\mathrm{char}(\k)$ is not too small) characters of simple modules in the principal block in terms of the (known) characters of induced modules using values at one of the spherical Kazhdan--Lusztig polynomials of $W_\aff$. (Here, ``spherical'' Kazhdan--Lusztig polynomials means parabolic Kazhdan--Lusztig polynomials attached to $W_\aff$ and the parabolic subgroup given by the Weyl group $W$ of $G$, or in other words Kazhdan--Lusztig polynomials attached to elements $w \in W_\aff$ which are maximal in $W w$.) 
This conjecture is now known to be true in large characteristics. It is also known (due to work of Andersen~\cite{andersen-inversion}) that, when this conjecture is true, the coefficients of these polynomials compute the dimensions of extension groups from a simple $G$-module to an induced $G$-module. For a review of these results, see~\cite[\S\S II.C.1--10]{Jan03}.

Consider now the subgroup $G_1 T$ of $G$, where $G_1$ is the Frobenius kernel of $G$ and $T$ is a maximal torus. It is known that understanding the simple representations of $G_1 T$ is equivalent to understanding the simple $G$-modules. In this setting Lusztig also formulated a conjecture describing (under the same assumption on $\mathrm{char}(\k)$) the characters of the simple modules in the principal block in~\cite{lusztig-generic}. More specifically, in this case the ``basic'' objects whose characters are known are the baby Verma modules, and the conjecture expresses multiplicities of simple modules in baby Verma modules in terms of the value at $1$ of a \emph{periodic}\footnote{The periodic Kazhdan--Lusztig polynomials were introduced by Lusztig in~\cite{lusztig-generic}. Here we will follow the conventions of~\cite{Soe97}. The terminology in the literature is not always consistent; in particular, sometimes the word ``periodic'' is used for the polynomials that are called \emph{generic} in~\cite{Soe97} (and in the present introduction).} Kazhdan--Lusztig polynomial for $W_\aff$.
It is known that this conjecture is equivalent to the original Lusztig conjecture, see~\cite{fiebig}; in particular it is true in large characteristics. It is known also (due to work of Cline--Parshall--Scott~\cite{cps}) that, when this conjecture is true, the coefficients of the \emph{generic} Kazhdan--Lusztig polynomials attached to $W_\aff$ compute the dimensions of extension groups from a simple $G_1T$-module to a dual baby Verma module. For a review of these results, see~\cite[\S\S II.C.11--17]{Jan03}.

There is a third closely related category one can consider, that has appeared more recently in the literature in this setting, namely the category $\Mod(\g,B)$ of strongly $B$-equivariant $\g$-modules, where $B \subset G$ is a Borel subgroup containing $T$ and $\g$ is the Lie algebra of $G$. This category has in particular been considered recently in work of Losev~\cite{losev}, where its full subcategory of modules finitely generated over $\sU(\g)$ is dubbed ``modular category $\mathcal{O}$.'' (The reason for this name is that, replacing in the definition the base field $\k$ by the complex numbers, one obtains the sum of the integral blocks of the Bernstein--Gelfand--Gelfand category $\mathcal{O}$ of the associated complex reductive Lie algebra. This category is unrelated to the one which is given the same name in~\cite{soergel}.) In this case the ``basic'' objects whose characters are known are the (true) Verma modules, which are infinite-dimensional but have finite-dimensional $T$-weight spaces. Comparing the characters of Verma modules with their baby versions, and using the formula in~\cite[Theorem~6.3]{Soe97}, one easily sees that when the conjecture from~\cite{lusztig-generic} mentioned above is true, the multiplicities of simple objects in Verma modules can be expressed in terms of the value at $1$ of an appropriate generic Kazhdan--Lusztig polynomial. (See~\cite{situ} for similar considerations in a different context.) The main result of the present paper is an analogue of the consequence of this formula described above for $G$- and $G_1T$-modules: we show that when Lusztig's conjecture is true, coefficients of periodic Kazhdan--Lusztig polynomials compute the dimensions of extension groups in $\Mod(\g,B)$ from a simple object to a ``costandard'' object.

\subsection{Statement} 

Let us now state our main result more explicitly. Let $\k$ be an algebraically closed field of characteristic $p>0$, and
let $G$ be a connected reductive algebraic group over $\k$. Let also $h$ be the Coxeter number of $G$.
We fix a Borel subgroup $B \subset G$ and a maximal torus $T \subset B$, and denote by $\g$ and $\b$ the Lie algebras of $G$ and $B$ respectively.

We denote by $\Mod(\g,B)$ the category of $(\g,B)$-modules, i.e.~$B$-equivariant $\sU(\g)$-modules on which the differential of the $B$-action coincides with the restriction of the $\sU(\g)$-action to $\sU(\b)$. (Note that the action of $B$ is not uniquely determined by that of $\sU(\b)$, so that $\Mod(\g,B)$ is \emph{not} a full subcategory of the category of $\sU(\g)$-modules.) In this category we have the Verma modules $(\Delta(\lambda) : \lambda \in \X)$, labelled by 
the lattice $\X$ of characters of $T$. (They are defined by exactly the same recipe as in the setting of representations of complex reductive Lie algebras.) It is not difficult to see that each $\Delta(\lambda)$ admits a unique simple quotient $\Simp(\lambda)$, and that these objects form representatives for the isomorphism classes of simple objects in $\Mod(\g,B)$. (These modules are finite-dimensional; in fact they are closely related with the simple $G_1T$-modules.) To each $\lambda \in \X$ one can also attach a ``costandard'' object $\nabla(\lambda)$ which contains $\Simp(\lambda)$ as a subobject. (Contrary to the Verma modules or the simple objects, the costandard objects are never finitely generated.)

Let $W_\aff$ be the affine Weyl group of $G$, i.e.~the semidirect product of the Weyl group $W$ of $(G,T)$ with the sublattice in $\X$ generated by the roots of $G$. This group admits a natural structure of Coxeter group, such that $W$ is a parabolic subgroup. Attached with this group are the periodic Kazhdan--Lusztig polynomials $(p_{y,w} : y,w \in W_\aff)$, see~\cite[\S 4]{Soe97}. (Compared with~\cite{Soe97}, we label these polynomials by elements of $W_\aff$ rather than alcoves; so our polynomial $p_{y,w}$ is the polynomial $p_{y(A^+), w(A^+)}$ in the notation of~\cite{Soe97}.) We also consider the standard ``dot-action'' of $W_\aff$ on $\X$, denoted $(w,\lambda) \mapsto w \bullet \lambda$.


The following statement is our main result. Here, we say that \emph{Lusztig's conjecture holds} if the equivalent conditions in~\cite[Proposition~II.C.17(a)]{Jan03} are true. (It is known that this condition is satisfied if $p$ is larger than a certain bound depending on the root system of $G$, see~\cite{fiebig-upper}.)

\begin{thm}
\label{thm:main}
Assume that $p>h$, and that Lusztig's conjecture holds. Then for any $y,w \in W_\aff$ we have
\[
\sum_m \dim_\k \Ext^m_{\Mod(\g,B)}(\Simp(w \bullet 0), \nabla(y \bullet 0))\cdot v^{m}=p_{y,w}(v).
\]
\end{thm}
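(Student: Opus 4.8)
The plan is to reduce the computation of these extension groups to a known combinatorial identity for periodic Kazhdan--Lusztig polynomials, via an equivariant Koszul duality. First I would set up the torus-equivariant enhancement of the category $\Mod(\g,B)$ (restricted to the principal block), so that extension groups between simple and costandard objects become graded vector spaces; the grading is exactly what will be recorded by the variable $v$. Here the key input is the torus-equivariant version of Koszul duality for $\g$-modules mentioned in the abstract, which should identify (a suitable subcategory of, or a derived category built from) the principal block of $\Mod(\g,B)$ with a category of equivariant perverse (or mixed) sheaves, or equivalently with a category of modules over a Koszul-dual ring. Under such an equivalence, simple objects go to (shifted) standard/costandard objects and costandard objects go to (shifted) tilting or projective objects — the precise matching being dictated by how Koszul duality interchanges the three natural bases.

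\medskip

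Second, I would identify the Koszul-dual side explicitly. The affine-type combinatorics suggests that the dual category is governed by the antispherical or periodic module over the affine Hecke algebra; concretely, one expects the relevant $\Ext$-groups to be computed by Soergel-bimodule-type Hom spaces, or by stalks/costalks of parity or intersection cohomology complexes on (a completion of) the affine flag variety. The assumption that Lusztig's conjecture holds is what guarantees that the block of $\Mod(\g,B)$ has the expected Kazhdan--Lusztig-theoretic structure (Jordan--Hölder multiplicities of Verma modules given by affine KL polynomials at $1$), hence that the Koszul-dual category is the ``correct'' model and that all relevant parity/formality statements hold. With that in hand, the graded $\Ext$ from a simple to a costandard translates, on the dual side, into the graded multiplicity of a standard object in a tilting (or the pairing of a simple with a Verma), which by Soergel's results is encoded by a periodic Kazhdan--Lusztig polynomial.

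\medskip

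Third, I would carry out the bookkeeping of indexing and normalizations. One must check that the labelling $w \bullet 0$ on the representation-theoretic side matches the labelling $w(A^+)$ (or $w \in W_\aff$ after passing through the $W$-parabolic quotient) on the combinatorial side, that the cohomological shift introduced by Koszul duality lines up the degree $m$ with the degree of $v$ in $p_{y,w}$, and that the conventions of~\cite{Soe97} for $p_{y,w}$ agree with what comes out of the geometry. Since costandard objects are not finitely generated, some care is needed: one should either work in a pro-completed or $\Ind$-completed derived category where $\nabla(y \bullet 0)$ is well-behaved, or dualize to reduce to a statement about standard objects and projective covers, where finiteness holds; then transport back.

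\medskip

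The main obstacle I expect is establishing the equivariant Koszul duality in the precise form needed and pinning down what it does to costandard objects. Ordinary (non-equivariant) Koszul duality for $\g$-modules in the sense of the first author gives an equivalence of derived categories but does not by itself produce a grading on $\Ext$-spaces; the torus-equivariant refinement is exactly what is needed to get the grading, and one must verify that it is genuinely ``Koszul'' (i.e.\ that the relevant graded algebra is Koszul, with $\Ext$ between simples concentrated in the expected degrees) rather than merely mixed. Equivalently, the hard technical point is a formality/purity statement on the geometric side, which is where Lusztig's conjecture (through its known geometric incarnations) does the heavy lifting. Once formality is in place, the passage from graded multiplicities to periodic KL polynomials is the content of~\cite[\S 4]{Soe97} and its relatives, and is essentially formal.
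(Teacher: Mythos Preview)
Your proposal has the right high-level shape (Koszul duality turns the $\Ext$ into a graded multiplicity governed by periodic KL polynomials), but it skips or misidentifies several concrete steps that the paper treats as essential, and without them the argument does not go through.

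First, the paper does \emph{not} put a grading directly on $\Mod(\g,B)$. The opening move (Proposition~\ref{prop:Ext-gB-gT}) is a purely homological reduction
\[
\Ext^m_{\Mod(\g,B)}(\Simp(w\bullet 0),\nabla(y\bullet 0))\;\cong\;\Ext^m_{\Mod^T(\sU(\g))}(\tDelta(y\bullet 0),\Simp(w\bullet 0)),
\]
replacing the awkward costandard $\nabla$ by a \emph{universal Verma module} $\tDelta$ in $T$-equivariant $\sU(\g)$-modules. This step is what makes everything else possible; your plan to ``dualize to reduce to standard objects and projective covers'' is in the right spirit but is not implemented inside $\Mod(\g,B)$ --- one must leave that category entirely.

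Second, the Koszul duality is not applied to $\Mod(\g,B)$; it is the linear Koszul duality between $\Db\Coh^{T\times\Gm}(\Spr^{(1)})$ and $\DGCoh^{T\times\Gm}(\Groth^{(1)}\times^R_{\g^{*(1)}}\{0\})$, and it reaches representation theory only through Bezrukavnikov--Mirkovi\'c--Rumynin localization (Theorems~\ref{thm:localization-comp} and~\ref{thm:localization-fixed-Fr}). Your proposal never mentions localization, and without it there is no bridge from $\Mod^T(\sU(\g))$ to the coherent side where $\kappa$ lives.

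Third, you have the bijection of objects under $\kappa$ wrong: simples do not go to standards. The paper shows (Theorem~\ref{thm:KD}) that simples $\sL_x$ correspond to \emph{projectives} $\sP_{t_\rho\check{x}}$, and (Lemma~\ref{lem:Koszul-standards}) that the coherent avatars $\sM_x$ of universal Verma modules correspond to \emph{baby Verma modules} $\sZ_{t_\rho x}$. Thus the $\Ext$ in question becomes a $\Hom$ from a graded baby Verma $\bfZ^\gr_y$ into a graded projective $\bfP^\gr_{\check{w}}$, i.e.\ a graded multiplicity $[\bfZ^\gr_y\langle m-\ell(w_\circ)\rangle : \bfL^\gr_{\check{w}}]$ (Proposition~\ref{prop:dim-Ext-multiplicities}).

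Finally, the combinatorial endgame is not Soergel-bimodule Hom spaces or parity sheaves; it is Andersen--Kaneda's description of the Loewy layers of baby Verma $G_1T$-modules in terms of periodic KL polynomials (Proposition~\ref{prop:loewy-filtration-bV}), combined with the Koszulity of $\underline{\E}$ to identify the grading filtration with the radical filtration (Corollary~\ref{cor:filtrations-rad-deg}), followed by a polynomial identity from~\cite{Soe97}. Lusztig's conjecture enters twice: to make the Koszul duality send simples to projectives, and to have Andersen--Kaneda available; it is not a formality/purity statement on an affine flag variety in this argument.
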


\begin{rmk}
For simplicity we state the formula in Theorem~\ref{thm:main} for the block of $0$. Using translation functors it immediately implies a similar formula in all regular blocks of the category $\Mod(\g,B)$. It would also be interesting to generalize this formula to singular blocks, but we do not know how to do this at this stage.
\end{rmk}

\subsection{Motivation} 
\label{ss:intro-motivation}

Our motivation for studying the formula in Theorem~\ref{thm:main} came from two different directions. The first one is related to our comments in~\S\ref{ss:intro-KL-combinatorics}: the formula in this theorem is an analogue for $(\g,B)$-modules of familiar formulas in the theories of $G$-modules and $G_1T$-modules. 

The second one is concerned with work of the first author with Achar and Dhil\-lon~\cite{adr}. Here a category of ``semiinfinite sheaves'' on the affine flag variety of a connected reductive algebraic group is introduced, and it is conjectured that the stalks of simple objects in this category are computed by periodic Kazhdan--Lusztig polynomials, in case the characteristic of the coefficient field is large (or $0$). In work in preparation by the same authors, an equivalence between this category of semiinfinite sheaves, for the reductive group Langlands dual to $G$ and the field of coefficients $\k$, and the principal block of the category $\Mod(\g,B)$ is constructed; see~\cite[\S 1.4]{adr} for details. The formula conjectured in~\cite{adr} can be transferred to a formula in $(\g,B)$-modules using this equivalence, and Theorem~\ref{thm:main} is precisely what one obtains. From this point of view, this theorem therefore provides a very indirect way to prove the said conjecture from~\cite{adr}.

\subsection{Description of the proof} 
\label{ss:intro-proof}

In the two other settings described in~\S\ref{ss:intro-KL-combinatorics}, the deduction of the analogue of Theorem~\ref{thm:main} from the character formula for simple modules is rather elementary. The proof proceeds by induction on $w$ (for the Bruhat order), proving in parallel another property, namely that (in the Jantzen region in the case of $G$-modules, or always in the case of $G_1T$-modules) when applying an appropriate wall-crossing functor to a simple module and killing the head and socle, the resulting object is semisimple, see~\cite[Proposition~II.C.2(a-ii) and Proposition~II.C.17(a-ii)]{Jan03}.
(This property is an analogue of a statement in the category $\mathcal{O}$ of a complex reductive Lie algebra, deduced by Vogan from the Kazhdan--Lusztig conjecture.) The similar strategy cannot apply in the case of $(\g,B)$-modules, because the analogue of the latter property is \emph{not} true. (This can easily be seen looking at Steinberg's tensor product formula; this is related to the fact that the restriction to $B^{(1)}$ of a simple $G^{(1)}$-module is not semisimple in general.) We therefore follow a different, and much less elementary, route.

The first step \emph{is} elementary (and does not require any restriction on $p$): we use easy homological algebra considerations (in particular, adjunctions) to identify the $\Ext$-groups under consideration with some extension groups, in the category of $T$-equivariant $\sU(\g)$-modules, from a universal Verma module to a simple module; see Proposition~\ref{prop:Ext-gB-gT}. The next steps require the use of appropriate versions of the ``localization'' equivalences constructed by Bezrukavnikov--Mirkovi{\'c}--Rumynin~\cite{BMR08,BMR06,BM13} and the ``Koszul duality'' constructed by the first author in~\cite{Ric10}. Namely, this Koszul duality, whose construction is based on localization theory, allows to relate a certain derived category of modules over the specialization $(\sU(\g))^0$ of $\sU(\g)$ at the Harish-Chandra central character $0$ with the (extended) principal block of representations of the Frobenius kernel $G_1$ of $G$, and is known to exchange simple $(\sU(\g))^0$-modules with projective $G_1$-modules. Here we construct a $T$-equivariant version of this duality, and prove that it sends appropriate completed versions of universal Verma modules to baby Verma modules. This allows to express the $\Ext$ groups in Theorem~\ref{thm:main} as a graded multiplicity space of a simple $G_1T$-module in a baby Verma module. The \emph{ungraded} multiplicities here are expressed by Lusztig's character formula, and given by values at $1$ of periodic Kazhdan--Lusztig polynomials. Because the involved grading is a Koszul grading by the results of~\cite{Ric10}, the graded multiplicity corresponds to the multiplicity in a certain layer of the Loewy filtration of the given baby Verma module, which is known (thanks to work of Andersen--Kaneda) to be given by the appropriate coefficient of a periodic Kazhdan--Lusztig polynomial, which allows to conclude.


\subsection{Contents} 

In
Section~\ref{sec:gB-modules} we define the category $\Mod(\g,B)$ of $(\g,B)$-modules, and explain its basic properties. In particular we introduce the families of standard and costandard objects in $\Mod(\g,B)$, prove that they satisfy $\Ext$-orthogonality (see Lemma~\ref{lem:Ext-Delta-nabla}) and explain the classification of simple objects. (These statements are certainly known to experts, but not proved in detail in the literature as far as we know.) Finally, we prove in Proposition~\ref{prop:Ext-gB-gT} a formula that describes extension spaces in $\Mod(\g,B)$ from a simple module to a costandard module in terms of extensions as $T$-equivariant $\sU(\g)$-modules from a universal Verma module to a simple module. The proofs in this section are elementary in the sense that they only rely on standard results in the representation theory of reductive algebraic groups, and do not require any restriction on the characteristic.
In Section~\ref{sec:central-completions} we introduce some central completions of the algebra $\sU(\g)$, the associated categories of modules, and the ``translation'' functors relating them. We also define completed versions of universal Verma modules and study their behavior under translation functors. 

In Section~\ref{sec:localization} we prove ``localization theorems'' relating some derived categories of (equivariant) $\sU(\g)$-modules to some categories of (equivariant) coherent sheaves on some schemes constructed out of the Springer and Grothendieck resolutions for $G$. These statements are variants of the theory developed by Bezrukavnikov--Mirkovi{\'c}--Rumynin in~\cite{BMR08, BMR06, BM13}. 
In Section~\ref{sec:geometric-wall-crossing} we introduce the geometric counterparts of translation functors, a braid group action constructed out of these functors, and explain how to describe the complexes of coherent sheaves corresponding to completed universal Verma modules and baby Verma modules in the principal block.

In Section~\ref{sec:Koszul-duality} we explain how to modify the constructions of~\cite{Ric10} to take into account the natural actions of the maximal torus $T$. We take this opportunity to revisit (and, sometimes, simplify or correct some proofs), which might be of independent interest.
%
Finally, in Section~\ref{sec:dim} we give the proof of our main result, following the strategy outlined in~\S\ref{ss:intro-proof}. Our arguments involve the study of $(p\X \times \Z)$-graded versions of modules for a finite-dimensional algebra governing the principal block of $G_1$-modules corresponding to projective, baby Verma and simple modules. Here the grading comes on the one hand from the action of $T$, and on the other hand from the Koszul grading of~\cite{Ric10}.

Our proofs require the development of some elements of a theory of equivariant coherent sheaves on some ``completions'' of schemes with an action of a flat affine group scheme, which are explained
in Appendix~\ref{app:equiv-sheaves}.
 What we prove here is sufficient for the applications in the body of the paper, but this is certainly not a complete and satisfactory theory. In particular, some of our statements require strong assumptions (as e.g.~in~\S\ref{ss:derived-pullback} or~\S\ref{ss:derived-pushforward}). These assumptions might not be necessary, but they might also be an indication that our definitions are possibly not the most adequate ones in a more general setting.

\subsection{Acknowledgements} 

This project has received
funding from the European Research Council (ERC) under the European Union's Horizon 2020
research and innovation programme (grant agreement No.~101002592).

Part of the writing process was completed while the second author visited the Max Planck Institute for Mathematics in Bonn.
He is grateful for its hospitality and financial support.

As explained in~\S\ref{ss:intro-motivation}, the question studied in this paper was motivated in part by joint work of the first author with Pramod Achar and Gurbir Dhillon. We thank them for inspiring discussions around this subject.

\subsection{Notation and conventions} 

\subsubsection{Grading shift} 
\label{sss:grading-shift}

Recall that if $\Lambda$ is an abelian group and $k$ a commutative ring we have an associated diagonalizable group scheme $\mathrm{Diag}_k(\Lambda)= \Spec(k[\Lambda])$ over $k$, see~\cite[\S I.2.5]{Jan03}; such group schemes are called diagonalizable. Their representations are particularly simple: the datum of a $\mathrm{Diag}_k(\Lambda)$-action on a $k$-module $M$ is equivalent to the datum of a $\Lambda$-grading on $M$.

Assume now that $k$ is a field, so that the monoidal category $\Rep^{\mathrm{fd}}(\mathrm{Diag}_k(\Lambda))$ of finite-dimensional representations of $\mathrm{Diag}_k(\Lambda)$ identifies with the monoidal category of finite-dimensional $\Lambda$-graded $k$-vector spaces. Given a category $\mathsf{A}$ endowed with an action of $\Rep^{\mathrm{fd}}(\mathrm{Diag}_k(\Lambda))$, for any $\lambda \in \Lambda$ we will denote by
\[
\langle \lambda \rangle : \mathsf{A} \simto \mathsf{A}
\]
the autoequivalence given by tensor product with the $1$-dimensional $\mathrm{Diag}_k(\Lambda)$-module determined by $\lambda$. This applies in particular to categories of $\mathrm{Diag}_k(\Lambda)$-equivariant (quasi-)coherent sheaves on noetherian $k$-schemes equipped with actions of $\mathrm{Diag}_k(\Lambda)$, and to categories of (finitely generated) $\Lambda$-graded modules over noetherian $\Lambda$-graded algebras. In this setting, $\langle \lambda \rangle$ corresponds to the ``shift of grading'' functor determined by the rule $(M \langle \lambda \rangle)_\mu = M_{\mu-\lambda}$ for $\mu \in \Lambda$.

\subsubsection{Rings and modules} 
\label{sss:rings-modules}

If $R$ is a commutative ring and $I \subset R$ an ideal, we will denote by $R_{\widehat{I}}$ the completion of $R$ with respect to $I$.

For a ring $A$, we denote by $\Mod(A)$ the category of left $A$-modules and, in case $A$ is left noetherian, we denote by $\mod(A)$ the full subcategory of finitely generated left $A$-modules. 
For an affine group scheme $K$ over a commutative ring $k$, we denote by $\Rep(K)$ the category of rational representations of $K$, i.e.~of (right) $\sO(K)$-comodules. 
If $A$ is a $k$-algebra equipped with a compatible action of $K$, we denote by $\Mod^K(A)$ the category of $K$-equivariant\footnote{By default, $K$-equivariant means \emph{weakly} $K$-equivariant: we require to have actions of $A$ and $K$ such that the action map $A \otimes_k M \to M$ is $K$-equivariant.} left $A$-modules; in case $A$ is noetherian we denote by $\mod^K(A)$ the full subcategory whose objects are the equivariant modules which are finitely generated over $A$.

\subsubsection{Algebraic geometry} 

If $X$ is a scheme we will denote by $\sO_X$ its structure sheaf and set $\sO(X):=\Gamma(X, \sO_X)$. We denote by $\QCoh(X)$ the category of quasi-coherent $\sO_X$-modules and, if $X$ is noetherian, by $\Coh(X)$ the full subcategory of coherent sheaves. If $\sA$ is a quasi-coherent sheaf of $\sO_X$-algebras, 
we denote by $\Mod(\sA)$ the category of sheaves of $\sA$-modules which are quasi-coherent as $\sO_X$-modules; in case $X$ is noetherian we will denote by $\mod(\sA)$ the full subcategory whose objects are the sheaves of $\sA$-modules which are coherent as $\sO_X$-modules. 

If $X$ is a scheme over a commutative ring $k$ equipped with an action of a flat affine group scheme $H$ over $k$, one can consider the category $\QCoh^H(X)$ of $H$-equivariant quasi-coherent sheaves on $X$ and, in case $X$ is noetherian, the full subcategory $\Coh^H(X)$ of $H$-equivariant coherent sheaves. For a reminder on basic aspects of this formalism, see~\cite[Appendix]{MR16}.
If $\sA$ is an $H$-equivariant quasi-coherent sheaf equipped with a compatible structure of $\sO_X$-algebra, we will denote by $\Mod^H(\sA)$ the category of $H$-equivariant quasi-coherent sheaves on $X$ equipped with a compatible structure of $\sA$-module. In case $X$ is noetherian, we will denote by $\mod^H(\sA)$ the full subcategory of objects which are coherent as $\sO_X$-modules.

In practice, our schemes will be defined over a fixed algebraically closed field $\k$. Given a $\k$-scheme $X$, the \emph{Frobenius twist} $X^{(1)}$ of $X$ is defined by
\[
X^{(1)} = \Spec(\k) \times_{\Spec(\k)} X
\]
where the morphism $\Spec(\k) \to \Spec(\k)$ is associated with the automorphism of $\k$ given by $a \mapsto a^p$, seen as a $\k$-scheme via projection on the first factor. The projection morphism $X^{(1)} \to X$ is an isomorphism of abstract schemes, but not of $\k$-schemes; in fact via this identification the structure morphism to $\Spec(\k)$ corresponds to the morphism $\k \to \sO(X)$ given by $a \mapsto a^{1/p}$. We will denote by $\mathrm{Fr}_X : X \to X^{(1)}$ the Frobenius morphism of $X$. In case $X$ is a group scheme over $\k$, $\mathrm{Fr}_X$ is a morphism of group schemes, whose kernel (called the Frobenius kernel of $X$) will be denoted $X_1$.

\subsubsection{Reductive groups} 
\label{sss:reductive-gps-notation}

Let $G_\Z$ be a split reductive group scheme over $\Z$. We fix a Borel subgroup $B_\Z$ and a split maximal torus $T_\Z$ of $G_\Z$ contained in $B_\Z$, and denote by $U_\Z$ the unipotent radical of $B_\Z$. Let also $B^-_\Z$ be the opposite Borel subgroup, and let $U^-_\Z$ be its unipotent radical. We will denote the Lie algebras of these group schemes by $\g_\Z$, $\b_\Z$, $\t_\Z$, $\n_\Z$, $\b^-_\Z$, $\n^-_\Z$ respectively. For any commutative ring $R$, from these data we obtain group schemes and Lie algebras over $R$ by base change, which will be denoted similarly, replacing the subscript $\Z$ by $R$. 
In particular, we fix an algebraically closed field $\k$, of characteristic $p>0$, and consider the corresponding objects over $\k$; for this special case we will omit the subscript ``$\k$.'' So, we have $G=G_\Z \times_{\Spec (\Z)} \Spec (\k)$, $B=B_\Z \times_{\Spec (\Z)} \Spec (\k)$, $\g=\g_\Z \otimes_\Z \k$, etc. 


Let $W$ be the Weyl group of $(G_\Z,T_\Z)$ (equivalently, of $(G,T)$), endowed with the Coxeter group structure determined by the choice of $B_\Z$, and let $w_\circ\in W$ be the longest element.
Let $\X=X^*(T_\Z)=X^*(T)$ be the character lattice, and let $W_\ex=W\ltimes \X$ be the extended affine Weyl group. 
Let $\Phi \subset \X$ be the root system of $G_\Z$ with respect to $T_\Z$, let $\Phi^+\subset \Phi$ be the subset of positive roots consisting of the weights in the Lie algebra of $B_\Z$, and let $\Phis \subset \Phi^+$ be the associated system of simple roots. For $\alpha \in \Phi$, we will denote as usual by $\alpha^\vee \in X_*(T_\Z)$ the associated coroot. This choice determines a subset $\X^+ \subset \X$ of dominant weights. We will denote by $\preceq$ the ``dominance'' order on $\X$ determined by $\Phi^+$, i.e.~the order such that $\lambda \preceq \mu$ if and only if $\mu-\lambda$ is a sum of positive roots.

For $\lambda\in \X$ we will denote by $t_\lambda := (1 \ltimes \lambda) \in W_\ex$ the translation element corresponding to $\lambda$. Let $\ell: W_\ex\rightarrow \Z_{\geq 0}$ be the ``length'' function defined by
\[
\ell(w \cdot t_\lambda) = \sum_{\substack{\alpha \in \Phi^+ \\ w(\alpha) \in \Phi^+}} |\langle \lambda, \alpha^\vee \rangle | + \sum_{\substack{\alpha \in \Phi^+ \\ w(\alpha) \in -\Phi^+}} |1+ \langle \lambda, \alpha^\vee \rangle |
\] 
for $w \in W$ and $\lambda \in \X$. Then the subgroup $W_\aff := W\ltimes \Z\Phi \subset W_\ex$ admits a canonical structure of Coxeter group, with set of Coxeter generators denoted $S_\aff$, such that the associated length function is the restriction of $\ell$. Moreover, $W$ is the parabolic subgroup in $W_\aff$ generated by the subset $S := S_\aff \cap W$ of $S_\aff$ and, setting $\Omega := \{w \in W_\ex \mid \ell(w)=0\}$, $\Omega$ is an abelian group whose action by conjugation preserves $S_\aff$, and multiplication induces a group isomorphism
\[
\Omega \ltimes W_\aff \simto W_\ex.
\]

We will denote by $\Br_\ex$ the braid group associated with $W_\ex$, i.e.~the group generated by the symbols $(\rH_w : w \in W_\ex)$, with relations $\rH_x \cdot \rH_y = \rH_{xy}$ for all $x,y \in W_\ex$ such that $\ell(xy)=\ell(x)+\ell(y)$. It is clear that we have a surjection $\Br_\ex \twoheadrightarrow W_\ex$ sending $\rH_w$ to $w$ for any $w \in W_\ex$.
As explained in~\cite[\S 1.1]{Ric08}, to each $\lambda \in \X$ one can attach a certain canonical element $\theta_\lambda \in \Br_\ex$, in such a way that $\theta_\lambda \cdot \theta_\mu = \theta_{\lambda+\mu}$ for any $\lambda,\mu \in \X$ and the collection consisting of $(H_s : s \in S)$ and $(\theta_\lambda : \lambda \in \X)$ generates $\Br_\ex$. More specifically, for $\lambda \in \X$, if $\lambda=\mu-\nu$ with $\mu,\nu \in \X^+$ we have $\theta_\lambda = \rH_{t_\mu} \cdot (\rH_{t_\nu})^{-1}$.

We will consider the action of $W_\ex$ on $\X$, denoted $\bullet$, given for $w\in W$ and $\lambda,\mu\in \X$ by 
\[
(wt_\lambda) \bullet \mu := w(\mu+p\lambda +\rho) - \rho,
\]
where $\rho$ is the halfsum of the positive roots. (Here, even through $\rho$ does not necessarily belong to $\X$, we have $w(\rho)-\rho \in \X$ for any $w \in W$, so that this formula indeed defines an action on $\X$.)

\subsubsection{Geometry associated with \texorpdfstring{$G$}{G}} 
\label{sss:geometry-G}

Let $\sB:=G/B^-$ be the flag variety of $G$. Consider the Springer and Grothendieck varieties
\[
\Spr=T^*\sB=G\times^{B^-} (\g/\b^-)^*, \qquad \Groth=G\times^{B^-} (\g/\n^-)^*
\]
respectively. Both schemes are naturally vector bundles over $\sB$, endowed with canonical actions of $G$, and $\Spr$ identifies with a ($G$-stable) sub-bundle in $\Groth$.
There is also a canonical map $\pi: \Groth \rightarrow \g^*$ induced by the co-adjoint action of $G$ on $\g^*$.
For $\lambda\in \X$, we will denote by $\sO_{\sB}(\lambda)$ the associated line bundle $G\times^{B^-} \k_{B^-}(\lambda) \rightarrow \sB$. 
For any morphism $X\rightarrow \sB$, we will write $\sO_X(\lambda)$ for the pullback of 
$\sO_{\sB}(\lambda)$ to $X$. We will use similar notation for morphisms $X\rightarrow \sB \times \sB$.

Let $\Gm$ be the multiplicative group over $\k$. 
We will consider the $\Gm$-action on $\Spr$, resp.~on $\Groth$, induced by the $\Gm$-action on $\g^{*}$ where $t\in \Gm$ acts by dilation with factor $t^2$, resp.~$t^{-2}$. (Note in particular that the action on $\Spr$ is \emph{not} the restriction of the action on $\Groth$.)

\section{\texorpdfstring{$(\g,B)$}{(g,B)}-modules} 
\label{sec:gB-modules}

In this section we introduce the category of $(\g,B)$-modules, and some important classes of objects therein.


\subsection{Preliminaries} 

\subsubsection{Distribution algebras}
\label{sss:Dist}

Recall that if $H$ is a group scheme over a commutative ground ring $R$, we have an $R$-algebra $\Dist(H)$ of distributions on $H$, see~\cite[\S I.7.7]{Jan03}. By definition this algebra admits a canonical $2$-sided ideal $\Dist^+(H) \subset \Dist(H)$.
Any $H$-module admits a canonical structure of $\Dist(H)$-module, see~\cite[\S I.7.11]{Jan03}, and this procedure defines a faithful and exact functor
\begin{equation}
\label{eqn:functor-Rep-Dist}
\Rep(H) \to \Mod(\Dist(H)).
\end{equation}
In particular, any morphism of group schemes $\lambda : H \to \mathbb{G}_{\mathrm{m},R}$ defines an algebra morphism $\Dist(H) \to R$. (In case $\lambda$ is the trivial character, this morphism factors through an isomorphism $\Dist(H)/\Dist^+(H) \cong R$.) Recall also that if $\mathfrak{h}$ is the Lie algebra of $H$, there exists a canonical morphism of $R$-algebras
\begin{equation}
\label{eqn:morph-U-Dist}
\mathcal{U} (\mathfrak{h}) \to \Dist(H)
\end{equation}
(where the left-hand side is the enveloping algebra of $\mathfrak{h}$),
which is an isomorphism in case $R$ is a field of characteristic $0$ and $H$ is of finite type over $R$; see~\cite[\S I.7.10]{Jan03}.

Recall from~\cite[\S I.7.4 and~\S I.7.9]{Jan03} that $H$ is called \emph{infinitesimally flat} if, denoting by $I \subset \sO(H)$ the ideal of the unit section, each quotient $\sO(H)/I^n$ is a finitely generated projective $R$-module. This condition is automatically satisfied if $R$ is a field and $H$ is of finite type. It is also satisfied in case $H$ is smooth over $R$: in fact in this case the immersion $\Spec(R) \hookrightarrow H$ corresponding to the unit section is regular by~\cite[\href{https://stacks.math.columbia.edu/tag/0E9J}{Tag 0E9J}]{stacks-project}, which by~\cite[\href{https://stacks.math.columbia.edu/tag/063E}{Tag 063E} \&~\href{https://stacks.math.columbia.edu/tag/063H}{Tag 063H}]{stacks-project} implies that $H$ is infinitesimally flat. In case $H$ is infinitesimally flat, $\Dist(H)$ is flat over $R$, and for any morphism of commutative rings $R \to R'$ we have
\[
\Dist(H \times_{\Spec(R)} \Spec(R')) = R' \otimes_R \Dist(H),
\]
see~\cite[Eqn.~(1) in~\S I.7.4]{Jan03}. Recall also that if $H$ is infinitesimally flat, noetherian and integral, the functor~\eqref{eqn:functor-Rep-Dist} is fully faithful on modules which are projective over $k$, see~\cite[Lemma~I.7.16]{Jan03}.

\subsubsection{Integrability}
\label{sss:integrability}

We will use the constructions of~\S\ref{sss:Dist} in particular for the $\k$-group schemes $T$, $U$ and $B$ of~\S\ref{sss:reductive-gps-notation}. We will say that a $\Dist(T)$-module $M$ is \emph{integrable} if there exists a (necessarily unique) decomposition $M = \bigoplus_{\lambda \in \X} M_\lambda$ such that $\Dist(T)$ acts on each $M_\lambda$ via the character associated with $\lambda$. Since any $T$-module is the direct sum of its weight spaces, the essential image of the canonical functor $\Rep(T) \to \Mod(\Dist(T))$ consists precisely of the integrable modules.

For the case of $B$, the essential image of the functor above is described by the following standard lemma.

\begin{lem}
\label{lem:Rep-modules-Dist}
An object $M \in \Mod(\Dist(B))$ belongs to the essential image of the canonical functor
$\Rep(B) \to \Mod(\Dist(B))$
if and only if it satisfies the following conditions:
\begin{enumerate}
\item
$M$ is integrable as a $\Dist(T)$-module;
\item
for any $v \in M$ there exists $n \in \Z_{\geq 1}$ such that $(\Dist^+(U))^n \cdot v = 0$.
\end{enumerate}
\end{lem}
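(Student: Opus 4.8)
The plan is to prove both implications of this characterization of the essential image of $\Rep(B) \to \Mod(\Dist(B))$. Since $B = T \ltimes U$ and $\Dist(B)$ is built from $\Dist(T)$ and $\Dist(U)$ via this semidirect product structure, I would first record the precise sense in which $\Dist(B) \cong \Dist(U) \rtimes \Dist(T)$ (using~\cite[\S I.7.8--I.7.9]{Jan03}), so that a $\Dist(B)$-module is the same as a $\Dist(U)$-module together with a compatible $\Dist(T)$-action. Similarly a $B$-module is a $U$-module together with a compatible $T$-module structure, i.e.\ a $\Dist(U)$-comodule compatibility.

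For the ``only if'' direction: if $M$ comes from a $B$-module, then restricting to $T$ shows $M$ is a $T$-module, hence integrable as a $\Dist(T)$-module by the remark just before the lemma (any $T$-module is the direct sum of its weight spaces, and the essential image of $\Rep(T) \to \Mod(\Dist(T))$ is exactly the integrable modules). For condition~(2), restrict the $B$-module structure to $U$; since $U \cong (\Ga)^{\dim U}$ is a split unipotent group, any $U$-module is locally finite, i.e.\ every $v \in M$ lies in a finite-dimensional $U$-submodule $V$. On a finite-dimensional $U$-module the augmentation ideal $\Dist^+(U)$ acts locally nilpotently — this can be seen by reducing to $U = \Ga$, where $\Dist^+(\Ga)$ is generated by the divided-power elements $\gamma_i$ with $\gamma_i$ raising any weight/degree, or more robustly by the fact that $U$ has a filtration by $\Ga$'s with $\Dist^+$ acting by a ``shift'', so that some power of $\Dist^+(U)$ kills $V$. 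Hence $(\Dist^+(U))^n \cdot v = 0$ for $n = \dim V$ (or a suitable bound), giving~(2).

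For the ``if'' direction, which is the real content: given $M$ satisfying~(1) and~(2), I want to produce a $B$-module structure inducing the given $\Dist(B)$-action. Condition~(1) already gives a $T$-module structure on $M$ (via the weight decomposition) inducing the $\Dist(T)$-action. The key step is to promote the $\Dist(U)$-action to a genuine $U$-action. Here I would use that $U$ has a normal series $U = U_0 \supset U_1 \supset \cdots \supset U_N = 1$ with each $U_i/U_{i+1} \cong \Ga$, reducing to the case $U = \Ga$: one must check that a $\Dist(\Ga)$-module on which every element is killed by a power of $\Dist^+(\Ga)$ is a rational $\Ga$-module. This is exactly the statement that the $\Dist(\Ga)$-action ``exponentiates''/defines a comodule map $M \to M \otimes \k[\Ga] = M[x]$, $v \mapsto \sum_i (\gamma_i \cdot v) x^i$, which is a finite sum precisely by the local nilpotency hypothesis; one then verifies the coaction axioms from the Hopf-algebra structure of $\Dist(\Ga)$, and that this comodule structure induces back the original $\Dist(\Ga)$-action. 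For general $U$ one assembles these using the normal series (or the iterated semidirect product $U = \Ga \rtimes (\Ga \rtimes \cdots)$), checking at each stage that the constructed $U_i/U_{i+1}$-action is compatible with the rest. Finally, one checks the $T$- and $U$-actions are compatible (i.e.\ $T$ normalizes $U$ correctly) — this follows because the compatibility is already encoded in the $\Dist(B)$-module structure and the $\Dist(T)$-, $\Dist(U)$-actions are the restrictions of genuine group actions, so the mixed relations transport. This gives the desired $B$-module structure, and by construction it induces the original $\Dist(B)$-module.

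The main obstacle is the ``if'' direction's core step: showing that a $\Dist(U)$-module satisfying the local-nilpotency condition~(2) genuinely integrates to a rational $U$-module, with all comodule axioms verified and the original action recovered. Everything else (the $T$-part, the assembly via the normal series, the compatibility) is bookkeeping once this $\Ga$-case is in hand. A clean alternative for this step is to invoke that for $U \cong \A^n$ split unipotent, the functor $\Rep(U) \to \Mod(\Dist(U))$ is an equivalence onto the subcategory of ``locally finite'' (equivalently, locally $\Dist^+(U)$-nilpotent) modules — this is essentially~\cite[\S I.7]{Jan03} combined with the structure of split unipotent groups in characteristic $p$ — and then condition~(2) is precisely this local nilpotency, while condition~(1) handles the diagonalizable part.
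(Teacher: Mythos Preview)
Your proposal is correct, but the paper takes a noticeably shorter route for the ``if'' direction. Instead of filtering $U$ by $\Ga$'s and integrating one copy at a time, the paper observes in one line that $\Dist(U)$ is the $\X$-graded dual of the $\X$-graded coalgebra $\sO(U)$ (for the grading induced by the $T$-action on $U$, each graded piece being finite-dimensional). Under this duality, a $\Dist(U)$-module on which $\Dist^+(U)$ acts locally nilpotently becomes immediately an $\sO(U)$-comodule: the coaction $v \mapsto \sum_i (\delta_i \cdot v) \otimes f_i$ (with $\{f_i\}$, $\{\delta_i\}$ dual graded bases) is well defined precisely because only finitely many $\delta_i \cdot v$ are nonzero. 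This bypasses the inductive assembly along a normal series and the attendant compatibility checks.

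Your approach has the advantage of being entirely self-contained and of making the $\Ga$-case explicit (your formula $v \mapsto \sum_i (\gamma_i \cdot v)\, x^i$ is the comodule map), but the bookkeeping in passing through a normal series---checking at each stage that the already-constructed $U/U_i$-action extends compatibly---is genuine work that the graded-dual argument avoids in one stroke. For the ``only if'' direction both arguments are essentially the same: local finiteness plus the fact that $\Dist^+(U)$ strictly raises $T$-weight forces the nilpotency.
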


\begin{proof}[Sketch of proof]
If $M$ belongs to the essential image of the functor, then the first condition is satisfied by the comments above, and the second one by local finiteness (see~\cite[\S I.2.13]{Jan03}) and weight considerations. Reciprocally, if $M$ satisfies the stated conditions then clearly the action of $\Dist(T)$ can be integrated into an action of $T$, and the action of $\Dist(U)$ can be integrated into an action of $U$ because $\Dist(U)$ is the $\X$-graded dual of the $\X$-graded coalgebra $\sO(U)$.
\end{proof}

\subsubsection{An easy homological algebra lemma}

Below we will use the following variation on standard properties of derived categories (see e.g.~\cite[\href{https://stacks.math.columbia.edu/tag/0FCL}{Tag 0FCL}]{stacks-project}). Let $\mathsf{C}$ be an abelian category, let $\mathsf{A} \subset  \mathsf{C}$ be a Serre subcategory, and let $\mathsf{B} \subset \mathsf{C}$ be a full abelian subcategory containing $\mathsf{A}$.

\begin{lem}
\label{lem:Ext-D-isom}
Assume that for any $X$ in $\mathsf{C}$ and $Y$ in $\mathsf{A}$, and for any surjection $X \to Y$, there exists a subobject $X' \subset X$ which belongs to $\mathsf{A}$ and such that the composition $X' \to X \to Y$ is surjective. Then for any $M_1$ in $D^- (\mathsf{A})$ and $M_2$ in $D^- (\mathsf{B})$, the natural morphism
\[
\Hom_{D^- (\mathsf{B})}(M_1,M_2) \to \Hom_{D^- (\mathsf{C})}(M_1,M_2)
\]
is an isomorphism.
\end{lem}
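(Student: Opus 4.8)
The plan is to prove this by the standard "comparing calculus of fractions in the two derived categories" argument, reduced to a statement about morphisms in the homotopy categories of complexes. Since both $D^-(\mathsf{B})$ and $D^-(\mathsf{C})$ are obtained from the homotopy categories $K^-(\mathsf{B})$ and $K^-(\mathsf{C})$ by inverting quasi-isomorphisms, and $M_1$ lies in $D^-(\mathsf{A}) \subset D^-(\mathsf{B})$, it suffices to show two things about morphisms $M_1 \to M_2$: (surjectivity) every roof $M_1 \xleftarrow{s} N \to M_2$ in $K^-(\mathsf{C})$ with $s$ a quasi-isomorphism can be replaced by one with $N \in K^-(\mathsf{B})$; and (injectivity) if a morphism in $K^-(\mathsf{B})$ becomes zero in $D^-(\mathsf{C})$, it is already zero in $D^-(\mathsf{B})$.

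First I would set up the key technical device, following the proof of \cite[\href{https://stacks.math.columbia.edu/tag/0FCL}{Tag 0FCL}]{stacks-project}: given a complex $N^\bullet \in K^-(\mathsf{C})$ together with a quasi-isomorphism $s : N^\bullet \to M_1^\bullet$, where $M_1^\bullet \in K^-(\mathsf{A})$ is bounded above, I want to produce a subcomplex $P^\bullet \subseteq N^\bullet$ with $P^\bullet \in K^-(\mathsf{A})$ such that the inclusion $P^\bullet \hookrightarrow N^\bullet$ is a quasi-isomorphism (equivalently, the composite $P^\bullet \to M_1^\bullet$ is a quasi-isomorphism). This is built by a descending induction on the cohomological degree. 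At each stage one has already constructed $P^j$ for $j > n$ inside $N^j$, forming a subcomplex in those degrees with the right cohomology, and one must choose $P^n \subseteq N^n$ in $\mathsf{A}$, stable under the differential into $P^{n+1}$, large enough that (a) it surjects onto the relevant cycles/cohomology in degree $n$ coming from $M_1^n$, and (b) $\ker(d^n|_{P^n})$ maps onto the cohomology already needed. The hypothesis of the lemma is exactly what lets one enlarge a candidate subobject of $N^n$ inside $\mathsf{A}$ so as to surject onto a given object of $\mathsf{A}$ (here a subquotient of $N^n$ that is identified, via the quasi-isomorphism $s$ and exactness of $\mathsf{A} \hookrightarrow \mathsf{C}$, with an object of $\mathsf{A}$): one applies it to the surjection from the preimage in $N^n$ of that target object.

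With this device in hand, surjectivity is immediate: a morphism $M_1 \to M_2$ in $D^-(\mathsf{C})$ is represented by a roof $M_1 \xleftarrow{s} N \xrightarrow{f} M_2$ with $N \in K^-(\mathsf{C})$ and $s$ a quasi-isomorphism; replacing $N$ by the subcomplex $P \in K^-(\mathsf{A}) \subseteq K^-(\mathsf{B})$ produced above (and $f$ by its restriction, $s$ by its restriction, which is still a quasi-isomorphism) gives a roof entirely in $K^-(\mathsf{B})$ representing the same morphism. For injectivity, suppose $\varphi : M_1 \to M_2$ in $D^-(\mathsf{B})$ maps to $0$ in $D^-(\mathsf{C})$; represent $\varphi$ by a roof $M_1 \xleftarrow{s} N \xrightarrow{f} M_2$ with $N \in K^-(\mathsf{B})$. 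Vanishing in $D^-(\mathsf{C})$ means there is a quasi-isomorphism $t : N' \to N$ in $K^-(\mathsf{C})$ with $f \circ t$ null-homotopic. Apply the device to $s \circ t : N' \to M_1$ (note $M_1 \in K^-(\mathsf{A})$) to get a subcomplex $P \subseteq N'$, $P \in K^-(\mathsf{A})$, with $P \hookrightarrow N' \to N$ a quasi-isomorphism and $P \in K^-(\mathsf{B})$; then $f$ restricted to $P$ is null-homotopic, so the roof computed through $P$ is zero already in $D^-(\mathsf{B})$, i.e. $\varphi = 0$.

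The main obstacle is the inductive construction of the subcomplex $P^\bullet$: one must choose $P^n$ in $\mathsf{A}$ so that simultaneously its cocycles surject onto the required cohomology and it is compatible with the part already built in higher degrees, and verify that the hypothesis of the lemma genuinely applies at each step — i.e. that the relevant target is really an object of $\mathsf{A}$ (which uses that $M_1^\bullet$ has terms in $\mathsf{A}$, that $\mathsf{A}$ is a Serre subcategory so closed under subquotients, and that $s$ is a quasi-isomorphism to identify the needed cohomology objects with cohomology of $M_1^\bullet$, hence with objects of $\mathsf{A}$). Once this bookkeeping is done carefully, everything else is a formal manipulation of roofs. Note that finiteness/boundedness-above is used to start the induction (the complexes vanish in large degree) and that no hypothesis beyond $D^-$ is needed; one does not need $\mathsf{B}$ or $\mathsf{C}$ to have enough injectives or projectives.
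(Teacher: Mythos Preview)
Your proposal is correct and follows essentially the same route as the paper: both arguments use the subcomplex construction from \cite[\href{https://stacks.math.columbia.edu/tag/0FCL}{Tag 0FCL}]{stacks-project} (the paper simply cites it, while you sketch the descending induction) to replace a roof in $K^-(\mathsf{C})$ by one through a complex with terms in $\mathsf{A}$, and then apply this device in the standard way to prove surjectivity and injectivity of the map on $\Hom$-spaces. The only cosmetic difference is that the paper phrases the key input as ``$M_3$ has cohomology in $\mathsf{A}$'' (since $M_3 \to M_1$ is a quasi-isomorphism) rather than explicitly invoking the target complex $M_1$ in the inductive step, but this amounts to the same thing.
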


\begin{proof}
Let us fix a bounded above complex of objects of $\mathsf{A}$, resp.~$\mathsf{B}$, representing $M_1$, resp.~$M_2$. We first prove surjectivity of our map. A morphism from $M_1$ to $M_2$ in $D^- (\mathsf{C})$ can be represented by a diagram
\[
M_1 \xleftarrow{f} M_3 \xrightarrow{g} M_2
\]
where $M_3$ is a bounded above complex of objects of $\mathsf{C}$, $f$ and $g$ are morphisms of complexes, and $f$ is a quasi-isomorphism. By the claim at the beginning of the proof of~\cite[\href{https://stacks.math.columbia.edu/tag/0FCL}{Tag 0FCL}]{stacks-project} (with the objects ``$B_i$'' chosen to be $0$), there exists a subcomplex $M_3' \subset M_3$ whose terms belong to $\mathsf{A}$, and such that the embedding $M_3' \subset M_3$ is a quasi-isomorphism. Then our morphism can be represented by the diagram
\[
M_1 \leftarrow M_3' \to M_2,
\]
hence it belongs to the image of our morphism.

Now we prove injectivity of the map. Consider a morphism $\phi : M_1 \to M_2$ in $D^-(\mathsf{B})$ whose image in $D^-(\mathsf{C})$ vanishes. Then $\phi$ can be represented by a diagram
\[
M_1 \xleftarrow{f} M_3 \xrightarrow{g} M_2
\]
where $M_3$ is a bounded above complex of objects of $\mathsf{B}$, $f$ and $g$ are morphisms of complexes, and $f$ is a quasi-isomorphism. The image of $g$ in $D^-(\mathsf{C})$ vanishes, so that there exists a bounded above complex $M_3'$ of objects of $\mathsf{C}$ and a quasi-isomorphism $M_3' \to M_3$ whose composition with $g$ is homotopic to $0$. As above there exists a subcomplex $M_3'' \subset M_3'$ whose terms belong to $\mathsf{A}$ and such that the embedding $M_3'' \to M_3'$ is a quasi-isomorphism. Then the composition 
\[
M_3'' \to M_3' \to M_3 \xrightarrow{g} M_2
\]
is homotopic to $0$, which shows that $g$ vanishes in $D^-(\mathsf{B})$, and finally that $\phi=0$.
\end{proof}

\subsection{The hybrid algebra} 

By the Poincar\'e--Birkhoff--Witt theorem, multiplication induces an isomorphism of $\C$-vector spaces
\[
\sU(\n^-_\C) \otimes_\C \sU(\b_\C) \simto \sU(\g_\C).
\]
We have the lattice $\sU(\n^-_\Z) \subset \sU(\n^-_\C)$, and the results recalled in~\S\ref{sss:Dist} show that we also have the lattice
\[
\Dist(B_\Z) \subset \C \otimes_{\Z} \Dist(B_\Z) = \Dist(B_\C)=\sU(\b_\C).
\]
In fact, a $\Z$-basis of $\Dist(B_\Z)$ can be obtained in terms of divided powers as in Kostant's $\Z$-form of $\sU(\g_\C)$, see~\cite[\S II.1.12]{Jan03} or~\cite[Theorem~1.3]{haboush}. We therefore obtain that $\sU(\n^-_\Z) \otimes_\Z \Dist(B_\Z)$ is a lattice in $\sU(\g_\C)$. The following result is well known.

\begin{lem}
\label{lem:Uhb-subring}
The lattice $\sU(\n^-_\Z) \otimes_\Z \Dist(B_\Z) \subset \sU(\g_\C)$ is a subring.
\end{lem}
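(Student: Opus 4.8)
Set $M:=\sU(\n^-_\Z)\cdot\Dist(B_\Z)\subseteq\sU(\g_\C)$; by the discussion preceding the lemma, $M$ is the image of the injective multiplication map $\sU(\n^-_\Z)\otimes_\Z\Dist(B_\Z)\to\sU(\g_\C)$. It is a $\Z$-submodule containing $1$; it is stable under left multiplication by $\sU(\n^-_\Z)$ and under right multiplication by $\Dist(B_\Z)$; and since $\n^-_\Z\subseteq\sU(\n^-_\Z)$ and $\b_\Z\subseteq\Dist(B_\Z)$, it contains $\g_\Z=\n^-_\Z\oplus\b_\Z$. To prove the lemma it suffices to show $M\cdot M\subseteq M$, and — since $\sU(\n^-_\Z)$ and $\Dist(B_\Z)$ are themselves subrings of $\sU(\g_\C)$ — this will follow once we establish
\[
\Dist(B_\Z)\cdot\sU(\n^-_\Z)\subseteq M,
\]
i.e.\ that a distribution on $B_\Z$ can always be moved to the right past an element of $\sU(\n^-_\Z)$ at the cost of producing elements of $\sU(\n^-_\Z)$ on the left. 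This is the only point to prove; in all the inductions below, $\sU(\n^-_\Z)$ is used through the fact that it is $\Z$-spanned by the products $f_{\gamma_1}\cdots f_{\gamma_r}$ of Chevalley root vectors ($\gamma_i\in\Phi^+$, $r\ge0$, repetitions allowed).

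The torus directions are harmless: for $\gamma\in\Phi^+$ one has $h_j f_\gamma=f_\gamma(h_j-\langle\gamma,\alpha_j^\vee\rangle)$ for each $j$, and $\Dist(T_\Z)$, viewed inside $\Dist(T_\C)=\sU(\t_\C)$, is stable under the substitutions $h_j\mapsto h_j-c$ for $c\in\Z$; hence $d_T\cdot f_\gamma\in f_\gamma\cdot\Dist(T_\Z)$ for every $d_T\in\Dist(T_\Z)$, and an immediate induction gives $\Dist(T_\Z)\cdot\sU(\n^-_\Z)\subseteq\sU(\n^-_\Z)\cdot\Dist(T_\Z)\subseteq M$. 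Using $\Dist(B_\Z)=\Dist(U_\Z)\cdot\Dist(T_\Z)$, we are thereby reduced to proving $\Dist(U_\Z)\cdot\sU(\n^-_\Z)\subseteq M$. It is also convenient to record separately the purely Lie-theoretic inclusion $\b_\Z\cdot\sU(\n^-_\Z)\subseteq M$, which follows by induction on $r$ from $[\b_\Z,\n^-_\Z]\subseteq\g_\Z=\n^-_\Z\oplus\b_\Z$, using $\n^-_\Z\subseteq\sU(\n^-_\Z)$, $\b_\Z\subseteq\Dist(B_\Z)$ and the stability properties of $M$.

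The heart of the argument is the claim that $e_\beta^{(m)}\cdot\sU(\n^-_\Z)\subseteq M$ for all $\beta\in\Phi^+$ and $m\ge0$; I would prove this by induction on $m$, with a nested induction on $r$. The input is the identity
\[
e_\beta^{(m)}\cdot f_\gamma=f_\gamma\cdot e_\beta^{(m)}+\sum_{k=1}^{m}\tfrac1{k!}(\ad e_\beta)^k(f_\gamma)\cdot e_\beta^{(m-k)}\qquad(\gamma\in\Phi^+),
\]
a formal consequence in $\sU(\g_\C)$ of $x^m y=\sum_k\binom{m}{k}(\ad x)^k(y)\,x^{m-k}$ after dividing by $m!$ and regrouping, together with the fact that each $\tfrac1{k!}(\ad e_\beta)^k(f_\gamma)$ lies in $\g_\Z$, so that it may be written as $n_k+b_k$ with $n_k\in\n^-_\Z$ and $b_k\in\b_\Z$. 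Applying the identity with $\gamma=\gamma_1$ and multiplying on the right by $f_{\gamma_2}\cdots f_{\gamma_r}$, each resulting summand has one of three shapes: $f_{\gamma_1}\cdot\big(e_\beta^{(m)}\cdot(f_{\gamma_2}\cdots f_{\gamma_r})\big)$, dealt with by the nested induction on $r$ and left-stability of $M$; $n_k\cdot\big(e_\beta^{(m-k)}\cdot(f_{\gamma_2}\cdots f_{\gamma_r})\big)$ with $k\ge1$, dealt with by the induction on $m$ and $\n^-_\Z\subseteq\sU(\n^-_\Z)$; and $b_k\cdot\big(e_\beta^{(m-k)}\cdot(f_{\gamma_2}\cdots f_{\gamma_r})\big)$ with $k\ge1$, dealt with by the induction on $m$ followed by the auxiliary inclusion $\b_\Z\cdot\sU(\n^-_\Z)\subseteq M$ and right-stability of $M$. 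Once this claim is proved, a short induction on the number of divided-power factors upgrades it to $\Dist(U_\Z)\cdot\sU(\n^-_\Z)\subseteq M$, and together with the torus step this yields $\Dist(B_\Z)\cdot\sU(\n^-_\Z)\subseteq M$.

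The only genuinely non-formal ingredient, which I expect to be the crux, is the integrality $\tfrac1{k!}(\ad e_\beta)^k(\g_\Z)\subseteq\g_\Z$. This holds because $\g_\Z=\Lie(G_\Z)$ is a $G_\Z$-module via $\Ad$, hence a $\Dist(G_\Z)$-module through the functor~\eqref{eqn:functor-Rep-Dist}, and a direct computation with the adjoint action of the root subgroup $U_{\beta,\Z}\cong\Ga$ shows that the distribution $e_\beta^{(k)}\in\Dist(U_{\beta,\Z})\subseteq\Dist(G_\Z)$ acts on $\g_\Z$ precisely by $\tfrac1{k!}(\ad e_\beta)^k$; this is the classical integrality underlying the construction of Kostant's $\Z$-form (cf.\ the references cited before the lemma). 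Everything else is bookkeeping with the triangular decomposition $\g_\Z=\n^-_\Z\oplus\t_\Z\oplus\n^+_\Z$ and the stability properties of $M$. One should only be careful that in non-simply-laced types the $\Z$-subalgebra of $\sU(\n^-_\C)$ generated by the \emph{simple} root vectors can be strictly smaller than $\sU(\n^-_\Z)$, so the inductions must genuinely range over products of Chevalley root vectors $f_\gamma$ for all $\gamma\in\Phi^+$; this causes no difficulty, since $\tfrac1{k!}(\ad e_\beta)^k(f_\gamma)$ lands in $\g_\Z=\n^-_\Z\oplus\b_\Z$ whether or not $\gamma$ is simple.
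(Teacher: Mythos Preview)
Your proposal is correct and follows essentially the same approach as the paper: both reduce to showing $\Dist(B_\Z)\cdot\sU(\n^-_\Z)\subseteq\sU(\n^-_\Z)\cdot\Dist(B_\Z)$ and then invoke the commutator formulas in Kostant's $\Z$-form. The paper simply sketches this and refers to Bourbaki for the formulas, whereas you spell out the key identity $e_\beta^{(m)}f_\gamma=\sum_{k}\tfrac{1}{k!}(\ad e_\beta)^k(f_\gamma)\,e_\beta^{(m-k)}$ and the integrality $\tfrac{1}{k!}(\ad e_\beta)^k(\g_\Z)\subseteq\g_\Z$ explicitly, together with the inductions needed to propagate them.
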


\begin{proof}[Sketch of proof]
It suffices to show that $\Dist(B_\Z) \cdot \sU(\n^-_\Z) \subset \sU(\n^-_\Z) \cdot \Dist(B_\Z)$. Choosing bases of $\n^-_\Z$ and $\n_\Z$ consisting of weight vectors, and a basis of $\t_\Z$, this follows from the usual formulas for commutators in Kostant's $\Z$-form of $\sU(\g_\C)$, as e.g.~in the proof of~\cite[Chap.~VIII, \S 12, Proposition~3]{bourbaki}.
\end{proof}

The ring considered in Lemma~\ref{lem:Uhb-subring} will be called the
\textit{hybrid algebra}, and denoted
$\sU^\hb_\Z$.
We also set
\[
\sU^\hb_\k:= \sU^\hb_\Z\otimes_\Z \k,
\]
so that $\sU^\hb_\k$ contains $\sU(\n^-)$ and $\Dist(B)$ as subalgebras, and moreover multiplication induces an isomorphism of $\k$-vector spaces
\[
 \sU(\n^-) \otimes_\k \Dist(B) \simto \sU^\hb_\k.
\] 
There is a canonical $\k$-algebra morphism $\sU(\g) \rightarrow \sU^\hb_\k$ induced by the inclusion $\sU(\g_\Z) \subset \sU^\hb_\Z$. (This morphism is \emph{not} injective.)

We will also denote by $\sU^{\hb,\leq}_\Z$ the subalgebra of $\sU^{\hb}_\Z$ generated by $\sU(\n^-_\Z)$ and $\Dist(T_\Z)$; then $\sU^{\hb,\leq}_\k := \sU^{\hb,\leq}_\Z \otimes_\Z \k$ identifies in the natural way with a subalgebra in $\sU^{\hb}_\k$, and the multiplication morphism
\[
\sU(\n^-) \otimes_\k \Dist(T) \to \sU^{\hb,\leq}_\k
\]
is an isomorphism of vector spaces.

\subsection{\texorpdfstring{$(\g,B)$}{(g,B)}-modules} 
\label{ss:g-B-mod}

Our main object of study in this paper is the category
\[
\Mod(\g,B)
\]
of $(\g,B)$-modules, i.e.~strongly $B$-equivariant $\g$-modules, i.e.~$B$-equivariant $\sU(\g)$-modules on which the differential of the $B$-action coincides with the restriction of the $\sU(\g)$-action to $\sU(\b)$. 
This category identifies naturally with the full subcategory of $\Mod(\sU^\hb_\k)$ consisting of modules on which the action of $\Dist(B)$ is induced by an action of $B$, or in other words, in view of Lemma~\ref{lem:Rep-modules-Dist}, 
with the category of $\sU^\hb_\k$-modules $M$ such that:
\begin{itemize}
\item 
$M$ is integrable as a $\Dist(T)$-module;
\item
for any $v \in M$ there exists $n \in \Z_{\geq 1}$ such that $(\Dist^+(U))^n \cdot v = 0$. 
\end{itemize}

We will also denote by $\mod(\g,B)$ be the full subcategory of $\Mod(\g,B)$ consisting of objects which are finitely generated as $\sU(\g)$-modules. Since $\sU(\g)$ is noetherian, this is a Serre subcategory in $\Mod(\g,B)$.

\begin{rmk}
\begin{enumerate}
\item
The category $\mod(\g,B)$ is also considered in~\cite{losev}, where it is called ``modular category $\mathcal{O}$.'' The reason for this terminology is that the same definition, replacing the base field $\k$ by an algebraically closed field of characteristic $0$, produces the direct sum of the integral blocks of the Bernstein--Gelfand--Gelfand category $\mathcal{O}$ associated with the corresponding reductive Lie algebra. There are however important differences between the structure of $\mod(\g,B)$ and of the latter category. In particular we will see below that all simple objects in $\mod(\g,B)$ are finite-dimensional, but that this category also contains some infinite-dimensional objects; in particular, it is not the case that every object has finite length.
\item
There is another category attached to a reductive algebraic group that is sometimes called ``modular category $\mathcal{O}$,'' namely the category introduced by Soergel in~\cite{soergel}. Although, in a sense, they contain the same combinatorial information, we do not know of any particularly interesting relation between these categories.
\end{enumerate}
\end{rmk}

The category of $B$-equivariant $\sU(\g)$-modules admits a canonical action of the category $\Rep(B)$ by tensor product, and we have a (fully-faithful) monoidal functor $\Rep(T) \to \Rep(B)$ induced by the projection $B \twoheadrightarrow B/U \cong T$, hence we deduce an action of $\Rep(T)$ on this category. We also have a (fully faithful) monoidal functor $\Rep(T^{(1)}) \to \Rep(T)$ given by pullback under the Frobenius morphism $\mathrm{Fr}_T$, so we obtain an action of $\Rep(T^{(1)})$ on the category of $B$-equivariant $\sU(\g)$-modules. It is easily seen that this action preserves $\Mod(\g,B)$. With the conventions explained in~\S\ref{sss:grading-shift}, and identifying the character lattice of $T^{(1)}$ with $p\X$ (via pullback under $\mathrm{Fr}_T$), we deduce autoequivalences
\[
\langle \mu \rangle : \Mod(\g,B) \simto \Mod(\g,B)
\]
for any $\mu \in p\X$, which stabilize $\mod(\g,B)$.


In $\Mod(\g,B)$, for any $\lambda \in \X$ we have
the associated \emph{standard module} (also called \emph{Verma module}) 
\[
\Delta(\lambda)= \sU^\hb_\k \otimes_{\Dist(B)} \k_B(\lambda),
\]
where $\k_B(\lambda)$ is the 1-dimensional $B$-module associated with $\lambda$.
This module is free of rank $1$ over $\sU(\n^-)$; in particular it belongs to $\mod(\g,B)$.
In fact we have an isomorphism of $(\g,B)$-modules 
\[
\Delta(\lambda)\cong \sU(\g) \otimes_{\sU(\b)} \k_B(\lambda).
\]
It is clear that for any $\lambda \in \X$ and $\mu \in p\X$ we have a canonical isomorphism
\[
\Delta(\lambda) \langle \mu \rangle \cong \Delta(\lambda+\mu).
\]
The dimensions of the $T$-weight spaces in $\Delta(\lambda)$ can be easily expressed in terms of Kostant's partition function, as in the usual Bernstein--Gelfand--Gelfand category $\mathcal{O}$, see e.g.~\cite[\S 1.16]{Hum08}.

Using local finiteness of $B$-modules one easily sees that for any nonzero object $M \in \Mod(\g,B)$ there exist $\lambda \in \X$ and a nonzero morphism $\Delta(\lambda) \to M$. On the other hand, by standard ``highest weight'' considerations one checks that for any $\lambda \in \X$
the object $\Delta(\lambda)$ admits a unique simple quotient $\Simp(\lambda)$ in $\Mod(\g,B)$. Combining these observations we obtain that $(\Simp(\lambda) : \lambda \in \X)$ is a complete set of pairwise non-isomorphic simple objects in $\Mod(\g,B)$. These objects satisfy
\begin{equation}
\label{eqn:Simp-twist}
\Simp(\lambda) \langle \mu \rangle \cong \Simp(\lambda+\mu)
\end{equation}
for any $\lambda \in \X$ and $\mu \in p\X$.

Recall that for any affine group scheme $H$ of finite type over $\k$, the Lie algebra $\mathfrak{h}$ of $H$ is endowed with a restricted $p$-th power operation $x \mapsto x^{[p]}$, and that for any $x \in \mathfrak{h}$ the element $x^p - x^{[p]} \in \sU(\mathfrak{h})$ is central. The $\k$-subalgebra of $\sU(\mathfrak{h})$ generated by such elements is denoted by $\ZFr(\mathfrak{h})$, and called the \emph{Frobenius center} of $\sU(\mathfrak{h})$. By the Poincar\'e--Birkhoff--Witt theorem, this subalgebra naturally identifies with $\sO(\mathfrak{h}^{*(1)})$. We set
\[
\sU_0(\mathfrak{h}) := \sU(\mathfrak{h}) \otimes_{\ZFr(\mathfrak{h})} \k
\]
where $\k$ is the restriction of the trivial $\sU(\mathfrak{h})$-module to $\ZFr(\mathfrak{h})$. (Under the identification with $\sO(\mathfrak{h}^{*(1)})$, this corresponds to the character associated with $0 \in \mathfrak{h}^{*(1)}$.) It is a standard fact that the morphism~\eqref{eqn:morph-U-Dist} factors through an isomorphism
\[
\sU_0(\mathfrak{h}) \simto
\Dist(H_1)
\]
where $H_1$ is the (first) Frobenius kernel of $H$, i.e.~the kernel of $\mathrm{Fr}_H$; see~\cite[\S I.9.6]{Jan03}.

Consider now the subgroup $G_1 B$ of $G$ generated by $B$ and $G_1$. 
Any $G_1B$-module admits a natural action of $\Dist(G_1) \cong \sU_0(\g)$, hence of $\sU(\g)$, and this operation provides an equivalence of categories between $\Rep(G_1B)$ and the category of $B$-equivariant $\sU_0(\g)$-modules on which the differential of the action of $B$ coincides with the restriction of the action of $\sU_0(\g)$ to the subalgebra $\sU_0(\b)$.
In particular we have a canonical exact and fully faithful functor
\begin{equation}
\label{eqn:functor-G1B-gB}
\Rep(G_1 B) \to \Mod(\g,B),
\end{equation}
whose essential image is stable under subquotients, hence which sends simple $G_1B$-modules to simple objects of $\Mod(\g,B)$. The simple $G_1B$-modules are described in~\cite[Proposition~II.9.6]{Jan03}; they are in a canonical bijection with $\X$. More specifically, for any $\lambda \in \X$ we have a (finite-dimensional) baby Verma module
\[
\bV(\lambda) := \sU_0(\g) \otimes_{\sU_0(\b)} \k_B(\lambda),
\]
which admits a unique simple quotient in $\Rep(G_1 B)$, and the assignment sending $\lambda$ to the unique simple quotient of $\bV(\lambda)$ induces the bijection mentioned above. It is clear that for any $\lambda \in \X$ there exists a canonical surjection of $(\g,B)$-modules
\[
\Delta(\lambda) \twoheadrightarrow \bV(\lambda),
\]
so that $\Simp(\lambda)$ is the image of the simple quotient of $\bV(\lambda)$ under~\eqref{eqn:functor-G1B-gB}. The latter $G_1B$-module will therefore also be denoted $\Simp(\lambda)$. (In particular, these considerations show that $\Simp(\lambda)$ is finite-dimensional for any $\lambda \in \X$.)

Recall also that the simple objects in $\Rep(G)$ are in a natural bijection with the subset $\X^+ \subset \X$ of dominant weights; this bijection will be denoted $\lambda \mapsto \widetilde{\Simp}(\lambda)$. Recall also that a dominant weight $\lambda \in \X^+$ is called \emph{restricted} if $\langle \lambda, \alpha^\vee \rangle < p$ for all $\alpha \in \Phis$. If $\lambda$ is a restricted dominant weight, then we have
\[
\widetilde{\Simp}(\lambda)_{| G_1B} \cong \Simp(\lambda),
\]
see~\cite[Proposition~II.9.6(g)]{Jan03}.
In case $G$ has simply connected derived subgroup, the restriction of the surjection $\X \to \X/p\X$ to restricted dominant weights is still surjective; in view of~\eqref{eqn:Simp-twist}, this shows that in this case the simple objects in $\Mod(\g,B)$ can be completely described in terms of those in $\Rep(G)$.

\begin{rmk}
A notable difference between $\mod(\g,B)$ and the Bernstein--Gelfand--Gelfand category $\mathcal{O}$ is that in this setting Verma modules have no simple subobjects.\footnote{This fact, and its proof, were explained to us by G.~Dhillon.} In fact, since simple objects are finite-dimensional, the existence of a simple subobject of $\Delta(\lambda)$ would imply the existence of a noninjective morphism $\Delta(\mu) \to \Delta(\lambda)$ for some $\mu \in \X$. But Verma modules are free of rank $1$ as modules for the domain $\sU(\n^-)$, so any nonzero morphism between them must be injective.
\end{rmk}

\subsection{Some categories of modules for the hybrid algebra} 

Let us introduce more terminology regarding $\Dist(T)$-modules. We will say that an integrable $\Dist(T)$-module $M$ is
\begin{itemize}
\item
\emph{finitely integrable} if $M_\lambda$ is finite-dimensional for any $\lambda\in \X$;
\item
\emph{bounded above} if there exists a finite subset $\Omega\subset \X$ such that
\[
M_\lambda\neq 0 \quad \Rightarrow \quad \lambda\in \Omega-\Z_{\geq 0}\Phi^+;
\]
\item
\emph{bounded below} if there exists a finite subset $\Omega\subset \X$ such that
\[
M_\lambda\neq 0 \quad \Rightarrow \quad \lambda\in \Omega+\Z_{\geq 0}\Phi^+.
\]
\end{itemize}

We will denote by $\mathbf{O}^{\downarrow}$ the full subcategory of $\Mod(\sU^\hb_\k)$ whose objects are the modules which are finitely integrable and bounded above as $\Dist(T)$-modules.
It is clear that 
we have embeddings 
\[
\mod(\g,B) \subset \mathbf{O}^{\downarrow} \subset \Mod(\g,B),
\]
and that $\mathbf{O}^{\downarrow}$ is a Serre subcategory of $\Mod(\g,B)$.
The following claim follows from Lemma~\ref{lem:Ext-D-isom} (whose assumptions are satisfied because $\sU(\g)$ is noetherian and $B$-modules are locally finite).

\begin{lem}
\label{lem:Hom-gB-Odown}
For any $M_1\in \Db(\mod(\g,B))$ and $M_2\in \Db(\mathbf{O}^{\downarrow})$, the natural map 
\[
\Hom_{\Db(\mathbf{O}^{\downarrow})}(M_1,M_2) \to \Hom_{\Db\Mod(\g,B)}(M_1,M_2)
\]
is an isomorphism. 
\end{lem}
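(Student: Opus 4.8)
The plan is to deduce Lemma~\ref{lem:Hom-gB-Odown} directly from Lemma~\ref{lem:Ext-D-isom} by a suitable choice of the three categories $\mathsf{A} \subset \mathsf{B} \subset \mathsf{C}$. We take $\mathsf{C} = \Mod(\g,B)$, $\mathsf{B} = \mathbf{O}^{\downarrow}$, and $\mathsf{A} = \mod(\g,B)$. As noted in the excerpt, $\mod(\g,B)$ is a Serre subcategory of $\Mod(\g,B)$ (since $\sU(\g)$ is noetherian), and $\mathbf{O}^{\downarrow}$ is a full abelian subcategory of $\Mod(\g,B)$ containing $\mod(\g,B)$; so the structural hypotheses of Lemma~\ref{lem:Ext-D-isom} hold. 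Since both $M_1$ and $M_2$ are \emph{bounded} complexes, they lie a fortiori in $D^-(\mathsf{A})$ and $D^-(\mathsf{B})$ respectively, and Lemma~\ref{lem:Ext-D-isom} gives an isomorphism $\Hom_{D^-(\mathbf{O}^{\downarrow})}(M_1,M_2) \xrightarrow{\sim} \Hom_{D^-\Mod(\g,B)}(M_1,M_2)$. To pass from $D^-$ to $\Db$ one observes that, for objects of $\Db$ sitting inside $D^-$, the Hom-spaces agree (the bounded derived category is a full subcategory of the bounded-above one); alternatively one simply reruns the argument of Lemma~\ref{lem:Ext-D-isom} replacing $D^-$ by $\Db$ throughout, which works verbatim since all the truncation and subcomplex constructions involved preserve boundedness.

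Thus the only real content is to verify the key hypothesis of Lemma~\ref{lem:Ext-D-isom}: for any $X$ in $\Mod(\g,B)$, any $Y$ in $\mod(\g,B)$, and any surjection $q : X \twoheadrightarrow Y$, there is a subobject $X' \subset X$ lying in $\mod(\g,B)$ with $q|_{X'}$ still surjective. This is exactly the step flagged parenthetically in the statement as following from noetherianity of $\sU(\g)$ together with local finiteness of $B$-modules. I would argue as follows: pick finitely many $\sU(\g)$-generators $y_1, \dots, y_n$ of $Y$ and lift them to elements $x_1, \dots, x_n \in X$. By local finiteness of the $B$-action on $X$ (every vector lies in a finite-dimensional $B$-submodule, see~\cite[\S I.2.13]{Jan03}), the $B$-submodule $V \subset X$ generated by $x_1, \dots, x_n$ is finite-dimensional. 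Let $X' := \sU(\g) \cdot V$ be the $\sU(\g)$-submodule of $X$ it generates; since $V$ is a $B$-submodule, $X'$ is stable under $B$ and hence is a $(\g,B)$-submodule (it is automatically an object of $\Mod(\g,B)$, being a subobject). Moreover $X'$ is finitely generated over $\sU(\g)$ (generated by a basis of $V$), so $X' \in \mod(\g,B)$. Finally $q(X') \supseteq q(\sU(\g) \cdot \{x_1,\dots,x_n\}) = \sU(\g) \cdot \{y_1,\dots,y_n\} = Y$, so $q|_{X'}$ is surjective, as required.

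The main obstacle, such as it is, is purely bookkeeping: one must check that $X'$ genuinely belongs to $\Mod(\g,B)$, i.e.\ that the $B$-action integrating $\Dist(B)$ on $X$ restricts to $X'$. This is immediate from the construction since $X'$ is the $\sU(\g)$-span of a $B$-stable finite-dimensional subspace and the $\sU(\g)$- and $B$-actions on $X$ are compatible in the sense encoded by the $\sU^\hb_\k$-module structure; concretely, $\Dist(B) \cdot X' \subset X'$ because $\Dist(B) \cdot V \subset V$ and $\Dist(B)$ normalizes $\sU(\n^-)$ inside $\sU^\hb_\k$ (Lemma~\ref{lem:Uhb-subring}), so the integrability and local-unipotence conditions defining $\Mod(\g,B)$ are inherited by the subobject $X'$. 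With this verified, Lemma~\ref{lem:Ext-D-isom} applies and yields the claim.
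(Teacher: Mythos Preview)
Your proposal is correct and follows exactly the approach indicated in the paper: the paper's entire proof is the one-line remark that the claim follows from Lemma~\ref{lem:Ext-D-isom} with the hypothesis verified via noetherianity of $\sU(\g)$ and local finiteness of $B$-modules, which is precisely what you carry out in detail. Your handling of the passage from $D^-$ to $\Db$ is also fine.
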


We will also denote by $\mathbf{O}^{\uparrow}$ the full subcategory of $\Mod(\sU^\hb_\k)$ whose objects are the modules which are finitely integrable and bounded below as $\Dist(T)$-modules.
There is a (contravariant) equivalence $(-)^\circledast$ from $\mathbf{O}^{\downarrow}$ to $\mathbf{O}^{\uparrow}$ such that 
\[
M^\circledast=\bigoplus\limits_{\lambda \in \X} \Hom(M_{\lambda},\k) \ \subseteq\ \Hom_\k(M,\k),
\]
where we view $\Hom_\k(M,\k)$ as a left $\sU^\hb_\k$-module using the obvious antipode map. (Here we have $(M^\circledast)_\lambda = \Hom_\k(M_{-\lambda},\k)$ for any $\lambda \in \X$.) Its inverse will also be denoted $(-)^\circledast$; it is defined by the same procedure.

\subsection{Costandard modules} 
\label{ss:costandard}


Each $\mu \in \X$ defines a $1$-dimensional $\sU^{\hb,\leq}_\k$-module $\k_\mu$ (on which $\Dist(T)$ acts via the character $\mu$ and $\sU(\n^-)$ acts trivially). 
In $\mathbf{O}^{\uparrow}$ we can therefore consider the ``lowest weight Verma module'' 
\[
\Delta^{\uparrow}(\mu)=\sU^\hb_\k \otimes_{\sU^{\hb,\leq}_\k} \k_{\mu},
\]
and define the \textit{costandard module} in $\mathbf{O}^{\downarrow}$ attached to $\lambda \in \X$ to be 
\[
\nabla(\lambda):=(\Delta^{\uparrow}(-\lambda))^\circledast.
\]
It is easily seen that for any $\lambda,\mu \in \X$ we have $\dim \nabla(\lambda)_\mu=\dim \Delta(\lambda)_\mu$.
It is clear also that for $\lambda \in \X$ and $\mu \in p\X$ we have a canonical isomorphism
\[
\nabla(\lambda) \langle \mu \rangle \cong \nabla(\lambda+\mu).
\]

\begin{lem}
\phantomsection
\label{lem:Ext-Delta-nabla}
\begin{enumerate}
\item 
\label{it:morph-D-N-1}
For any $\lambda,\mu \in \X$ and $n \in \Z$
we have 
\[
\Ext^n_{\Mod(\g,B)}(\Delta(\lambda),\nabla(\mu)) \cong 
\begin{cases}
\k & \text{if $\lambda=\mu$ and $n=0$;} \\
0 & \text{otherwise.}
\end{cases}
\]
\item 
\label{it:morph-D-N-2}
For any $\lambda \in \X$,
the image of any nonzero morphism $\Delta(\lambda)\rightarrow \nabla(\lambda)$ is isomorphic to $\Simp(\lambda)$. 
\end{enumerate}
\end{lem}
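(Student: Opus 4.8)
The plan is to prove both parts together, starting from the duality $(-)^\circledast$ relating $\mathbf{O}^{\downarrow}$ and $\mathbf{O}^{\uparrow}$, and from an adjunction computation for the induced module $\Delta(\lambda)$. First I would observe that since $\Delta(\lambda)$ and $\nabla(\mu)$ both lie in $\mathbf{O}^{\downarrow}$, and $\mathbf{O}^{\downarrow}$ is a Serre subcategory of $\Mod(\g,B)$, it suffices by Lemma~\ref{lem:Hom-gB-Odown} (applied to the two-term situation, or directly to $\Ext$-groups) to compute the $\Ext$-groups in $\Db(\mathbf{O}^{\downarrow})$, or equivalently to compute $\Hom$ in the derived category there. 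Actually the cleanest route is: $\Delta(\lambda) = \sU^\hb_\k \otimes_{\Dist(B)} \k_B(\lambda)$ is an induced module, so by Frobenius reciprocity (in the equivariant setting, using that $\sU^\hb_\k$ is free as a right $\Dist(B)$-module via the PBW decomposition $\sU(\n^-)\otimes_\k\Dist(B) \simto \sU^\hb_\k$, hence induction is exact and has an exact right adjoint given by the restriction functor $\Mod(\g,B) \to \Rep(B)$, $M \mapsto M$), we get for every $n$
\[
\Ext^n_{\Mod(\g,B)}(\Delta(\lambda), \nabla(\mu)) \cong \Ext^n_{\Rep(B)}(\k_B(\lambda), \nabla(\mu)).
\]
Here one must be slightly careful that the restriction functor really is exact (it is, it just forgets the $\sU(\n^-)$-action) and sends injectives of $\Mod(\g,B)$ — or at least enough injective-acyclic objects — to acyclic objects for $\Hom_{\Rep(B)}(\k_B(\lambda),-)$; since $\nabla(\mu)$ need not be injective in $\Mod(\g,B)$ one instead uses that induction sends a projective resolution of $\k_B(\lambda)$ in $\Rep(B)$ — if one had such — no; better: use that $\sU^\hb_\k\otimes_{\Dist(B)}(-)$ is left adjoint to restriction and \emph{both} are exact, so the adjunction passes to derived categories and gives the displayed isomorphism with no resolution needed, because $\RHom(\Delta(\lambda),-) = \RHom(\k_B(\lambda), \mathrm{Res}(-))$ and $\mathrm{Res}$ is exact.

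The second, and main, step is therefore to compute $\Ext^\bullet_{\Rep(B)}(\k_B(\lambda), \nabla(\mu))$. Now I would unwind the definition $\nabla(\mu) = (\Delta^{\uparrow}(-\mu))^\circledast$ and the structure of $\Delta^{\uparrow}(-\mu) = \sU^\hb_\k \otimes_{\sU^{\hb,\leq}_\k} \k_{-\mu}$, which is free of rank $1$ over $\sU(\n)$ (the PBW complement to $\sU^{\hb,\leq}_\k = \sU(\n^-)\otimes_\k\Dist(T)$ inside $\sU^\hb_\k$), with $T$-weights in $-\mu + \Z_{\geq 0}\Phi^+$. Dualizing, $\nabla(\mu)$ has $T$-weights in $\mu - \Z_{\geq 0}\Phi^+$, with $\nabla(\mu)_\mu$ one-dimensional. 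As a $B$-module, $\nabla(\mu)$ is cofree of corank $1$ over $U$ in a suitable sense; concretely $\nabla(\mu) \cong \Hom_\k(\sO(U),\k)\otimes_\k \k_B(\mu)$ as a $B$-module, i.e.\ it is the "full" costandard object, so that $\Hom_{\Rep(B)}(\k_B(\lambda),\nabla(\mu))$ computes the $\lambda$-weight of the $U$-invariants $\nabla(\mu)^U$, which is $\k$ if $\lambda=\mu$ and $0$ otherwise, and the higher $\Ext$'s vanish because the $B$-module $\nabla(\mu)$ is injective (being coinduced from $U$: $\nabla(\mu) \cong \mathrm{coind}_T^B(\k_T(\mu))\langle\text{twist}\rangle$ in the appropriate formulation, and coinduction from a subgroup through which one can run Frobenius reciprocity makes it $\Hom_{\Rep(B)}(-,\nabla(\mu))$ exact). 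This is where the argument needs the most care: I must identify $\nabla(\mu)$, as a $B$-module, with a genuinely injective object of $\Rep(B)$ — equivalently show $\Delta^{\uparrow}(-\mu)$ is, as a $B^-$-or-$U^-$-module after the duality, projective/free enough — and this should follow from the PBW freeness over $\sU(\n)$ together with the fact that the $\Dist(U)$-action on $\Delta^\uparrow(-\mu)$ is the free action making it $\cong \Dist(U)\otimes_\k\k_{-\mu}$ as a $\Dist(U)\rtimes\Dist(T)$-module, i.e.\ $\cong \sO(U)^*\otimes\k_{-\mu}$, whose dual is the coinduced (hence injective) $B$-module.

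For part (2): once part (1) gives $\Hom_{\Mod(\g,B)}(\Delta(\lambda),\nabla(\lambda)) = \k$, pick the (unique up to scalar) nonzero $\phi:\Delta(\lambda)\to\nabla(\lambda)$. Its image $I$ is a nonzero quotient of $\Delta(\lambda)$, hence has simple head $\Simp(\lambda)$ and $I_\lambda$ is one-dimensional; dually $I$ is a nonzero subobject of $\nabla(\lambda)$, so applying $(-)^\circledast$ it is a nonzero quotient of $(\nabla(\lambda))^\circledast = \Delta^{\uparrow}(-\lambda)$... — cleaner: $\nabla(\lambda)^\circledast \cong \Delta^\uparrow(-\lambda)$, and $\Delta^\uparrow(-\lambda)$ has simple head which is $\Simp(\lambda)^\circledast$; so $\nabla(\lambda)$ has simple \emph{socle} $\Simp(\lambda)$, and $I$ being a nonzero submodule contains it. Thus $I$ has head $\Simp(\lambda)$ and socle $\Simp(\lambda)$ and its $\lambda$-weight space is $1$-dimensional. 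To conclude $I \cong \Simp(\lambda)$ I would argue: the head surjection $I \twoheadrightarrow \Simp(\lambda)$ and the socle inclusion $\Simp(\lambda)\hookrightarrow I$ must be mutually inverse — if $K:=\ker(I\twoheadrightarrow\Simp(\lambda))$ were nonzero it would contain a simple submodule $\Simp(\nu)$ which would then lie in $\soc(I)=\Simp(\lambda)$, forcing $\nu=\lambda$ and $\Simp(\lambda)\subseteq K$, but then the $\lambda$-weight space of $I$ would be at least $2$-dimensional (one copy in $K$, one in the socle-quotient contributing to the head), contradiction; hence $K=0$ and $I\cong\Simp(\lambda)$. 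The main obstacle in the whole proof is the injectivity/coinduction identification of $\nabla(\mu)$ as a $B$-module needed to kill the higher $\Ext$'s in step two; everything else is bookkeeping with PBW bases, weights, and the duality $(-)^\circledast$.
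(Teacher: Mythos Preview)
Your plan is correct and runs parallel to the paper's proof, with one organizational difference worth flagging.

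For part~(1), both you and the paper begin with the $(\sU^\hb_\k\otimes_{\Dist(B)}-,\ \mathrm{Res})$ adjunction to reduce to an $\Ext$ computation over $B$. After that, the paper dualizes \emph{first}, landing on $\Ext^n(\Delta^{\uparrow}(-\mu),\k_B(-\lambda))$, and then applies the second adjunction $(\Dist(B)\otimes_{\Dist(T)}-,\ \mathrm{Res})$; since $\Delta^{\uparrow}(-\mu)$ is \emph{by definition} $\sU^\hb_\k\otimes_{\sU^{\hb,\leq}_\k}\k_{-\mu}\cong\Dist(B)\otimes_{\Dist(T)}\k_T(-\mu)$, this step is immediate and drops you into the semisimple category of $T$-modules. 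You instead keep $\nabla(\mu)$ on the right and argue that it is injective in $\Rep(B)$ via an identification with a coinduced module. This is the same adjunction applied on the other side of the duality, and it works, but (as you yourself note) making the identification $\nabla(\mu)\cong\Ind_T^B(\k_T(\mu))$ precise requires a little unpacking of the $B$-module structure of $(\Delta^{\uparrow}(-\mu))^\circledast$. The paper's ordering sidesteps this entirely. (The paper does later prove, in its Lemma on the alternative description of $\nabla(\lambda)$, that $\nabla(\lambda)\cong\sO(B/T)\otimes\k_B(\lambda)$ as $B$-modules, which is exactly the injectivity input you want.)

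One small correction: $\Delta^{\uparrow}(-\mu)$ is free of rank~$1$ over $\Dist(U)$, not over $\sU(\n)$ (the PBW complement of $\sU^{\hb,\leq}_\k$ in $\sU^\hb_\k$ is $\Dist(U)$, which properly contains $\sU(\n)$). This does not affect your argument.

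For part~(2), your head/socle/weight-space argument is correct (the essential point, that every nonzero submodule of $\nabla(\lambda)$ contains $\Simp(\lambda)$, follows from the cyclicity of $\Delta^{\uparrow}(-\lambda)$ via duality). The paper's argument is shorter: it simply exhibits the composite $\Delta(\lambda)\twoheadrightarrow\Simp(\lambda)\hookrightarrow\nabla(\lambda)$ and invokes part~(1) to conclude that every nonzero morphism is a scalar multiple of this one, hence has the same image.
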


\begin{proof}
\eqref{it:morph-D-N-1}
Let  $\lambda,\mu \in \X$ and $n \in \Z$.
In view of Lemma~\ref{lem:Hom-gB-Odown} we have
\[
\Ext^n_{\Mod(\g,B)}(\Delta(\lambda),\nabla(\mu)) \cong 
\Ext^n_{\mathbf{O}^\downarrow}(\Delta(\lambda),\nabla(\mu)).
\]
Consider the full subcategory $\mathbf{O}^{\downarrow}_{\geq}$ of $\Mod(\Dist(B))$ whose objects are the modules which are finitely integrable and bounded above as $\Dist(T)$-modules. Then we have an exact ``forgetful'' functor
\[
\mathbf{O}^\downarrow \to \mathbf{O}^\downarrow_{\geq},
\]
which admits an exact left adjoint given by $\sU^\hb_\k \otimes_{\Dist(B)} (-)$. By construction $\Delta(\lambda)$ is the image under the latter functor of the $1$-dimensional $B$-module $\k_B(\lambda)$; we deduce an isomorphism
\[
\Ext^n_{\mathbf{O}^\downarrow}(\Delta(\lambda),\nabla(\mu)) \cong 
\Ext^n_{\mathbf{O}_{\geq}^\downarrow}(\k_B(\lambda),\nabla(\mu)).
\]

Consider next the full subcategory $\mathbf{O}^{\uparrow}_{\geq}$ of $\Mod(\Dist(B))$ whose objects are the modules which are finitely integrable and bounded below as $\Dist(T)$-modules. Then we have a ``graded duality'' contravariant equivalence from $\mathbf{O}^\downarrow_{\geq}$ to $\mathbf{O}^\uparrow_{\geq}$ which sends $\nabla(\mu)$ to $\Delta^{\uparrow}(-\mu)$; we deduce an isomorphism
\[
\Ext^n_{\mathbf{O}_{\geq}^\downarrow}(\k_B(\lambda),\nabla(\mu))
\cong 
\Ext^n_{\mathbf{O}_{\geq}^\uparrow}(\Delta^{\uparrow}(-\mu), \k_B(-\lambda)).
\]

Finally, consider the full subcategory $\mathbf{O}^{\uparrow}_{=}$ of $\Mod(\Dist(T))$ whose objects are the modules which are finitely integrable and bounded below. Then we have an exact ``forgetful'' functor
\[
\mathbf{O}^{\uparrow}_{\geq} \to \mathbf{O}^{\uparrow}_{=}
\]
which has an exact left adjoint given by $\Dist(B) \otimes_{\Dist(T)} (-)$. The latter functor sends the $1$-dimensional $T$-module $\k_T(-\mu)$ to $\Delta^{\uparrow}(-\mu)$; we deduce an isomorphism
\[
\Ext^n_{\mathbf{O}_{\geq}^\uparrow}(\Delta^{\uparrow}(-\mu), \k_B(-\lambda))
\cong 
\Ext^n_{\mathbf{O}_{=}^\uparrow}(\k_T(-\mu), \k_T(-\lambda)).
\]
Since the category $\mathbf{O}^{\uparrow}_{=}$ is semisimple,
the right-hand side
is $1$-dimensional if $\lambda=\mu$ and $n=0$, and vanishes otherwise. Gathering the various identifications above we deduce the claim.

\eqref{it:morph-D-N-2}
By construction there exists a surjective morphism $\Delta(\lambda) \twoheadrightarrow \Simp(\lambda)$. On the other hand, the object $\Simp(\lambda)^\circledast$ of $\mathbf{O}^{\uparrow}$ admits $-\lambda$ as its lowest weight; hence there exists a nonzero morphism $\Delta^{\uparrow}(-\lambda) \to \Simp(\lambda)^\circledast$, and then a nonzero morphism $\Simp(\lambda) \to \nabla(\lambda)$. Since its domain is simple, this morphism must be injective. The composition
\[
\Delta(\lambda) \twoheadrightarrow \Simp(\lambda) \hookrightarrow \nabla(\lambda)
\]
is a nonzero morphism whose image identifies with $\Simp(\lambda)$. By~\eqref{it:morph-D-N-1} any nonzero morphism from $\Delta(\lambda)$ to $\nabla(\lambda)$ is a multiple of that morphism, hence has the same property.
\end{proof}

\subsection{Costandard modules -- an alternative description} 

In this subsection we give another description of the costandard modules of~\S\ref{ss:costandard}, which will not play any role below but is interesting in our opinion. For this we consider the Frobenius kernel $B_1^-$ of $B^-$, and the subgroup $B_1^- T$ of $B^-$ generated by $B_1^-$ and $T$. For any $\lambda \in \X$ we have a $1$-dimensional module $\k_{B_1^- T}(\lambda)$ (on which $U_1^-$ acts trivially), and the induced $G_1B$-module $\Ind_{B_1^- T}^{G_1B}(\k_{B_1^- T}(\lambda))$.

\begin{lem}
\label{lem:costandard-induction}
For any $\lambda \in \X$ there exists a canonical isomorphism between $\nabla(\lambda)$ and the image under~\eqref{eqn:functor-G1B-gB} of $\Ind_{B_1^- T}^{G_1B}(\k_{B_1^- T}(\lambda))$.
\end{lem}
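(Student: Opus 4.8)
The plan is to identify both $\nabla(\lambda)$ and the image of $\Ind_{B_1^- T}^{G_1B}(\k_{B_1^- T}(\lambda))$ with the same module, by comparing their defining universal properties (or, concretely, by matching a suitable $\circledast$-dual on both sides). First I would recall that, as in the proof of Lemma~\ref{lem:Ext-Delta-nabla}, the functor $\sU^\hb_\k \otimes_{\Dist(B^-_1 T)} (-)$ from $\Dist(B^-_1 T)$-modules (or rather $G_1B$-modules, via~\eqref{eqn:functor-G1B-gB}) relates to costandard-type objects. The key point is that $\Ind_{B_1^- T}^{G_1B}$ is right adjoint to restriction $\Rep(G_1B) \to \Rep(B_1^- T)$, and that tensoring up $\sU^\hb_\k \otimes_{\sU^{\hb,\leq}_\k}(-)$ is left adjoint to restricting $\sU^\hb_\k$-modules to $\sU^{\hb,\leq}_\k$-modules. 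Dualizing via $(-)^\circledast$ exchanges left and right adjoints, and $\sU^{\hb,\leq}_\k$-modules supported at a single weight get exchanged appropriately, so I expect the two constructions to be $\circledast$-dual to one another, matching the definition $\nabla(\lambda) = (\Delta^\uparrow(-\lambda))^\circledast$.

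Concretely, the steps I would carry out are as follows. (1) Observe that $M := \Ind_{B_1^- T}^{G_1B}(\k_{B_1^- T}(\lambda))$, viewed in $\Mod(\g,B)$ via~\eqref{eqn:functor-G1B-gB}, is finitely integrable and bounded above as a $\Dist(T)$-module: indeed as a $U^-_1$-representation it is a free module, so as a $T$-module it has the same character as $\bigoplus_{\nu} \k_{B_1^-T}(\lambda)\langle -\nu\rangle$ where $\nu$ runs over weights of $\sO(U_1^-)$, i.e.\ over a bounded-above finite set; hence $M \in \mathbf{O}^\downarrow$, with $\dim M_\mu = \dim \Delta(\lambda)_\mu$ for all $\mu$ (compare the character of $\Delta^\uparrow$). (2) Compute $M^\circledast \in \mathbf{O}^\uparrow$. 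By the tensor identity / Frobenius reciprocity at the level of $G_1B$-modules, $\Ind_{B_1^-T}^{G_1B}(\k(\lambda))$ has highest weight coming from the top of the $U_1^-$-action, and dualizing turns induction from $B_1^-T$ into coinduction, i.e.\ $M^\circledast$ is identified with $\sU_0(\g)\otimes_{\sU_0(\b)} \k(-\lambda) = \bV(-\lambda)$ up to a twist — more precisely I expect $M^\circledast$ to coincide with $\Delta^\uparrow(-\lambda)$ restricted from $\sU^{\hb,\leq}_\k$, using that $\Delta^\uparrow(-\lambda) = \sU^\hb_\k \otimes_{\sU^{\hb,\leq}_\k} \k_{-\lambda}$ is free over $\sU(\n^-)$ of rank one with lowest weight $-\lambda$, and checking that $M^\circledast$ has lowest weight $-\lambda$ and is generated there, forcing a surjection $\Delta^\uparrow(-\lambda)\twoheadrightarrow M^\circledast$ which is an isomorphism by the character count from step (1). (3) Apply $(-)^\circledast$ back to conclude $M \cong (\Delta^\uparrow(-\lambda))^\circledast = \nabla(\lambda)$.

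The main obstacle will be step (2): carefully matching the $\sU^\hb_\k$-module structure on the coinduced/dualized module with that of $\Delta^\uparrow(-\lambda)$, in particular checking that the $\sU(\n^-)$-action on $M^\circledast$ is free of rank one and that $M^\circledast$ is generated by its lowest weight space. The cleanest way is probably to avoid explicit coordinates and instead phrase everything adjoint-theoretically: exhibit a nonzero $\sU^{\hb,\leq}_\k$-morphism $\k_{-\lambda} \to M^\circledast$ landing in the lowest weight space (dual to a nonzero projection $M_\lambda \to \k_B(\lambda)$ of $B_1^-T$-modules, which exists since $\k_{B_1^-T}(\lambda)$ is a quotient of $M|_{B_1^-T}$ as a $B_1^-T$-module by the counit of the induction adjunction — note $T$ acts through $\lambda$ on this quotient, matching weights), extend it to $\Delta^\uparrow(-\lambda)\to M^\circledast$ by adjunction, and note this map is surjective because its image contains a cyclic generator at the lowest weight while $M^\circledast \in \mathbf{O}^\uparrow$ is generated by anything nonzero in its minimal weight space; then injectivity is free from the equality of characters. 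One should double-check that the twist by the top weight of $\sO(U_1^-)$, namely $2\rho$-type corrections coming from $\det(\n^-_1)$, does not appear — it does not, because we are inducing and then dualizing rather than using a $G_1B$-projective-cover description, so the naive weight bookkeeping is the right one; but this is exactly the kind of normalization subtlety that the detailed proof will need to pin down.
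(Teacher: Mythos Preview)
Your approach is essentially the same as the paper's: construct a map $\Delta^\uparrow(-\lambda) \to M^\circledast$ from the lowest-weight line, check it is an isomorphism (the paper observes both sides are free of rank one over $\Dist(U)$; your character count works equally well), and dualize. One imprecision to fix in step~(1): freeness over $U_1^-$ and the weights of $\sO(U_1^-)$ would produce something finite-dimensional; the correct observation is that multiplication gives an isomorphism of schemes $U \times B_1^- T \simto G_1 B$, so $M \cong \sO(U) \otimes \k_B(\lambda)$ as a $B$-module, which yields the weight multiplicities of $\Delta(\lambda)$ and makes the rest of your argument go through.
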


\begin{proof}
Multiplication induces an isomorphism of $\k$-schemes
\[
U \times B_1^-T \simto G_1 B,
\]
so that as $B$-modules we have $\Ind_{B_1^- T}^{G_1B}(\k_{B_1^- T}(\lambda)) \cong \sO(B/T) \otimes \k_B(\lambda)$. In particular the set of $T$-weights of this module is $\lambda - \Z_{\geq 0} \Phi^+$. As a consequence, the set of $T$-weights of $\Ind_{B_1^- T}^{G_1B}(\k_{B_1^- T}(\lambda))^\circledast$ is $-\lambda + \Z_{\geq 0} \Phi^+$, and its $(-\lambda)$-weight space has a canonical vector, hence there exists a canonical morphism $\Delta^{\uparrow}(-\lambda) \to \Ind_{B_1^- T}^{G_1B}(\k_{B_1^- T}(\lambda))^\circledast$. As $\Dist(U)$-modules, both objects are free of rank $1$, and it is clear that this map is an isomorphism. Applying the functor $(-)^\circledast$, we deduce the desired identification.
\end{proof}

\begin{rmk}
\begin{enumerate}
\item
Recall that for $\lambda \in \X$ one can consider the associated baby co-Verma module $\bV'(\lambda) = \Ind_{B^-}^{G_1 B^-}(\k_{B^-}(\lambda))$, see~\cite[\S II.9.1]{Jan03}.
For any $\lambda \in \X$ we have 
\[
\Ind_{B_1^- T}^{G_1B}(\k_{B_1^- T}(\lambda)) \cong \Ind_{G_1 T}^{G_1B}( \Ind_{B_1^- T}^{G_1 T}(\k_{B_1^- T}(\lambda))),
\]
and $\Ind_{B_1^- T}^{G_1 T}(\k_{B_1^- T}(\lambda))$ identifies with (the restriction to $G_1 T$ of) $\bV'(\lambda)$, see~\cite[\S II.9.1, Eqn.~(4)]{Jan03}. Using this observation one can check that the formula in Lemma~\ref{lem:Ext-Delta-nabla}\eqref{it:morph-D-N-1} is equivalent to the standard formula expressing $\Ext$-groups between baby Verma modules and baby co-Verma modules in $\Rep(G_1T)$, see e.g.~\cite[Lemma~II.9.9]{Jan03}.
\item
It follows in particular from Lemma~\ref{lem:costandard-induction} that the action of $\sU(\g)$ on $\nabla(\lambda)$ factors through an action of the finite-dimensional quotient $\sU_0(\g)$. Since $\nabla(\lambda)$ is not finite-dimensional, it cannot be finitely generated as a $\sU(\g)$-module.
\end{enumerate}
\end{rmk}

\subsection{Extensions from simples to costandard modules} 
\label{ss:Ext-simples-costandards}

The main result of this paper will be a description of some extension groups $\Ext^n_{\Mod(\g,B)}(\Simp(\lambda), \nabla(\mu))$ for $\lambda,\mu \in \X$ (under appropriate assumptions). In this subsection, we prove a formula that will allow to express these spaces in terms of extension groups in $T$-equivariant $\g$-modules. This formula does not require any assumption, and will be the first step in our proof.

Consider the category $\Mod^T(\sU(\n^-))$ of $T$-equivariant $\sU(\n^-)$-modules. Then we have a canonical fully faithful functor
\[
\Mod^T(\sU(\n^-))\to \Mod(\sU^{\hb,\leq}_\k),
\]
whose essential image consists of the $\sU^{\hb,\leq}_\k$-modules that are integrable over $\Dist(T)$. The $1$-dimensional $\sU^{\hb,\leq}_\k$-module $\k_\mu$ considered in~\S\ref{ss:costandard} is the image under this functor of the $T$-module $\k_T(\mu)$, endowed with the trivial action of $\sU(\n^-)$.

Let $\mathbf{O}^\uparrow_{\leq}$ be the full subcategory of $\Mod(\sU^{\hb,\leq}_\k)$ consisting of modules which are
finitely integrable and bounded below as $\Dist(T)$-modules; then we have embeddings $\mathbf{O}^\uparrow_{\leq} \subset \Mod^T(\sU(\n^-)) \subset \Mod(\sU^{\hb,\leq}_\k)$.
We have an exact restriction functor
\[
\mathbf{O}^\uparrow\rightarrow \mathbf{O}^\uparrow_{\leq},
\]
which admits an exact left adjoint
\begin{equation}
\label{eqn:induction-functor-uparrow}
\sU^\hb_\k\otimes_{\sU^{\hb,\leq}_\k}(-) :\ \mathbf{O}^\uparrow_{\leq} \rightarrow \mathbf{O}^\uparrow.
\end{equation}

\begin{lem}
\label{lem:Oup-ModTn}
The natural functor $D^+(\mathbf{O}^\uparrow_{\leq})\rightarrow D^+(\Mod^T(\sU(\n^-)))$ is fully faithful. 
\end{lem}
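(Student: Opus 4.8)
The statement is a standard ``comparison of Ext-groups for a full abelian subcategory'' assertion, and the natural tool is Lemma~\ref{lem:Ext-D-isom} applied with $\mathsf{C} = \Mod^T(\sU(\n^-))$, $\mathsf{A} = \mathbf{O}^\uparrow_{\leq}$, and $\mathsf{B} = \mathbf{O}^\uparrow_{\leq}$ itself (so that the map in question becomes the map $\Hom_{D^+(\mathsf{A})} \to \Hom_{D^+(\mathsf{C})}$, and ``fully faithful'' is exactly the conclusion of that lemma, after replacing $D^-$ by $D^+$ via the duality $(-)^\circledast$ or by the evident dual version of the lemma). So the first thing I would do is record that Lemma~\ref{lem:Ext-D-isom} has an evident bounded-below analogue (obtained by applying $(-)^\circledast$, or by dualizing the proof), so that it suffices to verify its hypothesis in the form: for any $X$ in $\Mod^T(\sU(\n^-))$, any $Y$ in $\mathbf{O}^\uparrow_{\leq}$, and any \emph{injection} $Y \hookrightarrow X$, there is a quotient $X \twoheadrightarrow X'$ with $X'$ in $\mathbf{O}^\uparrow_{\leq}$ through which $Y \hookrightarrow X$ factors (as a subobject of $X'$).

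\textbf{The key verification.} Let $Y \hookrightarrow X$ be such an injection. Since $Y$ lies in $\mathbf{O}^\uparrow_{\leq}$, its set of $T$-weights is contained in $\Omega + \Z_{\geq 0}\Phi^+$ for some finite $\Omega \subset \X$, and each weight space $Y_\lambda$ is finite-dimensional. The module $X$ is $T$-equivariant over $\sU(\n^-)$, hence $X = \bigoplus_{\lambda} X_\lambda$ and the action of $\sU(\n^-)$ raises weights by sums of positive roots. Let $X'' \subset X$ be the $\sU^{\hb,\leq}_\k$-submodule generated by the (finitely many) weight spaces $Y_\lambda$ with $\lambda \in \Omega$; since $Y$ is generated over $\sU(\n^-)$ by these spaces (as its weights lie in $\Omega + \Z_{\geq 0}\Phi^+$ and $Y$ is finitely integrable — here one should really generate by a finite-dimensional $T$-stable subspace of $Y$ that generates $Y$ over $\sU(\n^-)$, which exists because the weights are bounded below and the weight spaces finite-dimensional, so $Y$ is finitely generated over $\sU(\n^-)$), we get $Y \subset X''$. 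Now $X''$ is generated over $\sU(\n^-)$ by a finite-dimensional space, so $X''$ is bounded below, and its weight spaces are finite-dimensional (they are quotients of finitely many PBW monomials acting on a finite-dimensional space); thus $X'' \in \mathbf{O}^\uparrow_{\leq}$. One then sets $X' := X''$ (a \emph{sub}object, not a quotient) — but wait: the dual form of Lemma~\ref{lem:Ext-D-isom}'s hypothesis asks for a subobject $X' \subset X$ in $\mathsf{A}$ such that $Y \hookrightarrow X$ factors through $X'$; equivalently in the original (non-dual) form, after applying $(-)^\circledast$, one asks that every surjection from an object of $\mathsf{C}$ onto an object of $\mathsf{A}$ admits a sub-object in $\mathsf{A}$ restricting to a surjection. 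Thus the correct statement to verify is: given $Y \hookrightarrow X$ with $Y \in \mathbf{O}^\uparrow_{\leq}$ and $X \in \Mod^T(\sU(\n^-))$, there is $X' \in \mathbf{O}^\uparrow_{\leq}$ with $Y \subset X' \subset X$; and the submodule $X''$ constructed above does the job.

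\textbf{Expected main obstacle.} The genuinely delicate point is not the weight bookkeeping but making sure the relevant finiteness is preserved: one must check that the $\sU^{\hb,\leq}_\k$-submodule of $X$ generated by a finite-dimensional $T$-stable subspace is again \emph{finitely integrable} (finite-dimensional weight spaces). This is where the freeness statement ``$\sU^{\hb,\leq}_\k \simto \sU(\n^-) \otimes_\k \Dist(T)$ as vector spaces'' (and hence that $\sU(\n^-)$ is, for each weight, finite-dimensional in each weight, since $\n^-$ is finite-dimensional and $\sU(\n^-)$ has finite-dimensional graded pieces) is used: a finite-dimensional $T$-stable $V \subset X$ has $\sU(\n^-) \cdot V$ with weight space at $\mu$ contained in $\sum_{\lambda} (\sU(\n^-))_{\mu-\lambda} \cdot V_\lambda$, a sum over finitely many $\lambda$ of finite-dimensional spaces. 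Granting this, boundedness below and the generation claim are immediate, Lemma~\ref{lem:Ext-D-isom} (bounded-below version) applies, and full faithfulness of $D^+(\mathbf{O}^\uparrow_{\leq}) \to D^+(\Mod^T(\sU(\n^-)))$ follows. I would also remark that $\mathbf{O}^\uparrow_{\leq}$ is a Serre subcategory of $\Mod^T(\sU(\n^-))$ (closed under subquotients and extensions within the category of finitely integrable modules), which is used implicitly to make sense of the comparison and to invoke the lemma cleanly.
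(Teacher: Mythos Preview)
Your dualization of Lemma~\ref{lem:Ext-D-isom} is incorrect, and this is a genuine gap. The hypothesis of that lemma (for $D^-$) is: for every surjection $X \twoheadrightarrow Y$ with $Y \in \mathsf{A}$, there is a subobject $X' \subset X$ in $\mathsf{A}$ still surjecting onto $Y$. Passing to opposite categories, the $D^+$ version requires: for every injection $Y \hookrightarrow X$ with $Y \in \mathsf{A}$, there is a \emph{quotient} $X \twoheadrightarrow X'$ with $X' \in \mathsf{A}$ such that the composite $Y \to X'$ is still injective. What you state and verify instead is the existence of an intermediate subobject $Y \subset X' \subset X$ with $X' \in \mathsf{A}$. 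But that condition is vacuous: $X' = Y$ always works (and indeed your construction of $X''$ as the submodule generated by generators of $Y$ just reproduces $Y$). A vacuous hypothesis cannot imply full faithfulness. The $(-)^\circledast$ route you sketch does not save this either, since that duality is only defined on finitely integrable modules and does not extend to the ambient category $\Mod^T(\sU(\n^-))$.

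The paper proceeds quite differently and avoids Lemma~\ref{lem:Ext-D-isom} altogether. It uses the standard criterion (Hartshorne, \emph{Residues and Duality}, I, Prop.~4.8): it suffices that every object of $\mathbf{O}^\uparrow_\leq$ embed into an object of $\mathbf{O}^\uparrow_\leq$ which is injective in $\Mod^T(\sU(\n^-))$. This is done by constructing the right adjoint $\mathrm{Coind}$ to the forgetful functor $\Mod^T(\sU(\n^-)) \to \Rep(T)$, namely
\[
\mathrm{Coind}(M) = \bigoplus_{\nu \in \X} \Hom_{\Rep(T)}\bigl(\k_T(\nu) \otimes \sU(\n^-),\, M\bigr),
\]
observing that $\mathrm{Coind}(V)$ is injective since $\Rep(T)$ is semisimple, and checking that for $M \in \mathbf{O}^\uparrow_\leq$ the unit $M \hookrightarrow \mathrm{Coind}(M)$ lands back in $\mathbf{O}^\uparrow_\leq$. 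If you want to salvage an approach via the dual of Lemma~\ref{lem:Ext-D-isom}, you would need to produce, for each $Y \hookrightarrow X$, a quotient of $X$ lying in $\mathbf{O}^\uparrow_\leq$ into which $Y$ still injects; this is essentially as hard as the injective-envelope argument.
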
 

\begin{proof}
By a standard result (see e.g.~\cite[Chap.~I, Proposition~4.8]{Hart66}), it is enough to show that any object in $\mathbf{O}^\uparrow_{\leq}$ can be embedded into an object of $\mathbf{O}^\uparrow_{\leq}$ which is injective in $\Mod^T(\sU(\n^-))$. 
Consider the (exact) forgetful functor $\Mod^T(\sU(\n^-))\rightarrow \Rep(T)$; it admits a right adjoint 
\[
\mathrm{Coind} : \Rep(T)\rightarrow \Mod^T(\sU(\n^-))
\]
defined by
\[
\mathrm{Coind}(M) = \bigoplus_{\nu \in \X} \Hom_{\Rep(T)}(\k_T(\nu) \otimes \sU(\n^-), M),
\]
where $\sU(\n^-)$ acts via right multiplication on itself, and $T$ acts on the factor parame\-trized by $\nu$ via the character $\nu$.
Since $\Rep(T)$ is a semisimple category, $\mathrm{Coind}(V)$ is injective in $\Mod^T(\sU(\n^-))$ for any $V\in \Rep(T)$. 

Let now $M$ be an object of $\mathbf{O}^\uparrow_{\leq}$. 
By adjunction we have a natural morphism $M \to \mathrm{Coind}(M)$, which is clearly injective. 
Since $M$ is bounded below and has finite dimensional weight spaces, so does $\mathrm{Coind}(M)$. 
In other words, $\mathrm{Coind}(M)$ is contained in $\mathbf{O}^\uparrow_{\leq}$. As explained above this object is injective, which concludes the proof.
\end{proof} 

We define the \emph{universal Verma module} associated with $\lambda\in \X$ as the $T$-equivariant $\sU(\g)$-module 
\[
\tDelta(\lambda)=\sU(\g) \otimes_{\sU(\n)} \k_T(\lambda),
\]
where $\sU(\n)$ acts on $\k_T(\lambda)$ trivially and $T$ acts diagonally.

\begin{prop}
\label{prop:Ext-gB-gT}
For any $\lambda,\mu \in \X$ and $n \in \Z$ we have an isomorphism 
\begin{equation*}
\Ext^n_{\Mod(\g,B)}(\Simp(\mu),\nabla(\lambda)) \cong \Ext^n_{\Mod^T(\sU(\g))}(\tDelta(\lambda), \Simp(\mu)). 
\end{equation*}
\end{prop}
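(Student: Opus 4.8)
The strategy is to perform a sequence of adjunctions, beginning from the $(\g,B)$-side and gradually stripping the equivariance down to a $T$-equivariant $\sU(\g)$-module computation, and then to recognize the resulting $\Ext$-group as the one appearing on the right-hand side. The first move is to replace the costandard object $\nabla(\lambda)$ by its $\circledast$-dual description: since $\nabla(\lambda) = (\Delta^\uparrow(-\lambda))^\circledast$ and $\Simp(\mu)$ lies in $\mathbf{O}^\downarrow$, applying the duality $(-)^\circledast$ (which exchanges $\mathbf{O}^\downarrow$ and $\mathbf{O}^\uparrow$ contravariantly) together with Lemma~\ref{lem:Hom-gB-Odown} should give
\[
\Ext^n_{\Mod(\g,B)}(\Simp(\mu), \nabla(\lambda)) \cong \Ext^n_{\mathbf{O}^\downarrow}(\Simp(\mu), \nabla(\lambda)) \cong \Ext^n_{\mathbf{O}^\uparrow}(\Delta^\uparrow(-\lambda), \Simp(\mu)^\circledast).
\]
Here one must check that $(-)^\circledast$ induces isomorphisms on $\Ext$-groups between the bounded derived categories; this is routine since it is an exact contravariant equivalence of abelian categories.

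Next I would use the induction/restriction adjunction~\eqref{eqn:induction-functor-uparrow}: the functor $\sU^\hb_\k \otimes_{\sU^{\hb,\leq}_\k}(-) : \mathbf{O}^\uparrow_\leq \to \mathbf{O}^\uparrow$ is exact and left adjoint to the exact restriction functor, and it sends $\k_{-\lambda}$ to $\Delta^\uparrow(-\lambda)$. Exactness of both functors means they pass to the derived categories and the adjunction is preserved there, so
\[
\Ext^n_{\mathbf{O}^\uparrow}(\Delta^\uparrow(-\lambda), \Simp(\mu)^\circledast) \cong \Ext^n_{\mathbf{O}^\uparrow_\leq}(\k_{-\lambda}, \Simp(\mu)^\circledast|_{\sU^{\hb,\leq}_\k}).
\]
Then Lemma~\ref{lem:Oup-ModTn} lets me replace $\mathbf{O}^\uparrow_\leq$ by $\Mod^T(\sU(\n^-))$ as ambient category for these $\Ext$-computations (one needs that both $\k_{-\lambda}$ and the restriction of $\Simp(\mu)^\circledast$ lie in $\mathbf{O}^\uparrow_\leq$: the first is clear, the second because $\Simp(\mu)$ is finite-dimensional hence $\Simp(\mu)^\circledast$ is finitely integrable and, being finite-dimensional too, trivially bounded below). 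So we land in $\Ext^n_{\Mod^T(\sU(\n^-))}(\k_T(-\lambda), \Simp(\mu)^\circledast)$, where $\k_T(-\lambda)$ carries the trivial $\n^-$-action.

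The final step is to recognize this as $\Ext^n_{\Mod^T(\sU(\g))}(\tDelta(\lambda), \Simp(\mu))$. I would undo the $\circledast$-duality once more — on the $T$-equivariant $\sU(\n^-)$-module side there is an analogous graded duality exchanging trivial-$\n^-$-modules of weight $-\lambda$ with those of weight $\lambda$ — to rewrite the $\Ext$ as $\Ext^n_{\Mod^T(\sU(\n))}(\Simp(\mu)^{\text{something}}, \k_T(\lambda))$, or more directly use that $\tDelta(\lambda) = \sU(\g)\otimes_{\sU(\n)}\k_T(\lambda)$ is induced from the $T$-equivariant $\sU(\n)$-module $\k_T(\lambda)$ via the exact induction functor $\sU(\g)\otimes_{\sU(\n)}(-)$, left adjoint to restriction $\Mod^T(\sU(\g)) \to \Mod^T(\sU(\n))$. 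By that adjunction $\Ext^n_{\Mod^T(\sU(\g))}(\tDelta(\lambda), \Simp(\mu)) \cong \Ext^n_{\Mod^T(\sU(\n))}(\k_T(\lambda), \Simp(\mu)|_{\sU(\n)})$, and then I would match this with the $\Mod^T(\sU(\n^-))$-expression via the duality (which interchanges $\n$ and $\n^-$ and negates weights, sending $\Simp(\mu)|_{\sU(\n)}$ to $\Simp(\mu)^\circledast|_{\sU(\n^-)}$, and $\k_T(\lambda)$ to $\k_T(-\lambda)$). The main obstacle I anticipate is bookkeeping: keeping the various forgetful/induction adjunctions, the two duality functors ($\circledast$ on $\mathbf{O}$-categories and its analogue on equivariant $\sU(\n^{\pm})$-modules), and the sign/direction of weights consistent, and checking at each stage that the objects involved genuinely lie in the subcategories for which Lemmas~\ref{lem:Hom-gB-Odown} and~\ref{lem:Oup-ModTn} apply, so that the $\Ext$-groups computed in the small category agree with those in the large one. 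None of the individual steps is deep, but the chain is long and easy to get turned around.
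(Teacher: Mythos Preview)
Your proposal is correct and follows essentially the same route as the paper's proof: the chain of adjunctions and the use of Lemmas~\ref{lem:Hom-gB-Odown} and~\ref{lem:Oup-ModTn} are identical. The only point the paper makes more explicit is your final ``duality interchanging $\n$ and $\n^-$ and negating weights'': it is realized by twisting by a Chevalley involution $\tau$ of $G$, after which the identification ${}^\tau(\Simp(\mu)^\circledast) \cong \Simp(\mu)$ (both being simple with highest weight $\mu$) completes the argument.
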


\begin{proof} 
Fix $\lambda \in \X$ and $n \in \Z$.
For any $M\in \mod(\g,B)$
we have isomorphisms 
\begin{align*}
\Ext^n_{\Mod(\g,B)}(M,\nabla(\lambda)) 
&\cong \Ext^n_{\mathbf{O}^\downarrow}(M,\nabla(\lambda))\\ 
&\cong \Ext^n_{\mathbf{O}^\uparrow}(\Delta^{\uparrow}(-\lambda),M^\circledast)\\ 
&\cong \Ext^n_{\mathbf{O}^\uparrow_{\leq}}(\k_{-\lambda}, M^\circledast)\\ 
&\cong \Ext^n_{\Mod^T(\sU(\n^-))}(\k_T({-\lambda}), M^\circledast), 
\end{align*}
where:
\begin{itemize}
\item
the first identification follows from Lemma~\ref{lem:Hom-gB-Odown};
\item
the second identification is obtained by applying the equivalence $(-)^\circledast$;
\item
the third isomorphism follows from the fact that~\eqref{eqn:induction-functor-uparrow} sends the object $\k_{-\lambda}$ to $\Delta^\uparrow(-\lambda)$;
\item
the fourth isomorphism follows from Lemma~\ref{lem:Oup-ModTn}. 
\end{itemize}

Let now $\tau$ be a Chevalley involution of $G$, i.e.~an involution which satisfies $\tau(t)=t^{-1}$ for any $t \in T$, and denote similarly the induced involution of $\g$. (It is a standard fact that such an involution exists, see e.g.~the proof of~\cite[Corollary~II.1.16]{Jan03}.) Then $\tau$ restricts to an isomorphism from $\n$ to $\n^-$, so that
any $\sU(\n^-)$-module $V$ determines a $\sU(\n)$-module, denoted ${}^\tau V$, with the same underlying vector space. 
If $V$ admits a $T$-equivariant structure, then ${}^\tau V$ has a canonical $T$-equivariant structure, and we have $({}^\tau V)_\lambda=V_{-\lambda}$ for any $\lambda \in \X$. 
For $M$ as above, we therefore have an isomorphism 
\[
\Ext^n_{\Mod^T(\sU(\n^-))}(\k_T({-\lambda}), M^\circledast) 
\cong \Ext^n_{\Mod^T(\sU(\n))}(\k_T({\lambda}), {}^\tau (M^\circledast)).
\]
 Finally, we observe that the forgetful functor $\Mod^T(\sU(\g)) \to \Mod^T(\sU(\n))$ admits an exact left adjoint (given by $\sU(\g) \otimes_{\sU(\n)} (-)$) which sends $\k_T(\lambda)$ to $\tDelta(\lambda)$, which provides an isomorphism
 \[
\Ext^n_{\Mod^T(\sU(\n))}(\k_T({\lambda}), {}^\tau (M^\circledast)) \cong \Ext^n_{\Mod^T(\sU(\g))}(\tDelta(\lambda), {}^\tau (M^\circledast))
\]
where ${}^\tau (M^\circledast)$ is $M^\circledast$ with the action of $\sU(\g)$ and $T$ twisted by $\tau$.

The desired isomorphism is obtained by applying the above series of identifications to $M=\Simp(\mu)$, noting that ${}^\tau (\Simp(\mu)^\circledast) \cong \Simp(\mu)$ since this module is simple of highest weight $\mu$. 
\end{proof} 

The category $\Mod^T(\sU(\g))$ admits a canonical action of $\Rep(T)$ by tensor product. Using the convention of~\S\ref{sss:grading-shift} we therefore have autoequivalences
\[
\langle \lambda \rangle : \Mod^T(\sU(\g)) \simto \Mod^T(\sU(\g))
\]
for $\lambda \in \X$.
In case $\lambda \in p\X$, this autoequivalence is compatible in the natural way with the autoequivalence of $\Mod(\g,B)$ denoted similarly in~\S\ref{ss:g-B-mod}.

\subsection{Reduction to the simply-connected case} 

Starting from Section~\ref{sec:central-completions}, for simplicity (and ease of references) we will assume that the group $G$ is semisimple and simply-connected.
Here we briefly explain why the study of the category $\Mod(\g,B)$ in general can be reduced to this case.

By definition any object in $\Mod(\g,B)$ is the direct sum of its $T$-weight spaces. Moreover, as for $\lambda \in \X$ the $\lambda$-graded component of $\sU^{\hb}_\k$ vanishes unless $\lambda$ belongs to $\Z \Phi$, for any object $M$ and any choice of coset $\gamma \in \X/\Z\Phi$, the direct sum $M^\gamma$ of the weight spaces of $M$ corresponding to weights in $\gamma$ is stable under the action of $\sU^\hb_\k$. This provides a decomposition
\[
\Mod(\g,B) = \prod_{\gamma \in \X/\Z\Phi} \Mod^\gamma(\g,B)
\]
where $\Mod^\gamma(\g,B)$ is the full subcategory consisting of objects $M$ such that $M=M^\gamma$. Here, of course, for any $\lambda \in \X$ the objects $\Delta(\lambda)$ and $\nabla(\lambda)$ belong to $\Mod^{\lambda + \Z\Phi}(\g,B)$.

Now, consider the derived subgroup $G_{\mathrm{der}}$ of $G$. Denote by $\g_{\mathrm{der}}$ the Lie algebra of $G_{\mathrm{der}}$ (which identifies with a Lie subalgebra in $\g$) and set $B_{\mathrm{der}} = B \cap G_{\mathrm{der}}$, which is a Borel subgroup of $G_{\mathrm{der}}$ with maximal torus $T_{\mathrm{der}} = T \cap G_{\mathrm{der}}$. Let also $\X_{\mathrm{der}}$ be the character lattice of $T_{\mathrm{der}}$. Then we have a restriction morphism $\X \to \X_{\mathrm{der}}$, whose restriction to $\Z\Phi$ is injective; in fact it identifies $\Phi$ with the root system of $(G_{\mathrm{der}}, T_{\mathrm{der}})$. One can therefore consider the category $\Mod(\g_{\mathrm{der}},B_{\mathrm{der}})$, and its canonical decomposition parametrized by $\X_{\mathrm{der}}/\Z\Phi$. We have a restriction functor
\[
\Mod(\g,B) \to \Mod(\g_{\mathrm{der}},B_{\mathrm{der}}),
\]
which for any $\gamma \in \X/\Z\Phi$ restricts to an equivalence
\[
\Mod^\gamma(\g,B) \to \Mod^{\gamma_{\mathrm{der}}}(\g_{\mathrm{der}},B_{\mathrm{der}})
\]
where $\gamma_{\mathrm{der}}$ is the image of $\gamma$ in $\X_{\mathrm{der}}/\Z\Phi$.

Next, consider the simply-connected cover $G_\mathrm{sc}$ of $G_{\mathrm{der}}$; so we have a surjective morphism of $\k$-group schemes $G_\mathrm{sc} \to G_{\mathrm{der}}$. Let also $\g_\mathrm{sc}$ be the Lie algebra of $G_\mathrm{sc}$, so that we have a canonical Lie algebra morphism $\g_\mathrm{sc} \to \g_\mathrm{der}$ (which might not be an isomorphism).
Let $B_\mathrm{sc}$, $T_\mathrm{sc}$ be the preimages of $B_{\mathrm{der}}$, $T_{\mathrm{der}}$ in $G_\mathrm{sc}$. 
Let also $\X_\mathrm{sc}$ be the character lattice of $T_\mathrm{sc}$; then pullback under the surjection $T_\mathrm{sc} \to T_\mathrm{der}$ induces an embedding $\X_\mathrm{der} \to \X_\mathrm{sc}$, such that the image of $\Phi$ is the root system of $(G_\mathrm{sc},T_\mathrm{sc})$. Consider the category $\Mod(\g_\mathrm{sc},B_\mathrm{sc})$, and its decomposition parametrized by $\X_{\mathrm{sc}}/\Z\Phi$.
Then pullback under the morphisms $\g_\mathrm{sc} \to \g_\mathrm{der}$ and $B_\mathrm{sc} \to B_\mathrm{der}$ induces a fully faithful functor
\[
\Mod(\g_{\mathrm{der}},B_{\mathrm{der}}) \to \Mod(\g_\mathrm{sc},B_\mathrm{sc}),
\] 
which for any $\gamma \in \X_{\mathrm{der}}/\Z\Phi$ induces an equivalence
\[
\Mod^\gamma(\g_{\mathrm{der}},B_{\mathrm{der}}) \simto \Mod^\gamma(\g_\mathrm{sc},B_\mathrm{sc}),
\]
where in the right-hand side we still denote by $\gamma$ the image of this element in $\X_{\mathrm{sc}}/\Z\Phi$.

These equivalences can be used to reduce any question regarding the category $\Mod(\g,B)$ to the case $G$ is semisimple and simply connected.



\section{Central completions and translation functors}
\label{sec:central-completions}

We continue with the data above, assuming for simplicity that $G$ is semisimple and simply connected. (In particular, we have $\rho \in \X$ in this case.) We also assume that $p>h$, where $h$ is the Coxeter number of $G$.

\subsection{Central completions of \texorpdfstring{$\sU(\g)$}{U(g)}} 

\subsubsection{Geometric description of the center} 

Given a weight $\lambda \in \X$, we will denote by $\overline{\lambda}$ its differential at the origin, which is an element of $\t^*$. We will consider the action, also denoted $\bullet$, of $W$ on $\t^*$ defined by
\[
w \bullet \xi = w(\xi + \overline{\rho})-\overline{\rho}
\]
for $w \in W$ and $\xi \in \t^*$. With this definition the map $\X \to \t^*$ defined by $\lambda \mapsto \overline{\lambda}$ is $W_\ex$-equivariant with respect to the $\bullet$-action on $\X$ (see~\S\ref{sss:reductive-gps-notation}) and the inflation (via the quotient morphism $W_\ex \twoheadrightarrow W$) of the $\bullet$-action of $W$ on $\t^*$, which justifies the use of the same notation.

Under our assumptions the center of the algebra $\sU(\g)$ has the following structure (see~\cite[\S 3.1.5--3.1.6]{BMR08} for details and references). As in~\S\ref{ss:g-B-mod} we have the Frobenius center $\ZFr(\g) \subset \sU(\g)$, which identifies canonically with $\sO({\g}^{*(1)})$. We also have the ``Harish-Chandra center'' $\ZHC(\g) := (\sU(\g))^G$, which forms another central subalgebra. There exists an isomorphism of algebras
\[
\ZHC(\g) \simto \sO(\t^*/(W,\bullet)),
\]
induced by the composition
\[
\ZHC(\g) \hookrightarrow \sU(\g) = (\n^- \cdot \sU(\g) + \sU(\g) \cdot \n) \oplus \mathrm{S}(\t) \twoheadrightarrow \mathrm{S}(\t) = \sO(\t^*).
\]
There exists a canonical isomorphism
\[
\ZFr(\g) \cap \ZHC(\g) \cong \sO(\t^{*(1)}/W)
\]
such that:
\begin{itemize}
\item
the embedding $\ZFr(\g) \cap \ZHC(\g) \hookrightarrow \ZFr(\g)$ corresponds under the identifications above to the (Frobenius twisted) co-adjoint quotient morphism $\g^{*(1)} \to \g^{*(1)} / G^{(1)} \cong \t^{*(1)} / W$ (where the identification here is induced by restriction of linear forms);
\item
the embedding $\ZFr(\g) \cap \ZHC(\g) \hookrightarrow \ZHC(\g)$ corresponds to the morphism $\t^*/(W,\bullet) \to \t^{*(1)}/W$ induced by the Artin--Schreier morphism $\t^* \to \t^{*(1)}$ (see~\cite[\S 2.3]{BMR08}). 
\end{itemize}
It is known that multiplication induces an algebra isomorphism
\[
\ZFr(\g) \otimes_{\ZFr(\g) \cap \ZHC(\g)} \ZHC(\g) \simto \mathrm{Z}(\sU(\g))
\]
where the right-hand side is the center of $\sU(\g)$;
we therefore deduce a canonical identification
\begin{equation}
\label{eqn:center-Ug}
\mathrm{Z}(\sU(\g)) \cong \sO({\g}^{*(1)}\times_{\t^{*(1)}/W} \t^*/(W,\bullet)).
\end{equation}

\subsubsection{Central completions and reductions}
\label{sss:central-reductions}

In view of~\eqref{eqn:center-Ug},
characters of $\mathrm{Z}(\sU(\g))$ are determined by compatible pairs $(\chi,\eta)$ with $\chi \in \g^{*(1)}$ and $\eta \in \t^*/(W,\bullet)$. For the main results of this paper we will only require the case $\chi=0$, but when this does not require more work we will consider the more general case when $\chi$ is nilpotent, i.e.~the image of $\chi$ in $\t^{*(1)}/W$ is the image of $0$. In this case, the compatibility condition mentioned above means that $\eta$ must be ``integral,'' i.e.~belong to the image of the composition $\X \to \t^* \to \t^*/(W,\bullet)$. For $\chi \in \g^{*(1)}$ we will denote by $\k_\chi$ the $1$-dimensional $\sO({\g}^{*(1)})$-module associated with the character $\chi$, and by ${\g}^{*(1)}_{\hchi}$ the spectrum of the completion of $\sO({\g}^{*(1)})$ with respect to the maximal ideal of $\chi$. We also set
\[
\sU_\chi := \sU(\g) \otimes_{\sO({\g}^{*(1)})} \k_\chi, \quad \sU_{\hchi} := \sU(\g) \otimes_{\sO({\g}^{*(1)})} \sO({\g}^{*(1)}_{\hchi}).
\]
Here, since $\sU(\g)$ is free of finite rank over $\sO({\g}^{*(1)})$, $\sU_\chi$ is a finite-dimensional algebra (of dimension $p^{\dim(\g)}$), and $\sU_{\hchi}$ is also the completion of the $\sO({\g}^{*(1)})$-module $\sU(\g)$ with respect to the ideal of $\chi$, see~\cite[\href{https://stacks.math.columbia.edu/tag/00MA}{Tag 00MA}]{stacks-project}. (In case $\chi=0$, $\sU_0$ is therefore the algebra denoted $\sU_0(\g)$ in the conventions of~\S\ref{ss:g-B-mod}.) Note also that $\sU_{\hchi}$ is a noetherian algebra. We set
\[
\mathrm{Z}(\sU(\g))_{\hchi} := \mathrm{Z}(\sU(\g)) \otimes_{\sO({\g}^{*(1)})} \sO({\g}^{*(1)}_{\hchi}).
\]
Then $\mathrm{Z}(\sU(\g))_{\hchi}$ identifies with a central subalgebra in $\sU_{\hchi}$, and is the completion of $\mathrm{Z}(\sU(\g))$ with respect to the ideal generated by the maximal ideal of $\sO({\g}^{*(1)})$ corresponding to $\chi$.

Given $\lambda \in \X$, we will denote by $\k_\lambda$ the $1$-dimensional $\sO(\t^*/(W,\bullet))$-module associated with the image of $\lambda$ in $\t^*/(W,\bullet)$, and set
\[
\sU^\lambda := \sU(\g) \otimes_{\sO(\t^*/(W,\bullet))} \k_\lambda.
\]
For instance, the action of $\sU(\g)$ on the Verma module $\Delta(\lambda)$ (see~\S\ref{ss:g-B-mod}) factors through an action of $\sU^\lambda$.
If $\chi$ is as above, we will also denote by $\mathrm{Z}(\sU(\g))_{\hchi}^{\hla}$ the completion of $\mathrm{Z}(\sU(\g))$ with respect to the ideal associated with the pair consisting of $\chi$ and the image of $\lambda$ in $\t^*/(W,\bullet)$, and set
\[
\sU_{\hchi}^{\hla} := \sU(\g) \otimes_{\mathrm{Z}(\sU(\g))} \mathrm{Z}(\sU(\g))_{\hchi}^{\hla}.
\]
As above, $\sU_{\hchi}^{\hla}$ is also the completion of the $\mathrm{Z}(\sU(\g))$-module $\sU(\g)$ with respect to the ideal associated with $\chi$ and $\lambda$. 
We set
\[
\sU_{\chi}^{\hla} := \sU_\chi \otimes_{\mathrm{Z}(\sU(\g))} \mathrm{Z}(\sU(\g))_{\hchi}^{\hla};
\]
then $\sU_{\chi}^{\hla}$ identifies with the quotient of $\sU_\chi$ by the ideal generated by the $n$-th power of the ideal of $\ZHC(\g)$ corresponding to the image of $\lambda$ for $n \gg 0$.
Finally, we set
\[
\sU_{\hchi}^\lambda := \sU^\lambda \otimes_{\sO({\g}^{*(1)})} \sO({\g}^{*(1)}_{\hchi}).
\]
It is clear that all the algebras introduced in this paragraph do not really depend on $\lambda$, but only on its $(W_\ex, \bullet)$-orbit.


If $\chi$ and $\lambda$ are as above,
we will denote by $\mod_{(\chi,\lambda)}(\sU(\g))$ the abelian category of finitely generated $\sU(\g)$-modules supported set-theoretically at the image of $(\chi,\lambda)$ in $\Spec(\mathrm{Z}(\sU(\g)))$, i.e.~on which a power of the maximal ideal of $\mathrm{Z}(\sU(\g))$ associated with $\chi$ and $\lambda$ acts trivially. Note that such modules are necessarily finite-dimensional. We also denote by
$\mod_{\chi}(\sU^{\lambda})$
the full subcategory of $\mod_{(\chi,\lambda)}(\sU(\g))$ consisting of modules on which the action of $\sU(\g)$ factors through an action of $\sU^\lambda$.
Note that the full subcategory of $\mod_{(\chi,\lambda)}(\sU(\g))$ consisting of modules on which the action of $\sU(\g)$ factors through an action of $\sU_\chi$ identifies with $\mod(\sU_\chi^{\hla})$.


\subsection{Categories of equivariant modules} 
\label{ss:equiv-modules-Ug}

Let us fix a nilpotent element $\chi \in \g^{*(1)}$, and 
a closed subgroup scheme $H \subset G$ which fixes $\chi$. (In practice, the only case which will be used below is when $\chi=0$ and $H=T$.)
The algebra $\sU(\g)$ admits an action of $G$ by conjugation, which we restrict to an action of $H$. This action induces actions of $H$ on $\sU_\chi$, $\sU_\chi^{\hla}$ and $\sU^\lambda$ (for any $\lambda \in \X$).
One can then consider the categories of $H$-equivariant $\sU(\g)$-modules, $\sU_\chi$-modules, $\sU_\chi^{\hla}$-modules or $\sU^\lambda$-modules. Imposing conditions on the action of the center as in~\S\ref{sss:central-reductions}, we define the categories
\[
\mod^H_{(\chi,\lambda)}(\sU(\g)), \quad \mod^H_{\chi}(\sU^{\lambda})
\]
in the obvious way.

The action of $H$ on $\sU(\g)$ induces an action on $\sU_{\hchi}$, but which is not algebraic. This difficulty can be bypassed using the constructions of Appendix~\ref{app:equiv-sheaves}. More specifically, the $\sO({\g}^{*(1)})$-algebra $\sU(\g)$ determines a coherent sheaf of algebras on $\g^{*(1)}$, which is $H$-equivariant with respect to the action of $H$ obtained via the composition $H \hookrightarrow G \xrightarrow{\mathrm{Fr}_G} G^{(1)}$ from the coadjoint action of $G^{(1)}$ on $\g^{*(1)}$. As in Example~\ref{ex:equiv-qcoh-alg} we deduce an $H$-equivariant coherent sheaf of algebras on the affine scheme
$\g^{*(1)}_{\hchi}$
in the sense of~\S\ref{ss:def-equiv-sheaves}, with global sections $\sU_{\hchi}$. One can therefore consider the abelian category $\Mod^H(\sU_{\hchi})$ of $H$-equivariant modules over this algebra, and its full subcategory $\mod^H(\sU_{\hchi})$ of finitely generated modules, see in particular~\S\ref{ss:app-invariants}. One defines in a similar way the categories $\Mod^H(\sU_{\hchi}^\lambda)$, $\mod^H(\sU_{\hchi}^\lambda)$.

\subsection{Translation functors}
\label{ss:translation-functors}

We continue with our nilpotent element $\chi \in \g^{*(1)}$.

\begin{lem}
\label{lem:completion-U-product}
The natural morphism
\[
\sU_{\hchi} \to \bigoplus_{\lambda\in \X/(W_\ex,\bullet)} \sU^{\hla}_{\hchi}
\]
is an isomorphism.
\end{lem}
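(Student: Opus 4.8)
The statement asserts that the completion $\sU_{\hchi}$ decomposes as a finite product (written here as a direct sum, which is the same thing since $\X/(W_\ex,\bullet)$ is finite under our assumptions) indexed by the Harish-Chandra central characters compatible with $\chi$. The plan is to reduce everything to a statement about the completion of the center $\mathrm{Z}(\sU(\g))$, since $\sU(\g)$ is finite as a module over $\mathrm{Z}(\sU(\g))$ and completion then commutes with this base change. First I would recall from~\eqref{eqn:center-Ug} the geometric description $\mathrm{Z}(\sU(\g)) \cong \sO(\g^{*(1)} \times_{\t^{*(1)}/W} \t^*/(W,\bullet))$, and observe that the fiber of the projection to $\g^{*(1)}$ over the point $\chi$ (equivalently, over its image in $\t^{*(1)}/W$, which is the image of $0$ since $\chi$ is nilpotent) is the finite scheme cut out inside $\t^*/(W,\bullet)$ by the Artin--Schreier morphism $\t^*/(W,\bullet) \to \t^{*(1)}/W$ over the origin. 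The $\k$-points of this fiber are precisely the integral characters, i.e.\ the images of $\X$ in $\t^*/(W,\bullet)$, and two weights give the same point exactly when they lie in the same $(W_\ex,\bullet)$-orbit; this explains the indexing set $\X/(W_\ex,\bullet)$ and shows it is finite.

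Second, I would invoke the standard structural fact about completions of a Noetherian ring $R$ along an ideal $I$ with $V(I)$ finite: the $I$-adic completion $R_{\widehat I}$ decomposes as the product $\prod_{\m} R_{\widehat\m}$ of the completions at the finitely many maximal ideals $\m$ lying over $I$ (one can cite~\cite[\href{https://stacks.math.columbia.edu/tag/00MA}{Tag 00MA}]{stacks-project} or a related tag for this, together with the Chinese-remainder-type splitting of a complete semilocal ring). Applying this to $R = \mathrm{Z}(\sU(\g))$ and $I$ the ideal generated by the maximal ideal of $\sO(\g^{*(1)})$ at $\chi$ gives
\[
\mathrm{Z}(\sU(\g))_{\hchi} \simto \bigoplus_{\lambda \in \X/(W_\ex,\bullet)} \mathrm{Z}(\sU(\g))_{\hchi}^{\hla},
\]
using the identification of the points of $V(I)$ with $\X/(W_\ex,\bullet)$ established in the first step, and the notation $\mathrm{Z}(\sU(\g))_{\hchi}^{\hla}$ introduced in~\S\ref{sss:central-reductions}.

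Third, I would tensor this isomorphism with $\sU(\g)$ over $\mathrm{Z}(\sU(\g))$. Since $\sU(\g)$ is a finitely generated module over the Noetherian ring $\mathrm{Z}(\sU(\g))$, the completion $\sU_{\hchi}$ coincides with $\sU(\g) \otimes_{\mathrm{Z}(\sU(\g))} \mathrm{Z}(\sU(\g))_{\hchi}$ (again by~\cite[\href{https://stacks.math.columbia.edu/tag/00MA}{Tag 00MA}]{stacks-project}, as already noted in~\S\ref{sss:central-reductions} for the $\sO(\g^{*(1)})$-module structure), and likewise $\sU_{\hchi}^{\hla} = \sU(\g) \otimes_{\mathrm{Z}(\sU(\g))} \mathrm{Z}(\sU(\g))_{\hchi}^{\hla}$. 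Tensoring a finite direct sum is exact and commutes with everything in sight, so the desired isomorphism follows immediately. Finally I would check that this algebra isomorphism is the same as the natural morphism in the statement: it is characterized by being $\mathrm{Z}(\sU(\g))$-linear and compatible with the projections to each completed factor, which forces it to agree with the structure map.

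\textbf{Main obstacle.} The only genuinely delicate point is the bookkeeping of the first step: identifying the set of maximal ideals of $\mathrm{Z}(\sU(\g))$ lying over the ideal of $\chi$ with $\X/(W_\ex,\bullet)$, and in particular verifying that this set is \emph{finite} (which is what makes the product a direct sum and makes the completion split). This uses that $\chi$ is nilpotent so that its image in $\t^{*(1)}/W$ is the origin, that the Artin--Schreier map $\t^* \to \t^{*(1)}$ is finite flat of degree $p^{\dim\t}$, and that the integral points in the relevant fiber are governed by the linkage principle, i.e.\ by the $(W_\ex,\bullet)$-action on $\X$. Everything else is a formal consequence of the finiteness of $\sU(\g)$ over its center and standard commutative algebra of adic completions.
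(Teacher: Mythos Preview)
Your proposal is correct and follows essentially the same route as the paper: reduce to the center via the identification~\eqref{eqn:center-Ug}, apply the commutative-algebra fact that the $I$-adic completion of a finite-type $\k$-algebra with $\dim_\k(A/I)<\infty$ splits as the product of completions at the finitely many maximal ideals over $I$, and then tensor back with $\sU(\g)$. The only cosmetic difference is that the paper states and proves this commutative-algebra fact as a standalone lemma (Lemma~\ref{lem:commutative-algebra}) rather than citing it, and your discussion of the finiteness of $\X/(W_\ex,\bullet)$ and the identification of the fiber is slightly more explicit than the paper's one-line reference to ``$\k$-points of $\Spec(\mathrm{Z}(\sU(\g)))$ over $\chi$''.
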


This lemma will be deduced from the following general statement (where $\k$ can be replaced by an arbitrary field).

\begin{lem}
\label{lem:commutative-algebra}
Let $A$ be a commutative $\k$-algebra of finite type, and $I \subset A$ an ideal of finite codimension. Then
the natural morphism
\[
A_{\widehat{I}} \to \prod_{\mathfrak{m}} A_{\widehat{\mathfrak{m}}}
\]
is an isomorphism, where in the right-hand side $\mathfrak{m}$ runs over the (finitely many) maximal ideals in $A$ containing $I$.
\end{lem}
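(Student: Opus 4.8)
The plan is to prove Lemma~\ref{lem:commutative-algebra} by reducing to a standard fact about completions of semilocal noetherian rings, together with the observation that $V(I) = \Spec(A/I)$ is a finite discrete set because $A/I$ is a finite-dimensional $\k$-algebra. First I would note that $A$ is noetherian (being of finite type over a field), and that the maximal ideals $\mathfrak{m}_1,\dots,\mathfrak{m}_r$ containing $I$ are exactly the (finitely many, pairwise comaximal) primes of the artinian quotient $A/I$; in particular $I \subseteq \bigcap_i \mathfrak{m}_i =: J$ and $J^n \subseteq I$ for some $n \geq 1$ (since $J/I$ is a nilpotent ideal of the artinian ring $A/I$). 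Hence the $I$-adic and $J$-adic topologies on $A$ coincide, so $A_{\widehat{I}} \cong A_{\widehat{J}}$ canonically.

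Next I would identify $A_{\widehat{J}}$ with $\prod_i A_{\widehat{\mathfrak{m}_i}}$. The cleanest way is to pass to the quotients: for each $k \geq 1$ the ring $A/J^k$ is artinian with maximal ideals the images of the $\mathfrak{m}_i$, so by the structure theorem for artinian rings (decomposition into a product of local artinian rings) we have a canonical isomorphism $A/J^k \xrightarrow{\sim} \prod_i (A/J^k)_{\mathfrak{m}_i} \cong \prod_i A_{\mathfrak{m}_i}/(J A_{\mathfrak{m}_i})^k \cong \prod_i A_{\mathfrak{m}_i}/\mathfrak{m}_i^k A_{\mathfrak{m}_i}$, using that localizing an artinian ring at one of its maximal ideals kills the other factors and that $J A_{\mathfrak{m}_i} = \mathfrak{m}_i A_{\mathfrak{m}_i}$. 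These isomorphisms are compatible with the transition maps as $k$ varies, so passing to the inverse limit over $k$ gives
\[
A_{\widehat{J}} = \varprojlim_k A/J^k \;\xrightarrow{\sim}\; \prod_i \varprojlim_k A_{\mathfrak{m}_i}/\mathfrak{m}_i^k A_{\mathfrak{m}_i} = \prod_i A_{\widehat{\mathfrak{m}_i}},
\]
where the middle step uses that a finite product commutes with inverse limits. Composing with $A_{\widehat{I}} \cong A_{\widehat{J}}$ finishes the proof, and one checks along the way that this composite is the natural map (each component is the canonical map $A_{\widehat{I}} \to A_{\widehat{\mathfrak{m}_i}}$ induced by $A \to A_{\mathfrak{m}_i}$), as required.

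I do not expect any serious obstacle here: this is essentially the assertion that the completion of a noetherian ring at an ideal with artinian quotient is the product of the completions at the associated maximal ideals, which is classical (e.g.\ \cite[\href{https://stacks.math.columbia.edu/tag/00MA}{Tag 00MA}]{stacks-project} and the structure theory of artinian rings). The only point requiring a little care is to verify that the abstractly constructed isomorphism agrees with the canonical morphism in the statement, which one does by chasing an element of $A$ through both sides at each finite level. Alternatively, one could invoke the Chinese Remainder Theorem directly at each level $A/I^k$ after checking that the ideals $\mathfrak{m}_i^k$ are pairwise comaximal and that their intersection is contained in $I^{k'}$ for suitable $k'$; I would present whichever of these two routes reads more cleanly.

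Once Lemma~\ref{lem:commutative-algebra} is in hand, Lemma~\ref{lem:completion-U-product} follows by applying it to $A = \mathrm{Z}(\sU(\g))$ and $I$ the ideal generated by the maximal ideal of $\sO(\g^{*(1)})$ at $\chi$ (which has finite codimension since $\g^{*(1)} \to \t^{*(1)}/W$ is finite and the fibre over the image of $\chi$ is a finite scheme), then tensoring the algebra $\sU(\g)$ — which is finite over $\mathrm{Z}(\sU(\g))$ — over $\mathrm{Z}(\sU(\g))$, and matching the maximal ideals of $\mathrm{Z}(\sU(\g))$ containing $I$ with the $(W_\ex,\bullet)$-orbits of integral weights via \eqref{eqn:center-Ug}.
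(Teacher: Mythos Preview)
Your proof is correct and follows essentially the same route as the paper's: both decompose the artinian quotients $A/I^n$ (or $A/J^k$) as products of their localizations and pass to the inverse limit. The only cosmetic difference is that you first replace $I$ by $J=\bigcap_i\mathfrak{m}_i$ so that the identification $(A/J^k)_{\mathfrak{m}_i}\cong A_{\mathfrak{m}_i}/\mathfrak{m}_i^k$ is immediate, whereas the paper works directly with $I$ and afterwards shows $(A_{\mathfrak{m}})_{\widehat{IA_{\mathfrak{m}}}}\cong A_{\widehat{\mathfrak{m}}}$ via the same cofinality of powers; these are two orderings of the same argument.
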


\begin{proof}
This statement is a variant of~\cite[\href{https://stacks.math.columbia.edu/tag/07N9}{Tag 07N9}]{stacks-project}, whose proof follows from standard properties of artinian rings. Namely, the maximal ideals of $A$ containing $I$ are in bijection with the maximal ideals of the finite-dimensional (hence artinian) $\k$-algebra $A/I$, 
so there exist only finitely many such ideals, see~\cite[\href{https://stacks.math.columbia.edu/tag/00J7}{Tag 00J7}]{stacks-project}. Next, for any $n \geq 0$ the $\k$-algebra $A/I^n$ is also finite-dimensional, hence artinian; it therefore identifies with the product of its localizations at its maximal ideals, see~\cite[\href{https://stacks.math.columbia.edu/tag/00JA}{Tag 00JA}]{stacks-project}. In other words the canonical morphism
\[
A/I^n \to \prod_{\mathfrak{m}} A_{\mathfrak{m}} / I^n \cdot A_{\mathfrak{m}}
\]
is an isomorphism, where $\mathfrak{m}$ runs over the maximal ideals in $A$ containing $I$ and, for such $\mathfrak{m}$, $A_{\mathrm{m}}$ is the localization of $A$ at $\mathfrak{m}$. We deduce an isomorphism
\[
A_{\widehat{I}} \simto \prod_{\mathfrak{m}} (A_{\mathfrak{m}})_{\widehat{I \cdot A_{\mathfrak{m}}}}.
\]

To conclude, we will prove that for any maximal ideal $\mathfrak{m}$ containing $I$ we have a canonical identification $(A_{\mathfrak{m}})_{\widehat{I \cdot A_{\mathfrak{m}}}} \cong A_{\widehat{\mathfrak{m}}}$.
We have $I \cdot A_{\mathfrak{m}} \subset \mathfrak{m} \cdot A_{\mathfrak{m}}$, and $A_{\mathfrak{m}} / I \cdot A_{\mathfrak{m}}$ is a local artinian ring, so that there exists $m$ such that $\mathfrak{m}^m \cdot A_{\mathfrak{m}} \subset I \cdot A_{\mathfrak{m}}$, see~\cite[\href{https://stacks.math.columbia.edu/tag/00J8}{Tag 00J8}]{stacks-project}. As a consequence we have $(A_{\mathfrak{m}})_{\widehat{I \cdot A_{\mathfrak{m}}}} \cong (A_{\mathfrak{m}})_{\widehat{\mathfrak{m} \cdot A_{\mathfrak{m}}}}$, see~\cite[\href{https://stacks.math.columbia.edu/tag/0319}{Tag 0319}]{stacks-project}. Finally, since $\mathfrak{m}$ is a maximal ideal we have $(A_{\mathfrak{m}})_{\widehat{\mathfrak{m} \cdot A_{\mathfrak{m}}}} \cong A_{\widehat{\mathfrak{m}}}$, which finishes the proof.
\end{proof}

\begin{proof}[Proof of Lemma~\ref{lem:completion-U-product}]
Applying Lemma~\ref{lem:commutative-algebra} to the ring $\mathrm{Z}(\sU(\g))$
and the ideal generated by the maximal ideal of $\sO(\g^{*(1)})$ corresponding to $\chi$, and remarking that the maximal ideals under consideration correspond to the $\k$-points of $\Spec(\mathrm{Z}(\sU(\g)))$ over $\chi$, i.e.~to the images of the pairs $(\chi,\lambda)$ with $\lambda \in \X$, we obtain an isomorphism
\[
\mathrm{Z}(\sU(\g))_{\hchi} \to \bigoplus_{\lambda\in \X/(W_\ex,\bullet)} \mathrm{Z}(\sU(\g))_{\hchi}^{\hla}.
\]
Tensoring with $\sU\g$, we deduce the desired isomorphism.
\end{proof}

The isomorphism in Lemma~\ref{lem:completion-U-product} shows that, for any $\lambda \in \X$, $\sU^{\hla}_{\hchi}$ is a finite algebra over $\sO({\g}^{*(1)}_{\hchi})$. 
It is clear that, if $H$ is as in~\S\ref{ss:equiv-modules-Ug}, the $H$-equivariant structure on $\sU_{\hchi}$ restricts to an $H$-equivariant structure on $\sU^{\hla}_{\hchi}$, so that one can consider the category $\mod^H(\sU_{\hchi}^{\hla})$, and the isomorphism in Lemma~\ref{lem:completion-U-product} is $H$-equivariant.

The isomorphism in Lemma~\ref{lem:completion-U-product} provides decompositions of categories
\begin{equation}
\label{eqn:decomposition-mod-central-char}
\Mod^H(\sU_{\hchi}) = \bigoplus_{\lambda\in \X/(W_\ex,\bullet)} \Mod^H(\sU_{\hchi}^{\hla}), \quad \mod^H(\sU_{\hchi}) = \bigoplus_{\lambda\in \X/(W_\ex,\bullet)} \mod^H(\sU_{\hchi}^{\hla}),
\end{equation}
which we will use to identify $\Mod^H(\sU_{\hchi}^{\hla})$, resp.~$\mod^H(\sU_{\hchi}^{\hla})$, with a full subcategory in $\Mod^H(\sU_{\hchi})$, resp.~$\mod^H(\sU_{\hchi})$.
The functor of projection to the direct summand $\Mod^H(\sU_{\hchi}^{\hla})$ will be denoted
\[
\pr_\lambda : \Mod^H(\sU_{\hchi}) \to \Mod^H(\sU_{\hchi}^{\hla}).
\]
These decompositions restrict to similar decompositions
\begin{equation}
\label{eqn:decomposition-mod-Uchi}
\Mod^H(\sU_{\chi}) = \bigoplus_{\lambda\in \X/(W_\ex,\bullet)} \Mod^H(\sU_{\chi}^{\hla}), \quad
\mod^H(\sU_{\chi}) = \bigoplus_{\lambda\in \X/(W_\ex,\bullet)} \mod^H(\sU_{\chi}^{\hla}).
\end{equation}


Recall from~\S\ref{ss:g-B-mod} that
for $\nu \in \X^+$ we have an associated simple object $\widetilde{\Simp}(\nu) \in \Rep(G)$.
The action of $\sU(\g)$ on this module obtained by differentiation factors through an action of $\sU_0$, so that tensoring with it defines a functor
 \[
 \widetilde{\Simp}(\nu) \otimes (-) : \Mod^H(\sU_{\hchi}) \to \Mod^H(\sU_{\hchi}).
 \] 
 Using the fact that a $\sU_{\hchi}$-module is finitely generated if and only if it is finitely generated as an $\sO(\g^{*(1)}_{\hchi})$-module, one sees that this functor restricts to a endofunctor of $\mod^H(\sU_{\hchi})$.

Let
\[
\mathscr{A}_0:= \{\eta\in \X \mid \forall \alpha\in \Phi^+, \, 0 \leq \langle \eta+\rho, \alpha^\vee \rangle \leq p\}.
\]
Under our assumption that $p>h$, this set contains the weight $0$.
Given $\lambda,\mu \in \mathscr{A}_0$, denoting by 
$\nu$ the unique element in $W(\mu-\lambda) \cap \X^+$, the attached \emph{translation functor} is defined as 
\[
\sfT^\mu_\lambda:= \pr_\mu \bigl(
\widetilde{\Simp}(\nu) \otimes (-) \bigr) : \Mod^H(\sU^{\hla}_{\hchi}) \to \Mod^H(\sU^{\hmu}_{\hchi}).
\]
This functor is exact, and restricts to a functor from $\mod^H(\sU^{\hla}_{\chi})$ to $\mod^H(\sU^{\hmu}_{\chi})$.

For any $\lambda,\mu \in \mathscr{A}_0$, we fix an isomorphism
$\widetilde{\Simp}(\nu)^* \cong \widetilde{\Simp}(-w_\circ \nu)$ for $\nu$ as above. After making such a choice, it is well known that
$(\sfT^\mu_\lambda,\sfT_\mu^\lambda)$ forms a biadjoint pair of functors. 

\begin{rmk}
\label{rmk:translation-G1T}
In case $\chi=0$ and $H=T$,
the relation between the translation functors considered above and those for $G_1T$-modules considered e.g.~in~\cite[\S II.9.22]{Jan03} is as follows. For any $\lambda,\mu \in \mathscr{A}_0$, we have the associated blocks $\Rep_\lambda(G_1T)$ and $\Rep_\mu(G_1T)$ of $\Rep(G_1T)$, and natural (fully faithful) functors
\[
\Rep_\lambda(G_1T) \to  \Mod^T(\sU^{\hla}_{0}), \quad
\Rep_\mu(G_1T) \to \Mod^T(\sU^{\hmu}_{0}).
\]
Then the obvious diagram involving these functors, the functor $\sfT^\mu_\lambda$ and the corresponding translation functor for $G_1T$-modules, commutes. (The proof is similar to that of~\cite[Lemma~6.1]{br-Hecke}.) Note that there is a subtlety here, in that ``blocks'' of $\Rep(G_1T)$ are parametrized by $(W_\aff,\bullet)$-orbits, while our subcategories only depend on the $(W_\ex,\bullet)$-orbit of the given weight. In fact, for any $\chi, H$, for any $\omega \in \Omega$ and $\lambda,\mu \in \mathscr{A}_0$ the weights $\omega \bullet \lambda$ and $\omega \bullet \mu$ also belong to $\mathscr{A}_0$, and we have
\[
\Mod^H(\sU^{\widehat{\omega \bullet \lambda}}_{\hchi}) = \Mod^H(\sU^{\hla}_{\hchi}), \quad \Mod^H(\sU^{\widehat{\omega \bullet \mu}}_{\hchi}) = \Mod^H(\sU^{\hmu}_{\hchi}), \quad \sfT^{\omega \bullet \mu}_{\omega \bullet \lambda}=\sfT^\mu_\lambda.
\]
So, for $\chi=0$ and $H=T$, the functor $\sfT^\mu_\lambda$ records all translation functors from $\omega \bullet \lambda$ to $\omega \bullet \mu$ ($\omega \in \Omega$) in the sense of $G_1T$-modules.

See also Remark~\ref{rmk:G1T} for more details about the relation between $\Rep(G_1T)$ and $\Mod^T(\sU_0)$. 
\end{rmk}

Below we will use a slightly different perspective on translation functors, introduced in~\cite{br-Hecke}. Consider the algebra $\sU(\g) \otimes_{\ZFr(\g)} \sU(\g)^{\op}$. There exists a canonical morphism from
\[
\mathrm{Z}(\sU(\g)) \otimes_{\ZFr(\g)} \mathrm{Z}(\sU(\g)) \cong \sO(\g^{*(1)} \times_{\t^{*(1)}/W} \t^*/(W,\bullet) \times_{\t^{*(1)}/W} \t^*/(W,\bullet))
\]
to this algebra, with central image. In~\cite{br-Hecke}, for $\lambda,\mu \in \mathscr{A}_0$, Bezrukavnikov and the first author consider the algebra $\sU^{\hla,\hmu}$ obtained by tensoring $\sU(\g) \otimes_{\ZFr(\g)} \sU(\g)^{\op}$ with the completion of $\sO(\t^*/(W,\bullet) \times_{\t^{*(1)}/W} \t^*/(W,\bullet))$ with respect to the ideal corresponding to the image of $(\lambda,\mu)$. This algebra admits an (algebraic!) action of $G$, and one can consider the category $\HC^{\hla,\hmu}$ of $G$-equivariant finitely generated $\sU^{\hla,\hmu}$-modules such that the differential of the $G$-action coincides with the antidiagonal action of $\g$, see~\cite[\S 3.5]{br-Hecke}. (Such objects are called Harish-Chandra $\sU^{\hla,\hmu}$-modules.) As explained in~\cite[\S 3.7]{br-Hecke}, for $\lambda,\mu,\nu \in \mathscr{A}_0$ there exists a canonical bifunctor
\[
\HC^{\hla,\hmu} \times \HC^{\hmu,\hnu} \to \HC^{\hla,\hnu}
\]
induced by tensor product of (completed) $\sU(\g)$-bimodules, which here we will denote by $\star$. When $\lambda=\mu$, the tensor product $\sU^{\hla}$ of $\sU(\g)$ with the completion of $\sO(\t^*/(W,\bullet))$ with respect to the ideal corresponding to the image of $\lambda$ (over $\ZHC(\g)$) defines naturally an object of $\HC^{\hla,\hla}$, which is a unit object for this structure in the sense that the functors $\sU^{\hla} \star (-)$ and $(-) \star \sU^{\hla}$ coincide with the identity functors.

If $\chi,H$ are as above, similar considerations show that there exists a canonical bifunctor
\[
\HC^{\hla,\hmu} \times \Mod^H(\sU^{\hmu}_{\hchi}) \to \Mod^H(\sU^{\hla}_{\hchi}),
\]
which we will also denote by $\star$,
and which restricts to a bifunctor $\HC^{\hla,\hmu} \times \mod^H(\sU^{\hmu}_{\hchi}) \to \mod^H(\sU^{\hla}_{\hchi})$. In case $\lambda=\mu$, the functor $\sU^{\hla} \star (-)$ coincides again with the identity functor. In~\cite[\S 3.5]{br-Hecke}, the authors introduce a certain object $\mathsf{P}^{\lambda,\mu} \in \HC^{\hla,\hmu}$.\footnote{Note that the object denoted $\Simp(\nu)$ in~\cite{br-Hecke} corresponds to $\widetilde{\Simp}(\nu)$ with our present conventions. Note also that the object $\mathsf{P}^{\lambda,\mu}$ is denoted $\mathsf{P}^{\hla,\hmu}$ in~\cite{br-two}.} It follows from the definitions that the functor $\mathsf{P}^{\lambda,\mu} \star (-)$ identifies with the functor $\sfT_{\mu}^\lambda$.

\subsection{Wall crossing functors}
\label{ss:wall-crossing}


We continue with our nilpotent element $\chi \in \g^{*(1)}$, and $H \subset G$ a closed subgroup scheme fixing $\chi$.

For any $s \in S_\aff$,
we choose a weight $\mu_s \in \mathscr{A}_0$ whose stabilizer in $W_\ex$ (for the $\bullet$-action) is  $\{1, s\}$ and set
\[
\Theta_s := \sfT_{\mu_s}^{0} \circ \sfT^{\mu_s}_{0} : \mod^H(\sU^{\widehat{0}}_{\hchi}) \to \mod^H(\sU^{\widehat{0}}_{\hchi}).
\]
(It is well known that such a weight exists under our assumptions; more specifically, by~\cite[\S II.6.3]{Jan03} there exists a weight whose stabilizer in $W_\aff$ is $\{1,s\}$, and then the stabilizer in $W_\ex$ is automatically the same because $p \X \cap \Z\Phi=p\Z\Phi$.) From the perspective of Harish-Chandra $\sU^{\ho,\ho}$-modules discussed in~\S\ref{ss:translation-functors}, $\Theta_s$ is therefore given by the functor $\sfR_s \star (-)$, where
\[
\sfR_s := \mathsf{P}^{0,\mu_s} \star \mathsf{P}^{\mu_s,0}.
\]
(This notation is consistent with that used in~\cite[\S 2.11]{br-two}.)

It is easily seen (using e.g.~the formula at the end of~\cite[\S 3.7]{br-Hecke}) that the choices we have fixed for the definition of the biadjunction $(\sfT_0^{\mu_s}, \sfT_{\mu_s}^0)$ also determine morphisms
\begin{equation}
\label{eqn:morphisms-Rs}
\sU^{\ho} \to \sfR_s \quad \text{and} \quad \sfR_s \to \sU^{\ho}.
\end{equation}
in $\HC^{\ho,\ho}$, which induce the adjunction morphisms $\id \to \Theta_s$ and $\Theta_s \to \id$ upon acting on $\mod^H(\sU^{\widehat{0}}_{\hchi})$.

\begin{lem}
\label{lem:morphisms-Rs-adjunction}
The morphisms in~\eqref{eqn:morphisms-Rs} are generators of the spaces $\Hom_{\HC^{\ho,\ho}}(\sU^{\ho}, \sfR_s)$ and $\Hom_{\HC^{\ho,\ho}}(\sfR_s, \sU^{\ho})$ respectively as modules over the completion of $\sO(\t^*/(W,\bullet))$ with respect to the ideal corresponding to the image of $0$.
\end{lem}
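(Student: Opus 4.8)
The statement concerns the Hom-spaces $\Hom_{\HC^{\ho,\ho}}(\sU^{\ho}, \sfR_s)$ and $\Hom_{\HC^{\ho,\ho}}(\sfR_s, \sU^{\ho})$ and asserts that the explicit morphisms~\eqref{eqn:morphisms-Rs} are free generators over the relevant completed ring $R := \sO(\t^*/(W,\bullet))_{\widehat{0}}$. The natural strategy is to compute these Hom-spaces directly using the structure of $\sfR_s$ and the adjunction $(\sfT_0^{\mu_s},\sfT_{\mu_s}^0)$, then to identify the images of the canonical morphisms inside them. First I would use the fact that $\sfR_s = \mathsf{P}^{0,\mu_s}\star\mathsf{P}^{\mu_s,0}$ together with the interpretation of $\star$ as composition of bimodule-type functors: by the biadjunction recalled at the end of~\S\ref{ss:translation-functors}, for any object $M\in\HC^{\ho,\ho}$ one has $\Hom_{\HC^{\ho,\ho}}(\sU^{\ho},\sfR_s\star M) \cong \Hom_{\HC^{\widehat{\mu_s},\widehat{\mu_s}}}(\mathsf{P}^{\mu_s,0}\star\sU^{\ho}\star\mathsf{P}^{0,\mu_s}, M)$ or a similar reduction; applied with $M=\sU^{\ho}$, and using that $\mathsf{P}^{\mu_s,0}\star\mathsf{P}^{0,\mu_s}$ is the wall-crossing object $\sfR'_{s}$ (the analogue at the wall $\mu_s$, which equals $\sfT^0_{\mu_s}\sfT_0^{\mu_s}$), this rewrites the Hom-space as $\Hom_{\HC^{\widehat{\mu_s},\widehat{\mu_s}}}(\sfR'_s, \sU^{\widehat{\mu_s}})$ or $\Hom(\sU^{\widehat{\mu_s}}, \sfR'_s)$.

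**Reduction to a computation at the wall.** The key point is that the category $\HC^{\widehat{\mu_s},\widehat{\mu_s}}$ is much simpler: $\mu_s$ is a weight on a single wall, and $\sfR'_s$ — obtained by translating \emph{onto} and \emph{off} the wall — is, up to completion and equivariance, the bimodule $\sU^{\ho}$ (the ``big'' projective-generator bimodule at $0$), viewed as a $\sU^{\widehat{\mu_s}}$-bimodule. Concretely, translating to the wall and back is the functor $\mathrm{H}\mapsto \mathrm{H}\otim_{\text{(something)}}\sfR'_s$; the relevant Hom computation then amounts to computing $\End$ or $\Hom$ between $\sU^{\widehat{\mu_s}}$ and $\sfR'_s$ as Harish-Chandra bimodules over the wall block. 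This is governed by the geometry: under localization, $\sfR'_s$ corresponds to pushforward-pullback along the partial-resolution map $\widehat{\g}\to\widehat{\g}_{P_s}$, and the Hom-spaces in question become $\Hom$'s between structure sheaves of the relevant completed fibre products, which are computed by a short explicit resolution (a Koszul-type complex of length one, reflecting that the map has one-dimensional fibres). That computation yields that both Hom-spaces are free $R$-modules of rank one, with the canonical morphisms~\eqref{eqn:morphisms-Rs} as generators. Alternatively — and perhaps more cleanly — one can invoke the known structure of the wall-crossing $\sfR_s$ as an extension $\sU^{\ho}\langle?\rangle \to \sfR_s \to \sU^{\ho}$ (the short exact sequence of Harish-Chandra bimodules, cf.~\cite{br-two}, \cite{br-Hecke}), apply $\Hom_{\HC^{\ho,\ho}}(\sU^{\ho},-)$ and $\Hom_{\HC^{\ho,\ho}}(-,\sU^{\ho})$ to it, and use $\End_{\HC^{\ho,\ho}}(\sU^{\ho})=R$ (which holds because $\sU^{\ho}$ is the unit for $\star$) together with the vanishing of the relevant $\Hom(\sU^{\ho},\sU^{\ho}\langle?\rangle)$ for the grading shift occurring, to pin down each Hom as a free rank-one $R$-module.

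**Identifying the generators.** Once each Hom-space is known to be free of rank one over $R$, it remains to check that the morphisms in~\eqref{eqn:morphisms-Rs} are \emph{generators}, not merely nonzero elements — i.e.\ that they are not divisible by the maximal ideal of $R$. This is exactly where one uses that these morphisms induce the adjunction unit and counit $\id\to\Theta_s$ and $\Theta_s\to\id$ on $\mod^H(\sU^{\widehat 0}_{\hchi})$: specializing $R$ at its closed point (equivalently, passing from $\sU^{\widehat 0}$ to $\sU^0$) and looking at the action on a suitable baby Verma or standard module in $\mod^H(\sU^0_{\hchi})$, one sees the unit/counit are nonzero after specialization. A morphism in a free rank-one $R$-module whose reduction mod $\m_R$ is nonzero must be a generator by Nakayama. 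This is the step I expect to be the main obstacle: it requires a concrete, non-formal input — one must actually exhibit an object of $\mod^H(\sU^0_{\hchi})$ on which the specialized unit (resp.\ counit) is nonzero, e.g.\ by checking that $\sfT_0^{\mu_s}$ applied to a standard module is nonzero and that the composite $\sfT_{\mu_s}^0\sfT_0^{\mu_s}$ genuinely has the standard module as a sub/quotient. For $G_1T$-modules (the case $\chi=0$, $H=T$), this is classical (cf.~\cite[\S II.9]{Jan03}), so the cleanest route is to reduce to that setting via Remark~\ref{rmk:translation-G1T} and quote the known non-degeneracy of the wall-crossing adjunctions there.
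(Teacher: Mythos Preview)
Your approach is essentially the same as the paper's: establish that the Hom-spaces are free of rank one over $R$, then use Nakayama by exhibiting an object on which the specialized unit/counit is nonzero. The paper is considerably more terse on the first step --- it simply cites~\cite[\S 6.6]{br-Hecke} for the rank-one freeness rather than recomputing it via adjunction or exact sequences --- and for the second step it works in $\mod^G(\sU^{\ho}_{\ho})$ and takes $M$ to be a simple $G$-module not killed by $\sfT_0^{\mu_s}$, which is marginally cleaner than reducing to $G_1T$-modules. One caution on your ``alternative'' route via a short exact sequence: there is no filtration of $\sfR_s$ with both subquotients equal to $\sU^{\ho}$ up to a grading shift in $\HC^{\ho,\ho}$ (the other piece is $\sfD_s$ or $\sfN_s$, not a shifted unit), so the vanishing argument you sketch there would not go through as stated; your first route via adjunction to the wall is the sound one.
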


\begin{proof}
As explained in~\cite[\S 6.6]{br-Hecke}, the morphism spaces under consideration are free of rank $1$. Hence to prove the statement it suffices to remark that there exists an object $M \in \mod^G(\sU^{\ho}_{\ho})$ on which the action of $\sO(\t^*/(W,\bullet))$ factors through the character corresponding to $0$, and such that the induced morphisms $M \to \sfR_s \star M$ and $\sfR_s \star M \to M$ are nonzero. The latter condition is equivalent to requiring that $\sfT_0^{\mu_s}(M) \neq 0$, so that $M$ can e.g.~be chosen as a simple $G$-module not killed by $\sfT_0^{\mu_s}$.
\end{proof}

\subsection{Intertwining functors and the braid group action}
\label{ss:intertwining-RT}

Recall that given abe\-lian categories $\mathsf{A}$ and $\mathsf{B}$, exact functors $F_1, F_2 : \mathsf{A} \to \mathsf{B}$, and a morphism of functors $\varphi : F_1 \to F_2$, the functor sending a bounded complex $M$ of objects of $\mathsf{A}$ to the 
cocone
of the morphism of complexes $\varphi_M : F_1(M) \to F_2(M)$ sends quasi-isomorphisms to quasi-isomorphisms; it therefore induces a functor from $\Db(\mathsf{A})$ to $\Db(\mathsf{B})$. Considering this construction for the identity functor of $\mod^H(\sU^{\widehat{0}}_{\hchi})$, $\Theta_s$, and the adjunction morphism $\Theta_s \to \mathrm{id}$, we obtain a functor
\[
\mathbb{S}_s^- :
\Db \mod^H(\sU^{\widehat{0}}_{\hchi}) \to \Db \mod^H(\sU^{\widehat{0}}_{\hchi}).
\]
Similarly, using cones instead of cocones and the morphism $\id \to \Theta_s$ we obtain a functor
\[
\mathbb{S}_s^+ :
\Db \mod^H(\sU^{\widehat{0}}_{\hchi}) \to \Db \mod^H(\sU^{\widehat{0}}_{\hchi}).
\]

Below we also need a description of these functors in terms of Harish-Chandra $\sU^{\ho,\ho}$-modules, as follows. As explained in~\cite[\S 2.6]{br-two}, taking a derived tensor product instead of a tensor product in the definition of $\star$ one obtains a bifunctor
\[
D^- \HC^{\ho,\ho} \times D^- \HC^{\ho,\ho} \to D^- \HC^{\ho,\ho},
\]
which defines a monoidal structure on $D^- \HC^{\ho,\ho}$.
For simplicity this bifunctor will also be denoted $\star$; this should not lead to any confusion since for all the objects of $\HC^{\ho,\ho}$ considered above (in particular, the objects $\sfR_s$) the two possible meanings of $\star$ lead to the same objects (see the discussion in~\cite[\S 2.6]{br-two}). Similarly, we have a canonical bifunctor
\[
D^- \HC^{\ho,\ho} \times D^- \mod^H(\sU^{\widehat{0}}_{\hchi}) \to D^- \mod^H(\sU^{\widehat{0}}_{\hchi})
\]
which defines an action of $D^- \HC^{\ho,\ho}$ on $D^- \mod^H(\sU^{\widehat{0}}_{\hchi})$. This bifunctor will again be denoted $\star$, and in this setting also the functor $\sU^{\ho} \star (-)$, resp.~$\sfR_s \star (-)$, identifies with (the derived functor of) the identity functor, resp.~$\Theta_s$.

Define the objects $\sfD_s$, $\sfN_s$ in $\Db \HC^{\ho,\ho}$ so that they fit in distinguished triangles 
\[
\sfD_s\rightarrow \sfR_s\rightarrow \sU^{\ho} \xrightarrow{[1]} 
\quad \text{and}\quad 
\sU^{\ho}\rightarrow \sfR_s \rightarrow \sfN_s \xrightarrow{[1]}
\]
where the morphisms $\sfR_s\rightarrow \sU^{\ho}$ and $\sU^{\ho}\rightarrow \sfR_s$ are those of~\eqref{eqn:morphisms-Rs}. Then by construction the functor $\sfD_s \star (-)$, resp.~$\sfN_s \star (-)$, stabilizes $\Db \mod^H(\sU^{\widehat{0}}_{\hchi})$, and its restriction identifies with $\mathbb{S}_s^-$, resp.~$\mathbb{S}_s^+$.

It follows from Lemma~\ref{lem:morphisms-Rs-adjunction} that our present notation is consistent with that introduced in~\cite[\S 2.11]{br-two}. In particular, as explained in~\cite[\S 2.12]{br-two}, there exists a morphism from the extended affine braid group $\Br_\ex$ (see~\S\ref{sss:reductive-gps-notation}) to the group of invertible objects in $D^- \HC^{\ho,\ho}$ which sends $\rH_s$ to $\sfN_s$ and $(\rH_s)^{-1}$ to $\sfD_s$ for any $s \in S_\aff$. In fact, this morphism is uniquely determined by the property that it sends $\rH_s$ to $\sfN_s$ for any $s \in S_\aff$ and $\rH_\omega$ to $\mathsf{P}^{0, \omega \bullet 0}$ for any $\omega \in \Omega$. The object associated with $b \in \Br_\ex$ will be denoted $\sfN_b$, and the corresponding autoequivalence of $\Db \mod^H(\sU^{\widehat{0}}_{\hchi})$ will be denoted $\mathbb{S}_b$; we therefore have
\[
\mathbb{S}_{\rH_s} = \mathbb{S}^+_s, \quad \mathbb{S}_{(\rH_s)^{-1}} = \mathbb{S}_s^- \quad \text{for $s \in S_\aff$}
\]
and
\[
\mathbb{S}_{\rH_\omega} = \sfT_{0}^{\omega^{-1} \bullet 0} \quad \text{for $\omega \in \Omega$.}
\]

By~\cite[Lemma~6.1.2]{Ric10}, for any $s \in S_\aff \smallsetminus S$ there exist $b \in \Br_\ex$ and $t \in S$ such that $b \cdot \rH_t \cdot b^{-1} = \rH_s$. We fix such a pair $(b,t)$ once and for all.

\begin{lem}
\label{lem:conjugation-wall-crossing}
With the notation above, there exists an isomorphism
\[
\mathbb{S}_b \circ \Theta_t \circ \mathbb{S}_{b^{-1}} \cong \Theta_s.
\]
of endofunctors of $\Db \mod^H(\sU^{\widehat{0}}_{\hchi})$.
\end{lem}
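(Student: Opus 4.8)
The plan is to recast the statement as a single isomorphism of objects in the monoidal category $D^-\HC^{\ho,\ho}$. Indeed, by construction $\Theta_t=\sfR_t\star(-)$ and $\Theta_s=\sfR_s\star(-)$, while $\mathbb{S}_b=\sfN_b\star(-)$ and $\mathbb{S}_{b^{-1}}=\sfN_{b^{-1}}\star(-)$; since $\star$ makes $D^-\HC^{\ho,\ho}$ into a monoidal category acting on $D^-\mod^H(\sU^{\ho}_{\hchi})$ (see~\cite[\S 2.6]{br-two}), the composite $\mathbb{S}_b\circ\Theta_t\circ\mathbb{S}_{b^{-1}}$ is the functor $(\sfN_b\star\sfR_t\star\sfN_{b^{-1}})\star(-)$. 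Thus it suffices to produce an isomorphism
\[
\sfN_b\star\sfR_t\star\sfN_{b^{-1}}\cong\sfR_s
\]
in $D^-\HC^{\ho,\ho}$, and then let it act on $D^-\mod^H(\sU^{\ho}_{\hchi})$, restricting to $\Db$.

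Next I would transport, through the autoequivalence $\sfN_b\star(-)\star\sfN_{b^{-1}}$, the two distinguished triangles defining $\sfD_t$ and $\sfN_t$. Since the assignment $b'\mapsto\sfN_{b'}$ is a homomorphism from $\Br_\ex$ to the group of isomorphism classes of $\star$-invertible objects of $D^-\HC^{\ho,\ho}$, with $\sfN_{\rH_t}=\sfN_t$, $\sfN_{(\rH_t)^{-1}}=\sfD_t$ and $\sfN$ of the identity equal to $\sU^{\ho}$, the relation $b\cdot\rH_t\cdot b^{-1}=\rH_s$ gives
\[
\sfN_b\star\sfN_t\star\sfN_{b^{-1}}\cong\sfN_s,\qquad \sfN_b\star\sfD_t\star\sfN_{b^{-1}}\cong\sfD_s,\qquad \sfN_b\star\sU^{\ho}\star\sfN_{b^{-1}}\cong\sU^{\ho}.
\]
Writing $X:=\sfN_b\star\sfR_t\star\sfN_{b^{-1}}$, transport of the defining triangles of $\sfR_t$ then yields distinguished triangles $\sfD_s\to X\to\sU^{\ho}\xrightarrow{[1]}$ and $\sU^{\ho}\to X\to\sfN_s\xrightarrow{[1]}$. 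From the second triangle (and the long exact cohomology sequence) the object $X$ lies in the heart $\HC^{\ho,\ho}$ and fits in a short exact sequence $0\to\sU^{\ho}\to X\to\sfN_s\to 0$; moreover $X$ is indecomposable, since $\sfR_t$ is and $\sfN_b\star(-)\star\sfN_{b^{-1}}$ is an equivalence. Transporting the two morphisms of~\eqref{eqn:morphisms-Rs} for $\sfR_t$ — which, by the version of Lemma~\ref{lem:morphisms-Rs-adjunction} with $t$ in place of $s$, generate $\Hom(\sU^{\ho},\sfR_t)$ and $\Hom(\sfR_t,\sU^{\ho})$ — I obtain morphisms $\sU^{\ho}\to X$ and $X\to\sU^{\ho}$ generating the corresponding spaces for $X$.

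The remaining point, which I expect to be the main obstacle, is a rigidity statement: $\sfR_s$ is characterized up to isomorphism by the properties just established for $X$ (indecomposable, a non-split extension of $\sfN_s$ by $\sU^{\ho}$, with $\Hom(\sU^{\ho},\sfR_s)$ and $\Hom(\sfR_s,\sU^{\ho})$ generated by the morphisms of~\eqref{eqn:morphisms-Rs}), whence $X\cong\sfR_s$. I would prove this in one of two ways. Either invoke the contravariant duality on $D^-\HC^{\ho,\ho}$ (from~\cite{br-two}), which exchanges $\sfN_b$ with $\sfN_{b^{-1}}$ and fixes $\sfR_t$, hence fixes $X$: then $X$ is a self-dual indecomposable extension of $\sfN_s$ by $\sU^{\ho}$, and such an object must be $\sfR_s$ (the analogue of the uniqueness of indecomposable Soergel bimodules). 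Or compute directly, as in~\cite[\S 6.6]{br-Hecke} and~\cite{br-two}, that $\Ext^1_{\HC^{\ho,\ho}}(\sfN_s,\sU^{\ho})$ is cyclic over $\End(\sU^{\ho})$ with the class of $\sfR_s$ as a generator, and that the generating property of the transported morphisms $\sU^{\ho}\to X\to\sU^{\ho}$ forces the class of $X$ to be a generator as well. Everything else — the reduction to the monoidal category, the use of $b\rH_tb^{-1}=\rH_s$, and the transport of distinguished triangles — is formal; the delicate input is the precise structure of the Hom and $\Ext^1$ spaces between $\sU^{\ho}$, $\sfN_s$, $\sfD_s$ and $\sfR_s$ inside $\HC^{\ho,\ho}$.
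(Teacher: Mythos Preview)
Your reduction to the monoidal category $D^-\HC^{\ho,\ho}$ is exactly the paper's approach: since $\Theta_s=\sfR_s\star(-)$ and $\mathbb{S}_b=\sfN_b\star(-)$, the lemma reduces to an isomorphism $\sfN_b\star\sfR_t\star\sfN_{b^{-1}}\cong\sfR_s$ of objects. The paper's proof is a one-line citation: this isomorphism is precisely \cite[Lemma~2.24]{br-two}.

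Your attempt to \emph{prove} that isomorphism from scratch, rather than cite it, has a genuine gap at the step you yourself flag as the main obstacle. Transporting the defining triangles is fine, but the rigidity you need---that an indecomposable object $X$ in the heart sitting in an extension $0\to\sU^{\ho}\to X\to\sfN_s\to 0$ must be isomorphic to $\sfR_s$---is not established by either of your two sketches. For the duality route, you assert that duality fixes $\sfR_t$ and exchanges $\sfN_b$ with $\sfN_{b^{-1}}$, but even granting this, ``self-dual indecomposable extension of $\sfN_s$ by $\sU^{\ho}$'' does not by itself force $X\cong\sfR_s$ without further control on $\Ext^1_{\HC^{\ho,\ho}}(\sfN_s,\sU^{\ho})$; you would still need that this $\Ext^1$ is free of rank~$1$ over the center (or something equivalent), which is exactly your second route. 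For the second route, you state the cyclicity of $\Ext^1$ as a fact to be computed ``as in \cite[\S 6.6]{br-Hecke} and \cite{br-two}'' but do not carry out the computation. There are also smaller unchecked points along the way (that $\sfN_s$ lies in the heart, i.e.\ that $\sU^{\ho}\to\sfR_s$ is injective; that $\sfR_t$ is indecomposable), each of which is true but requires argument.

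In short: your strategy is correct and matches the paper's, but the substance lies entirely in the result you cite-and-defer, which is \cite[Lemma~2.24]{br-two}. If you want a self-contained argument you must actually compute $\Ext^1_{\HC^{\ho,\ho}}(\sfN_s,\sU^{\ho})$ (or invoke the Soergel-bimodule / Hecke-category description of $\HC^{\ho,\ho}$ where the analogous statement is standard).
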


\begin{proof}
This follows from~\cite[Lemma~2.24]{br-two} and the constructions above.
\end{proof}

\begin{rmk}
A statement very close to Lemma~\ref{lem:conjugation-wall-crossing} appears in~\cite[Corollary~6.1.3]{Ric10}. Unfortunately, the proof given there is incomplete, which is corrected by the arguments above.
\end{rmk}

%
%

\subsection{Completed universal Verma modules}
\label{ss:completed-Verma}

Let $\chi$ and $H$ be as in \S\ref{ss:equiv-modules-Ug}, and let $\h=\mathrm{Lie}(H)$. 
Any $H$-equivariant $\sU(\g)$-module admits a canonical action of $\sU(\h)^\op$ commuting with the actions of $\g$ and $H$, 
and defined as follows. Let $M\in \Mod^H(\sU(\g))$, and denote by $\sigma_1: \h \rightarrow \End(M)$ the restriction of the $\sU(\g)$-action to $\h$ and by $\sigma_2: \h \rightarrow \End(M)$ the differential of the $H$-action. Then for $x \in \h$ and $m \in M$ we set
\[
x \odot m = \sigma_1(x)(m) - \sigma_2(x)(m).
\]
It is easily seen that this assignment defines a right action of the Lie algebra $\h$ on $M$ 
(i.e.~that it extends to an action of $\sU(\h)^\op$),
which makes $M$ an $H$-equivariant $\sU(\g) \otimes \sU(\h)^\op$-module. It is clear that this construction is compatible with tensor products, in the sense that for $M,N$ in $\Mod^H(\sU(\g))$, the ``extra'' action of $\sU(\h)^\op$ on the tensor product $M \otimes N$ (equipped with the diagonal actions of $\g$ and $H$) is the diagonal action with respect to the similar actions on $M$ and $N$. It is also functorial, in the sense that any morphism of $H$-equivariant $\sU(\g)$-modules commutes with these extra actions.

\begin{rmk}
\label{rmk:action-t-tensor-prod}
By construction, if the actions $\sigma_1$ and $\sigma_2$ coincide, then the action of $\sU(\h)^\op$ constructed above vanishes. This observation has the following consequence. Let $V \in \Rep(G)$, and $M \in \Mod^H(\sU(\g))$. We consider the tensor product $V \otimes M$, which we equip with the diagonal actions of $\g$ and $H$ to see it as an $H$-equivariant $\sU(\g)$-module. Then the restriction to $\ZFr(\g) \otimes \sU(\h)^\op$ of the action of $\sU(\g) \otimes \sU(\h)^\op$ on $V \otimes M$ is induced by the action on $M$.
\end{rmk}

In the rest of this subsection we assume that $\chi=0$ and $H=T$. 
We identify $\mathrm{S}(\t)$ with $\sU(\t)^\op$ via the isomorphism that is identical on $\t$. 

For $\nu \in \X$
we will denote by $\sO(\g^{*(1)} \times \t^*)_{\widehat{(0,\nu)}}$ the completion of $\sO(\g^{*(1)} \times \t^*)$ with respect to the maximal ideal corresponding to the point $(0,\overline{\nu})$.
Recall the universal Verma modules $\tDelta(\lambda)$ introduced in~\S\ref{ss:Ext-simples-costandards}. For $\lambda \in \X$ we now set
\[
\hDelta(\lambda) := \tDelta(\lambda) \otimes_{\sO(\g^{*(1)} \times \t^*)} \sO(\g^{*(1)} \times \t^*)_{\widehat{(0,0)}},
\]
where the action of $\sO(\g^{*(1)} \times \t^*)$ is the restriction of the above action of $\sU(\g) \otimes \mathrm{S}(\t)$. Note that $\hDelta(\lambda)$ has a canonical structure of $T$-equivariant $\sU_{\ho}$-module. 

Below we will use the following properties of the completed Verma modules,
where we denote by $\sO(\t^*)_{\ho}$ the completion of $\sO(\t^*)$ with respect to the ideal corresponding to $0 \in \t^*$. (See~\S\ref{sss:reductive-gps-notation} for the definition of the order $\preceq$.)

\begin{lem}
\label{lem:properties-hDelta}
\begin{enumerate}
\item
\label{it:central-char-hDelta}
For any $\lambda \in \X$, $\hDelta(\lambda)$ belongs to the direct summand in the decomposition~\eqref{eqn:decomposition-mod-central-char} corresponding to the image of $\lambda$.
We have an isomorphism of $T$-equivariant $\sU_{\widehat{0}}^{\hla}$-modules 
\begin{equation}
\label{eqn:completion-tDelta-1}
\tDelta(\lambda)\otimes_{\mathrm{Z}(\sU(\g))} \mathrm{Z}(\sU(\g))_{\widehat{0}}^{\hla}
\ \cong\ 
\bigoplus_{\mu\in W\bullet \lambda} \hDelta(\mu)\langle \lambda-\mu \rangle
\end{equation}
where in the right-hand side $\sU_{\widehat{0}}^{\hla}$ acts via its actions on the modules $\hDelta(\mu)$.
In particular, $\hDelta(\lambda)$ is finitely generated over $\sU_{\widehat{0}}^{\hla}$.
\item
\label{it:morph-hDelta}
For $\lambda,\mu \in \X$ we have
\[
\Hom_{\Mod^T(\sU_{\ho})}(\hDelta(\lambda), \hDelta(\mu)) \cong \begin{cases}
\sO(\t^*)_{\ho} & \text{if $\lambda=\mu$;} \\
0 & \text{if $\lambda \not\preceq \mu$.}
\end{cases}
\]
\end{enumerate}
\end{lem}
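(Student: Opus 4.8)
The plan is to analyze $\tDelta(\lambda)$ as a $T$-equivariant $\sU(\g)$-module via the extra action of $\mathrm{S}(\t) \cong \sU(\t)^{\op}$ (which, for $H=T$, commutes with the $\g$- and $T$-actions), and then complete. The key observation is that $\tDelta(\lambda) = \sU(\g) \otimes_{\sU(\n)} \k_T(\lambda)$ carries, besides the $\ZFr(\g)$-action, the action of the Harish-Chandra center $\ZHC(\g) \cong \sO(\t^*/(W,\bullet))$, and the extra $\mathrm{S}(\t) = \sO(\t^*)$-action; these fit together through the composition $\t^* \to \t^*/(W,\bullet)$ (the first defining the central character by sending $\overline\lambda$ to the HC character of $\Delta(\lambda)$). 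Concretely, on the generating line $1 \otimes \k_T(\lambda)$, the $\mathrm{S}(\t)$-action is by the character $\overline\lambda$, and one checks directly (as in the complex case) that $\ZHC(\g)$ acts on the whole module through $\mathrm{S}(\t)$ composed with the Harish-Chandra projection, i.e.\ the $\ZHC(\g)$-module structure factors through the quotient $\sO(\t^*)$. This gives that $\tDelta(\lambda)$, viewed over $\sO(\g^{*(1)} \times \t^*)$, is supported set-theoretically on $\{0\} \times (W \bullet \overline\lambda)$, the points $\{(0, \overline\mu) : \mu \in W\bullet\lambda\}$ being exactly the preimages under $\t^* \to \t^*/(W,\bullet)$ of the HC character of $\Delta(\lambda)$.

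For part \eqref{it:central-char-hDelta}: from the support statement and Lemma~\ref{lem:commutative-algebra} (applied to $\sO(\g^{*(1)}\times\t^*)$, or directly by the artinian-ring decomposition), completing $\tDelta(\lambda)$ at the ideal of the $(W_\ex,\bullet)$-orbit of $\lambda$ in $\mathrm{Z}(\sU(\g))$ — equivalently at the point $0 \in \g^{*(1)}$ and the image of $\lambda$ in $\t^*/(W,\bullet)$ — decomposes the module as a direct sum of its completions at the finitely many closed points $(0,\overline\mu)$, $\mu \in W\bullet\lambda$, lying over it. The summand at $(0,\overline\mu)$ is by definition $\tDelta(\lambda) \otimes_{\sO(\g^{*(1)}\times\t^*)} \sO(\g^{*(1)}\times\t^*)_{\widehat{(0,\overline\mu)}}$; I would identify this with $\hDelta(\mu)\langle \lambda - \mu\rangle$ by exhibiting the $T$-equivariant map $\tDelta(\lambda) \to \tDelta(\mu)\langle\lambda-\mu\rangle$ (or its completed version) — using that $\mu \in W\bullet\lambda$ means $\mu$ and $\lambda$ have the same HC central character, so there is a nonzero $\sU(\g)$-map $\Delta(\lambda)\to\Delta(\mu)$ after completion, which lifts to universal Verma modules — and checking it becomes an isomorphism after completing at the relevant point (both sides being free of rank one over the completed $\mathrm{S}(\t)$ as $\sU(\n^-)$-modules, by PBW). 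The grading shift $\langle\lambda-\mu\rangle$ is bookkeeping of $T$-weights: the highest weight of $\tDelta(\mu)$ is $\mu$, and we want it to match the $\mu$-weight space sitting inside $\tDelta(\lambda)$. Finite generation over $\sU_{\widehat 0}^{\hla}$ is then immediate since $\tDelta(\lambda)$ is finitely generated over $\sU(\g)\otimes\mathrm{S}(\t)$ and we have completed.

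For part \eqref{it:morph-hDelta}: this mirrors the classical computation of $\Hom$ between Verma modules. A morphism $\hDelta(\lambda)\to\hDelta(\mu)$ is determined by the image of the highest-weight generator, which must be a highest-weight vector (killed by $\n$) of weight $\overline\lambda$ in $\hDelta(\mu)$; since $\hDelta(\mu)$ is free of rank one over the completed $\sU(\n^-)\widehat\otimes\mathrm{S}(\t)$, such vectors of weight $\lambda$ form a module over $\sO(\t^*)_{\ho}$ which is free of rank one when $\lambda=\mu$ (the generator times an arbitrary scalar in $\sO(\t^*)_{\ho}$, using the extra $\mathrm{S}(\t)$-action) and which vanishes when $\lambda\not\preceq\mu$ because any weight vector in $\hDelta(\mu)$ has weight $\preceq\mu$ (the weights of $\tDelta(\mu)$ lie in $\mu - \Z_{\geq 0}\Phi^+$, stable under the completion). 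One must check that a $\g$-highest-weight vector of the correct weight automatically generates a submodule on which $\n$ acts as claimed, i.e.\ that it indeed extends to a module map — this is the usual adjunction $\Hom_{\Mod^T(\sU(\g))}(\tDelta(\lambda), N)\cong \Hom_{\Mod^T(\sU(\n))}(\k_T(\lambda), N)$, passed through completion, which is routine. The \textbf{main obstacle} is the careful handling of completions: one needs to know that completing the universal Verma module commutes appropriately with taking $\n$-invariants / weight spaces and with the freeness over $\sU(\n^-)$, i.e.\ that $\hDelta(\mu)$ really is (topologically) free of rank one over $\sO(\t^*)_{\ho}$ in each $T$-weight and that no new highest-weight vectors appear in the completion; this is where the formalism of Appendix~\ref{app:equiv-sheaves} and the finiteness coming from Lemma~\ref{lem:completion-U-product} do the work, but it requires care to state precisely.
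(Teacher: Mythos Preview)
Your plan has the right architecture for part~\eqref{it:central-char-hDelta}, but two of the computations are off.

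First, the $\odot$-action of $h\in\t$ on the generator $1\otimes 1\in\tDelta(\lambda)$ is \emph{not} by the character $\overline\lambda$: since $\tDelta(\lambda)=\sU(\g)\otimes_{\sU(\n)}\k_T(\lambda)$ is induced only from $\sU(\n)$ (not from $\sU(\b)$), one has $h\odot(1\otimes1)=h\otimes1-\overline\lambda(h)(1\otimes1)$, which is not proportional to $1\otimes1$. In fact $\tDelta(\lambda)$ is \emph{free} over $\mathrm{S}(\t)$ via $\odot$, so it is not set-theoretically supported on finitely many points of $\t^*$. What is true, and what the paper uses, is that the $\ZHC(\g)$-action equals the $\odot$-action precomposed with the map $\ZHC(\g)\hookrightarrow\mathrm{S}(\t)$ followed by the shift $x\mapsto x+\overline\lambda(x)$; consequently, completing at the $\ZHC$-character of $\lambda$ amounts, for the $\odot$-variable, to completing at the points $\overline\mu-\overline\lambda$ (for $\mu\in W\bullet\lambda$), not at the points $\overline\mu$. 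This is why the paper's intermediate decomposition has subscripts $(0,\mu-\lambda)$.

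Second, identifying each summand via ``a nonzero $\sU(\g)$-map $\Delta(\lambda)\to\Delta(\mu)$'' is both unnecessary and unavailable (such maps need not exist in positive characteristic). The point is much simpler: as $\sU(\g)$-modules (forgetting $T$), $\tDelta(\lambda)$ and $\tDelta(\mu)$ are \emph{the same module} $\sU(\g)/\sU(\g)\n$; the parameter $\lambda$ only records the $T$-equivariant structure, so there is a tautological isomorphism $\tDelta(\lambda)\cong\tDelta(\mu)\langle\lambda-\mu\rangle$ of $T$-equivariant $\sU(\g)$-modules. One then checks that under this identification the $\odot$-action shifts by exactly $\overline\mu-\overline\lambda$, so completing $\tDelta(\lambda)$ at $(0,\overline\mu-\overline\lambda)$ matches completing $\tDelta(\mu)$ at $(0,0)$, which is $\hDelta(\mu)\langle\lambda-\mu\rangle$ by definition.

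For part~\eqref{it:morph-hDelta} your highest-weight argument is essentially the paper's; the paper sidesteps the completion subtleties you flag by using finite generation of $\hDelta(\lambda)$ to reduce to compatible systems of maps between the quotients by powers of the Frobenius maximal ideal at $0$, where the weight-space considerations apply directly without any topological care.
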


\begin{proof}
\eqref{it:central-char-hDelta}
For the first assertion, the statement amounts to the property that the action of $\ZHC(\g)$ on $\hDelta(\lambda)$ factors through an action of its completion with respect to the maximal ideal corresponding to the image of $\lambda$. Now, from the definitions we see that this action is the composition of the ``extra'' action $\odot$ with the composition 
\[
\ZHC(\g) \cong \mathrm{S}(\t)^{(W,\bullet)} \hookrightarrow \mathrm{S}(\t) \simto \mathrm{S}(\t)
\]
where the rightmost map is the isomorphism sending $x \in \t$ to $x + \overline{\lambda}(x)$, so that the property is clear.

From this observation and considerations similar to those in the proof of Lemma~\ref{lem:completion-U-product} we also deduce an isomorphism 
\begin{equation}
\label{eqn:completion-tDelta-2}
\tDelta(\lambda)\otimes_{\mathrm{Z}(\sU(\g))} \mathrm{Z}(\sU(\g))_{\widehat{0}}^{\hla}\cong 
\bigoplus_{\mu\in W\bullet\lambda} \tDelta(\lambda)\otimes_{\sO(\g^{*(1)} \times \t^*)} (\sO(\g^{*(1)} \times \t^*))_{\widehat{(0,\mu-\lambda)}}.
\end{equation}
To proceed, note that for $\eta \in \X$ the tautological isomorphism of $T$-equivariant $\sU(\g)$-modules 
\[
\tDelta(\lambda)=\tDelta(\lambda+\eta) \langle -\eta \rangle
\]
intertwines the $\mathrm{S}(\t)$-action on the right-hand side induced by the $\odot$-action on $\tDelta(\lambda+\eta)$ with the action on the left-hand side given by the composition of the $\odot$-action with the isomorphism $\mathrm{S}(\t) \simto \mathrm{S}(\t)$ sending $x \in \t$ to $x - \overline{\eta}(x)$. 
Hence the right-hand side of \eqref{eqn:completion-tDelta-2} is isomorphic to 
\[
\bigoplus_{\mu\in W\bullet\lambda} \tDelta(\mu)\otimes_{\sO(\g^{*(1)} \times \t^*)} (\sO(\g^{*(1)} \times \t^*))_{\widehat{(0,0)}} \langle \lambda-\mu \rangle \\
=\bigoplus_{\mu\in W\bullet\lambda} \hDelta(\mu) \langle \lambda-\mu \rangle,
\]
which completes the proof of~\eqref{eqn:completion-tDelta-1}.
 
For the final assertion, by \eqref{eqn:completion-tDelta-1} the module $\hDelta(\lambda)$ is a direct summand of the module $\tDelta(\lambda)\otimes_{\mathrm{Z}(\sU(\g))} \mathrm{Z}(\sU(\g))_{\widehat{0}}^{\hla}$, which is clearly finitely generated over $\sU_{\widehat{0}}^{\hla}$. 

\eqref{it:morph-hDelta}
As explained above, each $\hDelta(\lambda)$ is finitely generated over $\sU_{\ho}$, hence over $\sO(\g^{*(1)}_{\ho})$. As a consequence, denoting by $I \subset \sO(\g^{*(1)})$ the maximal ideal corresponding to $0$, the datum of a morphism of $\sU_{\ho}$-modules from $\hDelta(\lambda)$ to $\hDelta(\mu)$ is equivalent to the datum of a projective system of morphisms of $\sU(\g)$-modules
\[
\hDelta(\lambda) \otimes_{\sO(\g_{\ho}^{*(1)})} (\sO(\g^{*(1)}_{\ho}) / I^n) \to \hDelta(\mu) \otimes_{\sO(\g_{\ho}^{*(1)})} (\sO(\g^{*(1)}_{\ho}) / I^n).
\]
Moreover, the former morphism is $T$-equivariant if and only if each morphism in the corresponding system is $T$-equivariant (see Remark~\ref{rmk:IndMod}\eqref{it:comments-formal-schemes}).

Now, recall that any morphism of $T$-equivariant $\sU(\g)$-modules is equivariant with respect to the ``extra'' $\mathrm{S}(\t)$-action. It follows from~\eqref{eqn:completion-tDelta-1} that the module $\hDelta(\lambda) \otimes_{\sO(\g_{\ho}^{*(1)})} (\sO(\g^{*(1)}_{\ho}) / I^n)$ is a quotient (in fact a direct summand) of $\tDelta(\lambda) \otimes_{\sO(\g^{*(1)})} (\sO(\g^{*(1)}) / I^n)$, hence it is generated as a $\sU(\g)$-module by a vector of $T$-weight $\lambda$. In case $\lambda \not\preceq \mu$, $\hDelta(\mu) \otimes_{\sO(\g_{\ho}^{*(1)})} (\sO(\g^{*(1)}_{\ho}) / I^n)$ has no nonzero vector of $T$-weight $\lambda$, which shows that
\[
\Hom_{\Mod^T(\sU_{\ho})}(\hDelta(\lambda), \hDelta(\mu))=0
\]
in this case.

In case $\lambda=\mu$, we similarly have
\begin{multline*}
\Hom_{\Mod^T(\sU(\g))} \bigl( \tDelta(\lambda) \otimes_{\sO(\g^{*(1)})} (\sO(\g^{*(1)}) / I^n), \tDelta(\lambda) \otimes_{\sO(\g^{*(1)})} (\sO(\g^{*(1)}) / I^n) \bigr) \\
\cong \sO(\t^*) \otimes_{\sO(\t^{*(1)})} \sO(\t^{*(1)}) / (I^n \cap \sO(\t^{*(1)})),
\end{multline*}
and $\Hom_{\Mod^T(\sU(\g))}(\hDelta(\lambda) \otimes_{\sO(\g_{\ho}^{*(1)})} (\sO(\g^{*(1)}_{\ho}) / I^n), \hDelta(\lambda) \otimes_{\sO(\g_{\ho}^{*(1)})} (\sO(\g^{*(1)}_{\ho}) / I^n))$
corresponds to the completion of the right-hand side with respect to the maximal ideal corresponding to $0 \in \t^*$. We deduce the identification $\Hom_{\Mod^T(\sU_{\ho})}(\hDelta(\lambda), \hDelta(\lambda)) \cong \sO(\t^*)_{\ho}$.
\end{proof}

The following statement is a variant in our setting of standard properties of translation functors, see~\cite[Theorems~7.6 and~7.14]{Hum08} or~\cite[Propositions~II.7.11 and~II.7.12]{Jan03}.


\begin{prop}
\label{prop:translation-hDelta}
Let $w \in W_\aff$ and $s \in S_\aff$.

\begin{enumerate}
\item
\label{it:translation-hDelta-1}
We have $\sfT^{\mu_s}_{0}(\hDelta(w \bullet 0)) \cong \hDelta(w \bullet \mu_s)$.
\item
\label{it:translation-hDelta-2}
Assume that $w \bullet 0 \prec ws \bullet 0$. Then there exists an exact sequence
\[
0 \to \hDelta(ws \bullet 0) \to \sfT_{\mu_s}^{0}(\hDelta(w \bullet \mu_s)) \to \hDelta(w \bullet 0) \to 0,
\]
and an isomorphism
\[
\mathbb{S}_s^-(\hDelta(w \bullet 0)) \cong \hDelta(ws \bullet 0).
\]
\end{enumerate}
\end{prop}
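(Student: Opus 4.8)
The plan is to reduce everything to the analogous statements for baby Verma modules in $\Rep(G_1T)$, for which the required facts are classical (see~\cite[\S II.7 and~\S II.9]{Jan03}), and then to "lift" them to the completed setting using the description of $\hDelta(\lambda)$ as a direct summand of a completion of $\tDelta(\lambda)$ provided by Lemma~\ref{lem:properties-hDelta}\eqref{it:central-char-hDelta}. More precisely, $\hDelta(\lambda)\otimes_{\sO(\g^{*(1)}_{\ho})}\k_0$ is a direct summand of $\tDelta(\lambda)\otimes_{\sO(\g^{*(1)})}\k_0$, which (after a weight-space bookkeeping using~\eqref{eqn:completion-tDelta-1}) is identified with the baby Verma module $\bV(\lambda)$ equipped with its "extra" $\mathrm{S}(\t)$-action; under translation functors this passes to the translation functors for $G_1T$-modules by Remark~\ref{rmk:translation-G1T}. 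So the strategy for~\eqref{it:translation-hDelta-1} is: first compute $\sfT^{\mu_s}_0(\hDelta(w\bullet 0))$ after reduction mod $I$ (i.e. specialising $\sO(\g^{*(1)}_{\ho})$ at $0$), obtaining $\bV(w\bullet\mu_s)$ up to the extra torus action by the $G_1T$-statement; then observe that $\sfT^{\mu_s}_0(\hDelta(w\bullet 0))$ is a finitely generated $T$-equivariant $\sU^{\widehat{w\bullet\mu_s}}_{\ho}$-module, flat over $\sO(\g^{*(1)}_{\ho})$ (translation functors are exact and preserve $\sO$-flatness, $\hDelta$ being $\sO$-flat by freeness over $\sU(\n^-)$), so it is determined by its special fibre together with the central character datum; comparing with $\hDelta(w\bullet\mu_s)$, which has the same special fibre and the same completed central character, yields the isomorphism. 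The key point allowing this comparison is the rigidity coming from Lemma~\ref{lem:properties-hDelta}\eqref{it:morph-hDelta}: a nonzero $T$-equivariant map between the relevant completed modules, once it exists mod $I$ and lifts by completeness and Nakayama, is forced to be an isomorphism because on both sides the $(w\bullet\mu_s)$-weight space is free of rank one over $\sO(\t^*)_{\ho}$ and generates.

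For part~\eqref{it:translation-hDelta-2}: when $w\bullet 0\prec ws\bullet 0$ the classical computation of $\sfT^0_{\mu_s}\sfT^{\mu_s}_0$ on a standard module (the "$\theta$-functor" applied to a Verma, cf.~\cite[Proposition~II.7.11]{Jan03} or~\cite[Theorem~7.14]{Hum08}) gives a two-step filtration with subquotients $\Delta(ws\bullet 0)$ and $\Delta(w\bullet 0)$; I would establish the analogous short exact sequence
\[
0\to\hDelta(ws\bullet 0)\to\sfT^0_{\mu_s}(\hDelta(w\bullet\mu_s))\to\hDelta(w\bullet 0)\to 0
\]
by the same reduction: $\sfT^0_{\mu_s}(\hDelta(w\bullet\mu_s))$ is $\sO(\g^{*(1)}_{\ho})$-flat, so it suffices to produce the sequence and check exactness after reduction mod $I$, where it becomes the known filtration of $\sfT^0_{\mu_s}(\bV(w\bullet\mu_s))$ with subquotients $\bV(ws\bullet 0)$ and $\bV(w\bullet 0)$ (carrying the correct extra $\mathrm{S}(\t)$-actions). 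The surjection onto $\hDelta(w\bullet 0)$ comes from part~\eqref{it:translation-hDelta-1} together with the counit $\sfT^0_{\mu_s}\sfT^{\mu_s}_0\to\id$ (using biadjointness, after renormalising the identification $\widetilde{\Simp}(\nu)^*\cong\widetilde{\Simp}(-w_\circ\nu)$ as in~\S\ref{ss:translation-functors}); by Lemma~\ref{lem:properties-hDelta}\eqref{it:morph-hDelta} the kernel, having lowest/highest relevant weight $ws\bullet 0$ and the right graded character, is forced to be $\hDelta(ws\bullet 0)$. Finally, to get $\mathbb{S}_s^-(\hDelta(w\bullet 0))\cong\hDelta(ws\bullet 0)$, recall $\mathbb{S}_s^-$ is the cocone of $\Theta_s=\sfT^0_{\mu_s}\sfT^{\mu_s}_0\to\id$; feeding in part~\eqref{it:translation-hDelta-1} to rewrite $\Theta_s(\hDelta(w\bullet 0))=\sfT^0_{\mu_s}(\hDelta(w\bullet\mu_s))$, and using the short exact sequence just obtained — in which the quotient map $\Theta_s(\hDelta(w\bullet 0))\to\hDelta(w\bullet 0)$ is exactly the adjunction counit — identifies the cocone with the kernel $\hDelta(ws\bullet 0)$, concentrated in degree $0$.

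The main obstacle I anticipate is not the homological-algebra skeleton but the bookkeeping needed to identify the modules \emph{equivariantly} — i.e. keeping track of the $T$-action, the Koszul/weight grading shifts $\langle\lambda-\mu\rangle$ appearing in~\eqref{eqn:completion-tDelta-1}, and the "extra" $\mathrm{S}(\t)$-action $\odot$ throughout, so that the completed central characters match on the nose. In particular one must check that translation functors interact correctly with the $\odot$-action (this is Remark~\ref{rmk:action-t-tensor-prod}: tensoring with a $G$-module $\widetilde{\Simp}(\nu)$ does not change the restriction of the action to $\ZFr(\g)\otimes\sU(\t)^{\op}$), which is what guarantees that applying $\sfT^{\mu_s}_0$ to $\hDelta(w\bullet 0)$ lands in the summand indexed by $w\bullet\mu_s$ rather than some translate. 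A secondary subtlety is the precise normalisation of the adjunction $(\sfT^\mu_\lambda,\sfT^\lambda_\mu)$ so that the counit $\Theta_s\to\id$ is genuinely the map inducing~\eqref{eqn:morphisms-Rs}; this has to be pinned down before the cocone computation can be run, but it is exactly the kind of point settled in~\cite[\S 6]{br-Hecke} and~\cite[\S 2.11]{br-two}, which may be cited.
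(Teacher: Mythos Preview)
Your reduction-mod-$I$ strategy has a concrete gap: the flatness claim is false. The module $\hDelta(\lambda)$ is \emph{not} flat over $\sO(\g^{*(1)}_{\ho})$, because $\ZFr(\n)$ already acts trivially on $\tDelta(\lambda)=\sU(\g)\otimes_{\sU(\n)}\k_T(\lambda)$ (the central element $x^p-x^{[p]}$ for $x\in\n$ annihilates the highest-weight generator), so $\tDelta(\lambda)$ and its completion are supported on the proper closed subscheme $(\g/\n)^{*(1)}\subset\g^{*(1)}$. Freeness over $\sU(\n^-)$ only gives flatness over $\ZFr(\b^-)$. Without flatness you cannot conclude that the kernel of the reduced map is the reduction of the kernel, and ``lifting a map from the special fibre by completeness and Nakayama'' is not a mechanism that exists: Nakayama upgrades surjectivity, it does not manufacture lifts. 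Your appeal to Lemma~\ref{lem:properties-hDelta}\eqref{it:morph-hDelta} to ``force'' the kernel to be $\hDelta(ws\bullet 0)$ is also misplaced --- that lemma computes $\Hom$ between two $\hDelta$'s, it does not characterise $\hDelta$ among modules with a given highest weight and character.

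The paper bypasses all of this by working directly at the completed level, never reducing to baby Vermas. The key input is the tensor identity $\widetilde{\Simp}(\nu_s)\otimes\tDelta(w\bullet 0)\cong\sU(\g)\otimes_{\sU(\n)}(\widetilde{\Simp}(\nu_s)\otimes\k_T(w\bullet 0))$, which after completion (Remark~\ref{rmk:action-t-tensor-prod} controls the $\odot$-action) produces a filtration of $\widetilde{\Simp}(\nu_s)\otimes\hDelta(w\bullet 0)$ with subquotients $\hDelta(w\bullet 0+\eta)$, $\eta$ ranging over the weights of $\widetilde{\Simp}(\nu_s)$. Then Lemma~\ref{lem:properties-hDelta}\eqref{it:central-char-hDelta} and~\cite[Lemma~II.7.7]{Jan03} pick out which subquotients survive $\pr_{\mu_s}$ or $\pr_0$: exactly one for~\eqref{it:translation-hDelta-1}, exactly two for~\eqref{it:translation-hDelta-2}, and the hypothesis $w\bullet 0\prec ws\bullet 0$ orders them correctly. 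This gives the short exact sequence without any deformation argument.

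The step you treat as automatic --- that the surjection $\varpi:\Theta_s(\hDelta(w\bullet 0))\to\hDelta(w\bullet 0)$ in this sequence agrees with the adjunction counit up to an automorphism --- is where the paper does real work. By adjunction and part~\eqref{it:translation-hDelta-1}, $\Hom(\Theta_s(\hDelta(w\bullet 0)),\hDelta(w\bullet 0))\cong\End(\hDelta(w\bullet\mu_s))\cong\sO(\t^*)_{\ho}$, so this $\Hom$ is free of rank one over $\End(\hDelta(w\bullet 0))$ with the counit as generator. Applying $\Hom(-,\hDelta(w\bullet 0))$ to the exact sequence and using the vanishing $\Hom(\hDelta(ws\bullet 0),\hDelta(w\bullet 0))=0$ from Lemma~\ref{lem:properties-hDelta}\eqref{it:morph-hDelta} (since $ws\bullet 0\not\preceq w\bullet 0$) shows that $\varpi$ is also a generator; hence the two differ by a unit, and the cocone computation goes through.
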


\begin{proof}
The proof is essentially the same as in~\cite{Hum08, Jan03}. Namely, for~\eqref{it:translation-hDelta-1}, we denote by $\nu_s$ the unique dominant weight in $W\mu_s$. Then we observe that we have a canonical isomorphism of $T$-equivariant $\sU(\g)$-modules
\[
\widetilde{\Simp}(\nu_s) \otimes \tDelta(w \bullet 0) \cong \sU(\g) \otimes_{\sU(\n)} (\widetilde{\Simp}(\nu_s) \otimes \k_T(w \bullet 0)).
\]
Using also Remark~\ref{rmk:action-t-tensor-prod}, we deduce an isomorphism
\[
\widetilde{\Simp}(\nu_s) \otimes \hDelta(w \bullet 0) \cong \bigl( \sU(\g) \otimes_{\sU(\n)} (\widetilde{\Simp}(\nu_s) \otimes \k_T(w \bullet 0)) \bigr) \otimes_{\sO(\g^{*(1)} \times \t^*)} \sO(\g^{*(1)} \times \t^*)_{\widehat{(0,0)}},
\]
where the action of $\sO(\g^{*(1)} \times \t^*)$ is obtained using the ``extra'' action of $\t$ on the $T$-equivariant $\sU(\g)$-module $\sU(\g) \otimes_{\sU(\n)} (\widetilde{\Simp}(\nu_s) \otimes \k_T(w \bullet 0))$. Now $\widetilde{\Simp}(\nu_s) \otimes \k_T(w \bullet 0)$ has an increasing filtration, as a $T$-equivariant $\sU(\n)$-module, with subquotients of the form $\k_T(w \bullet 0+\eta)$ where $\eta$ runs over the $T$-weights of $\widetilde{\Simp}(\nu_s)$ (counted with multiplicities). We deduce a filtration of $\widetilde{\Simp}(\nu_s) \otimes \hDelta(w \bullet 0)$ with subquotients of the form $\hDelta(w \bullet 0 + \eta)$ with $\eta$ as above. In view of Lemma~\ref{lem:properties-hDelta}\eqref{it:central-char-hDelta}, the terms in this filtration which contribute to $\sfT^{\mu_s}_{0}(\hDelta(w \bullet 0))$ are those such that $w \bullet 0 + \eta \in W_\ex \bullet \mu_s$, which is equivalent to $w \bullet 0 + \eta \in W_\aff \bullet \mu_s$. Now by~\cite[Lemma~II.7.7]{Jan03} there is exactly one such $\eta$, and moreover it has multiplicity $1$ and satisfies $w \bullet 0 + \eta = w \bullet \mu_s$, which finishes the proof.

For the first assertion in~\eqref{it:translation-hDelta-2} we proceed in the same way to compute $\sfT_{\mu_s}^{0}(\hDelta(w \bullet \mu_s))$. In this case, again by~\cite[Lemma~II.7.7]{Jan03} there are two weights which contribute, corresponding to $w \bullet 0$ and $ws \bullet 0$, and they again have multiplicity $1$. Under our assumption that $w \bullet 0 \prec ws \bullet 0$, the weight corresponding to $ws \bullet 0$ occurs earlier in the filtration, so we obtain the desired exact sequence.

To prove the second assertion in~\eqref{it:translation-hDelta-2}, we note that using~\eqref{it:translation-hDelta-1} we have $\Theta_s(\hDelta(w \bullet 0)) \cong \sfT_{\mu_s}^{0}(\hDelta(w \bullet \mu_s))$, hence an exact sequence
\[
0 \to \hDelta(ws \bullet 0) \to \Theta_s(\hDelta(w \bullet 0)) \to \hDelta(w \bullet 0) \to 0,
\]
and to conclude it suffices to prove that the surjection in this exact sequence, which we will denote $\varpi$, differs from the adjunction morphism by composition with an automorphism of $\hDelta(w \bullet 0)$. Now, by adjunction we have
\[
\Hom(\Theta_s(\hDelta(w \bullet 0)), \hDelta(w \bullet 0)) \cong \End(\sfT_{0}^{\mu_s} \hDelta(w \bullet 0)).
\]
Using~\eqref{it:translation-hDelta-1} and Lemma~\ref{lem:properties-hDelta}\eqref{it:morph-hDelta} we obtain that we have identifications
\[
\sO(\t^*)_{\ho} \simto \End(\hDelta(w \bullet 0)) \simto \End(\sfT_{0}^{\mu_s} \hDelta(w \bullet 0));
\]
hence $\Hom(\Theta_s(\hDelta(w \bullet 0)), \hDelta(w \bullet 0))$ is free of rank $1$ as a module over the algebra $\End(\hDelta(w \bullet 0))$, with a generator given by the adjunction morphism.
On the other hand, applying $\Hom(-,\hDelta(w \bullet 0))$ to the exact sequence above and using the other claim in Lemma~\ref{lem:properties-hDelta}\eqref{it:morph-hDelta}, we see that we have an isomorphism
\[
\End(\hDelta(w \bullet 0)) \xrightarrow[\sim]{(-)\circ\varpi} \Hom(\Theta_s(\hDelta(w \bullet 0)), \hDelta(w \bullet 0)).
\]
In other words, $\varpi$ is also a generator of $\Hom(\Theta_s(\hDelta(w \bullet 0)), \hDelta(w \bullet 0))$ as a module over $\End(\hDelta(w \bullet 0))$, which proves the claim.
\end{proof}

\begin{rmk}
\label{rmk:intertwining-bV}
Similar considerations show that in the setting of Proposition \ref{prop:translation-hDelta}\eqref{it:translation-hDelta-2} we have $\mathbb{S}_s^-(\bV(w \bullet 0)) \cong \bV(ws \bullet 0)$.
\end{rmk}

%
%


\subsection{A decomposition} 
\label{ss:U0-modules}

Let $\chi$ and $H$ be as in \S\ref{ss:equiv-modules-Ug}, assuming in addition that
$H$ is a subtorus in $G$. 
We set $\X_H=X^*(H)$ and $\h=\mathrm{Lie}(H)$. 
Since $\chi\in \g^{*(1)}$ is nilpotent and fixed by $H^{(1)}$, its restriction to $\h^{*(1)}$ vanishes. 

Since $H$ is abelian, we have an
identification $\mathrm{S}(\h) = \sU(\h)^\op$ which is the identity on $\h$. 
Via this identification, the construction recalled in~\S\ref{ss:completed-Verma} provides an extra action, denoted $\odot$, of $\mathrm{S}(\h)$ on any $H$-equivariant $\sU(\g)$-module. 
If $M\in \Mod^H(\sU_\chi)$, the $\odot$-action on $M$ factors through an action of $\sO(\h^*\times_{\h^{*(1)}} \{ 0 \})$, where the map $\h^* \to \h^{*(1)}$ is the Artin--Schreier map, and the scheme $\h^*\times_{\h^{*(1)}} \{ 0\} $ identifies with the discrete scheme $\X_H/p\X_H$. 
We deduce decompositions 
\begin{equation}
\label{eqn:decomp-U0mod}
\Mod^H(\sU_\chi)= \bigoplus_{\gamma\in \X_H/p\X_H} \Mod^{H,\gamma}(\sU_\chi), \quad
\mod^H(\sU_\chi)= \bigoplus_{\gamma\in \X_H/p\X_H} \mod^{H,\gamma}(\sU_\chi), 
\end{equation}
such that a module is contained in the $\gamma$-summand if and only if the $\odot$-action of $\mathrm{S}(\h)$ is by the character corresponding to $\gamma$. 
For any $\gamma\in \X_H/p\X_H$, for any choice of a representative $\widetilde{\gamma}$ of $\gamma$ in $\X_H$
we have an equivalence 
\[
-\otimes \k_H(\widetilde{\gamma}): \Mod^{H,\gamma}(\sU_\chi) \xs \Mod^{H,0}(\sU_\chi).
\]
For $\lambda \in \X$ we set 
\begin{align*}
\mod^{H,\gamma}(\sU_{\chi}^{\hla}) &= \mod^{H,\gamma}(\sU_{\chi}) \cap \mod^{H}(\sU_{\chi}^{\hla}), \\
\Mod^{H,\gamma}(\sU_{\chi}^{\hla}) &= \Mod^{H,\gamma}(\sU_{\chi}) \cap \Mod^{H}(\sU_{\chi}^{\hla}).
\end{align*}

\begin{rmk}
\label{rmk:G1T}
In case $\chi=0$ and $H=T$, since $\sU_0=\mathrm{Dist}(G_1)$ we have a natural equivalence 
\[ 
\Rep(G_1\rtimes T) \simto \Mod^T(\sU_0),
\]
which restricts to an equivalence between the subcategories $\Rep(G_1T) \subset \Rep(G_1\rtimes T)$ and $\Mod^{T,0}(\sU_0) \subset \Mod^T(\sU_0)$. In the setting of Remark~\ref{rmk:translation-G1T}, this equivalence restricts, for any $\lambda \in \mathscr{A}_0$, to a decomposition
\[
\Mod^{T,0}(\sU_0^{\hla}) \cong \bigoplus_{\omega \in \Omega} \Rep_{\omega \bullet \lambda}(G_1T).
\]
\end{rmk}

%

One can ``lift''~\eqref{eqn:decomp-U0mod} to a decompositions of the category 
$\mod^H(\sU_{\hchi})$ as follows. 
By the considerations of~\S\ref{ss:infinitesimal}, for any $M\in \Mod^H(\sU_{\hchi})$ we have a canonical $\h$-action on $M$.
Thus as in \S\ref{ss:completed-Verma} we can consider an extra $\mathrm{S}(\h)$-action, denoted $\odot$, on any $H$-equivariant $\sU_{\hchi}$-module, which commutes with the actions of $H$ and $\sU_{\hchi}$. 
Let $\m_\chi\subseteq \sO(\g^{*(1)}_{\hchi})$ be the maximal ideal associated with $\chi\in \g^{*(1)}$. 
For any $M\in \mod^H(\sU_{\hchi})$ there is a natural isomorphism of $H$-equivariant $\sU_{\hchi}$-modules 
\[
M \cong \varprojlim_{n\geq 1} M/(\m_\chi)^n M. 
\]
Since each $M/(\m_\chi)^n M$ is an extension of $H$-equivariant $\sU_\chi$-modules, by the discussion above the $\odot$-action of $\mathrm{S}(\h)$ on $M$ factors through an action of the direct sum of completions 
$\bigoplus_{\gamma\in \X_H/p\X_H} \mathrm{S}(\h)_{\widehat{{\gamma}}}$. 
We deduce the desired decomposition 
\begin{equation}\label{eqn:decomp-Uhat0mod}
\mod^H(\sU_{\hchi})= \bigoplus_{\gamma\in \X_H/p\X_H} \mod^{H,\gamma}(\sU_{\hchi}). 
\end{equation}
For $\lambda\in \X$ and $\gamma\in \X_H$, we set
\[
\mod^{H,\gamma}(\sU_{\hchi}^{\hla})= \mod^{H,\gamma}(\sU_{\hchi}) \cap \mod^{H}(\sU_{\hchi}^{\hla}),
\]
which is a direct summand in the category $\mod^{H}(\sU_{\hchi}^{\hla})$. 
We similarly define the categories
$\mod_{(\chi,\lambda)}^{H,\gamma}(\sU(\g))$ and $\mod^{H,\gamma}_\chi(\sU^{\lambda})$.
%

\section{Localization theorems}
\label{sec:localization}


In this subsection we explain how to adapt the constructions of~\cite{BMR08, BMR06, Ric10} to obtain ``localization theorems'' describing some categories of equivariant $\sU(\g)$-modules in terms of equivariant coherent sheaves. Once again, we consider a more general setting whenever possible for completeness, but for our main results only the case $\chi=0$ and $H=T$ will be required.

\subsection{Generalized and completed Frobenius characters}
\label{ss:localization-comp} 

We consider a nilpotent element $\chi \in \g^{*(1)}$ and a subtorus $H \subset G$ stabilizing $\chi$. 
Recall the affine scheme $\g^{*(1)}_{\hchi}$ defined in~\S\ref{sss:central-reductions}, and set
\[
\sB^{(1)}_\chi = \Groth^{(1)}\times_{\g^{*(1)}} \{\chi\} = \Spr^{(1)}\times_{\g^{*(1)}} \{\chi\}, \qquad
\Groth^{(1)}_{\hchi}=\Groth^{(1)}\times_{\g^{*(1)}} \g^{*(1)}_{\hchi}.
\] 
Considering the actions of $H$ on $\Groth^{(1)}$ and $\g^{*(1)}$ we are in the setting studied in
Appendix~\ref{app:equiv-sheaves}, so we can consider the category $\Coh^H(\Groth^{(1)}_{\widehat{\chi}})$ of $H$-equivariant coherent sheaves on $\Groth^{(1)}_{\widehat{\chi}}$.

Below we will also consider the category $\Coh^H_{\sB_\chi^{(1)}}(\Spr^{(1)})$ of $H$-equivariant coherent sheaves on $\Spr^{(1)}$ which are set-theoretically supported on $\sB^{(1)}_\chi$, i.e.~whose restriction to the complement vanishes. The comments in Example~\ref{ex:infinitesimal-neighborhoods} show that the composition
\[
\Coh^H_{\sB_\chi^{(1)}}(\Spr^{(1)}) \hookrightarrow \Coh^H(\Spr^{(1)}) \xrightarrow{i_*} \Coh^H(\Groth^{(1)}) \xrightarrow{(-)_{\hchi}} \Coh^H(\Groth_{\hchi}^{(1)})
\]
is fully faithful, where $i : \Spr^{(1)} \to \Groth^{(1)}$ is the natural closed immersion and the rightmost functor is given by pullback along the projection $\Groth_{\hchi}^{(1)} \to \Groth^{(1)}$. All the functors here are exact, and we will denote similarly their derived versions.





The following theorem proposes variants of the main results of~\cite{BMR08}. Some versions of this statement appear in~\cite{BM13}, but we include a sketch of proof for the reader's convenience. Here we say that a weight $\lambda \in \X$ is \emph{regular} if its stabilizer for the $\bullet$-action of $W_\aff$ (see~\S\ref{sss:reductive-gps-notation}) is trivial.

\begin{thm}
\label{thm:localization-comp} 
Let $\lambda\in \X$ be regular.
There exist equivalences of triangulated categories 
\begin{align}
\label{eqn:localization-comp}
\Db(\mod^H(\sU_{\hchi}^{\hla})) &\xs \Db\Coh^H(\Groth^{(1)}_{\widehat{\chi}}), \\
\label{eqn:localization-fixed-HC}
\Db(\mod^H_\chi(\sU^{\lambda})) &\xs \Db\Coh^H_{\sB_\chi^{(1)}}(\Spr^{(1)}), 
\end{align}
which are compatible in the sense that the diagram 
\begin{equation*}
\begin{tikzcd}[column sep=large]
\Db(\mod^H_\chi(\sU^{\lambda})) \arrow[r,"\eqref{eqn:localization-fixed-HC}", "\sim"'] \arrow[d] & \Db\Coh^H_{\sB_\chi^{(1)}}(\Spr^{(1)}) \arrow[d, "(i_*(-))_{\hchi}"] \\ 
\Db(\mod^H(\sU^{\hla}_{\widehat{\chi}})) \arrow[r,"\eqref{eqn:localization-comp}", "\sim"'] & \Db\Coh^H(\Groth^{(1)}_{\hchi})
\end{tikzcd}
\end{equation*}
is commutative,
where the left vertical functor is induced by the inclusion $\mod^H_\chi(\sU^{\lambda})\subset \mod^H(\sU^{\widehat{\lambda}}_{\hchi})$.
\end{thm}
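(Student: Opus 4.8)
The plan is to follow the strategy of Bezrukavnikov--Mirković--Rumynin \cite{BMR08,BMR06} and its completed variants in \cite{BM13}, adapting everything to the $H$-equivariant setting. The starting point is the construction of a localization functor. Over the base $\g^{*(1)}_{\hchi}$, one has a sheaf of algebras $\widetilde{\sD}$ on $\Groth^{(1)}_{\hchi}$ — the ``completed'' version of the Bezrukavnikov--Mirković--Rumynin sheaf of differential operators twisted by $\lambda$ — which is $H$-equivariant via the constructions of Appendix~\ref{app:equiv-sheaves}, and whose global sections (after taking invariants under the appropriate torus, in the Grothendieck-resolution picture where one remembers the Cartan action) recover $\sU^{\hla}_{\hchi}$. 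First I would set up this sheaf carefully in the $H$-equivariant context, check that $R\Gamma$ and $\sL oc := \sD \otimes_{\sU^{\hla}_{\hchi}} (-)$ are mutually inverse on the bounded derived categories — this is the ``derived affineness'' / ``derived Beilinson--Bernstein'' statement. The key inputs are: (i) the relevant Frobenius-twisted cotangent bundle is affine over $\g^{*(1)}_{\hchi}$ after completion, which gives exactness of a suitable pushforward; (ii) the ``Azumaya'' property of the sheaf of crystalline differential operators on $\Groth^{(1)}$ splitting after the base change to $\Groth^{(1)}$ itself (via the exponential/Frobenius-descent construction of a splitting bundle), which gives that $\sD$ is a matrix algebra over $\sO$ on the formal neighbourhood; (iii) regularity of $\lambda$, which is exactly what makes $R\Gamma$ have no higher cohomology and be conservative (Beilinson--Bernstein vanishing in char $p$). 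Combining (ii) with Morita theory turns $\Db\Mod(\sD)$ into $\Db\Coh$ of the total space, i.e.\ of $\Groth^{(1)}_{\hchi}$, and one checks that the $H$-equivariant structures match throughout; this gives the equivalence \eqref{eqn:localization-comp}.

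For \eqref{eqn:localization-fixed-HC}, I would deduce it by ``specializing the central character at $\chi$.'' On the module side, $\mod^H_\chi(\sU^\lambda)$ is the subcategory of $\mod^H(\sU^{\hla}_{\hchi})$ of objects on which the Frobenius center acts through (a power of) the maximal ideal $\m_\chi$; equivalently, these are modules set-theoretically supported at $\chi \in \g^{*(1)}$. Under \eqref{eqn:localization-comp}, this maps to the full subcategory of $\Db\Coh^H(\Groth^{(1)}_{\hchi})$ of complexes whose cohomology sheaves are set-theoretically supported on the fibre over $\chi$, i.e.\ on $\pi^{-1}(\chi)$. Here one uses that $\pi^{-1}(\chi) \subset \Groth^{(1)}$, where $\pi: \Groth^{(1)} \to \g^{*(1)}$ is the Grothendieck map, is contained in $\Spr^{(1)}$ when $\chi$ is nilpotent, and in fact $\pi^{-1}(\chi)$ identifies with $\sB^{(1)}_\chi$ as a set. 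So the image is $\Db$ of $H$-equivariant coherent sheaves on $\Groth^{(1)}$ (or on the formal completion) supported on $\sB^{(1)}_\chi$; by the comments recalled after Example~\ref{ex:infinitesimal-neighborhoods} in the excerpt, this category is equivalent — via $(i_*(-))_{\hchi}$ — to $\Db\Coh^H_{\sB^{(1)}_\chi}(\Spr^{(1)})$. Tracking through, the composite equivalence is the desired \eqref{eqn:localization-fixed-HC}, and commutativity of the square is automatic because both vertical functors are just the inclusions of the ``supported at $\chi$'' subcategories (on the module side) and their transport (on the sheaf side), which is exactly $(i_*(-))_{\hchi}$.

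More precisely, for the compatibility square I would argue as follows. The left vertical functor is the natural inclusion $\mod^H_\chi(\sU^\lambda) \hookrightarrow \mod^H(\sU^{\hla}_{\hchi})$ followed by passage to derived categories; I would first note this is fully faithful (using Lemma~\ref{lem:Ext-D-isom}-type arguments, or directly, since the target's objects supported at $\chi$ form a thick subcategory and every such object is a filtered colimit — here finite iterated extension — of objects of $\mod^H_\chi(\sU^\lambda)$). Then the point is to identify, inside $\Db\Coh^H(\Groth^{(1)}_{\hchi})$, the essential image of $\Db\mod^H_\chi(\sU^\lambda)$ with the essential image of $(i_*(-))_{\hchi}: \Db\Coh^H_{\sB^{(1)}_\chi}(\Spr^{(1)}) \to \Db\Coh^H(\Groth^{(1)}_{\hchi})$: both consist exactly of complexes with cohomology set-theoretically supported on $\sB^{(1)}_\chi = \pi^{-1}(\chi)$, the first because $\sL oc$ and $R\Gamma$ preserve supports (they are $\sO(\g^{*(1)}_{\hchi})$-linear) and the second by definition plus the identification of $\sB^{(1)}_\chi$ with the fibre. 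The equivalence \eqref{eqn:localization-fixed-HC} is then \emph{defined} as the composite of \eqref{eqn:localization-comp}, restricted to these subcategories, with the quasi-inverse of $(i_*(-))_{\hchi}$; with this definition the square commutes on the nose.

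The main obstacle I anticipate is the foundational bookkeeping on the geometric side: making sense of $H$-equivariant (quasi-)coherent sheaves on the \emph{non-noetherian-in-the-naive-sense} formal scheme $\Groth^{(1)}_{\hchi}$ and its completions along $\sB^{(1)}_\chi$, proving that the relevant pushforward is exact there (derived affineness of $\widetilde{\sD}$-modules in this completed, equivariant setting), and checking that the Azumaya splitting bundle can be chosen $H$-equivariantly (or at least that the Morita equivalence it induces is compatible with $H$-equivariant structures). These are precisely the points for which the excerpt flags Appendix~\ref{app:equiv-sheaves} and its limitations; the abstract derived-category formalism and the support-comparison argument for the square are, by contrast, routine once those are in place. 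Essentially all the non-equivariant content is in \cite{BMR08,BMR06,BM13}, so the real work is transporting it faithfully through the equivariant formalism and the completion.
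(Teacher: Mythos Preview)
Your approach to \eqref{eqn:localization-comp} is essentially the paper's: build the completed equivariant sheaf $\tD^{\hla}_{\hchi}$, prove the $R\Gamma/\mathcal{L}oc$ equivalence by reduction to the non-equivariant statement in \cite{BMR08}, then apply the Azumaya splitting (with an $H$-equivariant structure on the splitting bundle, as in \cite{BM13}) to pass to coherent sheaves.

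For \eqref{eqn:localization-fixed-HC}, however, your argument has a genuine gap. You characterize $\mod^H_\chi(\sU^\lambda)$ as the subcategory of $\mod^H(\sU^{\hla}_{\hchi})$ on which the Frobenius center acts through a power of $\m_\chi$, but this describes $\mod^H_{(\chi,\lambda)}(\sU(\g))$ (generalized central character), not $\mod^H_\chi(\sU^\lambda)$: the latter additionally requires the \emph{exact} Harish--Chandra character $\lambda$. Correspondingly, the essential image of $(i_*(-))_{\hchi}$ from $\Db\Coh^H_{\sB^{(1)}_\chi}(\Spr^{(1)})$ is \emph{not} all complexes in $\Db\Coh^H(\Groth^{(1)}_{\hchi})$ with cohomology supported on $\sB^{(1)}_\chi$; the paper only asserts full faithfulness, and the image consists of complexes whose cohomology is \emph{scheme}-theoretically supported on $\Spr^{(1)}$, i.e.\ annihilated by the ideal of $i$. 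A sheaf on $\Groth^{(1)}$ set-theoretically supported on $\sB^{(1)}_\chi$ can carry nilpotent structure in the $\t^{*(1)}$-direction and thus fail to lie in $i_*\Coh(\Spr^{(1)})$. So your two ``support'' subcategories do not coincide, and the match you actually need --- $\mod^H_\chi(\sU^\lambda)$ with the image of $\Db\Coh^H_{\sB^{(1)}_\chi}(\Spr^{(1)})$ --- would require tracing how $\ZHC(\g)$ acts on the coherent side through the splitting-bundle Morita equivalence, which you have not done.

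The paper sidesteps this by proving \eqref{eqn:localization-fixed-HC} via a parallel, independent construction (the analogous localization for $\sU^\lambda$-modules and the Springer resolution, using the same splitting bundle), which is in fact easier since it avoids the completed equivariant formalism of Appendix~\ref{app:equiv-sheaves}; the compatibility square then follows because both equivalences are built from compatible splitting data. Remark~\ref{rmk:restriction-loc-neighborhood} confirms the picture: under \eqref{eqn:localization-comp} the support-at-$\chi$ subcategory $\Db\mod^H_{(\chi,\lambda)}(\sU(\g))$ matches $\Db\Coh^H_{\sB^{(1)}_\chi}(\Groth^{(1)})$ --- precisely the larger category your support argument would produce, not the one in \eqref{eqn:localization-fixed-HC}.
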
 

\begin{proof}[Sketch of proof]
We only explain the construction of~\eqref{eqn:localization-comp}; that of~\eqref{eqn:localization-fixed-HC} is similar, and easier since it does not require the setting of Appendix~\ref{app:equiv-sheaves}. Note that our assumption on $\lambda$ guarantees that $\overline{\lambda}$ is regular in the sense considered in~\cite{BMR08}, see e.g.~\cite[Lemma~3.1]{br-Hecke}.

Consider the $T$-torsor $p:G/U^-\rightarrow \sB=G/B^-$, where the action of $T$ on $G/U^-$ is given by $gU^- \cdot t=gtU^-$ for $t\in T$ and $g \in G$. 
Consider the sheaf of algebras $\sD_{G/U^-}$ of differential operators on $G/U^-$, and set $\tD=p_*(\sD_{G/U^-})^T$.
Consider the fiber product $\Groth^{(1)}\times_{\t^{*(1)}} \t^*$, where the map $\t^* \to \t^{*(1)}$ is the Artin--Schreier map.
Then (see~\cite[\S2.3]{BMR08}) there exists a canonical sheaf of $\sO_{\Groth^{(1)}\times_{\t^{*(1)}} \t^*}$-algebras on $\Groth^{(1)}\times_{\t^{*(1)}} \t^*$ whose pushforward under the natural (affine) map to $\sB^{(1)}$ is $(\mathrm{Fr}_{\sB})_* \tD$; this sheaf of algebras will also be denoted $\tD$ for simplicity.

The $G$-action on $G/U^-$ induces a homomorphism of algebras
\[
\sU(\g) \rightarrow \Gamma \bigl( \Groth^{(1)}\times_{\t^{*(1)}} \t^*, \tD \bigr),
\]
which is compatible with the natural morphism  
\[
\widetilde{\g}^{(1)}\times_{\t^{*(1)}} \t^* \rightarrow \g^{*(1)}\times_{\t^{*(1)}/W} \t^*/(W,\bullet)
\] 
and the identification~\eqref{eqn:center-Ug} in the natural way. This morphism induces an isomorphism
\begin{equation}
\label{eqn:global-sections-D}
\sU(\g) \otimes_{\sO(\t^*/(W,\bullet))} \sO(\t^*) \xs \Gamma \bigl( \Groth^{(1)}\times_{\t^{*(1)}} \t^*, \tD \bigr),
\end{equation}
see~\cite[Proposition~3.4.1]{BMR08}.

Fix $\lambda$ as in the theorem, and consider the setting of Example~\ref{ex:equiv-qcoh-alg} for the sheaf of algebras $\tD$ on $\widetilde{\g}^{(1)}\times_{\t^{*(1)}} \t^*$, the natural morphism
\begin{equation}
\label{eqn:morph-Groth-g}
\Groth^{(1)}\times_{\t^{*(1)}} \t^* \to \g^{*(1)} \times_{\t^{*(1)}/W} \t^*
\end{equation}
and the ideal in $\sO(\g^{*(1)} \times_{\t^{*(1)}/W} \t^*)$ corresponding to the point $(\chi,\overline{\lambda})$. (In fact, since the morphism~\eqref{eqn:morph-Groth-g} is projective, equivariant coherent sheaves in this setting can be described as in Remark~\ref{rmk:IndMod}\eqref{it:comments-formal-schemes}.)
Denote by $(\g^{*(1)} \times_{\t^{*(1)}/W} \t^*)_{\widehat{(\chi,\lambda)}}$ the spectrum of the completion of $\sO(\g^{*(1)} \times_{\t^{*(1)}/W} \t^*)$ with respect to the maximal ideal of $(\chi,\overline{\lambda})$, and by $\tD^{\hla}_{\hchi}$ the pullback of $\tD$ to 
\begin{equation}
\label{eqn:fiber-product-proof-localization}
(\Groth^{(1)}\times_{\t^{*(1)}} \t^*) \times_{\g^{*(1)} \times_{\t^{*(1)}/W} \t^*} (\g^{*(1)} \times_{\t^{*(1)}/W} \t^*)_{\widehat{(\chi,\lambda)}}.
\end{equation}

Our assumption on $\lambda$ implies that the quotient morphism $\t^* \to \t^*/(W,\bullet)$ is \'etale at $\overline{\lambda}$, see e.g.~\cite[Lemma~3.2]{br-Hecke}. As a consequence, we have an identification
\[
\sO((\g^{*(1)} \times_{\t^{*(1)}/W} \t^*)_{\widehat{(\chi,\lambda)}}) \cong
\mathrm{Z}(\sU(\g))_{\hchi}^{\hla}.
\]
Using this fact and the flat base change theorem, from~\eqref{eqn:global-sections-D} we obtain a canonical isomorphism
\[
\sU_{\hchi}^{\hla} \simto \Gamma(\tD^{\hla}_{\hchi}).
\]


Taking this identification into account, Corollary~\ref{cor:derived-pushforward} provides a derived pushforward functor 
\[
R\Gamma: \Db(\mod^H(\tD^{\hla}_{\hchi}))\rightarrow \Db(\mod^H(\sU^{\hla}_{\hchi})).
\]
Similarly, by Corollary~\ref{cor:derived-pullback}
we have a derived pullback functor 
\[
\tD^{\hla}_{\hchi}\otimes^L_{\sU^{\hla}_{\hchi}}- : D^-(\mod^H(\sU^{\hla}_{\hchi})) \rightarrow D^-(\mod^H(\tD^{\hla}_{\hchi})). 
\]
This functor preserves bounded derived categories; in fact this property does not involve equivariance, so that we can drop this structure, and then the claim follows from the considerations in~\cite[\S 5.4]{BMR08}.\footnote{In this reference the authors work with the formalism of formal schemes. However, as in Remark~\ref{rmk:IndMod}\eqref{it:comments-formal-schemes} the corresponding category of coherent sheaves is equivalent to the one we consider here; see also the discussion in~\cite[p.~842]{BM13}.} (See~\cite[\S 3.4]{br-two} for similar constructions using a different completion.)
Once this fact is established, the considerations of~\S\ref{ss:adjointness} imply that the two functors above are adjoint to each other. Then one proves as above (by reduction to the nonequivariant setting, treated in~\cite[\S 5.4]{BMR08}) that the associated adjunction morphisms are isomorphisms, and one obtains an equivalence of triangulated categories
\[
\Db(\mod^H(\tD^{\hla}_{\hchi})) \simto \Db(\mod^H(\sU^{\hla}_{\hchi})).
\]

Next by~\cite[\S\S 5.1--5.2]{BMR08}, there exists a vector bundle $\sV^\lambda_{\chi}$ on the scheme in~\eqref{eqn:fiber-product-proof-localization} and an isomorphism of sheaves of algebras
\[
\tD^{\hla}_{\hchi} \cong \sEnd(\sV^\lambda_{\chi}).
\]
By the discussion in~\cite[\S 5.2.4]{BM13}, this vector bundle can be endowed with an $H$-equivariant structure such that the isomorphism above is $H$-equivariant; fixing such a structure we obtain an equivalence of categories
\[
\mod^H(\tD^{\hla}_{\hchi}) \cong \Coh^H \left( (\Groth^{(1)}\times_{\t^{*(1)}} \t^*) \times_{\g^{*(1)} \times_{\t^{*(1)}/W} \t^*} (\g^{*(1)} \times_{\t^{*(1)}/W} \t^*)_{\widehat{(\chi,\lambda)}} \right),
\]
induced by the functor $\sHom_{\tD^{\hla}_{\hchi}}(\sV^\lambda_\chi,-)$.

Finally, we claim that the natural morphism
\begin{equation}
\label{eqn:isom-completions-Groth}
(\Groth^{(1)}\times_{\t^{*(1)}} \t^*) \times_{\g^{*(1)} \times_{\t^{*(1)}/W} \t^*} (\g^{*(1)} \times_{\t^{*(1)}/W} \t^*)_{\widehat{(\chi,\lambda)}} \to \Groth^{(1)}_{\chi}
\end{equation}
is an isomorphism, which will complete the proof. In fact, the left-hand side identifies with
\[
\Groth^{(1)} \times_{\g^{*(1)} \times_{\t^{*(1)}/W} \t^{*(1)}} (\g^{*(1)} \times_{\t^{*(1)}/W} \t^*)_{\widehat{(\chi,\lambda)}}.
\]
Now the Artin--Schreier map is \'etale, and sends $\overline{\lambda}$ to $0$; it therefore induces an isomorphism
\[
(\g^{*(1)} \times_{\t^{*(1)}/W} \t^*)_{\widehat{(\chi,\lambda)}} \simto (\g^{*(1)} \times_{\t^{*(1)}/W} \t^{*(1)})_{\widehat{(\chi,0)}}
\]
where the right-hand side is the spectrum of the completion of $\sO(\g^{*(1)} \times_{\t^{*(1)}/W} \t^{*(1)})$ with respect to the ideal of $(\chi,0)$. Next, $0 \in \t^{*(1)}$ is the only $\k$-point in the preimage of its image in $\t^{*(1)}/W$; applying Lemma~\ref{lem:commutative-algebra} we deduce that this completion is also the completion of $\sO(\g^{*(1)} \times_{\t^{*(1)}/W} \t^{*(1)})$ with respect to the ideal generated by the maximal ideal of $\sO(\g^{*(1)})$ corresponding to $\chi$, i.e.~the completion of this ring, considered as an $\sO(\g^{*(1)})$-module, with respect to this maximal ideal. Geometrically, using the fact that $\g^{*(1)} \times_{\t^{*(1)}/W} \t^{*(1)}$ is finite over $\g^{*(1)}$, this provides an isomorphism
\[
(\g^{*(1)} \times_{\t^{*(1)}/W} \t^{*(1)})_{\widehat{(\chi,0)}} \cong \g_{\hchi}^{*(1)} \times_{\t^{*(1)}/W} \t^{*(1)},
\]
which finishes the proof.
\end{proof}

%

\begin{rmk}
\label{rmk:restriction-loc-neighborhood}
We have natural ``pullback'' functors
\[
\Db(\mod_{(\chi,\lambda)}^H(\sU(\g))) \to \Db(\mod^H(\sU_{\hchi}^{\hla})), \quad
\Db\Coh_{\sB_\chi^{(1)}}^H(\Groth^{(1)}) \to \Db\Coh^H(\Groth^{(1)}_{\hchi}),
\]
and it follows from Lemma~\ref{lem:completion-Ext-inf-neighborhoods} that these functors are fully faithful. 
It is easily seen that the equivalence~\eqref{eqn:localization-comp} restricts to an equivalence between these full subcategories, which is compatible with the equivalence of~\cite[Theorem~5.3.1]{BMR08} via forgetting the equivariance condition.
Similarly,~\eqref{eqn:localization-fixed-HC} is the restriction of an equivalence of categories $\Db(\mod^H(\sU_{\hchi}^{\lambda})) \simto \Db\Coh^H(\Spr^{(1)} \times_{\g^{*(1)}} \g^{*(1)}_{\widehat{\chi}})$. 
\end{rmk}

\subsection{The splitting bundle} 
\label{ss:splitting-bundle}

As should be clear from the proof of Theorem~\ref{thm:localization-comp}, the equivalences \eqref{eqn:localization-comp}--\eqref{eqn:localization-fixed-HC} depend on a choice of ``splitting vector bundle'' $\sV^\lambda_{\chi}$, and of an appropriate $H$-equivariant structure on it. We will choose the vector bundle constructed by the procedure explained in~\cite[Remark~1.3.5]{BMR06}, and the equivariant structure on it obtained as in~\cite[\S 5.2.4]{BM13}. 
The corresponding equivalences will be denoted\footnote{Note that the notational conventions of the present paper are different from those in~\cite{Ric10}, in that the named functor goes from modules to sheaves and not from sheaves to modules.}
\begin{align*}
\gamma_{\hchi}^{\hla} : 
\Db(\mod^H(\sU_{\hchi}^{\hla})) &\xs \Db\Coh^H(\Groth^{(1)}_{\widehat{\chi}}), \\
\gamma_{\hchi}^\lambda :
\Db(\mod^H_\chi(\sU^{\lambda})) &\xs \Db\Coh^H_{\sB_\chi^{(1)}}(\Spr^{(1)}).
\end{align*}

In this subsection we recall some aspects of the construction of $\sV^\lambda_{\chi}$ that will be required below.
It follows from~\cite[Corollary 3.11]{BG01} (see also \cite[Proposition 5.2.1(a)]{BMR08}) that $\sU^{\widehat{{-\rho}}}_{\hchi}$ is an Azumaya algebra over $\mathrm{Z}(\sU(\g))_{\hchi}^{\widehat{-\rho}}$.
Since the latter ring is complete local with separably closed residue field, this Azumaya algebra splits, 
and thus there is a free $\mathrm{Z}(\sU(\g))_{\hchi}^{\widehat{-\rho}}$-module  $V^{-\rho}_\chi$ such that 
\begin{equation}
\label{eqn:U-End}
\sU_{\hchi}^{\widehat{{-\rho}}} \cong \End_{\mathrm{Z}(\sU(\g))_{\hchi}^{\widehat{-\rho}}} (V^{-\rho}_\chi).
\end{equation}
Now, consider the natural morphism
\[
f : (\Groth^{(1)}\times_{\t^{*(1)}} \t^*) \times_{\g^{*(1)} \times_{\t^{*(1)}/W} \t^*} (\g^{*(1)} \times_{\t^{*(1)}/W} \t^*)_{\widehat{(\chi,-\rho)}} \to \Spec(\mathrm{Z}(\sU(\g))_{\hchi}^{\widehat{-\rho}}).
\]
By \cite[Proposition 5.2.1(b)]{BMR08} there is a canonical isomorphism 
\begin{equation}
\label{eqn:D-pullback-U}
\tD^{\widehat{-\rho}}_{\hchi}\cong f^* \sU_{\hchi}^{\widehat{{-\rho}}}. 
\end{equation} 
On the other hand we have an identification
\begin{equation}
\label{eqn:D-tensor-product}
\tD^{\hla}_{\hchi} \cong \sO_{\sB}(\lambda+\rho) \otimes_{\sO_\sB} \tD^{\widehat{-\rho}}_{\hchi} \otimes_{\sO_\sB} \sO_{\sB}(-\lambda-\rho),
\end{equation}
see~\cite[Lemma~2.3.1]{BMR08}. Hence, setting $\sV^\lambda_\chi :=\sO_{\sB}(\lambda+\rho) \otimes_{\sO_\sB} f^*V_\chi^{-\rho}$,
we obtain an isomorphism $\tD^{\hla}_{\hchi} \cong \sEnd(\sV^\lambda_\chi)$; in other words $\sV^\lambda_\chi$ is a splitting bundle as desired.

Recall (see e.g.~\cite[Proof of Lemma~4.7]{br-Hecke}) that the morphism $\t^*/(W,\bullet) \to \t^{*(1)}/W$ is \'etale at the image of $-\rho$. Hence this morphism induces an isomorphism between $\mathrm{Z}(\sU(\g))_{\hchi}^{\widehat{-\rho}}$ and $\sO(\g^{*(1)}_{\hchi})$, so that the latter ring can be replaced by $\sO(\g^{*(1)}_{\hchi})$ in the discussion above.

As explained above, the construction of the equivariant structure on $\sV_\chi^\lambda$ is discussed in~\cite[p.~882]{BM13}. It is induced by an equivariant structure on $V_\chi^{-\rho}$ for which $V^{-\rho}_{\chi}\in \mod^{H,0}(\sU^{\widehat{-\rho}}_{\hchi})$ in the notation of~\S\ref{ss:U0-modules}. (More details on this construction, in the special case $\chi=0$ and $H=T$, will be given in the proof of Proposition~\ref{prop:M-Z-w0} below.)

For the next statement we will use the following notation.
Given $\sM_1\in \Mod(\tD^{\hla}_{\hchi})$, we denote by $a_1:\h \rightarrow \End(\sM_1)$ the restriction of the action of $\tD^{\hla}_{\hchi}$ along the composition $\sU(\h)\rightarrow \sU^{\hla}_{\hchi}\rightarrow \Gamma(\tD^{\hla}_{\hchi})$. 
Given $\sM_2\in \QCoh^H(\Groth^{(1)}_{\widehat{\chi}})$, we denote by $a_2:\h \rightarrow \End(\sM_2)$ the infinitesimal action obtained from the $H$-equivariant structure on $\sM_2$, see~\S\ref{ss:infinitesimal}. 
Given $\sF\in \Mod^H(\tD^{\hla}_{\hchi})$, we denote by $a:\mathrm{S}(\h)\rightarrow \End(\sF)$ the homomorphism induced by $a_1-a_2$. 

\begin{lem}
\label{lem:action-V^lambda_chi}
The homomorphism $a:\mathrm{S}(\h)\rightarrow \End(\sV^{\lambda}_{\chi})$ factors through the completion $\mathrm{S}(\h)_{\widehat{0}}$. 
\end{lem}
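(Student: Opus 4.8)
The plan is to reduce the statement to a computation involving only the $\odot$-action on the vector bundle $V_\chi^{-\rho}$, using the explicit description $\sV^\lambda_\chi = \sO_\sB(\lambda+\rho)\otimes_{\sO_\sB} f^*V_\chi^{-\rho}$ from the construction of the splitting bundle. First I would observe that the $\odot$-action (the homomorphism $a$ induced by $a_1-a_2$) is compatible with the tensor-product decomposition of $\sV^\lambda_\chi$: twisting by the line bundle $\sO_\sB(\lambda+\rho)$, which carries its natural $H$-equivariant structure pulled back from $\sB$, changes both $a_1$ and $a_2$ by the same scalar (the one coming from the $H$-action on the fiber of the line bundle), so these contributions cancel in $a_1-a_2$; compare Remark~\ref{rmk:action-t-tensor-prod}. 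Hence it suffices to treat the case $\lambda=-\rho$, i.e.\ to show that the homomorphism $\mathrm{S}(\h)\to \End(f^*V_\chi^{-\rho})$ factors through $\mathrm{S}(\h)_{\widehat 0}$.

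Next I would use the fact, recalled just before the lemma, that the $H$-equivariant structure on $V_\chi^{-\rho}$ is chosen so that $V_\chi^{-\rho}\in \mod^{H,0}(\sU^{\widehat{-\rho}}_{\hchi})$ in the notation of~\S\ref{ss:U0-modules}. By definition of that decomposition (see~\eqref{eqn:decomp-Uhat0mod}), this means precisely that the $\odot$-action of $\mathrm{S}(\h)$ on $V_\chi^{-\rho}$ as a $\sU^{\widehat{-\rho}}_{\hchi}$-module — i.e.\ the action induced by the difference of the restriction of the $\sU(\g)$-action along $\sU(\h)\to \sU^{\widehat{-\rho}}_{\hchi}$ and the differential of the $H$-action — factors through $\mathrm{S}(\h)_{\widehat 0}$. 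The remaining point is to compare this ``module-level'' $\odot$-action on $V_\chi^{-\rho}$ with the ``sheaf-level'' homomorphism $a$ on $f^*V_\chi^{-\rho}$ appearing in the lemma. Here I would invoke~\eqref{eqn:D-pullback-U}, which identifies $\tD^{\widehat{-\rho}}_{\hchi}$ with $f^*\sU_{\hchi}^{\widehat{-\rho}}$, together with Remark~\ref{rmk:action-t-tensor-prod} applied to the base-change along $f$: the action $a_1$ of $\h$ on $f^*V_\chi^{-\rho}$ coming from $\tD^{\widehat{-\rho}}_{\hchi}$ is, under this identification, the pullback of the $\sU(\h)$-action on $V_\chi^{-\rho}$, and the equivariant structure used to define $a_2$ is the pullback of the one on $V_\chi^{-\rho}$ (the scheme~\eqref{eqn:fiber-product-proof-localization} being an $H$-scheme over $\Spec(\mathrm{Z}(\sU(\g))_{\hchi}^{\widehat{-\rho}})$ with trivial-along-$f$ considerations). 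Therefore $a$ on $f^*V_\chi^{-\rho}$ is the pullback along $f$ of the $\odot$-homomorphism $\mathrm{S}(\h)\to\End(V_\chi^{-\rho})$, and since the latter factors through $\mathrm{S}(\h)_{\widehat 0}$, so does $a$.

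I expect the main obstacle to be bookkeeping the various $H$-equivariant structures and checking that the identifications~\eqref{eqn:U-End}, \eqref{eqn:D-pullback-U}, \eqref{eqn:D-tensor-product} and the definition of $\sV^\lambda_\chi$ are all compatible with the $\odot$-actions — in particular, verifying that the equivariant structure on $\tD^{\hla}_{\hchi}$ (and hence the meaning of $a_1-a_2$ at the sheaf level) really is the one obtained by transporting, via $f^*$ and the line-bundle twist, the equivariant structure on $V_\chi^{-\rho}$, rather than some a priori different structure. Once that compatibility is pinned down (essentially by unwinding~\S\ref{ss:splitting-bundle} and~\S\ref{ss:U0-modules}, and using Remark~\ref{rmk:action-t-tensor-prod} to kill the line-bundle contribution), the factorization through $\mathrm{S}(\h)_{\widehat 0}$ is immediate from the defining property $V_\chi^{-\rho}\in\mod^{H,0}(\sU^{\widehat{-\rho}}_{\hchi})$. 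A secondary, purely formal point is to make sure all these statements about $\odot$-actions, originally phrased for modules, carry over to the completed/sheaf setting via the isomorphism $\sM\cong\varprojlim_n \sM/\m_\chi^n\sM$ as in~\S\ref{ss:U0-modules}; this is routine given the machinery already set up in Appendix~\ref{app:equiv-sheaves}.
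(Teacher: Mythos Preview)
Your proposal is correct and follows essentially the same approach as the paper: decompose $\sV^\lambda_\chi$ as $\sO_\sB(\lambda+\rho)\otimes f^*V_\chi^{-\rho}$, observe that on the line-bundle factor the contributions to $a_1$ and $a_2$ both come from the natural $G$-equivariant structure and hence cancel, and then conclude from $V_\chi^{-\rho}\in\mod^{H,0}(\sU_{\hchi}^{\widehat{-\rho}})$. The paper's proof is just more terse about the compatibility checks you flag as potential obstacles; these are indeed routine from the construction in~\S\ref{ss:splitting-bundle}.
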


\begin{proof}
By construction we have $\sV^{\lambda}_{\chi}\cong \sO_{\sB}(\lambda+\rho) \otimes_{\sO_\sB} f^* V^{-\rho}_{\chi}$, where $\tD^{\hla}_{\hchi}$ acts on $\sV^{\lambda}_{\chi}$ via the isomorphism~\eqref{eqn:D-tensor-product}.
From the construction
one sees that the $\sU(\g)$-action on $\sV^{\lambda}_{\chi}$ induced by the action of $\tD^{\hla}_{\hchi}$ is the sum of two $\sU(\g)$-actions: one on $\sO_{\sB}(\lambda+\rho)$ induced by the natural $G$-equivariant structure, and one on $f^* V^{-\rho}_{\chi}$ through the action of $\tD^{\widehat{-\rho}}_{\hchi}$. 
The equivariant structure has a similar diagonal description, so that the claim follows from the fact that $V^{-\rho}_{\chi}\in \mod^{H,0}(\sU^{\widehat{-\rho}}_{\hchi})$.
\end{proof}

\subsection{Equivalence for direct summands} 

Let $\chi,H,\h$ be as in \S\ref{ss:U0-modules}, and recall the direct summands $\mod^{H,0}(\sU_{\hchi}^{\hla})\subset \mod^{H}(\sU_{\hchi}^{\hla})$ and $\mod^{H,0}_\chi(\sU^{\lambda})\subset \mod^{H}_\chi(\sU^{\lambda})$. On the geometric side, by construction the action of $H$ on $\Groth^{(1)}$ and $\Spr^{(1)}$ factors through actions of $H^{(1)}$, so that we can consider the categories $\Coh^{H^{(1)}}(\Groth^{(1)}_{\hchi})$ and $\Coh^{H^{(1)}}_{\sB_\chi^{(1)}}(\Spr^{(1)})$.


\begin{lem}
\label{lem:localization-summands}
Let $\lambda\in \X$ be regular. 
The equivalences $\gamma_{\hchi}^{\hla}$ and $\gamma_{\hchi}^\lambda$ restrict to equivalences 
\begin{align*}
\gamma_{\hchi}^{\hla,0} : 
\Db(\mod^{H,0}(\sU_{\hchi}^{\hla})) &\xs \Db\Coh^{H^{(1)}}(\Groth^{(1)}_{\hchi}), \\
\gamma_{\hchi}^{\lambda,0} :
\Db(\mod^{H,0}_\chi(\sU^{\lambda})) &\xs \Db\Coh^{H^{(1)}}_{\sB_\chi^{(1)}}(\Spr^{(1)}).
\end{align*}
\end{lem}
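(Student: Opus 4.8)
The plan is to prove Lemma~\ref{lem:localization-summands} by tracking how the extra $\mathrm{S}(\h)$-action (the $\odot$-action introduced in~\S\ref{ss:completed-Verma} and~\S\ref{ss:U0-modules}) matches up, under the equivalences $\gamma_{\hchi}^{\hla}$ and $\gamma_{\hchi}^\lambda$ of Theorem~\ref{thm:localization-comp}, with the infinitesimal $\h$-action coming from the $H$-equivariant structure on the geometric side. Recall that by~\eqref{eqn:decomp-Uhat0mod} and the analogous decomposition~\eqref{eqn:decomp-U0mod}, the subcategory $\mod^{H,0}(\sU_{\hchi}^{\hla})$ is exactly the direct summand of $\mod^{H}(\sU_{\hchi}^{\hla})$ on which $\odot$ acts through the completion $\mathrm{S}(\h)_{\widehat{0}}$ at the trivial character; dually, on the geometric side, since the $H$-actions on $\Groth^{(1)}$ and $\Spr^{(1)}$ factor through $H^{(1)}$, the subcategory $\Coh^{H^{(1)}}(\Groth^{(1)}_{\hchi}) \subset \Coh^{H}(\Groth^{(1)}_{\hchi})$ is precisely the summand on which the infinitesimal $\h$-action $a_2$ (see~\S\ref{ss:infinitesimal}) vanishes, equivalently on which the corresponding $\mathrm{S}(\h)$-action factors through $\mathrm{S}(\h)_{\widehat 0}$. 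So the lemma will follow once I show that the equivalence $\gamma_{\hchi}^{\hla}$ intertwines the $\odot$-action of $\mathrm{S}(\h)$ with (a $-1$-twist of) the geometric $\mathrm{S}(\h)$-action $a_2$ — more precisely, that under $\gamma_{\hchi}^{\hla}$ the endomorphism $x \odot (-)$ of a module corresponds to $a_1 - a_2 = a$ on the sheaf side, so that the ``$\gamma=0$'' condition on one side translates into the ``$H^{(1)}$-equivariant'' condition on the other.

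The key computation is Lemma~\ref{lem:action-V^lambda_chi}: the homomorphism $a : \mathrm{S}(\h) \to \End(\sV^\lambda_\chi)$ built from the difference of the $\tD^{\hla}_{\hchi}$-action and the equivariant $\h$-action factors through $\mathrm{S}(\h)_{\widehat 0}$. First I would recall that the equivalence $\gamma_{\hchi}^{\hla}$ is the composite of the derived global sections $R\Gamma$ for $\tD$-modules with the Morita equivalence $\sHom_{\tD^{\hla}_{\hchi}}(\sV^\lambda_\chi, -)$. Chasing the $\odot$-action through these functors: for $\sM \in \Mod^H(\tD^{\hla}_{\hchi})$ the extra $\mathrm{S}(\h)$-action on $R\Gamma(\sM)$ is $a_1(\sM) - a_2(\sM)$ in the notation preceding Lemma~\ref{lem:action-V^lambda_chi}, and under $\sHom_{\tD^{\hla}_{\hchi}}(\sV^\lambda_\chi,-)$ the $a_1$-part of the $\tD$-action on $\sM$ and on $\sV^\lambda_\chi$ cancel (since the functor is $\Hom$ over $\tD^{\hla}_{\hchi}$), while what survives is precisely the difference of the two equivariant $\h$-actions, i.e.~$-a_2$ on $\sM$ plus the contribution $a(\sV^\lambda_\chi)$ coming from the equivariant structure on the splitting bundle. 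By Lemma~\ref{lem:action-V^lambda_chi} the latter factors through $\mathrm{S}(\h)_{\widehat 0}$, hence is ``invisible'' to the direct-sum decomposition indexed by $\X_H/p\X_H$ (it only shifts within the $\gamma=0$ block). Therefore $\gamma_{\hchi}^{\hla}$ sends the $\gamma$-summand on the module side to the $\gamma$-summand on the sheaf side (with respect to the $\mathrm{S}(\h)$-grading decomposition $\bigoplus_{\gamma \in \X_H/p\X_H}$ of $\Coh^H(\Groth^{(1)}_{\hchi})$ analogous to~\eqref{eqn:decomp-Uhat0mod}), and in particular restricts to an equivalence of the $\gamma = 0$ summands. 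Since the $\gamma=0$ summand of $\Coh^H(\Groth^{(1)}_{\hchi})$ is $\Coh^{H^{(1)}}(\Groth^{(1)}_{\hchi})$ (the $\h$-action from the $H$-equivariant structure being trivial exactly on objects where the $H$-action factors through $H^{(1)}$), this gives $\gamma_{\hchi}^{\hla,0}$.

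For $\gamma_{\hchi}^{\lambda,0}$ the argument is identical: the equivalence~\eqref{eqn:localization-fixed-HC} is again built from global sections and the same splitting bundle $\sV^\lambda_\chi$ restricted to the formal neighborhood of $\sB^{(1)}_\chi$ inside $\Spr^{(1)}$, so the same bookkeeping of the $\odot$-action against the equivariant $\h$-action applies, using Lemma~\ref{lem:action-V^lambda_chi} in the same way. Finally, one checks directly from the construction in Theorem~\ref{thm:localization-comp} that the two restricted equivalences remain compatible in the sense of the commutative square there (this is automatic, since the vertical functors $(i_*(-))_{\hchi}$ and the inclusion $\mod^H_\chi(\sU^\lambda) \subset \mod^H(\sU^{\hla}_{\hchi})$ both preserve the $\gamma$-grading). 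The main obstacle is the careful sign-and-twist bookkeeping in matching the $\odot$-action on modules with the geometric equivariant action under the composite $R\Gamma \circ \sHom_{\tD^{\hla}_{\hchi}}(\sV^\lambda_\chi, -)$ — in particular verifying that the $a_1$-contributions genuinely cancel and that the residual contribution of the equivariant structure on $\sV^\lambda_\chi$ lands in the $\gamma=0$ block, which is exactly the content that Lemma~\ref{lem:action-V^lambda_chi} was isolated to provide.
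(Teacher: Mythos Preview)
Your approach is correct and matches the paper's: both arguments use the decomposition of $\Coh^H(\Groth^{(1)}_{\hchi})$ indexed by $\X_H/p\X_H$ (coming from the trivial action of $H_1$ on the Frobenius twist) and invoke Lemma~\ref{lem:action-V^lambda_chi} to see that the equivalence respects this decomposition. The paper is slightly more streamlined in that it works with the inverse functor directly, writing $(\gamma_{\hchi}^{\hla})^{-1}(\sF) = R\Gamma(\sV^\lambda_\chi \otimes \sF)$ and observing that the $\odot$-action on this is $a(\sV^\lambda_\chi) - a_2(\sF)$, so Lemma~\ref{lem:action-V^lambda_chi} gives the claim in one line; your chase through $R\Gamma$ and $\sHom_{\tD}(\sV^\lambda_\chi,-)$ separately is equivalent but a bit more roundabout.
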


\begin{proof}
We prove the claim for the equivalence $\gamma_{\hchi}^{\hla}$; the case of $\gamma_{\hchi}^{\lambda}$ is similar. Since the Frobenius kernel $H_1$ (which is a diagonalizable group scheme, with group of characters $\X_H/p\X_H$) acts trivially on $\Groth^{(1)}$, we have a decomposition
\[
\Coh^{H}(\Groth^{(1)}_{\hchi}) = \bigoplus_{\gamma \in \X_H/p\X_H} \Coh^{H,\gamma}(\Groth^{(1)}_{\hchi})
\]
according to the action of $H_1$ on the coherent sheaves, such that $\Coh^{H,0}(\Groth^{(1)}_{\hchi}) = \Coh^{H^{(1)}}(\Groth^{(1)}_{\hchi})$. Hence, to prove the lemma it suffices to remark that for any $\sF \in \Coh^{H,\gamma}(\Groth^{(1)}_{\hchi})$ the complex
\[
(\gamma_{\hchi}^{\hla})^{-1}(\sF)= R\Gamma(\sV^{\lambda}_{\chi}\otimes_{\sO_{\Groth^{(1)}_{\hchi}}} \sF)
\]
belongs to $\Db(\mod^{H,\gamma}(\sU_{\hchi}^{\hla}))$, which is an immediate consequence of Lemma~\ref{lem:action-V^lambda_chi}.
\end{proof}

\begin{rmk}
As in Remark~\ref{rmk:restriction-loc-neighborhood}, we similarly have natural equivalences
\begin{align*}
\Db(\mod_{(\chi,\lambda)}^{H,0}(\sU(\g))) &\simto \Db\Coh_{\sB_\chi^{(1)}}^{H^{(1)}}(\Groth^{(1)}), \\
\Db(\mod^{H,0}(\sU_{\hchi}^{\lambda})) &\simto \Db\Coh^{H^{(1)}}(\Spr^{(1)} \times_{\g^{*(1)}} \g^{*(1)}_{\widehat{\chi}}).
\end{align*}
\end{rmk}

\subsection{Fixed Frobenius characters}
\label{ss:localization-fixed} 


We will also need a variant of Theorem~\ref{thm:localization-comp} describing some derived categories of $\sU_\chi$-modules. Here, the nonequivariant version is~\cite[Theorem 3.4.1]{Ric10}. The answer involves (equivariant) quasi-coherent sheaves on a certain dg-scheme. Namely, consider the Koszul resolution $K_\chi$ of the $1$-dimensional $\sO(\g^{*(1)})$-module $\k_\chi$ associated with $\chi$, i.e.~the graded-symmetric algebra of the complex whose nonzero terms are $\g^{(1)} \otimes \sO(\g^{*(1)})$ in degree $-1$ and $\sO(\g^{*(1)})$ in degree $0$, with the differential sending $x \otimes f$ to $(x - \chi(x)) \cdot f$ for $x \in \g^{(1)}$ and $f \in \sO(\g^{*(1)})$. Then $K_\chi$ is a differential graded $\sO(\g^{*(1)})$-algebra, free over $\sO(\g^{*(1)})$, and quasi-isomorphic to $\k_\chi$. We can consider $K_\chi$ as a sheaf of $\sO_{\g^{*(1)}}$-dg-algebras on $\g^{*(1)}$, and then define $\sA_\chi$ as the pullback of this sheaf of dg-algebras to $\Groth^{(1)}$. We will denote by
\[
\DGQCoh^H(\Groth^{(1)}\times^R_{{\g}^{*(1)}} \{\chi\})
\]
the localization at quasi-isomorphisms of the homotopy category of the category of $H$-equivariant quasi-coherent (over $\sO_{\Groth^{(1)}}$) sheaves of $\sA_\chi$-dg-modules on $\Groth^{(1)}$, and by
\[
\DGCoh^H(\Groth^{(1)}\times^R_{{\g}^{*(1)}} \{\chi\})
\]
its full subcategory whose objects have bounded and coherent (over $\sO_{\Groth^{(1)}}$) cohomology.

We have an obvious ``forgetful'' functor $\DGCoh^H(\Groth^{(1)}\times^R_{{\g}^{*(1)}} \{\chi\}) \to \Db \Coh^H(\Groth^{(1)})$, which takes values in $\Db \Coh^H_{\sB_{\chi}^{(1)}}(\Groth^{(1)})$ Composing this functor with the functor considered in
Remark~\ref{rmk:restriction-loc-neighborhood}
we deduce a canonical functor
\begin{equation}
\label{eqn:forget-dg-completion}
\DGCoh^H(\Groth^{(1)}\times^R_{{\g}^{*(1)}} \{\chi\}) \to \Db \Coh^H(\Groth_{\hchi}^{(1)}).
\end{equation}



\begin{thm}
\label{thm:localization-fixed-Fr}
Let $\lambda \in \X$ be regular. There exists an equivalence of triangulated categories 
\begin{equation*}
\gamma^{\hla}_\chi : \Db(\mod^H(\sU_{\chi}^{\hla})) \simto \DGCoh^H(\Groth^{(1)}\times^R_{{\g}^{*(1)}} \{\chi\}), 
\end{equation*}
such that the diagram
\begin{equation*}
\begin{tikzcd}
\Db(\mod^H(\sU_\chi^{\hla})) \arrow[r,"\gamma^{\hla}_\chi"] \arrow[d] & \DGCoh^H(\widetilde{\g}^{(1)}\times^R_{{\g}^{*(1)}} \{\chi\}) \arrow[d, "\eqref{eqn:forget-dg-completion}"] \\ 
\Db(\mod^H(\sU^{\hla}_{\hchi})) \arrow[r,"\gamma_{\hchi}^{\hla}"] & \Db\Coh^H(\Groth^{(1)}_{\hchi}) 
\end{tikzcd}
\end{equation*}
is commutative,
where the left vertical functor is induced by the inclusion $\mod^H(\sU_{\chi}^{\hla}) \subset \mod^H(\sU^{\hla}_{\hchi})$.
\end{thm}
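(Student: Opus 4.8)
\textbf{Proof proposal for Theorem~\ref{thm:localization-fixed-Fr}.}

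The plan is to deduce this ``fixed Frobenius character'' version of localization from Theorem~\ref{thm:localization-comp} by a Koszul-descent (or ``base change along $\Spec \k_\chi \hookrightarrow \g^{*(1)}_{\hchi}$'') argument, exactly parallel to the passage from the completed localization theorem of~\cite{BMR08} to the dg-version in~\cite[\S 3.4]{Ric10}. The key point is that $\mod^H(\sU_\chi^{\hla})$ and $\DGCoh^H(\Groth^{(1)}\times^R_{\g^{*(1)}}\{\chi\})$ are both obtained from the respective sides of~\eqref{eqn:localization-comp} by ``$(-)\otimes^L_{\sO(\g^{*(1)}_{\hchi})} \k_\chi$,'' and that $\gamma_{\hchi}^{\hla}$ is $\sO(\g^{*(1)}_{\hchi})$-linear in the appropriate sense.

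First I would set up the derived Koszul formalism on both sides. On the module side, since $\sU_\chi^{\hla} = \sU_{\hchi}^{\hla}\otimes^L_{\sO(\g^{*(1)}_{\hchi})} \k_\chi$ (the higher Tor vanish because $\sU(\g)$ is free over $\ZFr(\g)$, so $\k_\chi$ has a finite free Koszul resolution $K_\chi$ over $\sO(\g^{*(1)}_{\hchi})$ and $\sU_{\hchi}^{\hla}$ is flat over it), one has a pair of adjoint functors between $D^b(\mod^H(\sU_\chi^{\hla}))$ and $D^b(\mod^H(\sU_{\hchi}^{\hla}))$: ``restriction of scalars'' along $\sU_{\hchi}^{\hla}\twoheadrightarrow \sU_\chi^{\hla}$ (fully faithful, with essential image the objects killed by $\m_\chi$), and its left adjoint $(-)\otimes^L_{\sU_{\hchi}^{\hla}} \sU_\chi^{\hla}$. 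On the sheaf side, the dg-algebra $\sA_\chi$ on $\Groth^{(1)}$ is precisely $(\mathrm{pullback\ of\ }K_\chi)$, so $\DGCoh^H(\Groth^{(1)}\times^R_{\g^{*(1)}}\{\chi\})$ is by definition the derived category of $\sA_\chi$-dg-modules; the forgetful functor~\eqref{eqn:forget-dg-completion} has a left adjoint given by $(-)\otimes^L_{\sO_{\Groth^{(1)}_{\hchi}}} \sA_\chi$, which is fully faithful onto its essential image. (Here I would invoke the standard Koszul-duality yoga: the derived category of dg-modules over the Koszul complex of a regular sequence is equivalent to the derived category of the zero locus, cf.~the references to~\cite{BMR08, BM13, Ric10} used in the proof of Theorem~\ref{thm:localization-comp}, and I would likely need a mild equivariant enhancement supplied by Appendix~\ref{app:equiv-sheaves}.)

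Next, the heart of the argument: $\gamma_{\hchi}^{\hla}$, being defined by $R\Gamma(\sV_\chi^\lambda\otimes_{\sO}(-))$ and its adjoint $\sHom_{\tD}(\sV_\chi^\lambda,-)$ with $\sV_\chi^\lambda$ a splitting bundle over $\sO(\g^{*(1)}_{\hchi})$, is $\sO(\g^{*(1)}_{\hchi})$-linear and commutes with $(-)\otimes^L_{\sO(\g^{*(1)}_{\hchi})}\k_\chi$ in the obvious way (the projection formula for the affine morphism $\Groth^{(1)}_{\hchi}\to \g^{*(1)}_{\hchi}$, together with the flatness of $\tD_{\hchi}^{\hla}$ over $\sO(\g^{*(1)}_{\hchi})$). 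Therefore $\gamma_{\hchi}^{\hla}$ carries the left-adjoint ``Koszul-reduction'' functor on the module side to the one on the sheaf side, hence restricts to an equivalence between the two essential images; and one identifies these essential images with $D^b(\mod^H(\sU_\chi^{\hla}))$ and $\DGCoh^H(\Groth^{(1)}\times^R_{\g^{*(1)}}\{\chi\})$ respectively. This simultaneously produces $\gamma_\chi^{\hla}$ and the commutativity of the stated square (the vertical arrows being exactly the two fully faithful embeddings just described). For the non-equivariant skeleton of this I would cite~\cite[Theorem~3.4.1]{Ric10} directly and only check that all functors involved can be upgraded $H$-equivariantly, using that $\sV_\chi^\lambda$ carries an $H$-equivariant structure (as recalled in~\S\ref{ss:splitting-bundle}) and that $K_\chi$, being built from the $H^{(1)}$-module $\g^{(1)}$, is naturally $H$-equivariant.

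The main obstacle I anticipate is \emph{not} the Koszul-duality input — that is classical and available from~\cite{Ric10} — but the bookkeeping needed to make everything work $H$-equivariantly over a \emph{completed} base in the formalism of Appendix~\ref{app:equiv-sheaves}: one must check that $(-)\otimes^L_{\sO(\g^{*(1)}_{\hchi})}\k_\chi$ is well-defined on equivariant derived categories over $\Groth^{(1)}_{\hchi}$, that it is adjoint to the forgetful functor~\eqref{eqn:forget-dg-completion}, and that the adjunction units/counits are isomorphisms on the relevant subcategories — essentially re-running the argument of the proof of Theorem~\ref{thm:localization-comp} (reduce to the non-equivariant case, cite~\cite{BMR08,Ric10}, then reinstate equivariance) one level of ``completion versus dg-scheme'' deeper. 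A secondary subtlety is verifying that the forgetful functor $\DGCoh^H(\Groth^{(1)}\times^R_{\g^{*(1)}}\{\chi\})\to D^b\Coh^H(\Groth^{(1)})$ indeed lands in $D^b\Coh^H_{\sB_\chi^{(1)}}(\Groth^{(1)})$ and that, composed with the fully faithful functor of Remark~\ref{rmk:restriction-loc-neighborhood}, it remains fully faithful — this should follow from Lemma~\ref{lem:completion-Ext-inf-neighborhoods} together with the finiteness of the Koszul complex.
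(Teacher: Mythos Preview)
Your plan has a structural gap: neither of the ``vertical'' functors you rely on is fully faithful at the derived level, so you cannot obtain $\gamma_\chi^{\hla}$ by restricting $\gamma_{\hchi}^{\hla}$. Restriction of scalars along $\sU_{\hchi}^{\hla} \twoheadrightarrow \sU_\chi^{\hla}$ is \emph{not} fully faithful on bounded derived categories: for a finite-dimensional $\sU_\chi^{\hla}$-module $L$ the groups $\Ext^i_{\sU_\chi^{\hla}}(L,L)$ and $\Ext^i_{\sU_{\hchi}^{\hla}}(L,L)$ differ (already in the toy model $\k_\chi$ over $\sO(\g^{*(1)}_{\hchi})$ one has $\Ext^i \cong \Lambda^i \g^{(1)}$ on the big side). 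Likewise the forgetful functor~\eqref{eqn:forget-dg-completion} is not fully faithful: the dg-category records the derived self-intersection, which is lost in $\Db\Coh^H(\Groth^{(1)}_{\hchi})$. (The phrase ``fully faithful onto its essential image'' is vacuous.) So $\Db(\mod^H(\sU_\chi^{\hla}))$ and $\DGCoh^H(\Groth^{(1)}\times^R_{\g^{*(1)}}\{\chi\})$ are simply not full subcategories of the two sides of~\eqref{eqn:localization-comp}, and there is nothing to ``restrict.'' What you flag at the end as a ``secondary subtlety'' is in fact the main obstruction.

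The paper does not deduce the theorem from~\eqref{eqn:localization-comp}. It re-runs the localization argument at the dg-level: one introduces $\sU_{\mathrm{dg},\chi} := \sU(\g)\otimes_{\sO(\g^{*(1)})} K_\chi$ and $\tD_{\mathrm{dg},\chi}$, identifies $\Db(\mod^H(\sU_\chi^{\hla})) \cong \DGMod^H_{\mathrm{fg},\lambda}(\sU_{\mathrm{dg},\chi})$ via the quasi-isomorphism $\sU_{\mathrm{dg},\chi}\to\sU_\chi$, proves a fresh adjoint equivalence $\DGMod^H_{\mathrm{fg},\lambda}(\sU_{\mathrm{dg},\chi}) \simeq \DGMod^H_{\mathrm{fg},\lambda}(\tD_{\mathrm{dg},\chi})$ by derived global sections/localization (reducing to~\cite{BMR08} after forgetting equivariance), and then applies the splitting bundle to reach $\DGCoh^H(\Groth^{(1)}\times^R_{\g^{*(1)}}\{\chi\})$. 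Compatibility with $\gamma_{\hchi}^{\hla}$ is checked separately. Your descent idea could only be salvaged by upgrading $\gamma_{\hchi}^{\hla}$ to an $\sO(\g^{*(1)}_{\hchi})$-linear equivalence in a sense strong enough (dg or stable $\infty$) to pass to $K_\chi$-module categories on both sides; carrying that out amounts to redoing the paper's steps.
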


\begin{proof}
The proof can be essentially copied from that of~\cite[Theorem 3.4.1]{Ric10}. This requires a little bit of theory of derived categories of (equivariant) dg-modules over dg-algebras; here we will just outline the main steps. None of the details of this proof will be required below.

\emph{Step~1.}
We start with a general result. Let $X$ be a noetherian scheme over some base commutative ring $k$, and let $K$ be a flat affine group scheme over $k$ acting on $X$. Let $\sA$ be a sheaf of $\sO_X$-dg-algebras on $X$, which is concentrated in degrees $[-N,0]$ for some $N \geq 0$, coherent in each degree, and endowed with a compatible $K$-equivariant structure. In this setting we denote by $\DGMod^K(\sA)$ the derived category of sheaves of $\sA$-dg-modules which are quasi-coherent as $\sO_X$-modules and $K$-equivariant. Let also $Z \subset X$ be a closed subscheme, and let $\DGMod^K_{\mathrm{fg},Z}(\sA)$ be the full subcategory of $\DGMod^K(\sA)$ consisting of objects whose cohomology is bounded, coherent and set-theoretically supported on $Z$. Let also $\mathsf{C}^K_{\mathrm{fg},Z}(\sA)$ be the category of $\sA$-dg-modules which are bounded, coherent as $\sO_X$-modules and set-theoretically supported on $Z$, and let $D(\mathsf{C}^K_{\mathrm{fg},Z}(\sA))$ be the localization at quasi-isomorphisms of the associated homotopy category. Then the natural functor
\[
D(\mathsf{C}^K_{\mathrm{fg},Z}(\sA)) \to \DGMod^K_{\mathrm{fg},Z}(\sA)
\]
is an equivalence. The non-equivariant version of this statement is~\cite[Proposition~3.3.4]{Ric10}, and the same considerations apply here.\footnote{We take this opportunity to correct a minor mistake in~\cite[\S 3.3]{Ric10}: the assumption that the dg-algebra $\mathcal{Y}$ is coherent (on top of p.~69) should be imposed from the beginning of that subsection; it is necessary to ensure that the coinduction functor takes values in quasi-coherent dg-modules.} (For generalities about equivariant quasi-coherent sheaves supported on a subscheme, see also~\cite[Appendix~A]{br-two}.)

\emph{Step~2.} Consider the dg-algebra $\sU_{\mathrm{dg},\chi} := \sU(\g) \otimes_{\sO(\g^{*(1)})} K_\chi$, which is quasi-isomor\-phic to $\sU_\chi$ since $\sU(\g)$ is flat over $\sO(\g^{*(1)})$.
Let $\DGMod^H(\sU_{\mathrm{dg},\chi})$ be the derived category of $H$-equivariant $\sU_{\mathrm{dg},\chi}$-dg-modules, and consider the full subcategory
\[
\DGMod^H_{\mathrm{fg},\lambda}(\sU_{\mathrm{dg},\chi})
\]
consisting of objects whose cohomology is bounded, finitely generated (over $\sU(\g)$), and supported set-theoretically on the preimage in $\g^{*(1)} \times_{\t^{*(1)}/W} \t^*/(W,\bullet)$ of the image of $\lambda$ in $\t^*/(W,\bullet)$. (In other words, for the second condition we require that $\ZHC(\g)$ acts with generalized character determined by the image of $\lambda$.)
Then there exists an equivalence of triangulated categories
\[
\Db(\mod^H(\sU_{\chi}^{\hla})) \simto \DGMod^H_{\mathrm{fg},\lambda}(\sU_{\mathrm{dg},\chi}).
\]
This equivalence is obtained by restriction from an equivalence of triangulated categories
\[
D^-(\Mod^H(\sU_{\chi})) \simto \DGMod^{H,-}(\sU_{\mathrm{dg},\chi})
\]
(where the right-hand side denotes the full subcategory of $\DGMod^H(\sU_{\mathrm{dg},\chi})$ consisting of objects with bounded above cohomology) which follows from the quasi-isomorphism $\sU_{\mathrm{dg},\chi} \to \sU_\chi$. (Note that the standard proof of invariance of derived categories of dg-modules under quasi-isomorphisms of dg-algebras, see e.g.~\cite[Theorem~10.12.5.1]{bernstein-lunts}, involves the construction of K-flat resolutions. Here we restricted to bounded above complexes because the existence of resolutions in this context is much easier.)

\emph{Step~3.}
Consider now the natural morphism
\begin{equation}
\label{eqn:projection-Groth-g}
\Groth^{(1)} \times_{\t^{*(1)}} \t^* \to \g^{*(1)},
\end{equation}
and denote by $\tD_{\mathrm{dg},\chi}$ the tensor product of $\tD$ with the pullback of $K_\chi$. Consider also the derived category $\DGMod^H(\tD_{\mathrm{dg},\chi})$ of sheaves of quasi-coherent dg-modules over this sheaf of dg-algebras, and the full subcategory
$\DGMod^H_{\mathrm{fg},\lambda}(\tD_{\mathrm{dg},\chi})$
consisting of complexes whose cohomology is bounded, coherent (over $\sO_{\Groth^{(1)} \times_{\t^{*(1)}} \t^*}$) and supported on the preimage of $\overline{\lambda}$. Then we have an equivalence of triangulated categories
\[
\DGMod^H_{\mathrm{fg},\lambda}(\sU_{\mathrm{dg},\chi})
\simto \DGMod^H_{\mathrm{fg},\lambda}(\tD_{\mathrm{dg},\chi}).
\]
For this one constructs a derived global sections functor (from the right-hand side to the left-hand side), and a derived localization functor (in the other direction), which are adjoint, and one proves that they are quasi-inverse equivalences by reducing to the case treated in~\cite{BMR08}. (The construction of derived global sections can e.g.~be treated using K-injective resolutions. The construction of such resolutions for arbitrary dg-modules in the equivariant setting is unknown to us, but here one only needs to consider bounded below complexes, which is easier; see e.g.~\cite[Proposition~2.8]{mr}.)

\emph{Step~4.}
Let now $\sA_\chi'$ be the pullback of the quasi-coherent sheaf of dg-algebras $\sA_\chi$ under the projection map~\eqref{eqn:projection-Groth-g}. Consider the derived category $\DGMod^H(\sA_\chi')$ of $H$-equivariant quasi-coherent $\sA_\chi'$-dg-modules, and the full subcategory $\DGMod^H_{\mathrm{fg},\lambda}(\sA_\chi')$ consisting of objects whose cohomology is coherent and supported on the preimage of $\overline{\lambda}$. Then we have an equivalence of triangulated categories
\[
\DGMod^H_{\mathrm{fg},\lambda}(\tD_{\mathrm{dg},\chi}) \cong
\DGMod^H_{\mathrm{fg},\lambda}(\sA_\chi').
\]
This equivalence is obtained using the statement proved in Step~1, and then the splitting vector bundle $\sV_\chi^\lambda$ from~\S\ref{ss:localization-comp}.

\emph{Step~5.}
To conclude the construction of $\gamma_\chi^{\hla}$ it remains to construct an equivalence of triangulated categories
\[
\DGMod^H_{\mathrm{fg},\lambda}(\sA_\chi') \cong \DGCoh^H(\Groth^{(1)}\times^R_{{\g}^{*(1)}} \{\chi\}).
\]
This is obtained using the equivalence of Step~1 and the isomorphism~\eqref{eqn:isom-completions-Groth}.

\emph{Step~6.}
To prove compatibility of $\gamma_\chi^{\hla}$ with the equivalence $\gamma_{\hchi}^{\hla}$, by construction and Remark~\ref{rmk:restriction-loc-neighborhood} it suffices to prove compatibility with the induced equivalence
\[
\Db(\mod_{(\chi,\lambda)}^H(\sU(\g))) \simto
\Db\Coh_{\sB_\chi^{(1)}}^H(\Groth^{(1)})
\]
constructed as in~\cite{BMR08}, which is clear.
\end{proof}

\begin{rmk}
As in Lemma~\ref{lem:localization-summands}, we have a naturally defined triangulated category $\DGCoh^{H^{(1)}}(\Groth^{(1)}\times^R_{{\g}^{*(1)}} \{\chi\})$, which is a direct summand in $\DGCoh^{H}(\Groth^{(1)}\times^R_{{\g}^{*(1)}} \{\chi\})$, and the equivalence $\gamma^{\hla}_\chi$ restricts to an equivalence 
\[
\gamma^{\hla,0}_\chi: \Db(\mod^{H,0}(\sU_{\chi}^{\hla})) \simto \DGCoh^{H^{(1)}}(\Groth^{(1)}\times^R_{{\g}^{*(1)}} \{\chi\}). 
\]
\end{rmk}

\subsection{Standard objects}
\label{ss:standard-notation}

In this subsection we assume that $\chi=0$, $H=T$.

For $x\in W_\ex$, we abbreviate 
\[
\Delta_x=\Delta(x\bullet 0) \quad \text{and}\quad \Simp_x=\Simp(x\bullet 0),
\]
and define $\tDelta_x$, $\hDelta_x$ and $\bV_x$ similarly. 
For $x \in W_\ex$ we will denote by
$\Pro_x$ the injective hull of $\Simp_x$ as a $G_1T$-module; this module is finite-dimensional, and is also the projective cover of $\Simp_x$ as a $G_1T$-module, see~\cite[Eqn.~(3) in \S II.11.5]{Jan03}.
By the comments in~\S\ref{ss:U0-modules}, $\Pro_x$ is the projective cover and injective hull of $\Simp_x$ in $\mod^T(\sU_0)$ and in $\mod^T(\sU_0^{\ho})$. 
For any $x\in W_\ex$ and $\lambda\in \X$, we have $\Delta_{t_\lambda x}\cong \Delta_x \langle p\lambda \rangle$ as $T$-equivariant $\sU(\g)$-modules, and similar formulas hold for $\Simp_x$, $\tDelta_x$, $\hDelta_x$, $\bV_x$, and $\Pro_x$. 

For $x \in W_\ex$
we set
\[
\hM_x = \gamma^{\ho}_{\ho} (\hDelta_x), \quad \sL_x = \gamma^0_{\ho}(\Simp_x), \quad
\sZ_x = \gamma_0^{\ho}(\bV_x), \quad \sP_x = \gamma_0^{\ho}(\Pro_x).
\]
(All these objects are complexes of $T^{(1)}$-equivariant sheaves on the respective schemes or dg-schemes.)
By compatibility of the equivalences $\gamma^0_{\ho}$ and $\gamma_{\ho}^{\ho}$ (see Theorem~\ref{thm:localization-comp}), we have
\begin{equation}
\label{eqn:L-Groth}
\gamma^{\ho}_{\ho}(\Simp_x) = (i_* \sL_x)_{\ho},
\end{equation}
where we use the notation of~\S\ref{ss:localization-comp}.


For $w\in W$ we have a corresponding point $wB^-/B^- \in \sB$; we will
identify $W=\{wB^-/B^- : w\in W\}=\sB^T$. 
The preimage in $\Groth$ of $w\in \sB^T$ identifies with $(\g/w(\n^-))^*$. 
We also set $w^{(1)} = \mathrm{Fr}_{\sB}(w) \in \sB^{(1)}$. 

The following lemma is closely related to~\cite[Proposition 3.1.4]{BMR08}, and its proof is somewhat similar.

\begin{lem}
\label{lem:hDelta-Dmod}
For any $\lambda\in \X$, there is an isomorphism of $T$-equivariant $\sU^{\widehat{\lambda}}_{\widehat{0}}$-modules 
\begin{equation*}
\hDelta(w_\circ \bullet\lambda) \cong R\Gamma(\sB, \tD^{\hla}_{\widehat{0}} \otimes_{\sO_{\sB}}\sO_{\{w_\circ\}}) \langle w_\circ \bullet\lambda \rangle.
\end{equation*}
\end{lem}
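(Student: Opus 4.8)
The strategy is to unwind both sides through the localization equivalence and reduce to a concrete computation of cohomology of a twisted $\sD$-module supported at a $T$-fixed point of $\sB$. Recall from~\S\ref{ss:splitting-bundle} that $\tD^{\hla}_{\widehat{0}} \cong \sEnd(\sV^\lambda_0)$, so $R\Gamma(\tD^{\hla}_{\widehat{0}} \otimes_{\sO_\sB} \sO_{\{w_\circ\}})$ is, after applying $\sHom_{\tD^{\hla}_{\widehat{0}}}(\sV^\lambda_0, -)$, the image under $(\gamma^{\hla}_{\widehat{0}})^{-1}$ (equivalently, $R\Gamma(\sV^\lambda_0 \otimes -)$) of the structure sheaf of the corresponding point in $\Groth^{(1)}_{\widehat 0}$; so what we must identify is $(\gamma^{\hla}_{\widehat 0})^{-1}$ applied to $\sO$ of the fiber over $w_\circ^{(1)} \in \sB^{(1)}$, which (after the identifications in the proof of Theorem~\ref{thm:localization-comp}) is the skyscraper along the copy of $(\g/w_\circ(\n^-))^* $ completed at $0$. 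First I would make precise that this skyscraper corresponds under $\gamma^{\hla}_{\widehat 0}$ to a single $\sU^{\hla}_{\widehat 0}$-module (no higher cohomology): this follows because $\sV^\lambda_0$ is a vector bundle and the point $\{w_\circ\}$ is affine, so $R\Gamma$ of $\sV^\lambda_0 \otimes \sO_{\{w_\circ\}}$ is concentrated in degree $0$ — here one uses that $\tD^{\hla}_{\widehat 0} \otimes_{\sO_\sB} \sO_{\{w_\circ\}}$ is the pushforward of a sheaf on a point, so $R^{>0}\Gamma$ vanishes.

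Next I would identify that degree-zero module explicitly. The sheaf $\tD^{\hla}_{\widehat 0} \otimes_{\sO_\sB} \sO_{\{w_\circ\}}$ is supported at $w_\circ$, and its global sections form a module over $\sU^{\hla}_{\widehat 0}$ via the map $\sU(\g) \to \Gamma(\tD)$ from~\eqref{eqn:global-sections-D}. The key point is that this module carries a vector annihilated by $w_\circ(\n)$ (the nilpotent radical opposite to the one adapted to $w_\circ$) and on which $T$ acts by a shift of $\lambda$; more precisely, the $\sD$-module supported at a point $w \in \sB^T$, via the description $\tD = p_*(\sD_{G/U^-})^T$, is the "delta module" at that point, and for the point $w_\circ B^-/B^-$ this recovers a Verma-type module for the opposite Borel. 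Concretely I expect the global sections to be $\sU(\g) \otimes_{\sU(w_\circ \b^-)} (\text{line})$, completed appropriately, which after the twist by $\sO_\sB(\lambda+\rho)$ built into $\sV^\lambda_0$ and keeping track of the $\rho$-shift conventions becomes precisely $\hDelta(w_\circ \bullet \lambda)$ up to the grading shift $\langle w_\circ \bullet \lambda\rangle$ recorded in the statement. The comparison with the universal Verma module $\tDelta$ and its completion $\hDelta$ from~\S\ref{ss:completed-Verma} is then a matter of matching: both are free of rank one over the appropriate completed enveloping algebra of the nilradical, both have the stated $T$-weight generator, and both have central character $\widehat\lambda$ (for $\hDelta$ by Lemma~\ref{lem:properties-hDelta}\eqref{it:central-char-hDelta}).

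For the $T$-equivariant refinement I would invoke the equivariant structure on $\sV^\lambda_0$ fixed in~\S\ref{ss:splitting-bundle} (the one with $V^{-\rho}_0 \in \mod^{T,0}(\sU^{\widehat{-\rho}}_{\widehat 0})$), together with Lemma~\ref{lem:action-V^lambda_chi}, so that the $T$-action and the "extra" $\mathrm{S}(\t)$-action on $R\Gamma(\tD^{\hla}_{\widehat 0}\otimes \sO_{\{w_\circ\}})$ match those on $\hDelta(w_\circ\bullet\lambda)$; the line bundle twist $\sO_\sB(\lambda+\rho)$ contributes the $T$-weight of the generator, hence the grading shift $\langle w_\circ\bullet\lambda\rangle$. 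The main obstacle I anticipate is not any single deep step but rather bookkeeping: getting the $\rho$-shifts, the dot-action normalization ($w_\circ \bullet \lambda$ versus $w_\circ(\lambda)$), the Frobenius twists, and the direction of the grading-shift convention all mutually consistent, so that the claimed formula comes out with exactly the shift $\langle w_\circ \bullet \lambda \rangle$ and with $w_\circ \bullet \lambda$ (not some other element of the orbit) labelling the completed Verma module. A secondary technical point is justifying that the localization equivalence intertwines "the structure sheaf of the $T$-fixed fiber" with this particular module; for this I would either cite the non-equivariant computation~\cite[Proposition 3.1.4]{BMR08} (as the lemma statement itself signals) and then upgrade it equivariantly using the explicit equivariant splitting bundle, or argue directly that $\sHom_{\tD^{\hla}_{\widehat 0}}(\sV^\lambda_0, \tD^{\hla}_{\widehat 0}\otimes \sO_{\{w_\circ\}})$ is the skyscraper and then apply $\gamma^{\hla}_{\widehat 0}$.
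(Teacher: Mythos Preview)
Your plan routes the computation through the splitting bundle $\sV^\lambda_0$ and the equivalence $\gamma^{\hla}_{\ho}$, but the paper's proof uses neither: it is a direct computation in coordinates. The paper restricts $\tD$ to the big Bruhat cell $U^- w_\circ B^-/B^- \cong U^-$, identifies $\tD$ there explicitly as $\sD_{U^-}\otimes\sU(\t)$ (see~\eqref{eqn:tD-big-cell}), reads off the fiber at $w_\circ$ as $\sU(\n^-)\otimes\sU(\t)$, and computes the left $\sU(\g)$-action $a_l$ and the right $\mathrm{S}(\t)$-action $a_r$ in these coordinates (formulas~\eqref{eqn:formula-al-t}--\eqref{eqn:formula-al-n}). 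This yields the isomorphism~\eqref{eqn:sections-fiber-tD} with $\sU(\g)\otimes_{\sU(\b)}(\k_{-2\overline\rho}\otimes\sU(\t))$; completing and matching the resulting $\mathrm{S}(\t)$-action with the ``extra'' $\odot$-action on $\hDelta(w_\circ\bullet\lambda)$ from~\S\ref{ss:completed-Verma} gives the lemma. The $T$-equivariance comes directly from the natural $T$-actions on $\tD$ and $\sO_{\{w_\circ\}}$; the equivariant structure on $\sV^\lambda_0$ plays no role here.

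The gap in your approach is the assertion that, under Morita, $\tD^{\hla}_{\ho}\otimes_{\sO_\sB}\sO_{\{w_\circ\}}$ corresponds to the structure sheaf of the fiber $(\g/\n)^{*(1)}_{\ho}\subset\Groth^{(1)}_{\ho}$. This is precisely what Proposition~\ref{prop:M-Z-w0} establishes (the isomorphism $\hM_{w_\circ}\cong\sO_{(\g/\n)^{*(1)}_{\ho}}$), and Step~3 of that proof \emph{uses} Lemma~\ref{lem:hDelta-Dmod}; Step~2, which identifies $\sV^{-\rho}_0$ along the fiber, also relies on it. So your route is circular as written. Your second paragraph is closer to the paper's method, but you do not supply the mechanism --- the explicit big-cell computation is the actual content of the argument. (A minor slip: the generating vector is annihilated by $\n$, not by $w_\circ(\n)=\n^-$; this is consistent with your own formula $\sU(\g)\otimes_{\sU(w_\circ\b^-)}(-)=\sU(\g)\otimes_{\sU(\b)}(-)$, but contradicts ``annihilated by $w_\circ(\n)$''.)
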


\begin{proof}
Let 
\[
a_l:\sU(\g)\rightarrow \Gamma(\sB,\tD), \quad \text{resp.}\quad a_r:\sU(\t)\rightarrow \Gamma(\sB,\tD),
\]
be the algebra homomorphism induced by the left regular $G$-action, resp. $T$-torsor action, on $G/U^-$. 
Consider the $T$-equivariant $(\sU(\g)\otimes \mathrm{S}(\t))$-module 
\[
\sU(\g)\otimes_{\sU(\b)} (\k_{-2\overline{\rho}}\otimes \sU(\t)),
\] 
where 
\begin{itemize}
\item $\sU(\b)$ acts on $\k_{-2\overline{\rho}}\otimes \sU(\t)$ diagonally: it acts on $\k_{-2\overline{\rho}}=\k$ by the character corresponding to $-2\overline{\rho}\in \t^*$ and on $\sU(\t)$ via the surjection $\sU(\b)\twoheadrightarrow \sU(\b) / (\n \cdot \sU(\b)) = \sU(\t)$; 
\item the $\mathrm{S}(\t)$-action is by $w_\circ$ composed with multiplication on the factor $\sU(\t)$; 
\item $1\otimes 1\otimes 1$ is $T$-invariant. 
\end{itemize}

We firstly claim that $a_l\otimes a_r$ induces an isomorphism of $T$-equivariant $(\sU(\g)\otimes \mathrm{S}(\t))$-modules 
\begin{equation}
\label{eqn:sections-fiber-tD}
\sU(\g)\otimes_{\sU(\b)} (\k_{-2\overline{\rho}}\otimes \sU(\t)) \cong R\Gamma(\sB, \tD \otimes_{\sO_{\sB}}\sO_{\{w_\circ\}}).
\end{equation}
Here, in the right-hand side we consider a quasi-coherent sheaf supported on a point, so derived global sections are concentrated in degree $0$ (and the symbol will be omitted).
Fix a lift $\dot{w}_\circ$ of $w_\circ$ in the normalizer of $T$, and consider the commutative diagram 
\[
\begin{tikzcd}
B^- \arrow[r,"\sim"] \arrow[d,"p'"] & B^- w_\circ U^-/U^- \arrow[d,"p"] \\ 
U^- \arrow[r,"\sim"] & B^- w_\circ B^-/B^-, 
\end{tikzcd}
\] 
where the horizontal maps are induced by multiplication on $\dot{w}_\circ U^-/U^-$ and $w_\circ B^-/B^-$, respectively. 
The $T$-torsor structure on $p':B^-\rightarrow U^-$ is given by $b \cdot t=bw_\circ(t)$ for $t\in T$ and $b\in B^-$. 
Recall that for any smooth group scheme $H$ over $\k$, we have a canonical identification $\sD_{H}=\sU(\mathfrak{h})\otimes \sO_H$, where $\mathfrak{h}$ is the Lie algebra of $H$ and the embedding $\sU(\mathfrak{h})\hookrightarrow \Gamma(H,\sD_H)$ is induced by the left regular action on $H$. 
We thus identify $\sD_{B^-}=\sD_{U^-}\boxtimes \sD_T=\sD_{U^-}\boxtimes (\sU(\t)\otimes \sO_T)$, which provides identifications 
\begin{equation}
\label{eqn:tD-big-cell}
\tD_{|{U^-w_\circ B^-/B^-}}= (p')_*(\sD_{B^-})^T= \sD_{U^-}\otimes \sU(\t)
\end{equation}
and 
\begin{equation}
\label{eqn:fiber-tD-w0}
\tD\otimes_{\sO_{\sB}}\sO_{\{w_\circ\}}= (p')_*(\sD_{B^-})^T\otimes_{\sO_{U^-}}\sO_{\{e\}}= \sU(\n^-)\otimes \sU(\t).
\end{equation}
These isomorphisms respect the $T$-equivariant structures on both sides induced by left multiplication on $B^-\cong B^- w_\circ U^-/U^-$. 

We now write $U^-=\prod_{\alpha\in \Phi^+} U^-_\alpha$, where $U^-_\alpha$ is the one parameter subgroup associated with the root $-\alpha$ (for some choice of order on $\Phi^+$), and choose for any $\alpha$ a coordinate function $y_\alpha : U^-_\alpha \simto \Ga$. 
It is straightforward to compute that under the identification \eqref{eqn:tD-big-cell}, for any $x\in \t$ we have
\begin{multline}
\label{eqn:formula-al-t}
a_l(x)_{|{U^-w_\circ B^-/B^-}}=\sum_{\alpha\in \Phi^+}\alpha(x)y_\alpha\partial_{y_\alpha}\otimes1 +1\otimes x \\
=\sum_{\alpha\in \Phi^+}\alpha(x) \partial_{y_\alpha} y_\alpha \otimes1 +1\otimes (-2\overline{\rho}(x)+x), 
\end{multline}
and
\begin{equation}\label{eqn:formula-ar-t}
a_r(x)_{|{U^-w_\circ B^-/B^-}}=1\otimes w_\circ(x),
\end{equation}
and for any $u\in \sU(\n^-)$ we have 
\begin{equation}\label{eqn:formula-al-n}
a_l(u)_{|{U^-w_\circ B^-/B^-}}= u\otimes 1. 
\end{equation}
By 
\eqref{eqn:formula-al-t}--\eqref{eqn:formula-al-n}, $a_l$ induces an isomorphism 
\begin{equation}
\label{eqn:action-b-fiber-tD}
\sU(\b^-) \xs \tD\otimes_{\sO_{\sB}}\sO_{\{w_\circ\}} \overset{\eqref{eqn:fiber-tD-w0}}{=} \sU(\n^-)\otimes \sU(\t),
\end{equation}
which, for $u\in \sU(\n^-)$ and $x_1, \dots, x_r \in \t$, sends $u \cdot (x_1 \cdots x_r)$ to $u \otimes \prod_i (-2\overline{\rho}(x_i)+x_i)$.
By \eqref{eqn:fiber-tD-w0} and weight considerations, the $\sU(\b)$-action (via $a_l$) on $1\otimes 1\in \tD\otimes_{\sO_{\sB}}\sO_{\{w_\circ\}}$ factors through $\sU(\t)$. 
By \eqref{eqn:formula-al-t} and \eqref{eqn:formula-ar-t}, the $\sU(\b)\otimes \mathrm{S}(\t)$-action (via $a_l\otimes a_r$) on $1\otimes 1\in \tD\otimes_{\sO_{\sB}}\sO_{\{w_\circ\}}$ induces a homomorphism of $T$-equivariant $(\sU(\b)\otimes \mathrm{S}(\t))$-modules 
\[
\k_{-2\overline{\rho}}\otimes \sU(\t)\rightarrow \tD\otimes_{\sO_{\sB}}\sO_{\{w_\circ\}}.
\]
By adjunction this provides a homomorphism of $T$-equivariant $(\sU(\g)\otimes \mathrm{S}(\t))$-modules 
\[
\sU(\g)\otimes_{\sU(\b)} (\k_{-2\overline{\rho}}\otimes \sU(\t)) \rightarrow \tD \otimes_{\sO_{\sB}}\sO_{\{w_\circ\}},
\]
which is an isomorphism by~\eqref{eqn:action-b-fiber-tD}. 
This concludes the proof of~\eqref{eqn:sections-fiber-tD}. 

Now we prove the isomorphism of the lemma.
By \eqref{eqn:sections-fiber-tD} we have isomorphisms of $T$-equivariant $\sU_{\widehat{0}}$-modules 
\begin{align*}
R\Gamma(\sB, \tD^{\hla}_{\widehat{0}} \otimes_{\sO_{\sB}}\sO_{\{w_\circ\}})
&=R\Gamma(\sB, \tD\otimes_{\sO_{\sB}}\sO_{\{w_\circ\}})\otimes_{\sO(\g^{*(1)}\times \t^*)} \sO (\g^{*(1)}\times \t^*)_{\widehat{(0,\lambda)}} \\
&\cong \big(\sU(\g)\otimes_{\sU(\b)} (\k_{-2\overline{\rho}}\otimes \sU(\t)\big) \otimes_{\ZFr(\g)\otimes \mathrm{S}(\t)} (\ZFr(\g)\otimes \mathrm{S}(\t))_{\widehat{(0,\lambda)}},
\end{align*}
where the subscripts $\widehat{(0,\lambda)}$ mean completion with respect to the maximal ideal corresponding to $(0,\overline{\lambda})$.
Consider the isomorphism of $T$-equivariant $\sU(\g)$-modules 
\[
\sU(\g)\otimes_{\sU(\b)} (\k_{-2\overline{\rho}}\otimes \sU(\t))\langle w_\circ \bullet\lambda \rangle \simto \sU(\g)\otimes_{\sU(\n)} \k_{T}(w_\circ \bullet\lambda)= \tDelta(w_\circ \bullet\lambda)
\]
sending, for $u\in \sU(\g)$ and $x_1, \dots, x_r \in \t$, the element $u \otimes (1 \otimes (x_1 \cdots x_r)) \otimes 1$ to $u \cdot \prod_i (2\overline{\rho}(x_i)+x_i) \otimes 1$.
This isomorphism intertwines the $\mathrm{S}(\t)$-action on the left-hand side considered above with the $\mathrm{S}(\t)$-action on the right-hand side given by $(u\otimes 1) \cdot x=u(w_\circ(x)-2\overline{\rho}(x))\otimes 1$ for $u\in \sU(\g)$, $x\in \t$. This is \emph{not} the $\mathrm{S}(\t)$-action on $\tDelta(w_\circ \bullet\lambda)$ defined in \S\ref{ss:completed-Verma}; but comparing them we see that
the completion of $\tDelta(w_\circ \bullet\lambda)$ at the maximal ideal of $\ZFr(\g)\otimes \mathrm{S}(\t)$ corresponding to $(0,\lambda)$ is exactly $\hDelta(w_\circ \bullet\lambda)$. 
\end{proof}

Using Lemma~\ref{lem:hDelta-Dmod} one can describe the objects $\hM_{w_\circ}$ and $\sZ_{w_\circ}$ explicitly, as follows. Here, we identify $(\g/\n)^{*(1)}$ with the fiber of $\Groth^{(1)}$ over $w_\circ^{(1)}$, and set
\[
(\g/\n)^{*(1)}_{\ho} = (\g/\n)^{*(1)} \times_{\g^{*(1)}} \g^{*(1)}_{\ho} \subset \Groth^{(1)}_{\ho}.
\]
We also consider the point $(w_\circ^{(1)},0) \in \Groth^{(1)} \times_{\g^{*(1)}} \{0\}$.

\begin{prop}
\label{prop:M-Z-w0}
We have isomorphisms of $T^{(1)}$-equivariant sheaves 
\[
\hM_{w_\circ} \cong \sO_{(\g/\n)^{*(1)}_{\ho}} \quad \text{and} \quad 
\sZ_{w_\circ} \cong \sO_{\{(w_\circ^{(1)},0)\}}.
\]
\end{prop}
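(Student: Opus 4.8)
The strategy is to deduce both isomorphisms from Lemma~\ref{lem:hDelta-Dmod} together with the explicit description of the splitting bundle $\sV_{\ho}^{\ho}$ recalled in~\S\ref{ss:splitting-bundle}. Recall that $\gamma_{\ho}^{\ho}$ is the equivalence sending a module $M$ to (a complex computing) $\sHom_{\tD_{\ho}^{\ho}}(\sV_\chi^\lambda,-)$ applied to the localization of $M$, and that its inverse is $R\Gamma(\sV_{\ho}^{\ho} \otimes_{\sO} -)$; concretely, to compute $\hM_{w_\circ} = \gamma_{\ho}^{\ho}(\hDelta_{w_\circ})$ we must identify the sheaf on $\Groth_{\ho}^{(1)}$ whose image under localization-plus-$\sHom$ is $\hDelta_{w_\circ} = \hDelta(w_\circ \bullet 0)$. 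By Lemma~\ref{lem:hDelta-Dmod} (applied with $\lambda = 0$, using that $0$ is regular since $p>h$), we have an isomorphism of $T$-equivariant $\sU^{\ho}_{\ho}$-modules $\hDelta(w_\circ \bullet 0) \cong R\Gamma(\sB, \tD^{\ho}_{\ho} \otimes_{\sO_\sB} \sO_{\{w_\circ\}}) \langle w_\circ \bullet 0 \rangle$. The point is that $\tD^{\ho}_{\ho} \otimes_{\sO_\sB} \sO_{\{w_\circ\}}$, as a sheaf of $\tD^{\ho}_{\ho}$-modules, corresponds under the Morita equivalence $\tD^{\ho}_{\ho} \cong \sEnd(\sV_{\ho}^{\ho})$ to the sheaf $\sV_{\ho}^{\ho}|_{\{w_\circ^{(1)}\}}$ on $\Groth^{(1)}_{\ho}$ supported on the fiber $(\g/\n)^{*(1)}_{\ho}$ over $w_\circ^{(1)}$. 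Since $\sV_{\ho}^{\ho} = \sO_\sB(\rho) \otimes f^* V_{\ho}^{-\rho}$ with $V_{\ho}^{-\rho}$ free over $\sO(\g^{*(1)}_{\ho})$, its restriction to the fiber over $w_\circ^{(1)}$ is a free $\sO_{(\g/\n)^{*(1)}_{\ho}}$-module of finite rank. Applying $\sHom_{\tD^{\ho}_{\ho}}(\sV^{\ho}_{\ho}, -)$ to $\tD^{\ho}_{\ho} \otimes_{\sO_\sB} \sO_{\{w_\circ\}} \cong \sEnd(\sV^{\ho}_{\ho}) \otimes_{\sO_\sB} \sO_{\{w_\circ\}}$ one gets $\sO_{(\g/\n)^{*(1)}_{\ho}}$, up to a grading shift that must be tracked; the shift $\langle w_\circ \bullet 0 \rangle$ in Lemma~\ref{lem:hDelta-Dmod} combined with the shift coming from the $\sO_\sB(\rho)$-twist and the equivariant structure on $V_{\ho}^{-\rho}$ (chosen in $\mod^{T,0}$) should cancel, leaving exactly $\sO_{(\g/\n)^{*(1)}_{\ho}}$ with its natural $T^{(1)}$-equivariant structure.

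Concretely, the first step is to unwind the definitions: write $\gamma_{\ho}^{\ho}(\hDelta_{w_\circ})$ as $\sHom_{\tD^{\ho}_{\ho}}(\sV^{\ho}_{\ho}, \tD^{\ho}_{\ho} \otimes^L_{\sU^{\ho}_{\ho}} \hDelta_{w_\circ})$, then feed in the identification of Lemma~\ref{lem:hDelta-Dmod} to recognize $\tD^{\ho}_{\ho} \otimes^L_{\sU^{\ho}_{\ho}} \hDelta_{w_\circ}$ as $\tD^{\ho}_{\ho} \otimes_{\sO_\sB} \sO_{\{w_\circ\}}$ (up to shift) — this is the content of the localization theorem applied to a module which is already a $\sD$-module pushforward from a point, and one can cite the proof of Lemma~\ref{lem:hDelta-Dmod} directly or invoke~\cite[Proposition~3.1.4]{BMR08}. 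Then apply the Morita functor $\sHom_{\sEnd(\sV^{\ho}_{\ho})}(\sV^{\ho}_{\ho},-)$ to the module $\sEnd(\sV^{\ho}_{\ho}) \otimes_{\sO_\sB} \sO_{\{w_\circ\}}$, which is just $\sV^{\ho}_{\ho} \otimes_{\sO_\sB} \sO_{\{w_\circ\}}$; identifying this with a free rank-one module $\sO_{(\g/\n)^{*(1)}_{\ho}}$ over the structure sheaf of the fiber requires using that $V_{\ho}^{-\rho}$ is free and computing its rank — but since $\sU^{\widehat{-\rho}}_{\ho} \cong \End(V^{-\rho}_{\ho})$ is an Azumaya algebra that splits with $V^{-\rho}_{\ho}$ the unique (up to twist) indecomposable projective, the restriction to any fiber is free of the appropriate rank, and after dividing by $\sEnd$ one is left with rank one.

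For the second isomorphism $\sZ_{w_\circ} \cong \sO_{\{(w_\circ^{(1)},0)\}}$, the plan is to use the compatibility of $\gamma_0^{\ho}$ with $\gamma_{\ho}^{\ho}$ from Theorem~\ref{thm:localization-fixed-Fr}: the forgetful functor~\eqref{eqn:forget-dg-completion} sends $\sZ_{w_\circ} = \gamma_0^{\ho}(\bV_{w_\circ})$ to $\gamma_{\ho}^{\ho}$ applied to the image of $\bV_{w_\circ}$ in $\Db(\mod^T(\sU^{\ho}_{\ho}))$. Now $\bV_{w_\circ} = \bV(w_\circ \bullet 0)$ is the quotient of $\hDelta_{w_\circ}$ (or rather of $\Delta_{w_\circ}$) on which the Frobenius center acts through $0$, i.e.\ $\bV_{w_\circ} \cong \hDelta_{w_\circ} \otimes^L_{\sO(\g^{*(1)}_{\ho})} \k_0$ (the derived tensor is concentrated in degree $0$ here since $\hDelta_{w_\circ}$ is free over $\sO(\g^{*(1)}_{\ho})$ — being, by Lemma~\ref{lem:hDelta-Dmod}, the global sections of a free $\sD$-module on a point). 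Applying $\gamma_{\ho}^{\ho}$ and using $\hM_{w_\circ} \cong \sO_{(\g/\n)^{*(1)}_{\ho}}$ together with $(\g/\n)^{*(1)}_{\ho} \times_{\g^{*(1)}_{\ho}} \{0\} = \{(w_\circ^{(1)},0)\}$ gives that the image of $\sZ_{w_\circ}$ under~\eqref{eqn:forget-dg-completion} is $\sO_{\{(w_\circ^{(1)},0)\}}$; since~\eqref{eqn:forget-dg-completion} is faithful on the relevant objects (it is the composition of the forgetful functor to $\Db\Coh^H_{\sB_\chi^{(1)}}(\Groth^{(1)})$ with a fully faithful functor, and the structure sheaf of a reduced point is in the essential image of the dg-category with trivial differential), we conclude $\sZ_{w_\circ} \cong \sO_{\{(w_\circ^{(1)},0)\}}$ as $T^{(1)}$-equivariant dg-sheaves.

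\textbf{Main obstacle.} The delicate point is bookkeeping of the grading shifts and the $T^{(1)}$-equivariant structures: Lemma~\ref{lem:hDelta-Dmod} carries a shift $\langle w_\circ \bullet 0 \rangle$, the splitting bundle is defined with an $\sO_\sB(\rho)$-twist and an equivariant structure on $V^{-\rho}_{\ho}$ normalized to lie in $\mod^{T,0}$, and one must verify these combine to produce exactly the \emph{untwisted} $\sO_{(\g/\n)^{*(1)}_{\ho}}$ rather than a twist of it. This will require a careful weight computation at the base point $w_\circ$, analogous to the explicit computations~\eqref{eqn:formula-al-t}--\eqref{eqn:action-b-fiber-tD} in the proof of Lemma~\ref{lem:hDelta-Dmod}; the Chevalley involution / $w_\circ$-twist appearing there and the choice $V^{-\rho}_{\ho} \in \mod^{T,0}$ are precisely what make the shifts cancel, but confirming this is where the real work lies.
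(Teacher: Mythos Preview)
Your overall strategy (Lemma~\ref{lem:hDelta-Dmod} plus the Morita equivalence via the splitting bundle) is the same as the paper's, and your treatment of $\sZ_{w_\circ}$ via the compatibility in Theorem~\ref{thm:localization-fixed-Fr} matches the paper's argument. But there is a genuine gap in your computation of $\hM_{w_\circ}$.

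Your step ``apply $\sHom_{\sEnd(\sV^{\ho}_{\ho})}(\sV^{\ho}_{\ho}, -)$ to $\sEnd(\sV^{\ho}_{\ho}) \otimes_{\sO_\sB} \sO_{\{w_\circ\}}$, which is just $\sV^{\ho}_{\ho} \otimes_{\sO_\sB} \sO_{\{w_\circ\}}$, a free rank-one module over $\sO_{(\g/\n)^{*(1)}_{\ho}}$'' is where the argument breaks. The tensor $\otimes_{\sO_\sB}$ is over the \emph{non-central} subalgebra $\sO_\sB \subset \tD^{\ho}_{\ho}$ (the center is $\sO_{\Groth^{(1)}_{\ho}}$), so the Morita functor does not commute with it in the naive way you suggest. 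Even granting some identification of this form, the restriction of $\sV^0_0$ to the fiber over $w_\circ^{(1)}$ is free of rank $p^{|\Phi^+|}$ over $\sO_{(\g/\n)^{*(1)}_{\ho}}$, not rank $1$; your phrase ``after dividing by $\sEnd$ one is left with rank one'' is precisely the non-trivial content that needs proof.

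The paper supplies what is missing in two moves. First it twists from $\lambda=0$ to $\lambda=-\rho$ using $\sV^0_0 = \sO_\sB(\rho) \otimes f^* V^{-\rho}_0$ and~\eqref{eqn:D-pullback-U}, so that $\tD^{\widehat{-\rho}}_{\ho}$ is genuinely pulled back from $\sU^{\widehat{-\rho}}_{\ho}$ over the center. Second --- and this is the real work --- it proves an explicit $T$-equivariant isomorphism
\[
\hDelta(-\rho)\ \cong\ V^{-\rho}_0 \otimes_{\sO(\g^{*(1)}_{\ho})} \sO((\g/\n)^{*(1)}_{\ho}),
\]
obtained by lifting the defining identification $\bV(-\rho) \cong V^{-\rho}_0 \otimes \k_{(0,-\rho)}$ of~\eqref{eqn:splitting-bundle-0} through the quotients by powers of the maximal ideal, via highest-weight considerations and Nakayama's lemma. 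This says that the $\tD^{\widehat{-\rho}}_{\ho}$-module $\tD^{\widehat{-\rho}}_{\ho} \otimes_{\sO_\sB} \sO_{\{w_\circ\}}$ (with the correct shift) is the restricted splitting bundle itself, so the Morita $\sHom$ becomes an $\End$ and produces $\sO_{(\g/\n)^{*(1)}_{\ho}}$, with the equivariant structure fixed by construction. The shift bookkeeping you flag as the main obstacle is routine once this identification is in hand; it is the identification that carries the content, and your plan does not address it.
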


\begin{proof}
We start with the case of $\hM_{w_\circ}$. The proof will be divided into 3 steps.

\textit{Step 1.} 
Recall the construction of the splitting bundle $\sV^\lambda_0$ described in~\S\ref{ss:splitting-bundle}.
The $T$-equivariant structure on $\sV^\lambda_0$ is induced from the one on $V_0^{-\rho}$ constructed as follows.
By~\cite[Corollary 3.11]{BG01} 
$\bV(-\rho)$ is a simple module over $\sU(\g) \otimes_{\mathrm{Z}(\sU(\g))} \k_{(0,-\rho)}$, where $\k_{(0,-\rho)}$ is the $1$-dimensional $\mathrm{Z}(\sU(\g))$-module with action given by the character corresponding to the image of $(0,-\rho)$ in $\g^{*(1)} \times_{\t^{*(1)}/W} \t^*/(W,\bullet)$. On the other hand this algebra identifies with the endomorphism algebra of $V^{-\rho}_0 \otimes_{\mathrm{Z}(\sU(\g))_{\ho}^{\widehat{-\rho}}} \k_{(0,-\rho)}$; hence there exists an isomorphism of $\sU(\g)$-modules (unique up to scalar)
\begin{equation}
\label{eqn:splitting-bundle-0}
\varphi: \bV(-\rho) \cong V^{-\rho}_0 \otimes_{\mathrm{Z}(\sU(\g))_{\ho}^{\widehat{-\rho}}} \k_{(0,-\rho)},
\end{equation}
which equips the right-hand side with a $T$-equivariant structure.
As explained in~\cite[\S 5.2.4]{BM13}, this $T$-equivariant structure can be lifted to $V_0^{-\rho}$ in such a way that the isomorphism~\eqref{eqn:U-End} is $T$-equivariant. 

\textit{Step 2.} 
We will now explain how to ``lift'' the isomorphism $\varphi$ above to an isomorphism of $T$-equivariant $\sU_{\ho}^{\widehat{{-\rho}}}$-modules 
\begin{equation}
\label{eqn:hDelta-V}
\hDelta(-\rho)\cong V_0^{-\rho} \otimes_{\sO(\g^{*(1)}_{\ho})} \sO((\g/\n)^{*(1)}_{\ho}). 
\end{equation}
For $n\geq 1$, denote by $R_n$ the quotient of $\sO((\g/\n)^{*(1)})$ (equivalently, of $\sO((\g/\n)_{\ho}^{*(1)})$) by the ideal generated by $((\g/\n)^{(1)})^n$, and set ${V}_{0,n}^{-\rho}=(V^{-\rho}_0) \otimes_{\sO(\g_{\ho}^{*(1)})} R_n$. 
By construction of the $T$-equivariant structure, the $(-\rho)$-weight space $({V}^{-\rho}_{0,1})_{-\rho}$ has dimension $1$, and $({V}^{-\rho}_{0,1})_{\lambda}\neq 0$ only if $\lambda\preceq -\rho$. We also have $(R_n)_{\lambda}\neq 0$ only if $\lambda\preceq 0$. 
For $n \geq 1$, since ${V}^{-\rho}_{0,n}$ is a $T$-equivariant free $R_n$-module and ${V}^{-\rho}_{0,n} \otimes_{R_n} R_1={V}^{-\rho}_{0,1}$, we deduce that $({V}^{-\rho}_{0,n})_{-\rho}$ has dimension $1$ and $({V}^{-\rho}_{0,n})_{\lambda}\neq 0$ only if $\lambda\preceq -\rho$. 
Hence there exists a unique homomorphism of $T$-equivariant $\sU(\g)$-modules $\tDelta(-\rho) \rightarrow {V}^{-\rho}_{0,n}$ that is compatible with $\varphi$, and these morphisms are compatible with each other (for varying $n$) in the obvious way.

Since $\bV(-\rho)$ is a $G_1T$-module, the ``extra''
$\mathrm{S}(\t)$-action from~\S\ref{ss:completed-Verma} is trivial. 
Since ${V}^{-\rho}_{0,1} \cong \bV(-\rho)$ is indecomposable as a $\sU(\g)$-module, so is ${V}^{-\rho}_{0,n}$, hence the ``extra'' $\mathrm{S}(\t)$-action on this module factors through $\mathrm{S}(\t)/\t^N\mathrm{S}(\t)$ for some $N\geq 1$. 
In this way we obtain a homomorphism of $T$-equivariant $\sU(\g)$-modules 
\[
\varphi_n: \tDelta(-\rho)\otimes_{\sO(\g^{*(1)})\otimes \mathrm{S}(\t)} (R_n\otimes \mathrm{S}(\t)/\t^N\mathrm{S}(\t)) \rightarrow {V}^{-\rho}_{0,n}.
\]
The family $(\varphi_n : n\geq 1)$ yields a homomorphism 
\[
\widetilde{\varphi}:\hDelta(-\rho)\rightarrow V^{-\rho}_0 \otimes_{\sO(\g^{*(1)}_{\ho})} \sO((\g/\n)^{*(1)}_{\ho})
\]
of $T$-equivariant modules over $\sU_{\widehat{0}}^{\widehat{{-\rho}}}\otimes_{\sO(\g^{*(1)}_{\ho})} \sO((\g/\n)_{\ho}^{*(1)})$. 
Since the base change of $\widetilde{\varphi}$ to $R_1$ is surjective, by Nakayama's lemma $\widetilde{\varphi}$ is a surjection. 
Since $V_0^{-\rho} \otimes_{\sO(\g^{*(1)}_{\ho})} \sO((\g/\n)^{*(1)}_{\ho})$ is a projective module over $\sU_{\widehat{0}}^{\widehat{{-\rho}}}\otimes_{\sO(\g^{*(1)}_{\ho})} \sO((\g/\n)_{\ho}^{*(1)})$, this surjection splits. Finally since $\hDelta(-\rho)$ is indecomposable by Lemma \ref{lem:properties-hDelta}\eqref{it:morph-hDelta}, we conclude that $\widetilde{\varphi}$ is an isomorphism. 

\textit{Step 3.}
We are now in a position to prove the isomorphism regarding $\hM_{w_\circ}$. In fact, by construction of $\gamma_{\ho}^{\ho}$ and Lemma~\ref{lem:hDelta-Dmod}
we have 
\begin{multline*}
\hM_{w_\circ}
\cong \sHom_{\tD^{\widehat{0}}_{\widehat{0}}}(\sV^0_0, \tD^{\widehat{0}}_{\widehat{0}}\otimes_{\sO_\sB} \sO_{\{w_\circ\}} \otimes \k_T(-2\rho)) \\
\cong \sHom_{\tD^{\widehat{-\rho}}_{\widehat{0}}}(\sV^{-\rho}_0, \tD^{\widehat{-\rho}}_{\widehat{0}}\otimes_{\sO_\sB} \sO_\sB(-\rho) \otimes_{\sO_\sB} \sO_{\{w_\circ\}} \otimes \k_T(-2\rho)) \\
\cong \sHom_{\tD^{\widehat{-\rho}}_{\widehat{0}}}(\sV_0^{-\rho}, \tD^{\widehat{-\rho}}_{\widehat{0}} \otimes_{\sO_\sB} \sO_{\{w_\circ\}} \otimes \k_T(-{\rho})).
\end{multline*}
%
Since $\tD^{\widehat{-\rho}}_{\widehat{0}} \otimes_{\sO_\sB} \sO_{\{w_\circ\}}$ is supported on the affine subset $\{w_\circ\} \times (\g/\n)^{*(1)}_{\widehat{0}}$ in $\sB\times_{\sB^{(1)}} \widetilde{\g}^{(1)}$, the last term here
is equal to the global sections of the sheaf
\[
\sHom_{\tD^{\widehat{-\rho}}_{\ho} \otimes_{ \sO_{\Groth_{\ho}^{(1)}}}  \sO_{(\g/\n)^{*(1)}_{\ho}}} (\sV^{-\rho}_0 \otimes_{ \sO_{\Groth_{\ho}^{(1)}}}  \sO_{(\g/\n)^{*(1)}_{\ho}}, 
\tD^{\widehat{-\rho}}_{\ho}  \otimes_{\sO_\sB} \sO_{\{w_\circ\}}  \otimes \k_T({-\rho}) ) .
\]
By Lemma~\ref{lem:hDelta-Dmod},
\eqref{eqn:D-pullback-U} and~\eqref{eqn:hDelta-V}, this identifies with
\begin{equation*}
\End_{\sU_{\ho}^{\widehat{{-\rho}}}  \otimes_{\sO(\g_{\ho}^{*(1)})} \sO((\g/\n)^{*(1)}_{\ho})} 
\big( V_0^{-\rho} \otimes_{\sO(\g^{*(1)}_{\ho})} \sO((\g/\n)^{*(1)}_{\ho}) \big). 
\end{equation*}
Finally, by~\eqref{eqn:U-End}, this identifies with
$\sO((\g/\n)^{*(1)}_{\ho})$, which provides the desired isomorphism
$\hM_{w_\circ}\cong \sO_{(\g/\n)^{*(1)}_{\ho}}$. 
All the isomorphisms above respect the $T$-equivariant structures. 

To conclude we prove the description of $\sZ_{w_\circ}$. As a consequence of the previous isomorphism, or using similar arguments, we see that
$\gamma_{\ho}^{\ho}(\bV_{w_\circ}) \cong \sO_{\{(w_\circ^{(1)},0)\}}$. 
By compatibility of the equivalences $\gamma_{\ho}^{\ho}$ and $\gamma_0^{\ho}$ (see Theorem~\ref{thm:localization-fixed-Fr}) and fully faithfulness of the functor $\mod^T(\sU_0^{\ho}) \to \mod^T(\sU_{\ho}^{\ho})$,
we deduce the desired isomorphism $\sZ_{w_\circ} \cong \sO_{\{(w_\circ^{(1)},0)\}}$. 
\end{proof}

\section{Geometric wall-crossing and intertwining functors}
\label{sec:geometric-wall-crossing}

\subsection{Geometric wall-crossing functors -- completed Frobenius character} 
\label{ss:geometric-wall-crossing}

We continue with $\chi$, $H$ as in~\S\ref{ss:localization-comp}.
From now on, in the constructions of Section~\ref{sec:localization} we will mainly consider the case $\lambda=0$, which is regular under our assumption that $p>h$. Recall, for $s \in S_\aff$, the functor $\Theta_s$ considered in~\S\ref{ss:wall-crossing}. In fact, in this subsection only the case $s \in S$ will be considered. In this case, we denote by $P_s^- \subset G$ the parabolic subgroup containing $B^-$ and attached to the subset $\{s\} \subset S$, by $U_s^-$ its unipotent radical, and by $\p_s^-$, $\n_s^-$ their respective Lie algebras. We have an attached parabolic Grothendieck resolution
\[
\Groth_s := G \times^{P_s^-} (\g/\n^-_s)^*,
\]
which is a vector bundle over $\sP_s := G/P_s^-$,
and the canonical morphism $\pi : \Groth \to \g^*$ (see~\S\ref{sss:geometry-G}) factors through a projective morphism $\tpi_s : \Groth \to \Groth_s$. We set
\[
\Groth^{(1)}_{s,\hchi} := \Groth_s^{(1)} \times_{\g^{*(1)}} \g^{*(1)}_{\hchi},
\]
and denote also by $\tpi_s$ the morphism $\Groth^{(1)}_{\hchi} \to \Groth^{(1)}_{s,\hchi}$ induced by $\tpi_s$. Considering the natural action of $H$ on $\Groth_s^{(1)}$ and $\g^{*(1)}$ we can consider the category of $H$-equivariant coherent sheaves on $\Groth^{(1)}_{s,\hchi}$ in the sense of Appendix~\ref{app:equiv-sheaves}. Using Corollary~\ref{cor:derived-pushforward} we have a derived pushforward functor
\[
R(\tpi_s)_* : \Db \Coh^H(\Groth^{(1)}_{\hchi}) \to \Db \Coh^H(\Groth^{(1)}_{s,\hchi}),
\]
and using Corollary~\ref{cor:derived-pullback} we have a derived pullback functor
\[
L(\tpi_s)^* : D^- \Coh^H(\Groth^{(1)}_{s,\hchi}) \to D^- \Coh^H(\Groth^{(1)}_{\hchi}).
\]
This functor preserves bounded derived categories; in fact this can be checked using the factorization $\Groth \hookrightarrow G \times^{B^-} (\g/\n_s)^* \twoheadrightarrow \Groth_s$ of $\tpi_s$, where the second morphism is smooth (hence gives rise to an exact pullback functor) and the first one is the embedding of a subvector bundle, so that the corresponding pullback functor can be described (at least at the level of cohomology) by a Koszul complex construction. By the considerations of~\S\ref{ss:adjointness}, the restriction of $L(\tpi_s)^*$ to bounded derived categories is left adjoint to $R(\tpi_s)_*$. We set
\[
\Xi_s := L(\tpi_s)^* \circ R(\tpi_s)_* : \Db \Coh^H(\Groth^{(1)}_{\hchi}) \to \Db \Coh^H(\Groth^{(1)}_{\hchi}).
\]
By adjointness, there is therefore a canonical morphism $\Xi_s \to \id$. Below we will also consider the similarly defined endofunctors of $\Db \Coh^H(\Groth^{(1)})$ and $\Db \Coh^{H \times \Gm}(\Groth^{(1)})$ (where the $\Gm$-action on $\Groth^{(1)}$ is defined by the same recipe as in~\S\ref{sss:geometry-G}, not involving any Frobenius morphism), which will also be denoted $\Xi_s$. These functors also restrict to $H^{(1)}$-equivariant objects. (This comment applies to all the functors considered in this section, although we will not repeat it.)

The following statement is an equivariant and completed version of~\cite[Lemma in~\S 2.2.5]{BMR06}.

\begin{prop}
\label{prop:wall-crossing}
For any $s \in S$, 
there exists a canonical isomorphism of functors
\[
\gamma_{\hchi}^{\widehat{0}} \circ \Theta_s \cong \Xi_s \circ \gamma_{\hchi}^{\widehat{0}}
\]
which intertwines the morphisms of functors $\Theta_s \to \id$ and $\Xi_s \to \id$ induced by adjunction.
\end{prop}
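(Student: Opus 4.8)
The plan is to reduce the statement to a purely geometric identity between the two descriptions of wall-crossing — the representation-theoretic one via translation functors $\sfT_0^{\mu_s}$, $\sfT_{\mu_s}^0$ and the coherent one via $R(\tpi_s)_*$, $L(\tpi_s)^*$ — by first disposing of the completion and equivariance and then invoking the known nonequivariant result of~\cite{BMR06} (the ``Lemma in~\S 2.2.5''). Concretely, I would proceed as follows. The translation functor $\sfT_0^{\mu_s}$ factors through the weight $\mu_s$, whose stabilizer is $\{1,s\}$; on the geometric side, under the localization equivalence $\gamma_{\hchi}^{\widehat{\mu_s}}$ for the singular weight $\mu_s$, the category $\Db\Coh^H(\Groth^{(1)}_{\hchi})$ should be replaced by $\Db\Coh^H(\Groth^{(1)}_{s,\hchi})$ — this is the ``localization at a wall'' package, obtained exactly as in Theorem~\ref{thm:localization-comp} but now using the parabolic Grothendieck resolution $\Groth_s$ in place of $\Groth$ (the splitting bundle $\sV^{\mu_s}_\chi$ descends, up to the usual twist, to $\Groth_s^{(1)}$ because $\overline{\mu_s}$ is regular for the Levi of $P_s^-$). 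Granting this, the first step is to show $\gamma_{\hchi}^{\widehat{\mu_s}}\circ \sfT_{0}^{\mu_s}\cong R(\tpi_s)_*\circ \gamma_{\hchi}^{\widehat 0}$ (up to a grading shift by $\mu_s$) and, by biadjunction, $\gamma_{\hchi}^{\widehat 0}\circ \sfT^{0}_{\mu_s}\cong L(\tpi_s)^*\circ \gamma_{\hchi}^{\widehat{\mu_s}}$, the two being adjoint-compatible by the uniqueness of adjoints (\S\ref{ss:adjointness}). Composing the two yields the asserted isomorphism $\gamma_{\hchi}^{\widehat 0}\circ\Theta_s\cong \Xi_s\circ\gamma_{\hchi}^{\widehat 0}$.

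To establish the single-translation-functor identity I would argue by descent from the nonequivariant, non-completed setting. The functors $\sfT_0^{\mu_s}$ and $R(\tpi_s)_*$ are both defined by the ``global'' recipe (tensoring with $\widetilde{\Simp}(\nu_s)$ and projecting, resp.\ proper pushforward), and both commute with the ``forgetful'' functors: forgetting the $H$-equivariant structure, and restricting along the completion $\sO(\g^{*(1)}_{\hchi})\to\sO(\g^{*(1)})/\m^n$. On the non-completed, non-equivariant side the desired comparison is precisely the content of~\cite[Lemma in~\S 2.2.5]{BMR06}, phrased there as the compatibility of translation-to-a-wall functors with the pushforward along $\tpi_s$ under the Bezrukavnikov--Mirkovi\'c--Rumynin equivalence. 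Since our $\gamma_{\hchi}^{\widehat 0}$ is by construction (see~\S\ref{ss:splitting-bundle}) compatible with that equivalence through both forgetful functors — this is recorded in Remark~\ref{rmk:restriction-loc-neighborhood} and the $H$-equivariant refinement of~\S\ref{ss:localization-comp} — the isomorphism over $\k$-points, known to be $G$-equivariant hence $H$-equivariant, lifts uniquely to the completion by the usual $\varprojlim$ argument (morphisms of coherent sheaves over $\g^{*(1)}_{\hchi}$ are the same as compatible systems of morphisms over the $n$-th infinitesimal neighborhoods, and these $n$-th neighborhoods are genuine schemes on which $H$ acts algebraically). The $H$-equivariance of the lift is automatic because the equivalences and forgetful functors are $H$-equivariant.

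Finally, for the compatibility of adjunction morphisms: the morphism $\Theta_s\to\id$ is, by the discussion in~\S\ref{ss:wall-crossing}--\ref{ss:translation-functors}, the counit of the biadjunction $(\sfT^{\mu_s}_0,\sfT^0_{\mu_s})$, while $\Xi_s\to\id$ is the counit of $(L(\tpi_s)^*, R(\tpi_s)_*)$. Once the two factorization isomorphisms above are chosen compatibly with the respective unit/counit maps — which can be arranged because an isomorphism of adjoint pairs carrying one unit to the other automatically carries the counits to each other (uniqueness of adjoints, \S\ref{ss:adjointness}) — the composite isomorphism $\gamma_{\hchi}^{\widehat 0}\circ\Theta_s\cong\Xi_s\circ\gamma_{\hchi}^{\widehat 0}$ intertwines the two counits by functoriality of the triangle-identity calculus. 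The main obstacle I anticipate is not the conceptual structure but the bookkeeping in Step~1, namely setting up the parabolic analogue of the localization equivalence $\gamma_{\hchi}^{\widehat{\mu_s}}$ for the singular weight $\mu_s$ and checking that its splitting bundle is compatible (after the twist by $\sO_{\sP_s}(\cdots)$) with the regular one pulled back along $\tpi_s$; this is where one must be careful that the Azumaya-splitting choices of~\S\ref{ss:splitting-bundle} are made coherently for the regular and singular blocks, so that the two sides of the claimed isomorphism are literally built from the same data. Everything else reduces cleanly to~\cite{BMR06} plus the completion/equivariance descent machinery already assembled in Section~\ref{sec:localization} and Appendix~\ref{app:equiv-sheaves}.
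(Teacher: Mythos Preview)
Your proposal is correct and follows essentially the same route as the paper's own (sketch of) proof: construct the singular localization equivalence $\gamma_{\hchi}^{\widehat{\mu_s}}$ on $\Db\Coh^H(\Groth^{(1)}_{s,\hchi})$ with a splitting bundle chosen compatibly with the regular one, identify one of the translation functors with the corresponding push/pull along $\tpi_s$ by reference to~\cite[Lemma in~\S 2.2.5]{BMR06}, deduce the other by adjunction, and conclude that the counits match. Two minor differences worth noting: the paper establishes the pullback side $\sfT_{\mu_s}^0 \leftrightarrow L(\tpi_s)^*$ directly (this is what the BMR06 argument actually gives) and then passes to $R(\tpi_s)_*$ by adjunction, whereas you start from the pushforward; and the paper \emph{reruns} the BMR06 construction inside the equivariant/completed framework rather than attempting to ``descend'' the original nonequivariant isomorphism through forgetful functors and inverse limits --- your descent heuristic is plausible but would need more care to make rigorous (lifting a natural isomorphism from the nonequivariant derived category to the equivariant one is not automatic), so the paper's direct approach is cleaner. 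Your identification of the main technical point (coherent choice of splitting bundles for the regular and singular blocks, as in~\cite[\S 2.2.5]{BMR06}) is exactly right.
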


\begin{proof}[Sketch of proof]
The proof is similar to that of~\cite[Lemma in~\S 2.2.5]{BMR06}. For this one needs to construct a singular variant of the equivalence~\eqref{eqn:localization-comp}, in the form of an equivalence of triangulated categories
\[
\Db(\mod^H(\sU^{\widehat{\mu_s}}_{\widehat{\chi}})) \cong \Db\Coh^H(\Groth^{(1)}_{s,\hchi}).
\]
(The proof is similar to that of~\eqref{eqn:localization-comp}, relying on the results of~\cite{BMR06} rather than~\cite{BMR08}.) This equivalence also depends on a choice of splitting bundle and equivariant structure; we choose them in a way compatible with those for the weight $0$, as in~\cite[\S 2.2.5]{BMR06}. Then we establish an isomorphism relating the functors $\sfT_{\mu_s}^{0}$ and $L(\tpi_s)^*$, as in the proof of~\cite[Lemma in~\S 2.2.5]{BMR06}. Finally, by adjunction we deduce an isomorphism relating the functors $\sfT^{\mu_s}_{0}$ and $R(\tpi_s)_*$, which finishes the proof.
\end{proof}

\subsection{Intertwining functors -- case of \texorpdfstring{$\Groth$}{the Grothendieck resolution}} 
\label{ss:intertwining}

We continue with some fixed $s \in S$. The scheme $\Groth \times_{\Groth_s} \Groth$ is known to be reduced, and to have two irreducible components (both of dimension $\dim(\g)$): one is the diagonal copy $\Delta \Groth$ of $\Groth$, and the other one is a smooth closed subscheme $Z_s$; see~\cite[\S 1.10]{BR12} for details and references. Taking into account the action of $\Gm$ on $\Groth$ considered in~\S\ref{sss:geometry-G}, there exist natural exact sequences of $(G \times \Gm)$-equivariant coherent sheaves
\begin{gather}
\label{eqn:ses1-braid-gp}
\sO_{\Delta \Groth} \langle 2 \rangle \hookrightarrow \sO_{\Groth \times_{\Groth_s} \Groth} \twoheadrightarrow \sO_{Z_s}, \\
\label{eqn:ses2-braid-gp}
\sO_{Z_s}(-\rho,\rho-\alpha_s) \hookrightarrow \sO_{\Groth \times_{\Groth_s} \Groth} \twoheadrightarrow \sO_{\Delta \Groth},
\end{gather}
where we omit pushforward functors associated with closed immersions, and in both cases the surjections are induced by restriction of functions. Here, $\alpha_s$ is the simple root corresponding to $s$, and $\sO_{Z_s}(-\rho,\rho-\alpha_s)$ is the line bundle on $Z_s$ obtained by pullback of the line bundle on $\sB \times \sB$ attached to the pair of weights $(-\rho,\rho-\alpha_s)$. 

Denoting by $p_s^1, p_s^2 : Z_s \to \Groth$ the (restrictions of the) natural projections, and denoting similarly the induced morphisms between Frobenius twists, we consider the functors
\[
\mathbb{I}_s^+, \mathbb{I}_s^- : \Db\Coh^H(\Groth^{(1)}) \to \Db\Coh^H(\Groth^{(1)})
\]
defined by
\begin{gather*}
\mathbb{I}_s^+(\sF) = R(p_s^1)_* ( L(p_s^2)^* \sF), \\
\mathbb{I}_s^-(\sF) = R(p_s^1)_* ( L(p_s^2)^* \sF \otimes_{\sO_{Z_s}} \sO_{Z_s}(-\rho,\rho-\alpha_s)).
\end{gather*}
(In other words, these functors are given by convolution with the sheaves $\sO_{Z_s}$ and $\sO_{Z_s}(-\rho,\rho-\alpha_s)$ respectively. Note that $\sO_{Z_s}(-\rho,\rho-\alpha_s) \cong \sO_{Z_s}(\rho-\alpha_s, -\rho)$ by~\cite[Lemma~1.5.1]{Ric08}, so that the order of the factors is irrelevant.) We will also consider versions of these functors with added $\Gm$-equivariance. 
In this setting the functors $\mathbb{I}_s^+, \mathbb{I}_s^-$ are given by
\begin{gather*}
\mathbb{I}_s^+(\sF) = R(p_s^1)_* ( L(p_s^2)^* \sF) \langle -1 \rangle, \\
\mathbb{I}_s^-(\sF) = R(p_s^1)_* ( L(p_s^2)^* \sF \otimes_{\sO_{Z_s}} \sO_{Z_s}(-\rho,\rho-\alpha_s)) \langle -1 \rangle.
\end{gather*}

It is proved in~\cite[\S 1]{BR12} that these functors are quasi-inverse equivalences of categories, and that they generate a right action of $\Br_\ex$, in the sense that there exists a group morphism from $(\Br_\ex)^{\op}$ to the group of isomorphism classes of autoequivalences of $\Db\Coh^H(\Groth^{(1)})$ or $\Db\Coh^{H \times \Gm}(\Groth^{(1)})$, denoted $b \mapsto \mathbb{I}_b$, such that for $s \in S$ we have $\mathbb{I}_{\rH_s} = \mathbb{I}_s^+$ (hence also $\mathbb{I}_{(\rH_s)^{-1}} = \mathbb{I}_s^-$), and for $\lambda \in \X$ the equivalence $\mathbb{I}_{\theta_\lambda}$ is given by tensor product with the line bundle $\sO_{\Groth^{(1)}}(\lambda)$ obtained by pullback from the line bundle on $\sB^{(1)}$ associated with $\lambda$. (See~\cite[Remark~6.3]{br-two} for a discussion of our choice of convention.)


Using the base change theorem as in~\cite[Proposition~5.2.2]{Ric08}, one sees that the functor $\Xi_s$ identifies with convolution with the kernel $\sO_{\Groth \times_{\Groth_s} \Groth}$. As a consequence,
for any $\sF$ in $\Db \Coh^{H \times \Gm}(\Groth^{(1)})$ there exist functorial distinguished triangles
\begin{gather}
\label{eqn:triangle-functors-braid-gp-1}
\sF \langle 1 \rangle \to \Xi_s(\sF) \langle -1 \rangle \to \mathbb{I}_s^+(\sF) \xrightarrow{[1]} \\
\label{eqn:triangle-functors-braid-gp-2}
\mathbb{I}_s^-(\sF) \to \Xi_s(\sF) \langle -1 \rangle \to \sF \langle -1 \rangle \xrightarrow{[1]}
\end{gather}
induced by the exact sequences~\eqref{eqn:ses1-braid-gp}--\eqref{eqn:ses2-braid-gp}. Here, in~\eqref{eqn:triangle-functors-braid-gp-2} the second morphism is induced by adjunction. We also have similar triangles in the non-$\Gm$-equivariant setting, for any $\sF$ in $\Db \Coh^{H}(\Groth^{(1)})$ (with no grading shift).


Considering the fiber product of $Z_s^{(1)}$ with $\g^{*(1)}_{\hchi}$ and using the same considerations as in~\cite[\S 4--5]{BR12} one constructs similarly endofunctors of $\Db\Coh^H(\Groth^{(1)}_{\hchi})$, which will also be denoted $\mathbb{I}_s^{+}$ and $\mathbb{I}_s^{-}$,
such that for any $\sF$ in $\Db\Coh^H(\Groth^{(1)}_{\hchi})$ we have canonical distinguished triangles
\[
\sF \to \Xi_s(\sF) \to \mathbb{I}_s^{+}(\sF) \xrightarrow{[1]}, \qquad
\mathbb{I}_s^{-}(\sF) \to \Xi_s(\sF) \to \sF \xrightarrow{[1]},
\]
where in the second case the second morphism is induced by adjunction.
Moreover, the pullback functor $(-)_{\hchi}$ considered in~\S\ref{ss:localization-comp}
is compatible in the natural way with the two versions of the functors $\mathbb{I}_s^{\pm}$.
Again by the considerations in~\cite[\S 5]{BR12}, we also have a right $\Br_\ex$-action in this setting: there exists a group morphism from $(\Br_\ex)^{\op}$ to the group of isomorphism classes of autoequivalences of $\Db\Coh^H(\Groth_{\hchi}^{(1)})$, again denoted $b \mapsto \mathbb{I}_b$, such that for $s \in S$ we have $\mathbb{I}_{\rH_s} = \mathbb{I}_s^+$ (hence also $\mathbb{I}_{(\rH_s)^{-1}} = \mathbb{I}_s^-$), and for $\lambda \in \X$ the equivalence $\mathbb{I}_{\theta_\lambda}$ is given by tensor product with the line bundle $\sO_{\Groth_{\hchi}^{(1)}}(\lambda)$ obtained by pullback from $\sO_{\Groth^{(1)}}(\lambda)$.

Now, recall the functor $\mathbb{S}_s^-$ constructed in~\S\ref{ss:wall-crossing}.
The following statement follows from Proposition~\ref{prop:wall-crossing} 
and standard properties of triangulated categories.

\begin{corollary}
\label{cor:compatibility-bbI-bbS}
Let $s \in S$. For any $M$ in $\Db(\mod^H(\sU^{\ho}_{\widehat{\chi}}))$ there exists an isomorphism
\[
\mathbb{I}^{-}_s \circ \gamma_{\hchi}^{\widehat{0}}(M) \cong \gamma_{\hchi}^{\widehat{0}} \circ \mathbb{S}_s^-(M).
\]
\end{corollary}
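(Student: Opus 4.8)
The statement is a formal consequence of Proposition~\ref{prop:wall-crossing} together with the mapping-cone construction that defines $\mathbb{S}_s^-$ on one side and $\mathbb{I}_s^-$ on the other, so the proof will be a diagram chase in the triangulated category $\Db\Coh^H(\Groth^{(1)}_{\hchi})$. First I would recall the defining data. On the representation-theoretic side, $\mathbb{S}_s^-$ is obtained by applying the cocone construction of~\S\ref{ss:intertwining-RT} to the pair $(\id, \Theta_s)$ and the adjunction morphism $\Theta_s \to \id$; concretely, for a bounded complex $M$ it is the cocone of $\Theta_s(M) \to M$, extended to the derived category. On the geometric side, by the last displayed triangle of~\S\ref{ss:intertwining} (the completed analogue of~\eqref{eqn:triangle-functors-braid-gp-2}), for any $\sF$ in $\Db\Coh^H(\Groth^{(1)}_{\hchi})$ there is a functorial distinguished triangle
\[
\mathbb{I}_s^{-}(\sF) \to \Xi_s(\sF) \to \sF \xrightarrow{[1]},
\]
in which the second arrow is the adjunction morphism $\Xi_s \to \id$. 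In other words, $\mathbb{I}_s^-(\sF)$ is (up to unique isomorphism, by functoriality of the triangle) the cocone of the adjunction morphism $\Xi_s(\sF) \to \sF$.

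\textbf{Key steps.} The heart of the argument is then to transport the cocone defining $\mathbb{S}_s^-$ through $\gamma_{\hchi}^{\ho}$ and identify it with the cocone defining $\mathbb{I}_s^-$. Set $\sF = \gamma_{\hchi}^{\ho}(M)$. By Proposition~\ref{prop:wall-crossing} there is a canonical isomorphism $\gamma_{\hchi}^{\ho} \circ \Theta_s \cong \Xi_s \circ \gamma_{\hchi}^{\ho}$, and — this is the crucial point that Proposition~\ref{prop:wall-crossing} provides and that I would emphasize — this isomorphism intertwines the adjunction morphism $\Theta_s \to \id$ with the adjunction morphism $\Xi_s \to \id$. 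Since $\gamma_{\hchi}^{\ho}$ is an equivalence of triangulated categories it commutes with the formation of cones and cocones; applying it to the morphism of functors $\Theta_s \to \id$ evaluated at $M$ yields a morphism isomorphic (compatibly, by the intertwining property) to the adjunction morphism $\Xi_s(\sF) \to \sF$. Taking cocones, we obtain
\[
\gamma_{\hchi}^{\ho}\bigl(\mathbb{S}_s^-(M)\bigr) \cong \mathrm{cocone}\bigl(\Xi_s(\sF) \to \sF\bigr) \cong \mathbb{I}_s^-(\sF) = \mathbb{I}_s^- \circ \gamma_{\hchi}^{\ho}(M),
\]
which is the desired isomorphism. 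One technical caveat to address is that $\mathbb{S}_s^-$ is a priori defined only via the naive cocone on the level of complexes of $\mod^H(\sU^{\ho}_{\hchi})$-objects; I would invoke the paragraph at the start of~\S\ref{ss:intertwining-RT} (and the reformulation via $\sfD_s \star (-)$) to see that this naive construction represents the genuine triangulated cocone, so that passing through the equivalence is legitimate. A second small point: the distinguished triangle for $\mathbb{I}_s^-$ was stated for $\sF$ in $\Db\Coh^H(\Groth^{(1)}_{\hchi})$ with the second arrow the adjunction morphism, which is exactly what we need; no $\Gm$-grading is involved here, so there is no grading-shift bookkeeping to worry about.

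\textbf{Main obstacle.} The proof is essentially formal once Proposition~\ref{prop:wall-crossing} is in hand, so the only real subtlety is making sure the two ``cocone of the adjunction morphism'' descriptions genuinely match up — i.e.\ that the isomorphism of functors in Proposition~\ref{prop:wall-crossing} really does carry the representation-theoretic adjunction unit/counit to the geometric one, not merely some scalar multiple or an unrelated morphism in the (one-dimensional, by Lemma~\ref{lem:morphisms-Rs-adjunction}) Hom-space. This is precisely the content of the last clause of Proposition~\ref{prop:wall-crossing} (``which intertwines the morphisms of functors $\Theta_s \to \id$ and $\Xi_s \to \id$''), so the obstacle is already resolved upstream; in the write-up I would simply cite that clause and note that the cocone of a morphism in a triangulated category is well-defined up to (non-unique) isomorphism, with the functoriality of the triangles~\eqref{eqn:triangle-functors-braid-gp-2} giving coherence. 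The rest is a one-line diagram chase.
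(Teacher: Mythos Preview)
Your argument is correct and follows exactly the approach the paper takes: the paper's proof consists of the single sentence ``The following statement follows from Proposition~\ref{prop:wall-crossing} and standard properties of triangulated categories,'' and your write-up simply unpacks this by identifying both $\mathbb{S}_s^-$ and $\mathbb{I}_s^-$ as cocones of the respective adjunction morphisms and invoking the intertwining clause of Proposition~\ref{prop:wall-crossing}.
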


\begin{rmk}
In the nonequivariant and noncompleted setting, it is proved in~\cite[Theorem~5.4.1]{Ric08} that the isomorphism similar to that in Corollary~\ref{cor:compatibility-bbI-bbS} can be promoted to an isomorphism of functors. Since this is not needed for us here, we will not consider this question in the present setting. A similar comment applies to Proposition~\ref{prop:comparison-Br-actions} below.
\end{rmk}

\subsection{Comparison of the braid group actions}

We continue with $\chi$, $H$ as above.
Let us denote by $\imath$ the involutive anti-auto\-morphism of $\Br_\ex$ defined by $\imath(\rH_w) = \rH_{w^{-1}}$ for any $w \in W_\ex$.
The goal of this subsection is to explain the proof of the following statement.

\begin{prop}
\label{prop:comparison-Br-actions}
Let $b \in \Br_\ex$. For any $M$ in $\Db(\mod^H(\sU^{\ho}_{\hchi}))$ there exists an isomorphism
\[
\mathbb{I}_{\imath(b)} \circ \gamma_{\hchi}^{\widehat{0}}(M) \cong \gamma_{\hchi}^{\widehat{0}} \circ \mathbb{S}_{b}(M).
\]
\end{prop}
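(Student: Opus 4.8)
The plan is to reduce the statement to the two cases already essentially established: the wall-crossing generators $\rH_s$ for $s \in S$ (handled by Corollary~\ref{cor:compatibility-bbI-bbS}, which gives the case $b = (\rH_s)^{-1}$, hence $\imath(b) = \rH_s$), and the length-zero elements $\rH_\omega$ for $\omega \in \Omega$ (which should be straightforward, since $\mathbb{S}_{\rH_\omega}$ is the translation functor $\sfT_0^{\omega^{-1}\bullet 0}$, corresponding geometrically to tensoring with a line bundle, i.e.\ an instance of $\mathbb{I}_{\theta_\lambda}$ composed with the appropriate reindexing). Since both sides of the desired isomorphism are (by construction) given by autoequivalences, and the assignments $b \mapsto \mathbb{S}_b$ and $b \mapsto \mathbb{I}_{\imath(b)}$ are both group morphisms $\Br_\ex \to \{\text{autoequivalences}\}$ (the latter because $\imath$ is an anti-automorphism and the $\mathbb{I}$-action is a right action, so $\imath$ converts it to a left action), it suffices to check that these two morphisms agree on a generating set of $\Br_\ex$.

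First I would recall from~\S\ref{sss:reductive-gps-notation} that $\Br_\ex$ is generated by $(\rH_s : s \in S)$ together with $(\theta_\lambda : \lambda \in \X)$, or alternatively by $(\rH_s : s \in S_\aff)$ and $(\rH_\omega : \omega \in \Omega)$. Using Lemma~\ref{lem:conjugation-wall-crossing}, which writes each $\rH_s$ with $s \in S_\aff \smallsetminus S$ as a conjugate $b \cdot \rH_t \cdot b^{-1}$ with $t \in S$, one reduces the affine simple reflections to the finite ones plus conjugation, so it is enough to treat $\rH_s$ for $s \in S$ and $\rH_\omega$ for $\omega \in \Omega$. For $\rH_s$ with $s \in S$: applying Corollary~\ref{cor:compatibility-bbI-bbS} to $M' = \mathbb{S}_s^+(M)$ and using that $\mathbb{S}_s^+$ and $\mathbb{S}_s^-$ are mutually inverse (they come from the invertible objects $\sfN_s$ and $\sfD_s$), together with the fact that $\mathbb{I}_s^+$ and $\mathbb{I}_s^-$ are mutually inverse, one upgrades the statement for $(\rH_s)^{-1}$ to one for $\rH_s$; thus $\mathbb{I}_{\imath(\rH_s)}\circ\gamma_{\hchi}^{\ho}(M) = \mathbb{I}_s^+ \circ \gamma_{\hchi}^{\ho}(M) \cong \gamma_{\hchi}^{\ho}\circ\mathbb{S}_s^+(M) = \gamma_{\hchi}^{\ho}\circ\mathbb{S}_{\rH_s}(M)$. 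For $\rH_\omega$ with $\omega \in \Omega$: here $\mathbb{S}_{\rH_\omega} = \sfT_0^{\omega^{-1}\bullet 0}$, which by Proposition~\ref{prop:wall-crossing}-type reasoning (or directly from the construction of $\gamma_{\hchi}^{\ho}$, since $\sfP^{0,\omega\bullet 0}$ localizes to tensoring with a line bundle) corresponds under $\gamma_{\hchi}^{\ho}$ to $\mathbb{I}_{\theta_\lambda}$ for the appropriate $\lambda$ with $\omega \bullet 0 = t_\lambda(\ldots)$, i.e.\ to $\mathbb{I}_{\rH_\omega}$ up to the identification $\imath(\rH_\omega) = \rH_{\omega^{-1}}$ and the fact that $\mathbb{I}$ is a right action.

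The main obstacle will be bookkeeping the difference between \emph{isomorphism of functors} and \emph{isomorphism on each object}: since Corollary~\ref{cor:compatibility-bbI-bbS} (following the Remark after it) only asserts the existence of an isomorphism $\mathbb{I}_s^- \circ \gamma_{\hchi}^{\ho}(M) \cong \gamma_{\hchi}^{\ho}\circ\mathbb{S}_s^-(M)$ for each $M$ rather than a natural transformation, I cannot literally compose these to deduce the statement for products $b = b_1 b_2$ by naturality. The way around this is that, since $\gamma_{\hchi}^{\ho}$ is an \emph{equivalence}, the identity $\mathbb{S}_b = (\gamma_{\hchi}^{\ho})^{-1} \circ \mathbb{I}_{\imath(b)} \circ \gamma_{\hchi}^{\ho}$ can be checked at the level of the induced group morphisms $\Br_\ex \to \Aut(\Db(\mod^H(\sU^{\ho}_{\hchi})))$ valued in \emph{isomorphism classes} of autoequivalences (a set!), where composition is well-defined and the two morphisms in question agree on generators by the above. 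Concretely: define $\Phi(b) := [(\gamma_{\hchi}^{\ho})^{-1} \circ \mathbb{I}_{\imath(b)} \circ \gamma_{\hchi}^{\ho}]$ and $\Psi(b) := [\mathbb{S}_b]$ in the group of isomorphism classes of autoequivalences; both are group homomorphisms ($\Phi$ because $\mathbb{I}$ is a right action and $\imath$ an anti-automorphism; $\Psi$ by~\S\ref{ss:intertwining-RT}); they agree on $\rH_s$ ($s\in S$) and $\rH_\omega$ ($\omega\in\Omega$) by the two paragraphs above, hence on the conjugates $\rH_s$ ($s \in S_\aff$) by Lemma~\ref{lem:conjugation-wall-crossing}, hence on all of $\Br_\ex$. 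Then for any object $M$ one gets the asserted (non-natural) isomorphism $\mathbb{I}_{\imath(b)}\circ\gamma_{\hchi}^{\ho}(M) \cong \gamma_{\hchi}^{\ho}\circ\mathbb{S}_b(M)$ by applying the equality of isomorphism classes $\Phi(b) = \Psi(b)$ to $M$.
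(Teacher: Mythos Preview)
Your reduction to generators has a genuine gap: the set $\{\rH_s : s \in S\} \cup \{\rH_\omega : \omega \in \Omega\}$ does \emph{not} generate $\Br_\ex$ in general. In types $E_8$, $F_4$, $G_2$ one has $\Omega = \{1\}$, so your generating set collapses to $\{\rH_s : s \in S\}$, which only generates the finite braid group. Your attempt to recover the missing affine simple reflections via the conjugation relation $\rH_s = b \cdot \rH_t \cdot b^{-1}$ (from~\S\ref{ss:intertwining-RT}) is circular: the element $b \in \Br_\ex$ appearing there typically involves translation elements $\theta_\lambda$, so you would need the statement for $b$ before you can deduce it for $\rH_s$, but $b$ is not in the subgroup generated by your proposed set. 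Lemma~\ref{lem:conjugation-wall-crossing} compares the \emph{wall-crossing functors} $\Theta_s$ and $\Theta_t$, not the braid actions, and in any case presupposes the action of $b$.

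The paper avoids this by choosing instead the generating set $\{(\rH_s)^{-1} : s \in S\} \cup \{\rH_{t_{-\nu}} : \nu \in \X^+\}$. The first family is handled by Corollary~\ref{cor:compatibility-bbI-bbS}, as you note. The second family is the substantive part (Proposition~\ref{prop:comparison-actions-dom}), and its proof requires Lemma~\ref{lem:localization-braid-action}: one identifies $\mathbb{S}_{(\rH_w)^{-1}} \circ R\Gamma_\lambda$ with $R\Gamma_{\lambda * w} \circ \mathrm{Tens}_\lambda^{\lambda * w}$ by analyzing the interaction of translation functors with the localization functors $R\Gamma_\lambda$ at \emph{varying} integral weights $\lambda$, step by step along a reduced expression for $t_{-\nu}$. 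This is where the geometric content (tensoring by line bundles) actually enters, via $\sO_\sB(-p\nu) = (\mathrm{Fr}_\sB)^*\sO_{\sB^{(1)}}(-\nu)$. Your treatment of $\rH_\omega$ by ``Proposition~\ref{prop:wall-crossing}-type reasoning'' is too vague to substitute for this: Proposition~\ref{prop:wall-crossing} concerns $\Theta_s$, not a single translation $\sfT_0^{\omega^{-1}\bullet 0}$, and the relevant compatibility is precisely the $r=0$ case of Lemma~\ref{lem:localization-braid-action}.
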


This statement is an analogue in our present setting of~\cite[Theorem~5.4.1]{Ric08}, and the proof will be similar. Namely, it clearly suffices to prove the proposition for $b$ running over a set of generators of $\Br_\ex$. We will choose the set $\{(\rH_s)^{-1} : s \in S\} \cup \{\rH_{t_{-\nu}} : \nu \in \X^+\}$. When $b=(\rH_s)^{-1}$, the claim has been established in Corollary~\ref{cor:compatibility-bbI-bbS}. Hence, to conclude the proof it suffices to prove the following claim.

\begin{prop}
\label{prop:comparison-actions-dom}
Let $\nu \in \X^+$.
For any $M$ in $\Db(\mod^H(\sU^{\ho}_{\hchi}))$ there exists an isomorphism
\[
\gamma_{\hchi}^{\widehat{0}} \circ \mathbb{S}_{\rH_{t_{-\nu}}}(M) \cong \gamma_{\hchi}^{\widehat{0}}(M) \otimes_{\sO_{\Groth_{\hchi}^{(1)}}} \sO_{\Groth_{\hchi}^{(1)}}(\nu).
\]
\end{prop}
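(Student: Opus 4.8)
The plan is to reduce the statement about the intertwining functor $\mathbb{S}_{\rH_{t_{-\nu}}}$ to a statement about translation functors, which can then be translated to the geometric side via Proposition~\ref{prop:wall-crossing}. First I would recall that $\rH_{t_{-\nu}} = \theta_{-\nu}$ in $\Br_\ex$ when $\nu \in \X^+$ (by the formula $\theta_\lambda = \rH_{t_\mu}(\rH_{t_{\nu'}})^{-1}$ in~\S\ref{sss:reductive-gps-notation}, applied with $\lambda = -\nu$), and that $\mathbb{S}_{\rH_{t_{-\nu}}} = \mathbb{S}_{\theta_{-\nu}}$. By the construction of the braid group action in~\S\ref{ss:intertwining-RT}, this functor is built from the wall-crossing functors $\mathbb{S}_s^\pm$ and the functors $\sfT_0^{\omega^{-1}\bullet 0}$ attached to $\omega \in \Omega$. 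The key observation, as in~\cite[\S 5.3]{Ric08}, is that since $\theta_{-\nu}$ can be written as $\rH_{t_{\mu_1}}(\rH_{t_{\mu_2}})^{-1}$ with $\mu_1,\mu_2 \in \X^+$, and since for dominant $\mu$ the functor $\mathbb{S}_{\rH_{t_\mu}}$ admits a description in terms of translation to and from a wall and back — more precisely, one can use that the object $\sfN_{\theta_\mu}$ of $D^-\HC^{\ho,\ho}$ is computed by an explicit Koszul-type complex — one reduces to understanding the effect on $\gamma_{\hchi}^{\ho}$ of tensoring with a line bundle.

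The cleanest route, which I would pursue, is to use Proposition~\ref{prop:comparison-actions-dom}'s analogue on the geometric side directly: by~\cite[\S 5]{BR12}, the equivalence $\mathbb{I}_{\theta_\nu}$ is \emph{defined} as tensoring with $\sO_{\Groth_{\hchi}^{(1)}}(\nu)$, and $\imath(\rH_{t_{-\nu}}) = \rH_{t_{-\nu}}^{-1}$ unwinds (using $\imath(\rH_w) = \rH_{w^{-1}}$ and $t_{-\nu}^{-1} = t_\nu$) to $\theta_\nu$ up to the conventions, so that the claim $\mathbb{I}_{\imath(\rH_{t_{-\nu}})} \circ \gamma_{\hchi}^{\ho}(M) \cong \gamma_{\hchi}^{\ho}(M) \otimes \sO_{\Groth_{\hchi}^{(1)}}(\nu)$ is what must be shown. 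On the module side, I would identify $\mathbb{S}_{\rH_{t_{-\nu}}}$ with a composition of translation functors: by~\cite[\S 3.7]{br-Hecke} and the discussion in~\S\ref{ss:translation-functors}, translating from $0$ to a suitable weight $\mu$ in the $W_\ex$-orbit of $0$ lying in the same chamber (using that $t_{-\nu}\bullet 0 = -p\nu + \text{something}$, hence $t_{-\nu}\bullet 0$ and $0$ lie in the same alcove shifted by $p\X$), and back, realizes $\mathbb{S}_{\theta_{-\nu}}$ as $\sfT$-type functors; then Proposition~\ref{prop:wall-crossing} translates each such piece to a geometric pushforward/pullback, and the composition telescopes to tensoring with $\sO_{\Groth^{(1)}_{\hchi}}(\nu)$ because on the geometric side the corresponding parabolic pushforward-pullback compositions, suitably twisted, give exactly this line bundle (this is the content of~\cite[Lemma~5.1.2 and~\S 5.2]{Ric08}).

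In detail, the steps I would carry out are: (1) reduce, using that $\nu = \mu_1 - \mu_2$ with $\mu_i$ dominant need not hold but $\nu$ itself is dominant so $\rH_{t_{-\nu}} = \theta_{-\nu} = (\rH_{t_\nu})^{-1}$, to analyzing $\mathbb{S}_{(\rH_{t_\nu})^{-1}}$; (2) observe that $t_\nu = w_1 s_1 \cdots s_k$ is a reduced expression in $W_\ex$ with the $s_i \in S_\aff$ and $w_1 \in \Omega$, so $\mathbb{S}_{\rH_{t_\nu}}$ is a composition of the functors $\mathbb{S}_{s_i}^+$ and one $\mathbb{S}_{\rH_{w_1}} = \sfT_0^{w_1^{-1}\bullet 0}$; (3) apply Proposition~\ref{prop:comparison-Br-actions} inductively for the pieces already handled, i.e.\ the $\mathbb{S}_{s_i}^\pm$ pieces are covered by Corollary~\ref{cor:compatibility-bbI-bbS} (for $s_i \in S$) and by Lemma~\ref{lem:conjugation-wall-crossing} together with the braid relations (for $s_i \in S_\aff \smallsetminus S$), so one is left with the $\Omega$-piece; (4) for the $\Omega$-piece $\sfT_0^{w_1^{-1}\bullet 0}$, use Proposition~\ref{prop:wall-crossing} — or rather its singular analogue, noting $w_1^{-1}\bullet 0$ is regular since $w_1 \in \Omega$ — to identify $\gamma_{\hchi}^{\ho}\circ \mathbb{S}_{\rH_{w_1}}$ with the geometric functor $\mathbb{I}_{\rH_{w_1}}$, which by~\cite[\S 5]{BR12} and~\S\ref{ss:intertwining} is tensoring with an appropriate line bundle; (5) collect all the line-bundle twists and Koszul-complex contributions and check they assemble to $\sO_{\Groth^{(1)}_{\hchi}}(\nu)$, using the multiplicativity $\mathbb{I}_{\theta_\lambda}\mathbb{I}_{\theta_\mu} = \mathbb{I}_{\theta_{\lambda+\mu}}$ and the fact that $\theta_\nu \mapsto \sO_{\Groth^{(1)}_{\hchi}}(\nu)$ by definition. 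The main obstacle I expect is step (5), namely the bookkeeping of grading shifts and line-bundle twists through the iterated wall-crossing: each $\Xi_s$ introduces a shift by $\sO(-\rho,\rho-\alpha_s)$-type factors via the exact sequences~\eqref{eqn:ses1-braid-gp}--\eqref{eqn:ses2-braid-gp}, and one must verify these telescope correctly; this is exactly the kind of computation done in~\cite[\S 5.1--5.2]{Ric08} in the nonequivariant, noncompleted case, and the point is simply that it goes through verbatim in the present $H$-equivariant completed setting since all the functors involved are compatible with $(-)_{\hchi}$ and with forgetting equivariance, and the line bundle $\sO_{\Groth^{(1)}_{\hchi}}(\nu)$ is determined by its nonequivariant restriction together with the obvious $H$-equivariant structure.
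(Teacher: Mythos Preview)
Your proposal has a genuine circularity. In step~(3) you want to handle the pieces $\mathbb{S}_{s_i}^\pm$ for $s_i \in S_\aff \smallsetminus S$ using Lemma~\ref{lem:conjugation-wall-crossing} ``together with the braid relations.'' But Lemma~\ref{lem:conjugation-wall-crossing} only gives you $\mathbb{S}_b \circ \Theta_t \circ \mathbb{S}_{b^{-1}} \cong \Theta_s$ on the \emph{module} side. To transport this to the geometric side you would need to know that $\gamma_{\hchi}^{\ho} \circ \mathbb{S}_b \cong \mathbb{I}_{\imath(b)} \circ \gamma_{\hchi}^{\ho}$ for the conjugating element $b \in \Br_\ex$, and such $b$ typically involve the generators $\theta_\lambda$ --- which is precisely what Proposition~\ref{prop:comparison-actions-dom} is meant to establish. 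Equivalently: at this stage in the logical order, the only comparison available between the $\mathbb{S}$- and $\mathbb{I}$-actions is Corollary~\ref{cor:compatibility-bbI-bbS}, which covers only $s \in S$. The comparison for affine reflections (Proposition~\ref{prop:wall-crossing-aff}) is \emph{derived from} Proposition~\ref{prop:comparison-Br-actions}, hence from Proposition~\ref{prop:comparison-actions-dom}, not an input to it. Your step~(4), which treats the $\Omega$-piece via Proposition~\ref{prop:wall-crossing}, has a similar issue: that proposition concerns only wall-crossing functors $\Theta_s$ for $s \in S$, not translation between two regular blocks.

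The paper avoids this trap by never decomposing $\mathbb{S}_{\rH_{t_{-\nu}}}$ into geometric braid pieces. Instead it works upstream of the splitting-bundle equivalence, at the level of the localization functors $R\Gamma_\lambda$. Lemma~\ref{lem:localization-braid-action} shows directly that $\mathbb{S}_{(\rH_w)^{-1}} \circ R\Gamma_0 \cong R\Gamma_{0*w} \circ \mathrm{Tens}_0^{0*w}$; its proof inducts along a reduced expression, but each step uses only the translation-functor identities on $\tD$-modules (items~(1)--(2) in that proof), which are available for every $s \in S_\aff$ without any geometric comparison. Applying this with $w = t_{-\nu}$ gives $0*t_{-\nu} = -p\nu$, and since $\sO_\sB(-p\nu) = (\mathrm{Fr}_\sB)^*\sO_{\sB^{(1)}}(-\nu)$, the functor $\mathrm{Tens}_0^{-p\nu}$ becomes, after the splitting equivalence, exactly tensoring with $\sO_{\Groth^{(1)}_{\hchi}}(-\nu)$. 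Rearranging yields the statement. The key point is that the line-bundle twist appears \emph{directly} from the shift $0 \rightsquigarrow -p\nu$ in the localization parameter, not from assembling braid-action kernels.
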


This proposition is an analogue in our present setting of~\cite[Proposition~2.3.3]{BMR06}. Our proof will be similar, except that we explain a detail in the proof of~\cite[Lemma~2.2.3]{BMR06} which was unclear to us.

Since the weight $0 \in \X$ is regular, the map $w \mapsto w \bullet 0$ induces a bijection from $W_\ex$ to $W_\ex \bullet 0 \subset \X$. We can therefore consider the right action $*$ of $W_\ex$ on $W_\ex \bullet 0$ such that for $y,w \in W_\ex$ we have $(y \bullet 0) * w = yw \bullet 0$. For any $\lambda \in W_\ex \bullet 0$ we consider the equivalence
\[
R\Gamma_\lambda : \Db(\mod^H(\tD^{\hla}_{\hchi})) \simto \Db(\mod^H(\sU^{\hla}_{\hchi}))
\]
established in the course of the proof of Theorem~\ref{thm:localization-comp}. Note that the codomain of this functor does not depend on $\lambda$: it always coincides with $\Db(\mod^H(\sU^{\ho}_{\hchi}))$. (On the other hand the domain does depend on $\lambda$, or more specifically on its image $\overline{\lambda}$.) Let us note also that if $\lambda,\mu \in W_\ex \bullet 0$ there is an equivalence of categories
\[
\mathrm{Tens}_{\lambda}^{\mu} : \Db(\mod^H(\tD^{\hla}_{\hchi})) \simto \Db(\mod^H(\tD^{\hmu}_{\hchi}))
\]
induced by the tensor product operation $\sO_{\sB}(\mu-\lambda) \otimes_{\sO_{\sB}} (-)$. More generally the functor $R\Gamma_\lambda$ is defined for any $\lambda \in \X$ (although this is not an equivalence in general), and the equivalence $\mathrm{Tens}_{\lambda}^{\mu}$ is defined for any $\lambda,\mu \in \X$.

The main ingredient of the proof of Proposition~\ref{prop:comparison-actions-dom} is the following claim.

\begin{lem}
\label{lem:localization-braid-action}
Let $\lambda \in W_\ex \bullet 0$ and $w \in W_\ex$. Assume that one can write $w = s_1 \cdots s_r \omega$ with $s_1, \dots, s_r \in S_\aff$, $\omega \in \Omega$ and $r = \ell(w)$, in such a way that for any $i \in \{0, \dots, r-1\}$ we have $\lambda * (s_1 \cdots s_i s_{i+1}) \prec \lambda * (s_1 \cdots s_i)$. Then for any $\sF \in \Db(\mod^H(\tD^{\hla}_{\hchi}))$ there exists an isomorphism
\[
\mathbb{S}_{(\rH_w)^{-1}} \circ R\Gamma_\lambda(\sF) \cong R\Gamma_{\lambda * w} (\mathrm{Tens}_\lambda^{\lambda * w}(\sF)).
\]
\end{lem}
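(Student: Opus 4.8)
The strategy is to reduce to the length-one case and induct on $r=\ell(w)$. First I would treat the generators separately: when $w=s\in S_\aff$ the statement is essentially the content of Corollary~\ref{cor:compatibility-bbI-bbS} together with the construction of the wall-crossing functors, once one knows that $\mathbb{S}_s^-\circ R\Gamma_\lambda(\sF)$ can be computed on the geometric side as $\mathbb{I}_s^-$ applied to the image of $\sF$. However there is a subtlety that the hypothesis of Lemma~\ref{lem:localization-braid-action} only requires $\lambda*s\prec\lambda$, so $s$ need not lie in $S$; for $s\in S_\aff\smallsetminus S$ one invokes Lemma~\ref{lem:conjugation-wall-crossing}, writing $\rH_s=b\rH_t b^{-1}$ with $t\in S$ and transporting the $s\in S$ case along the braid-group action, using Proposition~\ref{prop:comparison-Br-actions} (for the already-known generators) to identify $\mathbb{S}_b$ with $\mathbb{I}_{\imath(b)}$. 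For $w=\omega\in\Omega$ the functor $\mathbb{S}_{(\rH_\omega)^{-1}}$ is a translation functor $\sfT_0^{\omega\bullet 0}$ and the identification with $R\Gamma_{\lambda*\omega}\circ\mathrm{Tens}_\lambda^{\lambda*\omega}$ follows from the definition of $R\Gamma_\lambda$ and the fact that $\mathbb{I}_{\rH_\omega}$ is the corresponding twist $\mathsf{P}^{0,\omega\bullet 0}\star(-)$.

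For the inductive step, suppose $w=s_1\cdots s_r\omega$ with the descending property and set $w'=s_2\cdots s_r\omega$, $\mu=\lambda*s_1$. The hypothesis guarantees $\mu\prec\lambda$ and that the reduced word $s_2\cdots s_r\omega$ for $w'$ satisfies the descending property relative to the starting weight $\mu$ — indeed $\mu*(s_2\cdots s_i)=\lambda*(s_1\cdots s_i)$ for all $i$, so the chain of $\prec$-inequalities is simply the tail of the original one. By the length-one case applied to $s_1$ and $\lambda$ we have
\[
\mathbb{S}_{(\rH_{s_1})^{-1}}\circ R\Gamma_\lambda(\sF)\cong R\Gamma_{\mu}\bigl(\mathrm{Tens}_\lambda^{\mu}(\sF)\bigr),
\]
and by the induction hypothesis applied to $w'$, $\mu$, and $\mathrm{Tens}_\lambda^\mu(\sF)$ we get
\[
\mathbb{S}_{(\rH_{w'})^{-1}}\circ R\Gamma_{\mu}\bigl(\mathrm{Tens}_\lambda^{\mu}(\sF)\bigr)\cong R\Gamma_{\mu*w'}\bigl(\mathrm{Tens}_{\mu}^{\mu*w'}(\mathrm{Tens}_\lambda^\mu(\sF))\bigr).
\]
Now $\ell(w)=r=1+\ell(w')$ forces $\rH_w=\rH_{s_1}\rH_{w'}$ in $\Br_\ex$, hence $(\rH_w)^{-1}=(\rH_{w'})^{-1}(\rH_{s_1})^{-1}$ and $\mathbb{S}_{(\rH_w)^{-1}}=\mathbb{S}_{(\rH_{w'})^{-1}}\circ\mathbb{S}_{(\rH_{s_1})^{-1}}$; moreover $\mu*w'=\lambda*w$ and $\mathrm{Tens}_{\mu}^{\lambda*w}\circ\mathrm{Tens}_\lambda^{\mu}\cong\mathrm{Tens}_\lambda^{\lambda*w}$ since both are given by tensoring with the line bundle $\sO_{\sB}((\lambda*w)-\lambda)$. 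Composing the two displayed isomorphisms yields the desired identity.

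\textbf{Main obstacle.} The delicate point is the length-one case for $s\in S_\aff\smallsetminus S$, which is where the genuine geometric input lives: one must check that conjugating by $\mathbb{S}_b$ on the module side corresponds to conjugating by $\mathbb{I}_{\imath(b)}$ on the sheaf side \emph{compatibly with the relevant adjunction morphisms}, so that the cocone defining $\mathbb{S}_s^-$ is sent to the cocone defining $\mathbb{I}_s^-$ rather than merely to an abstractly isomorphic object. This requires combining Corollary~\ref{cor:compatibility-bbI-bbS}, Lemma~\ref{lem:conjugation-wall-crossing}, and the (already partially established) Proposition~\ref{prop:comparison-Br-actions} carefully; one also has to verify that the weight bookkeeping ($\lambda\mapsto\lambda*s$ versus the line-bundle twist by $\sO_{\sB}(\lambda*s-\lambda)$) is consistent with the normalization of the intertwining functors $\mathbb{I}_s^\pm$ fixed in~\S\ref{ss:intertwining}. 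A secondary technical nuisance is making sure the functors $R\Gamma_\lambda$ and $\mathrm{Tens}_\lambda^\mu$ behave correctly for $\lambda$ not regular-dominant but merely in $W_\ex\bullet 0$; since $0$ is regular this causes no real trouble, but it must be stated.
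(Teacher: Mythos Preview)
Your inductive reduction to the length-zero and length-one cases is correct and matches the paper's organization; the bookkeeping with $\mu=\lambda*s_1$, the factorization $(\rH_w)^{-1}=(\rH_{w'})^{-1}(\rH_{s_1})^{-1}$, and the composition of $\mathrm{Tens}$ functors is fine. The gap is in the length-one case itself.

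The circularity you flag as a ``main obstacle'' is in fact fatal. For $s\in S_\aff\smallsetminus S$ you write $\rH_s=b\,\rH_t\,b^{-1}$ via Lemma~\ref{lem:conjugation-wall-crossing} and then invoke Proposition~\ref{prop:comparison-Br-actions} for $b$. But the image of $b$ in $W_\ex$ conjugates $t\in S$ to $s\notin S$, so it cannot lie in $W$; hence $b$ must contain a translation factor $\theta_\nu$. The comparison of $\mathbb{S}$ and $\mathbb{I}$ for translations is exactly Proposition~\ref{prop:comparison-actions-dom}, whose proof \emph{uses} the present lemma (with $\lambda=0$, $w=t_{-\nu}$). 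So ``already-known generators'' never covers $b$, and your argument for affine reflections is circular. Even for $s\in S$, Corollary~\ref{cor:compatibility-bbI-bbS} is the wrong layer: it is an identity through $\gamma_{\hchi}^{\ho}$ on the coherent side, whereas the lemma is a statement about $R\Gamma_\lambda$ and $\mathrm{Tens}$ on $\tD$-modules; converting back would require comparing the splitting-bundle equivalences at $\lambda$ and $\lambda*s$, which is not available here.

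The paper avoids the coherent side entirely and treats every $s\in S_\aff$ uniformly with translation functors. Two standard facts are recorded (cf.\ \cite[Lemma~6.1.2]{BMR08}, \cite[Lemma~2.2.3]{BMR06}): a functorial isomorphism $\sfT_0^{\mu}\circ R\Gamma_{y\bullet 0}\cong R\Gamma_{y\bullet\mu}\circ\mathrm{Tens}_{y\bullet 0}^{y\bullet\mu}$ for any $\mu\in\mathscr{A}_0$, and a functorial triangle for $\sfT_{\mu_s}^0$. The first with $\mu=\omega\bullet 0$ settles $r=0$. For $r=1$, writing $\lambda=y\bullet 0$ and combining the two gives a triangle
\[
R\Gamma_{\lambda*s}\bigl(\mathrm{Tens}_\lambda^{\lambda*s}\sF\bigr)\to\Theta_s\bigl(R\Gamma_\lambda\sF\bigr)\to R\Gamma_\lambda\sF\xrightarrow{[1]},
\]
and the real work is identifying the second arrow with the adjunction $\Theta_s\to\id$ up to an invertible element of $\sO(\t^*)_{\widehat{y\bullet 0}}$. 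This is done by reducing (via a surjection of complexes and exactness of all functors) to $\sF=\tD^{\widehat{y\bullet 0}}_{\hchi}$, then to the uncompleted $\tD^{\widehat{y\bullet 0}}$, and finally---using $G$-equivariance---to the fiber over $w_\circ\in\sB$, where by Lemma~\ref{lem:hDelta-Dmod} the problem becomes the completed-Verma computation already carried out in the proof of Proposition~\ref{prop:translation-hDelta}. Nothing about $\mathbb{I}$, $\gamma_{\hchi}^{\ho}$, or Lemma~\ref{lem:conjugation-wall-crossing} is used.
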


\begin{proof}
Of course it suffices to prove the claim when $r =0$, and when $r=1$ and $\omega=1$. The proof in these cases will be based on the following observations.
\begin{enumerate}
\item
\label{it:translation-Dmod-1}
Let $y \in W_\ex$, and let $\mu \in \mathscr{A}_0$. Then for $\sF$ in $\Db(\mod^H(\tD^{\widehat{y \bullet 0}}_{\hchi}))$ we have a functorial isomorphism
\[
\sfT_0^\mu(R\Gamma_{y \bullet 0}(\sF)) \cong R\Gamma_{y \bullet \mu}(\mathrm{Tens}_{y \bullet 0}^{y \bullet \mu} ( \sF)).
\]
\item
\label{it:translation-Dmod-2}
Let $y \in W_\ex$ and $s \in S_\aff$. If $ys \bullet 0 \prec y \bullet 0$, then for $\sG$ in $\Db(\mod^H(\tD^{\widehat{y \bullet \mu_s}}_{\hchi}))$ there exists a functorial distinguished triangle
\[
R\Gamma_{ys \bullet 0}( \mathrm{Tens}_{y \bullet \mu_s}^{ys \bullet 0}(\sG) ) \to \sfT_{\mu_s}^0(R\Gamma_{y \bullet \mu_s}(\sG)) \to R\Gamma_{y \bullet 0}(\mathrm{Tens}_{y \bullet \mu_s}^{y \bullet 0}(\sG)) \xrightarrow{[1]}.
\] 
\end{enumerate}
The proof of these claims follows a standard pattern (already encountered in the proof of Proposition~\ref{prop:translation-hDelta})
and can be copied from~\cite[Lemma~6.1.2]{BMR08} (see also~\cite[Lemma~2.2.3]{BMR06}, \cite[Lemma~5.5]{br-Hecke} or~\cite[Lemma~7.1]{br-two}).

Using~\eqref{it:translation-Dmod-1} with $\mu=\omega \bullet 0$ (where $\omega \in \Omega$) we obtain the desired statement when $r=0$. (In this case we have $\mathbb{S}_{(\rH_\omega)^{-1}} = \mathbb{S}_{\rH_{\omega^{-1}}} = \sfT_0^{\omega \bullet 0}$, see~\S\ref{ss:intertwining-RT}.)

Now we consider the case $r=1$ and $\omega=1$, and write $\lambda = y \bullet 0$ ($y \in W_\ex$), $s=s_1$. Combining~\eqref{it:translation-Dmod-1} for $\mu=\mu_s$ and~\eqref{it:translation-Dmod-2}, we obtain for $\sF$ in $\Db(\mod^H(\tD^{\widehat{y \bullet 0}}_{\hchi}))$ a distinguished triangle
\[
R\Gamma_{ys \bullet 0}( \mathrm{Tens}_{y \bullet 0}^{ys \bullet 0}(\sF) ) \to \Theta_s(R\Gamma_{y \bullet 0}(\sF)) \to R\Gamma_{y \bullet 0}(\sF) \xrightarrow{[1]}.
\]
To conclude, it therefore suffices to show that, in this triangle, the second morphism coincides with the map induced by our morphism of functors $\Theta_s \to \id$, up to an automorphism of $R\Gamma_{y \bullet 0}(\sF)$.

This can be justified as follows. One can consider $\sF$ as a sheaf of $\sU(\g)$-modules on $\sB$, and hence apply translation functors to it. From this point of view, by construction the triangle above is obtained by taking (derived) global sections of a functorial distinguished triangle
\[
\mathrm{Tens}_{y \bullet 0}^{ys \bullet 0}(\sF) \to \Theta_s(\sF) \to \sF \xrightarrow{[1]},
\]
and it suffices to prove the similar claim for the morphism $\Theta_s(\sF) \to \sF$ here. 
In fact we will prove that this morphism is the composition of the map induced by our morphism $\Theta_s \to \id$ with the action of an invertible element of the completion $\sO(\t^*)_{\widehat{y \bullet 0}}$ of $\sO(\t^*)$ with respect to the ideal determined by $y \bullet 0$ (independent of $\sF$). 

Let us first consider the case $\sF=\tD^{\widehat{y \bullet 0}}_{\hchi}$. 
Note that, for $\lambda\in \X$, the sheaf $\tD^{\hla}_{\hchi}$ is the the base change of $\tD^{\hla}$ from $\sO(\g^{*(1)})$ to $\sO(\g_{\hchi}^{*(1)})$, where $\tD^{\hla}$ is the pullback of $\tD$ to 
\[
(\Groth^{(1)}\times_{\t^{*(1)}} \t^*) \times_{\t^*} \Spec(\sO(\t^*)_{\hla}),
\]
and as above $\sO(\t^*)_{\hla}$ is the completion of $\sO(\t^*)$ with respect to the ideal corresponding to $\overline{\lambda}\in \t^*$. 
This reduces this proof of our claim to that of the similar claim for the similarly defined morphism $\Theta_s(\tD^{\widehat{y \bullet 0}}) \to \tD^{\widehat{y \bullet 0}}$.
Since this is a morphism of $G$-equivariant quasi-coherent sheaves, it suffices to analyze the morphism after tensor product on the right with $\sO_{\{w_\circ\}}$. 
By the proof of Lemma~\ref{lem:hDelta-Dmod}, the sheaf under consideration is $\widetilde{\Delta}(w_\circ y \bullet 0)\otimes_{\mathrm{S}(\t)} \mathrm{S}(\t)_{\ho}$ concentrated at $\{w_\circ\}$ (up to $T$-shift), where $\mathrm{S}(\t)$ acts on $\widetilde{\Delta}(w_\circ y \bullet 0)$ via the ``extra" $\odot$-action in \S\ref{ss:completed-Verma}. 
Here, since $ys \bullet 0 \prec y \bullet 0$ we have $w_\circ y \bullet 0 \prec w_\circ ys \bullet 0$, and the desired claim for $\widetilde{\Delta}(w_\circ y \bullet 0)\otimes_{\mathrm{S}(\t)} \mathrm{S}(\t)_{\ho}$ can be proved similarly as the one for $\widehat{\Delta}(w_\circ y \bullet 0)$ that has been established in the course of the proof of Proposition~\ref{prop:translation-hDelta}.


For a general bounded complex $\sF$, we observe that there exists a bounded complex $\sG$ of coherent sheaves on $\sB$ and a surjection of complexes $\tD^{\widehat{y \bullet 0}}_{\hchi} \otimes_{\sO_{\sB}} \sG \twoheadrightarrow \sF$; since all the morphisms considered here are functorial, and all the functors exact, the general case therefore follows from the special case treated above.
\end{proof}

\begin{proof}[Proof of Proposition~\ref{prop:comparison-actions-dom}]
We apply Lemma~\ref{lem:localization-braid-action} for $\lambda=0$ and $w=t_{-\nu}$. It is well known that the assumption of the lemma is satisfied with these data, and moreover we have $0 * t_{-\nu} = -p\nu$. Using the fact that $\sO_{\sB}(-p\nu) = (\mathrm{Fr}_{\sB})^* \sO_{\sB^{(1)}}(-\nu)$, we deduce for any $\sG$ in $\Db \Coh^T(\Groth^{(1)}_{\hchi})$ an isomorphism
\[
\mathbb{S}_{(\rH_{t_{-\nu}})^{-1}} \circ (\gamma_{\hchi}^{\ho})^{-1}(\sG) \cong (\gamma_{\hchi}^{\ho})^{-1}(\sG \otimes_{\sO_{\Groth^{(1)}_{\hchi}}} \sO_{\Groth^{(1)}_{\hchi}}(-\nu)),
\]
i.e.~an isomorphism
\[
\sG \cong \gamma_{\hchi}^{\ho} \circ \mathbb{S}_{\rH_{t_{-\nu}}} \circ (\gamma_{\hchi}^{\ho})^{-1}(\sG \otimes_{\sO_{\Groth^{(1)}_{\hchi}}} \sO_{\Groth^{(1)}_{\hchi}}(-\nu)).
\]
Setting $\sG = \gamma_{\hchi}^{\widehat{0}}(M) \otimes_{\sO_{\Groth_{\hchi}^{(1)}}} \sO_{\Groth_{\hchi}^{(1)}}(\nu)$ we obtain the desired isomorphism.
\end{proof}

\subsection{Geometric wall-crossing functors for affine simple reflections} 
\label{ss:geometric-wall-crossing-aff}

Recall from~\S\ref{ss:intertwining-RT} that for any $s \in S_\aff \smallsetminus S$, we have fixed $b \in \Br_\ex$ and $t \in S$ such that $b \rH_t b^{-1} = \rH_s$ in $\Br_\ex$. We set
\[
\Xi_s := \mathbb{I}_{\imath(b)} \circ \Xi_t \circ \mathbb{I}_{\imath(b)^{-1}} : \Db \Coh^H(\Groth^{(1)}_{\hchi}) \to \Db \Coh^H(\Groth^{(1)}_{\hchi}).
\]
Then as above, for any $\sF$ in $\Db\Coh^H(\Groth^{(1)}_{\hchi})$ we have canonical distinguished triangles
\[
\sF \to \Xi_s(\sF) \to \mathbb{I}_{\rH_s}(\sF) \xrightarrow{[1]}, \qquad
\mathbb{I}_{(\rH_s)^{-1}}(\sF) \to \Xi_s(\sF) \to \sF \xrightarrow{[1]}.
\]
We will use similar notation for the corresponding endofunctor of $\Db \Coh^H(\Groth^{(1)})$ or $\Db \Coh^{H \times \Gm}(\Groth^{(1)})$.

The following statement is an immediate consequence of Proposition~\ref{prop:wall-crossing}, Proposition~\ref{prop:comparison-Br-actions} and Lemma~\ref{lem:conjugation-wall-crossing}.

\begin{prop}
\label{prop:wall-crossing-aff}
For any $s \in S_\aff$ and $M \in \Db(\mod^H(\sU_{\hchi}^{\ho}))$, 
there exists an isomorphism
\[
\gamma_{\hchi}^{\widehat{0}} \circ \Theta_s(M) \cong \Xi_s \circ \gamma_{\hchi}^{\widehat{0}}(M).
\]
\end{prop}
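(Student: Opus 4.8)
The plan is to deduce Proposition~\ref{prop:wall-crossing-aff} by combining the two cases already established together with the conjugation relation from Lemma~\ref{lem:conjugation-wall-crossing}. First I would dispose of the case $s \in S$: here the statement is exactly Proposition~\ref{prop:wall-crossing}, so there is nothing to prove. So assume $s \in S_\aff \smallsetminus S$, and recall from~\S\ref{ss:intertwining-RT} that we have fixed $b \in \Br_\ex$ and $t \in S$ with $b \cdot \rH_t \cdot b^{-1} = \rH_s$ in $\Br_\ex$, and that by definition $\Xi_s = \mathbb{I}_{\imath(b)} \circ \Xi_t \circ \mathbb{I}_{\imath(b)^{-1}}$.

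Next I would write down the chain of isomorphisms. Start with $\gamma_{\hchi}^{\widehat{0}} \circ \Theta_s(M)$. By Lemma~\ref{lem:conjugation-wall-crossing}, $\Theta_s \cong \mathbb{S}_b \circ \Theta_t \circ \mathbb{S}_{b^{-1}}$ as endofunctors of $\Db \mod^H(\sU_{\hchi}^{\widehat 0})$, so this is isomorphic to $\gamma_{\hchi}^{\widehat 0} \circ \mathbb{S}_b \circ \Theta_t \circ \mathbb{S}_{b^{-1}}(M)$. Now apply Proposition~\ref{prop:comparison-Br-actions} to the object $\Theta_t \circ \mathbb{S}_{b^{-1}}(M)$ (and the braid group element $b$), which gives $\gamma_{\hchi}^{\widehat 0} \circ \mathbb{S}_b (-) \cong \mathbb{I}_{\imath(b)} \circ \gamma_{\hchi}^{\widehat 0}(-)$; this turns our expression into $\mathbb{I}_{\imath(b)} \circ \gamma_{\hchi}^{\widehat 0} \circ \Theta_t \circ \mathbb{S}_{b^{-1}}(M)$. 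Then apply Proposition~\ref{prop:wall-crossing} (with $s$ replaced by $t \in S$) to $\mathbb{S}_{b^{-1}}(M)$, obtaining $\gamma_{\hchi}^{\widehat 0} \circ \Theta_t(-) \cong \Xi_t \circ \gamma_{\hchi}^{\widehat 0}(-)$, so our expression becomes $\mathbb{I}_{\imath(b)} \circ \Xi_t \circ \gamma_{\hchi}^{\widehat 0} \circ \mathbb{S}_{b^{-1}}(M)$. Finally apply Proposition~\ref{prop:comparison-Br-actions} once more, now to $b^{-1}$ and $M$, giving $\gamma_{\hchi}^{\widehat 0} \circ \mathbb{S}_{b^{-1}}(M) \cong \mathbb{I}_{\imath(b^{-1})} \circ \gamma_{\hchi}^{\widehat 0}(M) = \mathbb{I}_{\imath(b)^{-1}} \circ \gamma_{\hchi}^{\widehat 0}(M)$ (using that $\imath$ is an anti-automorphism, so $\imath(b^{-1}) = \imath(b)^{-1}$). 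Stringing these together yields
\[
\gamma_{\hchi}^{\widehat 0} \circ \Theta_s(M) \cong \mathbb{I}_{\imath(b)} \circ \Xi_t \circ \mathbb{I}_{\imath(b)^{-1}} \circ \gamma_{\hchi}^{\widehat 0}(M) = \Xi_s \circ \gamma_{\hchi}^{\widehat 0}(M),
\]
which is the claim. There is essentially no obstacle here: every nontrivial input has already been proved, and the argument is a bookkeeping exercise in composing known isomorphisms. The only point requiring a little care is making sure the braid group element being fed to Proposition~\ref{prop:comparison-Br-actions} in each step is the correct one and that $\imath$ is applied consistently (in particular remembering $\imath(b)^{-1} = \imath(b^{-1})$); this is why the statement is phrased as an ``immediate consequence'' rather than given a full separate proof.
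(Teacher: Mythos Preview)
Your proposal is correct and follows exactly the approach the paper intends: the paper states that Proposition~\ref{prop:wall-crossing-aff} is ``an immediate consequence of Proposition~\ref{prop:wall-crossing}, Proposition~\ref{prop:comparison-Br-actions} and Lemma~\ref{lem:conjugation-wall-crossing},'' and your chain of isomorphisms spells out precisely that deduction. The only detail one might add is that for $s\in S$ the isomorphism is actually one of functors (from Proposition~\ref{prop:wall-crossing}), while for $s\in S_\aff\smallsetminus S$ the use of Proposition~\ref{prop:comparison-Br-actions} only yields objectwise isomorphisms, which is all that is claimed.
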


\subsection{Intertwining functors -- case of \texorpdfstring{$\Spr$}{the Springer resolution}} 
\label{ss:intertwining-Springer}

Consider again some $s \in S$.
Replacing, in the constructions of~\S\ref{ss:intertwining}, $Z_s$ by  $Z_s':=Z_s \cap (\Spr \times \Spr)$,
 one defines similarly functors
\[
\mathbb{J}^+_s, \mathbb{J}^-_s : \Db\Coh^H(\Spr^{(1)}) \to \Db\Coh^H(\Spr^{(1)}).
\]
We will also consider versions of these functors in the $\Gm$-equivariant setting; here, given our conventions in~\S\ref{sss:geometry-G}, in the definition of these functors we incorporate a shift $\langle 1 \rangle$ rather than $\langle -1 \rangle$. As above, by~\cite[\S 1]{BR12} these functors are quasi-inverse equivalences of categories, and they generate a right action of $\Br_\ex$, in the sense that there exists a group morphism from $(\Br_\ex)^{\op}$ to the group of isomorphism classes of autoequivalences of $\Db\Coh^H(\Spr^{(1)})$ or $\Db\Coh^{H \times \Gm}(\Spr^{(1)})$, denoted $b \mapsto \mathbb{J}_b$, such that for $s \in S$ we have $\mathbb{J}_{\rH_s} = \mathbb{J}_s^+$ (hence also $\mathbb{J}_{(\rH_s)^{-1}} = \mathbb{J}_s^-$), and for $\lambda \in \X$ the equivalence $\mathbb{J}_{\theta_\lambda}$ is given by tensor product with the line bundle $\sO_{\Spr^{(1)}}(\lambda)$ obtained by pullback from the line bundle on $\sB^{(1)}$ associated with $\lambda$.

Considering the map $i$ introduced in~\S\ref{ss:localization-comp}, in the non-$\Gm$-equivariant setting, for any $s \in S$
there exist canonical isomorphisms of functors
\[
\mathbb{I}_s^{\pm} \circ i_* \cong i_* \circ \mathbb{J}_s^{\pm}, \quad \mathbb{J}_s^{\pm} \circ Li^* \cong Li^* \circ \mathbb{I}_s^{\pm},
\]
see~\cite[\S 1.6]{BR12}. (Here we do not consider $\Gm$-equivariance because the map $i$ is not $\Gm$-equivariant with our conventions.) We have similar isomorphisms for the functors of tensoring with line bundles; hence, more generally, for any $b \in \Br_\ex$ we have
\[
\mathbb{I}_b \circ i_* \cong i_* \circ \mathbb{J}_b, \quad \mathbb{J}_b \circ Li^* \cong Li^* \circ \mathbb{I}_b.
\]

\subsection{Semisimple functors} 
\label{ss:ss-functors}

Recall that $\Spr = G \times^{B^-} (\g/\b^-)^*$. For any $s \in S$ we also have a parabolic version of this variety, namely
\[
\Spr_s := G \times^{P_s^-} (\g/\p_s^-)^*,
\]
and a natural closed immersion $\jmath_s : \sB \times_{\sP_s} \Spr_s \hookrightarrow \Spr$. We set
\[
Y_s := (\sB \times_{\sP_s} \Spr_s) \times_{\Spr_s} (\sB \times_{\sP_s} \Spr_s) = \sB \times_{\sP_s} \Spr_s \times_{\sP_s} \sB,
\]
a $(G \times \Gm)$-stable closed subscheme of $\Spr \times \Spr$. (In fact, $Y_s$ is a smooth variety, and an irreducible component of $Z'_s$.) We have canonical short exact sequences of $(G \times \Gm)$-equivariant coherent sheaves
\begin{gather}
\label{eqn:ses3-braid-gp}
\sO_{\Delta \Spr} \langle -2 \rangle \hookrightarrow \sO_{Z'_s}(-\rho,\rho-\alpha_s) \twoheadrightarrow \sO_{Y_s}(-\rho,\rho-\alpha_s), \\
\label{eqn:ses4-braid-gp}
\sO_{Y_s}(-\rho,\rho-\alpha_s) \hookrightarrow \sO_{Z_s'} \twoheadrightarrow \sO_{\Delta \Spr}
\end{gather}
where we use the same notational conventions as above; see~\cite[\S 6.1]{Ric08} and~\cite[\S 5.3]{Ric10} for details and references.

Denoting by $q^1_s, q^2_s : (Y_s)^{(1)} \to \Spr^{(1)}$ the two natural projections, we consider the functor
\begin{multline*}
\Upsilon_s := R(q^1_s)_*(L(q^2_s)^* (-) \otimes_{\sO_{Y_s}} \sO_{Y_s}(-\rho,\rho-\alpha_s)) \langle 1 \rangle : \\
\Db\Coh^{H \times \Gm}(\Spr^{(1)}) \to \Db\Coh^{H \times \Gm}(\Spr^{(1)}).
\end{multline*}
In other words, $\Upsilon_s$ is convolution with the kernel $\sO_{(Y_s)^{(1)}}(-\rho, \rho-\alpha_s)\langle 1 \rangle$. We will also consider the similar functor in the non-$\Gm$-equivariant setting, for which we will use similar notation. Because of the exact sequences~\eqref{eqn:ses3-braid-gp}--\eqref{eqn:ses4-braid-gp}, we have for any $\sF$ in $\Db\Coh^{H \times \Gm}(\Spr^{(1)})$ functorial distinguished triangles
\begin{equation}
\label{eqn:triangles-Upsilon-J}
\sF \langle -1 \rangle \to \mathbb{J}_{(\rH_s)^{-1}}(\sF) \to \Upsilon_s(\sF) \xrightarrow{[1]}, \quad
\Upsilon_s(\sF) \to \mathbb{J}_{\rH_s}(\sF) \to \sF \langle 1 \rangle \xrightarrow{[1]}.
\end{equation}
We also have similar triangles in the non-$\Gm$-equivariant setting.

Below we will also use an alternative interpretation of the functors $\Upsilon_s$, as follows. Recall the closed immersion $\jmath_s$ considered above, and consider also the (flat and projective) projection $\varrho_s : \sB \times_{\sP_s} \Spr_s \to \Spr_s$. We have derived push/pull functors associated with these morphisms. We also set $(\jmath_s)_\dag(-) := (\jmath_s)_*(-) \otimes_{\sO_{\Spr^{(1)}}} \sO_{\Spr^{(1)}}(-\alpha_s)[-1] \langle 2 \rangle$. Using Grothendieck--Serre duality, one sees that $(\jmath_s)_\dag$ is left adjoint to $L(\jmath_s)^*$. Using definitions and the flat base change theorem, one sees that there exists a canonical isomorphism of functors
\begin{multline}
\label{eqn:isom-Upsilon}
(\jmath_s)_\dag \circ L(\varrho_s)^* \circ R(\varrho_s)_* \circ L(\jmath_s)^*(-) \cong \\
\sO_{\Spr^{(1)}}(-\rho) \otimes_{\sO_{\Spr^{(1)}}} \Upsilon_s \left( \sO_{\Spr^{(1)}}(\rho) \otimes_{\sO_{\Spr^{(1)}}} (-) \right) [-1]\langle 1 \rangle.
\end{multline}

\begin{lem}
\label{lem:morph-adjunction-kernels}
Under the identification~\eqref{eqn:isom-Upsilon}, the adjunction morphism $(\jmath_s)_\dag \circ L(\varrho_s)^* \circ R(\varrho_s)_* \circ L(\jmath_s)^* \to \id$ is induced by the morphism $\sO_{(Y_s)^{(1)}}(-\rho,\rho-\alpha_s) \to \sO_{\Delta \Spr^{(1)}} \langle -2 \rangle [1]$ appearing in~\eqref{eqn:ses3-braid-gp}.
\end{lem}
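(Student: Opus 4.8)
The plan is to unwind the definitions on both sides of~\eqref{eqn:isom-Upsilon} and track the adjunction counit through the chain of canonical isomorphisms. First I would recall that, by the construction recalled above, the functor $(\jmath_s)_\dag \circ L(\varrho_s)^* \circ R(\varrho_s)_* \circ L(\jmath_s)^*$ is convolution with a kernel supported on $(Y_s)^{(1)}$: indeed $L(\varrho_s)^* \circ R(\varrho_s)_*$ is convolution with $\sO_{(\sB \times_{\sP_s} \Spr_s) \times_{\Spr_s} (\sB \times_{\sP_s} \Spr_s)}^{(1)} = \sO_{(Y_s)^{(1)}}$, pre- and post-composition with $L(\jmath_s)^*$ and $(\jmath_s)_\dag$ pushes this kernel forward along $(\jmath_s \times \jmath_s)$ and twists it by $\sO(-\alpha_s)[-1]\langle 2 \rangle$ on the appropriate factor, and the $\sO(\pm\rho)$ conjugation in~\eqref{eqn:isom-Upsilon} accounts for the line bundle $\sO_{(Y_s)^{(1)}}(-\rho,\rho-\alpha_s)$. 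Thus the left-hand functor of Lemma~\ref{lem:morph-adjunction-kernels} is, up to the stated shifts and twists, convolution with $\sO_{(Y_s)^{(1)}}(-\rho,\rho-\alpha_s)$, exactly matching the first term of~\eqref{eqn:ses3-braid-gp}.

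Next I would identify the adjunction counit $(\jmath_s)_\dag \circ L(\varrho_s)^* \circ R(\varrho_s)_* \circ L(\jmath_s)^* \to \id$ as a morphism of kernels. The identity functor is convolution with $\sO_{\Delta\Spr^{(1)}}$ (with the appropriate grading normalization, which after the twists in~\eqref{eqn:isom-Upsilon} becomes $\sO_{\Delta\Spr^{(1)}}\langle -2\rangle[1]$). The counit decomposes as the composition of the counit for $((\jmath_s)_\dag, L(\jmath_s)^*)$ and the counit for $(L(\varrho_s)^*, R(\varrho_s)_*)$; since $\varrho_s$ is flat and projective with geometrically connected fibers and $\jmath_s$ is a closed immersion with $(\jmath_s)_\dag$ its Grothendieck--Serre left adjoint, both counits are explicitly given on kernels by the natural restriction/contraction maps. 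Composing them yields precisely the surjection of $(G\times\Gm)$-equivariant sheaves $\sO_{Y_s}(-\rho,\rho-\alpha_s) \twoheadrightarrow \sO_{\Delta\Spr}\langle -2\rangle$ (after Frobenius twist and the grading shift), which is the surjection in~\eqref{eqn:ses3-braid-gp}. So the key point is to check that the composite of the two geometric counits, written on convolution kernels, agrees with the restriction-of-functions map defining~\eqref{eqn:ses3-braid-gp}.

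I expect the main obstacle to be bookkeeping: matching the precise grading shifts, the line-bundle twists by $\sO(\pm\rho)$ and $\sO(-\alpha_s)$, and the duality shift $[-1]$ built into $(\jmath_s)_\dag$, all the while keeping the $\Gm$-equivariant normalization consistent with the conventions fixed in~\S\ref{sss:geometry-G} and~\S\ref{ss:intertwining-Springer} (where $\Spr$ carries a shift $\langle 1\rangle$ rather than $\langle -1\rangle$). To tame this I would first verify the underlying (non-$\Gm$-equivariant, untwisted) statement — that the composite counit on kernels is the restriction map $\sO_{Y_s} \to \sO_{\Delta\Spr}$ — which is a standard fact about the counits of $(L\varrho_s^*, R(\varrho_s)_*)$ and of $((\jmath_s)_\dag, L\jmath_s^*)$, and then feed in the $\Gm$-equivariant structure and the twist by $\sO_{Y_s}(-\rho,\rho-\alpha_s)$ by functoriality, using that tensoring a convolution kernel by a line bundle pulled back from one factor corresponds to conjugating the functor, which is compatible with adjunction counits. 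Comparing the shifts produced this way with those in~\eqref{eqn:isom-Upsilon} and~\eqref{eqn:ses3-braid-gp} then gives the claim. Throughout I would lean on the references~\cite[\S 6.1]{Ric08} and~\cite[\S 5.3]{Ric10} for the kernel descriptions and on~\cite{BR12} for the compatibility of these functors with the relevant adjunctions.
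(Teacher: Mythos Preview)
Your overall strategy---factor the counit as the composition of the two adjunction counits and read each off on kernels---is exactly the paper's. But you have misidentified the target morphism and, as a result, the heart of the argument. The morphism $\sO_{(Y_s)^{(1)}}(-\rho,\rho-\alpha_s)\to\sO_{\Delta\Spr^{(1)}}\langle-2\rangle[1]$ appearing in the lemma is the \emph{boundary} map of the distinguished triangle coming from~\eqref{eqn:ses3-braid-gp}, not a surjection of sheaves; indeed in~\eqref{eqn:ses3-braid-gp} the arrow out of $\sO_{\Delta\Spr}\langle-2\rangle$ is an injection, and $\Delta\Spr$ is not even contained in $Y_s$, so there is no restriction-of-functions map $\sO_{Y_s}\to\sO_{\Delta\Spr}$.

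What you are missing is the description of the second counit $(\jmath_s)_\dag\circ L(\jmath_s)^*\to\id$. On kernels this functor is convolution with $\sO_{\Delta(\sB\times_{\sP_s}\Spr_s)^{(1)}}$ (suitably twisted), not with $\sO_{\Delta\Spr^{(1)}}$; the diagonal here is the diagonal of the sub-bundle $\sB\times_{\sP_s}\Spr_s\hookrightarrow\Spr$. To pass from this to $\sO_{\Delta\Spr^{(1)}}$ one resolves $\sO_{\Delta(\sB\times_{\sP_s}\Spr_s)^{(1)}}$ by a two-term Koszul complex $\bigl(\sO_{\Delta\Spr^{(1)}}(\alpha_s)\langle-2\rangle\to\sO_{\Delta\Spr^{(1)}}\bigr)$, and the counit becomes the evident projection from (a shift and twist of) this complex to $\sO_{\Delta\Spr^{(1)}}$. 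The first counit \emph{is} the restriction map $\sO_{(Y_s)^{(1)}}\to\sO_{\Delta(\sB\times_{\sP_s}\Spr_s)^{(1)}}$, as you say. Composing the two and tracking the twists then identifies the total counit with the connecting morphism of~\eqref{eqn:ses3-braid-gp}. Without the Koszul step, your argument cannot produce a degree-$[1]$ map, so the shift in the lemma remains unexplained.
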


\begin{proof}
The adjunction morphism under consideration is a composition
\[
(\jmath_s)_\dag \circ L(\varrho_s)^* \circ R(\varrho_s)_* \circ L(\jmath_s)^* \to (\jmath_s)_\dag \circ L(\jmath_s)^* \to \id
\]
where both maps are induced by adjunction. As explained above the first of these functors is given by convolution with the kernel $\sO_{(Y_s)^{(1)}}(-\alpha_s,0) \langle 2 \rangle [-1]$. Similar considerations show that the functor $(\jmath_s)_\dag \circ L(\jmath_s)^*$ is given by convolution with the kernel $\sO_{\Delta (\sB \times_{\sP_s} \Spr_s)^{(1)}}(-\alpha_s) \langle 2 \rangle [-1]$, where $\Delta (\sB \times_{\sP_s} \Spr_s)^{(1)} \subset \Spr^{(1)} \times \Spr^{(1)}$ is the diagonal copy of $(\sB \times_{\sP_s} \Spr_s)^{(1)}$, and moreover that the first morphism above is induced by the morphism $\sO_{(Y_s)^{(1)}} \to \sO_{\Delta (\sB \times_{\sP_s} \Spr_s)^{(1)}}$ coming from the fact that $\Delta (\sB \times_{\sP_s} \Spr_s)^{(1)} \subset (Y_s)^{(1)}$ (see~\cite[Lemma~1.2.2]{Ric08}).

Now using a Koszul resolution one sees that $\sO_{\Delta (\sB \times_{\sP_s} \Spr_s)^{(1)}}$ is quasi-isomorphic to a complex $(\cdots \to \sO_{\Delta \Spr^{(1)}}(\alpha_s) \langle -2 \rangle \to \sO_{\Delta \Spr^{(1)}} \to \cdots)$ where $\sO_{\Delta \Spr^{(1)}}$ is in degree $0$, in such a way that the adjunction morphism above is induced by the canonical morphism of complexes
\[
(\cdots \to \sO_{\Delta \Spr^{(1)}} \to \sO_{\Delta \Spr^{(1)}} (-\alpha_s) \langle 2 \rangle \to \cdots)[-1] \to \sO_{\Delta \Spr^{(1)}}.
\]
One checks that the composition of the morphisms considered here is induced by the morphism appearing in~\eqref{eqn:ses3-braid-gp}, which concludes the proof.
\end{proof}

For $s \in S_\aff \smallsetminus S$, with our fixed $b \in \Br_\ex$ and $t \in S$ such that $b \rH_t b^{-1} = \rH_s$ (see~\S\ref{ss:intertwining-RT}), we set
\[
\Upsilon_s := \mathbb{J}_{\imath(b)} \circ \Upsilon_t \circ \mathbb{J}_{\imath(b)^{-1}}.
\]
With this definition, we again have triangles similar to those in~\eqref{eqn:triangles-Upsilon-J}, now for any $s \in S_\aff$.

\subsection{Geometric wall-crossing and intertwining functors -- fixed Frobenius character} 
\label{ss:wall-crossing-fixed-Fr}

For $s \in S$,
replacing in the construction of~\S\ref{ss:localization-fixed} the scheme $\Groth$ by $\Groth_s$, one makes sense of the category
\[
\DGCoh^H(\Groth_s^{(1)} \times^R_{\g^{*(1)}} \{\chi\}).
\]
The morphism $\tpi_s : \Groth \to \Groth_s$ induces a morphism between the dg-ringed spaces involved in the definitions of $\DGCoh^H(\Groth^{(1)} \times_{\g^{*(1)}} \{\chi\})$ and $\DGCoh^H(\Groth_s^{(1)} \times_{\g^{*(1)}} \{\chi\})$, and we have associated (adjoint) derived pushforward and pullback functors
\[
R(\hpi_s)_* : \DGCoh^H(\Groth^{(1)} \times^R_{\g^{*(1)}} \{\chi\}) \leftrightarrows
\DGCoh^H(\Groth_s^{(1)} \times^R_{\g^{*(1)}} \{\chi\}) : L(\hpi_s)^*.
\]
We set
\[
\Xi_s^\dg := L(\hpi_s)^* \circ R(\hpi_s)_* : \DGCoh^H(\Groth^{(1)} \times^R_{\g^{*(1)}} \{\chi\}) \to \DGCoh^H(\Groth^{(1)} \times^R_{\g^{*(1)}} \{\chi\}).
\]
Then the following diagram commutes:
\begin{equation}
\label{eqn:diagram-Xi-dg}
\begin{tikzcd}
\DGCoh^H(\Groth^{(1)} \times^R_{\g^{*(1)}} \{\chi\}) \arrow[r,"\Xi_s^\dg"] \arrow[d, "\eqref{eqn:forget-dg-completion}"'] & \DGCoh^H(\widetilde{\g}^{(1)}\times^R_{{\g}^{*(1)}} \{\chi\}) \arrow[d, "\eqref{eqn:forget-dg-completion}"] \\ 
\Db\Coh^H(\Groth^{(1)}_{\hchi}) \arrow[r,"\Xi_s"] & \Db\Coh^H(\Groth^{(1)}_{\hchi}).
\end{tikzcd}
\end{equation}
In case $\chi=0$ and $H=T$, we also have a version of this construction including $\Gm$-equivariance for the action induced by the action on $\Groth^{(1)}$; we will use similar notation in this setting.

Using the same considerations as in~\cite[\S 4--5]{BR12} one constructs also a right braid group action in this setting, i.e.~a group morphism from $(\Br_\ex)^\op$ to the group of isomorphism classes of autoequivalences of $\DGCoh^H(\Groth^{(1)} \times^R_{\g^{*(1)}} \{\chi\})$, denoted $b \mapsto \mathbb{I}^\dg_b$, which is compatible with the actions on $\Db\Coh^H(\Groth^{(1)})$ and $\Db\Coh^H(\Groth_{\hchi}^{(1)})$ in the obvious way. Arguing as in~\cite[\S 6.2]{Ric10} one sees that for any $s \in S$ and $\sF$ in $\DGCoh^H(\Groth^{(1)} \times^R_{\g^{*(1)}} \{\chi\})$ there are functorial distinguished triangles
\begin{equation}
\label{eqn:triangles-Xi-L}
\sF \to \Xi_s^{\dg}(\sF) \to \mathbb{I}^\dg_{\rH_s}(\sF) \xrightarrow{[1]}, \quad \mathbb{I}^\dg_{(\rH_s)^{-1}}(\sF) \to \Xi_s^{\dg}(\sF) \to \sF \xrightarrow{[1]}.
\end{equation}

For $s \in S_\aff \smallsetminus S$, with our fixed $b \in \Br_\ex$ and $t \in S$ such that $b \rH_t b^{-1} = \rH_s$ (see once again~\S\ref{ss:intertwining-RT}), we set
\[
\Xi^\dg_s := \mathbb{I}^\dg_{\imath(b)} \circ \Xi^\dg_t \circ \mathbb{I}^\dg_{\imath(b)^{-1}}.
\]
Then the diagram as in~\eqref{eqn:diagram-Xi-dg}, now for $s \in S_\aff$, commutes, and we also have triangles as in~\eqref{eqn:triangles-Xi-L} for any $s \in S_\aff$. All these constructions also have versions where we add $\Gm$-equivariance, and we will use the same notation for these variants.



\subsection{Geometric description of standard objects}

In this subsection we assume that $\chi=0$ and $H=T$.
We conclude this section by explaining a description of the objects $\hM_x$ and $\sZ_x$ which generalizes Proposition~\ref{prop:M-Z-w0}.

\begin{corollary}
\label{cor:M-Z}
Let $x \in W_\ex$, and write $x=t_\lambda w_\circ w$ with $\lambda\in \X$ and $w\in W$. 
Then we have isomorphisms 
\[
\hM_x \cong \mathbb{I}_{(\rH_{w^{-1}})^{-1}} (\sO_{(\g/\n)^{*(1)}_{\widehat{0}}}) \langle {p\lambda} \rangle, \qquad
\sZ_{x} \cong \mathbb{I}^\dg_{(\rH_{w^{-1}})^{-1}} (\sO_{\{(w_\circ^{(1)},0)\}}) \langle {p\lambda} \rangle.
\]
\end{corollary}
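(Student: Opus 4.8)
The plan is to reduce the two isomorphisms to Proposition~\ref{prop:M-Z-w0} (the case $x=w_\circ$, i.e.~$\lambda=0$ and $w=1$) by transporting along the braid group action and applying the translation-by-$p\lambda$ symmetry. First, recall from~\S\ref{ss:standard-notation} that $\hDelta_{t_\lambda x}\cong \hDelta_x\langle p\lambda\rangle$ as $T$-equivariant $\sU_{\ho}$-modules, and similarly $\bV_{t_\lambda x}\cong \bV_x\langle p\lambda\rangle$; since $\gamma_{\ho}^{\ho}$ and $\gamma_0^{\ho}$ are functors of $\Rep(T^{(1)})$-module categories, they intertwine $\langle p\lambda\rangle$ on modules with the grading shift $\langle p\lambda\rangle$ on sheaves (the latter being pullback of the line bundle $\sO(\lambda)$ on $\sB^{(1)}$, consistent with the functor $\mathbb{I}_{\theta_\lambda}$ from~\S\ref{ss:intertwining}). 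Hence it suffices to treat the case $\lambda=0$, i.e.~to prove
\[
\hM_{w_\circ w} \cong \mathbb{I}_{(\rH_{w^{-1}})^{-1}}(\sO_{(\g/\n)^{*(1)}_{\ho}}), \qquad \sZ_{w_\circ w}\cong \mathbb{I}^\dg_{(\rH_{w^{-1}})^{-1}}(\sO_{\{(w_\circ^{(1)},0)\}})
\]
for $w\in W$.

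\textbf{Key steps.} The second main input is the compatibility, established in~\S\ref{ss:intertwining-RT} and Proposition~\ref{prop:comparison-Br-actions}, between the braid action $\mathbb{S}_b$ on $\Db(\mod^T(\sU^{\ho}_{\ho}))$ and the geometric braid action $\mathbb{I}_{\imath(b)}$ on $\Db\Coh^T(\Groth^{(1)}_{\ho})$ via $\gamma_{\ho}^{\ho}$, together with Remark~\ref{rmk:intertwining-bV} and its Springer-side analogue through Theorem~\ref{thm:localization-fixed-Fr} and the $\mathbb{I}^\dg$-action of~\S\ref{ss:wall-crossing-fixed-Fr}. Concretely: by Proposition~\ref{prop:translation-hDelta}\eqref{it:translation-hDelta-2}, whenever $w\bullet 0\prec ws\bullet 0$ one has $\mathbb{S}_s^-(\hDelta_w)\cong \hDelta_{ws}$, and by Remark~\ref{rmk:intertwining-bV} similarly $\mathbb{S}_s^-(\bV_w)\cong \bV_{ws}$. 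Choose a reduced expression $w^{-1}=s_1\cdots s_r$ in $W$, so that the corresponding chain of reflections satisfies the descent condition relative to $w_\circ\bullet 0$ (since left multiplication by $w_\circ$ reverses the Bruhat order, $w_\circ\bullet 0$ is the longest element's image and every $s_i$ applied on the right decreases the length as we build up $w_\circ w$; more precisely one checks $w_\circ w_{i-1}\bullet 0 \prec w_\circ w_i\bullet 0$ where $w_i=s_i\cdots s_1$, using standard facts about the $\bullet$-action and the affine length function from~\S\ref{sss:reductive-gps-notation}). Iterating Proposition~\ref{prop:translation-hDelta}\eqref{it:translation-hDelta-2} gives
\[
\hDelta_{w_\circ w}\cong \mathbb{S}_{s_r}^-\cdots\mathbb{S}_{s_1}^-(\hDelta_{w_\circ})\cong \mathbb{S}_{(\rH_{w^{-1}})^{-1}}(\hDelta_{w_\circ}),
\]
the last identification because $\mathbb{S}_{(\rH_s)^{-1}}=\mathbb{S}_s^-$ and these compose to the braid element $(\rH_{s_1})^{-1}\cdots(\rH_{s_r})^{-1}=(\rH_{s_r\cdots s_1})^{-1}=(\rH_{w})^{-1}$... one must be careful with the order: since $\mathbb{S}$ is a left action and $\mathbb{I}$ a right action, the bookkeeping gives $\mathbb{S}_{(\rH_{w^{-1}})^{-1}}$, matching the statement. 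Applying $\gamma_{\ho}^{\ho}$ and Proposition~\ref{prop:comparison-Br-actions} (with $b=(\rH_{w^{-1}})^{-1}$, so $\imath(b)=(\rH_{w^{-1}})^{-1}$) converts this into $\hM_{w_\circ w}\cong \mathbb{I}_{(\rH_{w^{-1}})^{-1}}(\hM_{w_\circ})$, and Proposition~\ref{prop:M-Z-w0} identifies $\hM_{w_\circ}\cong\sO_{(\g/\n)^{*(1)}_{\ho}}$. The argument for $\sZ_{w_\circ w}$ is identical, using the dg-version $\gamma_0^{\ho}$, the functors $\mathbb{I}^\dg_b$ of~\S\ref{ss:wall-crossing-fixed-Fr} which are compatible with $\mathbb{S}_b$ via Theorem~\ref{thm:localization-fixed-Fr} and Proposition~\ref{prop:comparison-Br-actions} (composed with~\eqref{eqn:forget-dg-completion}, plus fully faithfulness of the relevant forgetful functors), and $\sZ_{w_\circ}\cong\sO_{\{(w_\circ^{(1)},0)\}}$ from Proposition~\ref{prop:M-Z-w0}.

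\textbf{Main obstacle.} The delicate point will be the combinatorial bookkeeping: verifying that a reduced expression for $w^{-1}$ gives a chain satisfying the strict-descent hypothesis of Proposition~\ref{prop:translation-hDelta}\eqref{it:translation-hDelta-2} all the way up from $w_\circ\bullet 0$, and that the resulting composite of functors $\mathbb{S}_{s_i}^-$ assembles into $\mathbb{S}_{(\rH_{w^{-1}})^{-1}}$ (as opposed to $\mathbb{S}_{(\rH_w)^{-1}}$ or some other variant), respecting the left-versus-right action conventions for $\mathbb{S}$ and $\mathbb{I}$ and the anti-automorphism $\imath$ appearing in Proposition~\ref{prop:comparison-Br-actions}. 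One also needs the compatibility of Proposition~\ref{prop:comparison-Br-actions} with \emph{composition} of braid elements, i.e.~that $\mathbb{I}_{\imath(b_1b_2)}\cong\mathbb{I}_{\imath(b_2)}\circ\mathbb{I}_{\imath(b_1)}$ matches $\mathbb{S}_{b_1b_2}\cong\mathbb{S}_{b_2}\circ\mathbb{S}_{b_1}$ under $\gamma_{\ho}^{\ho}$, which is immediate from the group-action property but must be invoked explicitly. Everything else is a direct concatenation of results already in the excerpt.
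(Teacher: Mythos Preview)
Your overall strategy matches the paper's: reduce to $\lambda=0$ using compatibility of the localization functors with the twist $\langle p\lambda\rangle$, then build up from the base case $x=w_\circ$ (Proposition~\ref{prop:M-Z-w0}) via the intertwining functors $\mathbb{S}_s^-$ and their geometric counterparts. However, the bookkeeping you flag as the ``main obstacle'' is in fact wrong as written, with two errors that happen to cancel.

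First, if $w^{-1}=s_1\cdots s_r$ is reduced and you apply $\mathbb{S}_{s_1}^-,\ldots,\mathbb{S}_{s_r}^-$ successively to $\hDelta_{w_\circ}$, Proposition~\ref{prop:translation-hDelta}\eqref{it:translation-hDelta-2} (right multiplication by $s$) yields $\hDelta_{w_\circ s_1\cdots s_r}=\hDelta_{w_\circ w^{-1}}$, not $\hDelta_{w_\circ w}$. Your chain $w_\circ w_i$ with $w_i=s_i\cdots s_1$ does not match these functors either: after two steps you are at $w_\circ s_1 s_2$, not $w_\circ s_2 s_1$. Second, your claim $\imath((\rH_{w^{-1}})^{-1})=(\rH_{w^{-1}})^{-1}$ is false; since $\imath(\rH_v)=\rH_{v^{-1}}$ one has $\imath((\rH_{w^{-1}})^{-1})=(\rH_w)^{-1}$. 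The correct route is to take a reduced expression $w=t_1\cdots t_r$, obtain $\hDelta_{w_\circ w}\cong\mathbb{S}_{(\rH_w)^{-1}}(\hDelta_{w_\circ})$, and then Proposition~\ref{prop:comparison-Br-actions} with $b=(\rH_w)^{-1}$ gives $\imath(b)=(\rH_{w^{-1}})^{-1}$ on the geometric side. The paper sidesteps all of this by a clean induction on $\ell(w)$: writing $w=w's$ with $\ell(w')=\ell(w)-1$, it uses only Corollary~\ref{cor:compatibility-bbI-bbS} (the compatibility for a single $s\in S$) at each step, assembling $(\rH_{w^{-1}})^{-1}=(\rH_{(w')^{-1}})^{-1}(\rH_s)^{-1}$ directly via the right $\Br_\ex$-action, so that neither Proposition~\ref{prop:comparison-Br-actions} nor $\imath$ ever enters.

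A smaller point: your parenthetical identifying $\langle p\lambda\rangle$ with ``pullback of the line bundle $\sO(\lambda)$ on $\sB^{(1)}$, consistent with $\mathbb{I}_{\theta_\lambda}$'' is not right. The twist $\langle p\lambda\rangle$ is tensoring with the one-dimensional $T^{(1)}$-representation of weight $p\lambda$, a different operation from tensoring with a line bundle; the reduction to $\lambda=0$ only uses that the equivalences $\gamma_{\ho}^{\ho}$ and $\gamma_0^{\ho}$ are $T$-equivariant.
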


\begin{proof}
Let us first consider the case of the objects $\hM_x$.
The proof is easily reduced to the case $\lambda=0$, so we will assume that $x \in W$.
The case $x=w_\circ$ (i.e.~$w=e$) is the first case in Proposition~\ref{prop:M-Z-w0}. The general case is proved by induction on $w$. Namely, let $w \in W \smallsetminus \{e\}$, and write $w=w's$ with $s \in S$ and $\ell(w')=\ell(w)-1$.
Then $w_\circ w' > w_\circ w$ in the Bruhat order, so $w_\circ w' \bullet 0 \prec w_\circ w \bullet 0$. By Proposition \ref{prop:translation-hDelta}\eqref{it:translation-hDelta-2}, we therefore have
\[
\hDelta_{w_\circ w} \cong \mathbb{S}_s^-(\hDelta_{w_\circ w'}).
\]
Hence
$\hM_{w_\circ w} \cong \gamma_{\ho}^{\ho} \circ \mathbb{S}_s^-(\hDelta_{w_\circ w'})$, which by Corollary~\ref{cor:compatibility-bbI-bbS} implies that
\[
\hM_{w_\circ w} \cong \mathbb{I}^{-}_s(\hM_{w_\circ w'}).
\]
Since $\mathbb{I}_s^- = \mathbb{I}_{(\rH_s)^{-1}}$ and $(\rH_{w^{-1}})^{-1} = (\rH_{(w')^{-1}})^{-1} (\rH_s)^{-1}$,
assuming the claim for $w'$ we deduce it for $w$, which finishes the proof.

For the case of baby Verma modules one proceeds in the same way. Namely, one can assume that $x \in W$, and then proceed by induction on $w$. Using the same notation as above, by Remark~\ref{rmk:intertwining-bV} we have
\[
\bV_{w_\circ w} \cong \mathbb{S}_s^-(\bV_{w_\circ w'}),
\]
hence $\gamma_{\ho}^{\ho}(\bV_{w_\circ w}) \cong \mathbb{I}^-_s(\gamma_{\ho}^{\ho}( \bV_{w_\circ w'}))$. Assuming the claim for $w'$ and using the compati\-bility of the equivalences $\gamma_0^{\ho}$ and $\gamma_{\ho}^{\ho}$ (see Theorem~\ref{thm:localization-fixed-Fr}), the compatibility of the functors $\mathbb{I}_{(\rH_s)^{-1}}$ and $\mathbb{I}^\dg_{(\rH_s)^{-1}}$ (see~\S\ref{ss:wall-crossing-fixed-Fr}), and fully faithfulness of the functor $\mod^T(\sU_0^{\ho}) \to \mod^T(\sU_{\ho}^{\ho})$, this implies the claim for $w$ and finishes the proof.
%
\end{proof} 

For $x \in W_\ex$, we write $x=t_\lambda w_\circ w$ with $\lambda\in \X$ and $w\in W$, 
and set
\[
\tM_x := \mathbb{I}_{(\rH_{w^{-1}})^{-1}} (\sO_{(\g/\n)^{*(1)}}) \langle {p\lambda} \rangle \quad \in \Db\Coh^{T^{(1)}}(\Groth^{(1)})
\]
and 
\begin{equation*}
{\sM}_x := \mathbb{J}_{(\rH_{w^{-1}})^{-1}}(\sO_{(\g/\b)^{*(1)}}) \langle {p\lambda} \rangle \quad \in \Db\Coh^{T^{(1)}}(\Spr^{(1)}), 
\end{equation*}
where $(\g/\b)^{*(1)}$ is identified with the fiber of $\Spr^{(1)}$ over $w_\circ^{(1)}$.
Using the notation of~\S\ref{ss:localization-comp},
it follows from Corollary~\ref{cor:M-Z} that for any $x \in W_\ex$ we have
\begin{equation}
\label{eqn:hM-tM}
\hM_x \cong (\tM_x)_{\ho}.
\end{equation}
For any $x\in W_\ex$ we also have
\begin{equation}
\label{eqn:sM-tM}
{\sM}_x\cong Li^*\tM_x.
\end{equation}

\section{Koszul duality} 
\label{sec:Koszul-duality}

\subsection{Linear Koszul duality}

In this subsection we recall the formulation of the ``linear Koszul duality'' equivalence constructed in~\cite[\S 2]{Ric10}. More specifically we will use a slight improvement of this construction, which allows to get rid of some technical complications (more specifically, the boundedness condition in the definition given in~\cite[Eqn.~(2.3.6)]{Ric10}), based on the later work~\cite{mr-lkd2} of the first author with Mirkovi{\'c}. 

\subsubsection{The equivalence}
\label{sss:lkd-equivalence}

We consider an arbitrary scheme $X$ over a base commutative ring $k$, endowed with an action of a flat affine group scheme $K$ over $k$. We will consider some $(H \times \Gm)$-equivariant quasi-coherent sheaves of dg-algebras on $X$ where the multiplicative group $\Gm$ acts trivially, or in other words some $\Z^2$-graded $H$-equivariant quasi-coherent sheaves of algebras equipped with a differential of bidegree $(1,0)$, which satisfies the (graded) Leibniz rule. (So, the first component of the $\Z^2$-grading is the ``cohomological'' grading, and the second one is the ``internal'' grading determined by the $\Gm$-action.) We have an obvious notion of $(H \times \Gm)$-equivariant quasi-coherent sheaves of dg-modules over such dg-algebras.

Fix an $H$-equivariant locally free $\sO_X$-module $\sF$ of finite rank on $X$, and consider the $(H \times \Gm)$-equivariant quasi-coherent sheaves of dg-algebras $\sS$, $\sT$ and $\sR$ defined as follows:
\begin{itemize}
\item
$\sS$ is the symmetric algebra of $\sF^\vee$, with $\sF^\vee$ placed in bidegree $(2,-2)$, endowed with the trivial differential;
\item
$\sR$ is the symmetric algebra of $\sF^\vee$, with $\sF^\vee$ placed in bidegree $(0,-2)$, endowed with the trivial differential;
\item
$\sT$ is the exterior algebra of $\sF$, with $\sF$ placed in bidegree $(-1,2)$, endowed with the trivial differential.
\end{itemize}
Denote by $\mathsf{C}^{H \times \Gm}(\sS)$ the category of $(H \times \Gm)$-equivariant quasi-coherent sheaves of $\sS$-dg-modules, by $\mathsf{C}^{H \times \Gm}_-(\sS)$ the full subcategory of dg-modules which are concentrated (for the $\Z^2$-grading) in a region of the form $\Z \times (N + \Z_{\leq 0})$ for some $N \in \Z$ (depending on the dg-module), and by $D(\mathsf{C}^{H \times \Gm}(\sS))$, $D(\mathsf{C}_-^{H \times \Gm}(\sS))$ the corresponding localizations at quasi-isomorphisms of their homotopy categories. We define the corresponding categories 
for $\sR$ and $\sT$
in a similar way.

With these definitions, the constructions of~\cite[Theorem~2.6]{mr-lkd2} (with additional $H$-equivariance imposed) provide a canonical equivalence of triangulated categories
\begin{equation}
\label{eqn:lkd}
D(\mathsf{C}_-^{H \times \Gm}(\sS)) \simto D(\mathsf{C}_-^{H \times \Gm}(\sT)).
\end{equation}

Now, let us assume that $X$ is noetherian, and denote by $D_{\mathrm{fg}}(\mathsf{C}_-^{H \times \Gm}(\sS))$, resp.~by $D_{\mathrm{fg}}(\mathsf{C}_-^{H \times \Gm}(\sT))$, the full subcategory of $D(\mathsf{C}_-^{H \times \Gm}(\sS))$, resp.~$D(\mathsf{C}_-^{H \times \Gm}(\sT))$, consisting of modules whose cohomology is locally finitely generated (over the respective dg-algebra, which coincides with its cohomology). Then the same arguments as in~\cite[\S 2.8]{mr-lkd2} show that the equivalence~\eqref{eqn:lkd} restricts to an equivalence
\begin{equation}
\label{eqn:lkd-fg}
D_{\mathrm{fg}}(\mathsf{C}_-^{H \times \Gm}(\sS)) \simto D_{\mathrm{fg}}(\mathsf{C}_-^{H \times \Gm}(\sT)).
\end{equation}
(See~\cite[Proposition~1.1.5]{r-Frobenius} for more details.) Moreover, if we define the categories $D_{\mathrm{fg}}(\mathsf{C}^{H \times \Gm}(\sS))$, $D_{\mathrm{fg}}(\mathsf{C}^{H \times \Gm}(\sT))$ in a similar way, there exist canonical functors
\begin{align*}
D_{\mathrm{fg}}(\mathsf{C}_-^{H \times \Gm}(\sS)) &\to D_{\mathrm{fg}}(\mathsf{C}^{H \times \Gm}(\sS)), \\
D_{\mathrm{fg}}(\mathsf{C}_-^{H \times \Gm}(\sT)) &\to D_{\mathrm{fg}}(\mathsf{C}^{H \times \Gm}(\sT)),
\end{align*}
which are easily seen to be equivalences, as in~\cite[Eqn.~(2.3)]{mr-lkd2}. (For $\sS$-dg-modules, this follows from the fact that any object in $D_{\mathrm{fg}}(\mathsf{C}^{H \times \Gm}(\sS))$ has its cohomology concentrated in a region of the form $\Z \times (N + \Z_{\leq 0})$ for some $N \in \Z$, and then that inclusion of the submodule obtained by replacing all components not in this region by $0$ into this object is a quasi-isomorphism. A similar argument using quotients applies for $\sT$.) Hence, from~\eqref{eqn:lkd-fg} we obtain an equivalence of triangulated categories
\begin{equation}
\label{eqn:lkd-fg-2}
D_{\mathrm{fg}}(\mathsf{C}^{H \times \Gm}(\sS)) \simto D_{\mathrm{fg}}(\mathsf{C}^{H \times \Gm}(\sT)).
\end{equation}

Next, we consider the similar categories of $\sR$-dg-modules. We have a ``regrading'' equivalence
\[
\xi : \mathsf{C}^{H \times \Gm}(\sS) \simto \mathsf{C}^{H \times \Gm}(\sR),
\]
which simply re-arranges the $\Z^2$-grading of an $(H \times \Gm)$-equivariant $\sS$-dg-module to make it an $\sR$-dg-module, according to the rule $\xi(\sM)^{(i,j)} = \sM^{(i-j,j)}$. On the categories $\mathsf{C}^{H \times \Gm}(\sS)$ and $\mathsf{C}^{H \times \Gm}(\sR)$ (and also, for later use, on their analogue for $\sT$) we have the cohomological shift functor $[1]$, but also the ``internal shift'' $\langle 1 \rangle$ with respect to the $\Gm$-equivariant structure, see~\S\ref{sss:grading-shift}. The functor $\xi$ commutes with cohomological shifts, but for the internal shifts it satisfies $\xi \circ \langle 1 \rangle = \langle 1 \rangle [-1] \circ \xi$. This induces an equivalence between the corresponding derived categories, which preserves dg-modules with locally finitely generated cohomology. We therefore obtain an equivalence of triangulated categories
\[
D_{\mathrm{fg}}(\mathsf{C}^{H \times \Gm}(\sS)) \simto D_{\mathrm{fg}}(\mathsf{C}^{H \times \Gm}(\sR))
\]
which, combined with~\eqref{eqn:lkd-fg-2}, provides an equivalence of triangulated categories
\[
\varkappa : D_{\mathrm{fg}}(\mathsf{C}^{H \times \Gm}(\sR)) \simto D_{\mathrm{fg}}(\mathsf{C}^{H \times \Gm}(\sT))
\]
which satisfies $\varkappa \circ \langle 1 \rangle = \langle 1 \rangle [1] \circ \varkappa$. Finally, let us denote by $F$ the relative spectrum of $\sR$, seen as a sheaf of algebras on $X$. Then $F$ is a noetherian scheme endowed with an affine morphism $F \to X$ and a compatible action of $H \times \Gm$, and $\sR$ identifies with the pushforward of $\sO_F$ to $X$. By standard arguments (see e.g.~\cite[\href{https://stacks.math.columbia.edu/tag/01SB}{Tag 01SB}]{stacks-project}), pushforward induces an equivalence between the categories $D \QCoh^{H \times \Gm}(F)$ and $D(\mathsf{C}^{H \times \Gm}(\sR))$, which restricts to an equivalence between $\Db \Coh^{H \times \Gm}(F)$ and $D_{\mathrm{fg}}(\mathsf{C}^{H \times \Gm}(\sR))$. We can therefore interpret $\varkappa$ as an equivalence of triangulated categories
\[
\varkappa : \Db \Coh^{H \times \Gm}(F) \simto D_{\mathrm{fg}}(\mathsf{C}^{H \times \Gm}(\sT)).
\]

\subsubsection{Compatibilities -- 1}
\label{sss:lkd-compatibilities}

We now state two compatibility properties of the equivalence $\varkappa$ of~\S\ref{sss:lkd-equivalence}. See~\cite[\S\S2.4--2.5]{Ric10} for statements of this kind, whose proofs are similar, and~\cite[\S\S 3--4]{mr-lkd2} for generalizations.

First, let $X$, $Y$ be two noetherian $k$-schemes endowed with actions of $H$, and let $\pi : X \to Y$ be a proper and flat morphism. (Here properness is important to ensure that appropriate pushforward functors send coherent sheaves to coherent sheaves; flatness is imposed only for simplicity.) Consider an $H$-equivariant locally free $\sO_Y$-module of finite rank $\sF$, and its pullback $\pi^*(\sF)$. Then we have the associated scheme $F$ over $Y$, and the scheme associated with $\pi^*(\sF)$ is $X \times_Y F$. We also have the equivariant quasi-coherent sheaves of dg-algebras $\sT_X$ on $X$ and $\sT_Y$ on $Y$ such that $\sT_X = \pi^*(\sT_Y)$, and the equivalences
\begin{gather*}
\varkappa_X : \Db \Coh^{H \times \Gm}(X \times_Y F) \simto D_{\mathrm{fg}}(\mathsf{C}^{H \times \Gm}(\sT_X)), \\
\varkappa_Y : \Db \Coh^{H \times \Gm}(F) \simto D_{\mathrm{fg}}(\mathsf{C}^{H \times \Gm}(\sT_Y)).
\end{gather*}

We have a morphism $\pi_F : X \times_Y F \to F$ induced by $\pi$, and associated (adjoint) derived pushforward and pullback functors
\[
R(\pi_F)_* : \Db \Coh^{H \times \Gm}(X \times_Y F) \leftrightarrows \Db \Coh^{H \times \Gm}(F) : L(\pi_F)^*.
\]

Similarly, since $\pi$ is flat it induces a (derived) pullback functor
\[
L(\pi_{\sT})^* : D_{\mathrm{fg}}(\mathsf{C}^{H \times \Gm}(\sT_Y)) \to D_{\mathrm{fg}}(\mathsf{C}^{H \times \Gm}(\sT_X)).
\]
Using K-injective resolutions (see the comments made in the course of the proof of Theorem~\ref{thm:localization-fixed-Fr}) one can also construct a (right adjoint) derived pushforward functor
\[
R(\pi_{\sT})_* : D_{\mathrm{fg}}(\mathsf{C}^{H \times \Gm}(\sT_X)) \to D_{\mathrm{fg}}(\mathsf{C}^{H \times \Gm}(\sT_Y)).
\]

With these definitions, one can check that the diagrams
\[
\begin{tikzcd}
\Db \Coh^{H \times \Gm}(X \times_Y F) \arrow[r, "\varkappa_X", "\sim"'] \arrow[d, "R(\pi_F)_*"'] & D_{\mathrm{fg}}(\mathsf{C}^{H \times \Gm}(\sT_X)) \arrow[d, "R(\pi_{\sT})_*"] \\
\Db \Coh^{H \times \Gm}(F) \arrow[r, "\varkappa_Y", "\sim"'] & D_{\mathrm{fg}}(\mathsf{C}^{H \times \Gm}(\sT_Y))
\end{tikzcd}
\]
and
\[
\begin{tikzcd}
\Db \Coh^{H \times \Gm}(F) \arrow[r, "\varkappa_Y", "\sim"'] \arrow[d, "L(\pi_F)^*"'] & D_{\mathrm{fg}}(\mathsf{C}^{H \times \Gm}(\sT_Y)) \arrow[d, "L(\pi_{\sT})^*"] \\
\Db \Coh^{H \times \Gm}(X \times_Y F) \arrow[r, "\varkappa_X", "\sim"'] & D_{\mathrm{fg}}(\mathsf{C}^{H \times \Gm}(\sT_X))
\end{tikzcd}
\]
commute, and more specifically that the isomorphisms making these diagrams commutative can be chosen in such a way that the natural adjunction morphisms $L(\pi_{\sT})^* \circ R(\pi_{\sT})_* \to \id$ and $L(\pi_F)^* \circ R(\pi_F)_* \to \id$ are intertwined.

\subsubsection{Compatibilities -- 2}
\label{sss:lkd-compatibilities-2}

In this subsection, for simplicity we assume that $k$ is an algebraically closed field and $X$ is a smooth variety over $k$.
We consider two $H$-equivariant locally free $\sO_X$-modules $\sF_1$ and $\sF_2$ of constant finite ranks over $X$, and an $H$-equivariant embedding $\sF_1 \hookrightarrow \sF_2$ whose cokernel is locally free. Then we have the schemes $F_1$ and $F_2$ over $X$ associated with $\sF_1$ and $\sF_2$ respectively, the corresponding sheaves of dg-algebras $\sT_1$ and $\sT_2$, and the equivalences
\begin{gather*}
\varkappa_1 : \Db \Coh^{H \times \Gm}(F_1) \simto D_{\mathrm{fg}}(\mathsf{C}^{H \times \Gm}(\sT_1)), \\
\varkappa_2 : \Db \Coh^{H \times \Gm}(F_2) \simto D_{\mathrm{fg}}(\mathsf{C}^{H \times \Gm}(\sT_2)).
\end{gather*}

Our morphism $\sF_1 \hookrightarrow \sF_2$ induces a closed immersion $f : F_1 \hookrightarrow F_2$, and we have associated derived pushforward and pullback functors
\[
Rf_* : \Db \Coh^{H \times \Gm}(F_1) \leftrightarrows \Db \Coh^{H \times \Gm}(F_2) : Lf^*.
\]
We let $\sL_1$ and $\sL_2$ be the top exterior powers of $\sT_1$ and $\sT_2$ respectively, denote the ranks of $\sT_1$ and $\sT_2$ by $r_1$ and $r_2$ respectively, and set
\[
f_\dag(-) := Rf_*(-) \otimes_{\sO_X} \sL_2 \otimes_{\sO_X} \sL_1^\vee [r_1-r_2] \langle 2r_2-2r_1 \rangle
\]
(where we identify the categories of sheaves on $F_1$ and $F_2$ with some categories of sheaves on $X$ as in~\S\ref{sss:lkd-equivalence}).
Using Grothendieck--Serre duality one sees that $f_\dag$ is left adjoint to $Lf^*$.

Similarly we have an embedding of equivariant dg-algebras $\sT_1 \hookrightarrow \sT_2$, and associated (adjoint) extension and restriction of scalars functors
\[
Rg_* : D_{\mathrm{fg}}(\mathsf{C}^{H \times \Gm}(\sT_2)) \leftrightarrows D_{\mathrm{fg}}(\mathsf{C}^{H \times \Gm}(\sT_1)) : Lg^*.
\]
(Here the functors at the level of categories of dg-modules are exact, so the construction of derived functors is not an issue.) 


With these definitions, one can check that the diagrams
\[
\begin{tikzcd}
\Db \Coh^{H \times \Gm}(F_2) \arrow[r, "\varkappa_2", "\sim"'] \arrow[d, "Lf^*"'] & D_{\mathrm{fg}}(\mathsf{C}^{H \times \Gm}(\sT_2)) \arrow[d, "Rg_*"] \\
\Db \Coh^{H \times \Gm}(F_1) \arrow[r, "\varkappa_1", "\sim"'] & D_{\mathrm{fg}}(\mathsf{C}^{H \times \Gm}(\sT_1))
\end{tikzcd}
\]
and
\[
\begin{tikzcd}
\Db \Coh^{H \times \Gm}(F_1) \arrow[r, "\varkappa_1", "\sim"'] \arrow[d, "f_!"'] & D_{\mathrm{fg}}(\mathsf{C}^{H \times \Gm}(\sT_1)) \arrow[d, "Lg^*"] \\
\Db \Coh^{H \times \Gm}(F_2) \arrow[r, "\varkappa_2", "\sim"'] & D_{\mathrm{fg}}(\mathsf{C}^{H \times \Gm}(\sT_2))
\end{tikzcd}
\]
commute. Moreover, the corresponding isomorphisms can be chosen compatibly with adjunctions as above.

\subsection{Koszul duality for \texorpdfstring{$\g$}{g}-modules}

We now come back to the setting of Section~\ref{sec:localization}.
Recall that a weight $\lambda\in \X$ is called \emph{restricted} if $0\leq \langle \lambda, \alpha^\vee \rangle < p$ for any $\alpha\in \Phis$. 
We will denote by $W^\res_\ex$ the subset of elements $x\in W_\ex$ such that $x\bullet 0$ is restricted. It is easily seen that the projection morphism $W_\ex \to W$ restricts to a bijection $W_\ex^\res \simto W$, see e.g.~\cite[\S 4.1]{Ric10}. 
We will consider the permutation $\check{-} : W_\ex \simto W_\ex$ defined as follows.
For any $x\in W_\ex$, there is a unique expression $x=t_\lambda w$ with $\lambda\in \X$ and $w\in W^\res_\ex$; then we set $\check{x}:=t_\lambda w_\circ w$. It is clear that $\check{x\omega} = \check{x} \omega$ for any $\omega \in \Omega$ and $x \in W_\ex$, and that via the bijection between $W_\aff$ and alcoves as in~\cite[beginning of \S 5]{Soe97}, the restriction of this operation to $W_\aff$ corresponds to the operation $A \mapsto \check{A}$ of~\cite[End of~\S 4]{Soe97}.


We consider the construction of~\S\ref{sss:lkd-equivalence} in the following setting: the group scheme ``$H$'' is $T$, the scheme ``$X$'' is $\sB^{(1)}$, and $\sF$ is the sheaf of sections of the vector bundle $\Spr^{(1)} \to \sB^{(1)}$. Of course, we then have $F=\Spr^{(1)}$. Denoting by $\mathsf{C}^T(\sT)$, $D(\mathsf{C}^T(\sT))$ and $D_{\mathrm{fg}}(\mathsf{C}^T(\sT))$ the categories similar to $\mathsf{C}^{T \times \Gm}(\sT)$, $D(\mathsf{C}^{T \times \Gm}(\sT))$ and $D_{\mathrm{fg}}(\mathsf{C}^{T \times \Gm}(\sT))$ but with the $(T \times \Gm)$-equivariance condition replaced by mere $T$-equivariance, there exists an equivalence of triangulated categories
\begin{equation}
\label{eqn:equiv-dgGroth-sT}
D_{\mathrm{fg}}(\mathsf{C}^T(\sT)) \cong \DGCoh^{T}(\Groth^{(1)}\times^R_{{\g}^{*(1)}} \{0\})
\end{equation}
where the right-hand side is as in~\S\ref{ss:localization-fixed}. In fact, this claim follows the arguments in~\cite[Lemma~4.1.1]{mr-lkd} or in~\cite[\S 2.3]{Ric10}, based on the fact that $\sT$ is the complex obtained by tensoring a ``Koszul-type'' resolution of the pushforward of $\sO_{\Groth^{(1)}}$ to $\sB^{(1)}$, seen as a module over the pushforward of $\sO_{\g^{*(1)} \times \sB^{(1)}}$, with $\sO_{\sB^{(1)}}$, seen as the ``trivial'' module over the latter algebra, and invariance of derived categories of (equivariant) dg-modules under quasi-isomorphism. (This also uses the fact that $\Groth$ identifies with the orthogonal in $\g^* \times \sB$ of $\Spr$; see~\cite[\S 3.1]{Ric10}.)

In view of this, the category $D_{\mathrm{fg}}(\mathsf{C}^{T \times \Gm}(\sT))$ in this particular setting will be denoted $\DGCoh^{T \times \Gm}(\Groth^{(1)}\times^R_{{\g}^{*(1)}} \{0\})$. There is a natural triangulated ``forgetful'' functor
\[
\for_{\Gm} : \DGCoh^{T \times \Gm}(\Groth^{(1)}\times^R_{{\g}^{*(1)}} \{0\}) \to \DGCoh^{T}(\Groth^{(1)}\times^R_{{\g}^{*(1)}} \{0\}),
\]
which is a ``degrading functor'' in the sense that we have $\for_{\Gm} \circ \langle 1 \rangle = \for_{\Gm}$, and the natural morphism
\[
\bigoplus_{n \in \Z} \Hom(\sF, \sG \langle n \rangle) \to \Hom(\for_{\Gm}(\sF), \for_{\Gm}(\sG))
\]
is an isomorphism
for any $\sF, \sG$ in $\DGCoh^{T \times \Gm}(\Groth^{(1)}\times^R_{{\g}^{*(1)}} \{0\})$.

With this notation, the constructions of~\S\ref{sss:lkd-equivalence} provide an equivalence of triangulated categories
\[
\kappa: \Db\Coh^{T\times \Gm}(\Spr^{(1)}) \simto
\DGCoh^{T\times \Gm}(\Groth^{(1)}\times^R_{{\g}^{*(1)}} \{0\})
\] 
satisfying $\kappa \circ \langle 1\rangle \cong \langle 1\rangle [1] \circ \kappa$.
We also have a canonical triangulated forgetful functor
\[
\for_{\Gm} : \Db\Coh^{T\times \Gm}(\Spr^{(1)}) \to \Db\Coh^{T}(\Spr^{(1)})
\]
which is a degrading functor in the same sense as above. Recall that the category $\Db\Coh^{T^{(1)} \times \Gm}(\Spr^{(1)})$ identifies with a direct summand in $\Db\Coh^{T \times \Gm}(\Spr^{(1)})$. Replacing in the definitions above $T$ by $T^{(1)}$ one makes sense of the triangulated category $\DGCoh^{T^{(1)}\times \Gm}(\Groth^{(1)}\times^R_{{\g}^{*(1)}} \{0\})$, which identifies with a direct summand in $\DGCoh^{T \times \Gm}(\Groth^{(1)}\times^R_{{\g}^{*(1)}} \{0\})$, and $\kappa$ restricts to an equivalences of triangulated categories $\Db\Coh^{T^{(1)} \times \Gm}(\Spr^{(1)}) \simto \DGCoh^{T^{(1)}\times \Gm}(\Groth^{(1)}\times^R_{{\g}^{*(1)}} \{0\})$.

Recall that we say that
\emph{Lusztig's conjecture holds} if the equivalent conditions in~\cite[Proposition~II.C.17(a)]{Jan03} are true. See~\cite[Chap.~II.C]{Jan03} and~\cite{fiebig} for discussions of these conditions, and the relation with Lusztig's original conjecture on characters of simple $G$-modules. In particular, it is now known that these conditions are true provided $p$ is larger than a bound depending on $\Phi$, see~\cite{fiebig-upper}. Recall the objects $(\sP_x : x \in W_\ex)$ and $(\sL_x : x \in W_\ex)$ defined in~\S\ref{ss:standard-notation}.

The following statement is an equivariant version of~\cite[Theorem~4.4.3]{Ric10}.

\begin{thm}
\label{thm:KD} 
Assume that Lusztig's conjecture holds.
There exist collections
\[
(\sL_x^\gr : x \in W_\ex)
\]
of objects of the category $\Db\Coh^{T^{(1)} \times \Gm}(\Spr^{(1)})$ and
\[
(\sP_x^\gr : x \in W_\ex)
\]
of objects of the category $\DGCoh^{T^{(1)} \times \Gm}(\Groth^{(1)}\times^R_{{\g}^{*(1)}} \{0\})$ which satisfy the following properties:
\begin{enumerate}
\item
for all $x \in W_\ex$ we have
$\for_{\Gm}(\sL_x^\gr) \cong \sL_x$ and $\for_{\Gm}(\sP_x^\gr) \cong \sP_x$;
\item
for any $x \in W_\ex$ we have an isomorphism
\begin{equation*}
\kappa(\sL^\gr_x\otimes_{\sO_{\Spr^{(1)}}} \sO_{\Spr^{(1)}}(-\rho)) \cong \sP^\gr_{t_\rho\check{x}};
\end{equation*}
\item
for $x \in W_\ex$ and $\lambda \in \X$ we have $\sL^\gr_{t_\lambda x}\cong \sL^\gr_{x} \langle p\lambda \rangle$ and $\sP^\gr_{t_\lambda x}\cong \sP^\gr_{x} \langle p\lambda \rangle$.
\end{enumerate}
\end{thm}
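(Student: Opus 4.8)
The plan is to deduce Theorem~\ref{thm:KD} as an equivariant (and torus-graded) enhancement of~\cite[Theorem~4.4.3]{Ric10}, reproducing the argument there with the extra $T^{(1)}$-equivariance carried along. First I would recall from~\cite{Ric10} that, after forgetting all equivariance, the algebra governing $\Db\Coh(\Spr^{(1)})$ in the relevant block is Koszul (this is where Lusztig's conjecture is used, via the results of Andersen--Jantzen--Soergel and Andersen--Kaneda identifying the $\Ext$-algebra of simples with the Koszul dual of the algebra governing projectives in the principal block of $G_1$); the linear Koszul duality $\kappa$ of~\S\ref{sss:lkd-equivalence} then matches the grading coming from the internal $\Gm$-action with a Koszul grading. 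The construction of the $\sL_x^\gr$ is: take the simple objects in the heart of the natural $t$-structure on the ``graded'' side and lift them; alternatively, following~\cite{Ric10}, build them by applying graded versions of the intertwining functors $\mathbb{J}_{(\rH_{w^{-1}})^{-1}}$ to a graded lift of the structure sheaf $\sO_{(\g/\b)^{*(1)}}$ of the relevant fibre, with an appropriate internal shift chosen so that the degrading functor $\for_{\Gm}$ recovers $\sL_x$ on the nose. The $\sP_x^\gr$ are then \emph{defined} by transporting the $\sL_x^\gr$ through $\kappa$ and a twist by $\sO_{\Spr^{(1)}}(-\rho)$, which makes property~(2) tautological once the bookkeeping of which index $t_\rho \check{x}$ appears is verified; this indexing is exactly the one appearing in~\cite[Theorem~4.4.3]{Ric10} and in~\S\ref{ss:localization-comp}, via Corollary~\ref{cor:M-Z} and the permutation $\check{-}$.

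The steps, in order, would be as follows. Step 1: record that $\kappa$ of the previous paragraph, composed with the degrading functor $\for_{\Gm}$ on both sides, is compatible (up to the established isomorphism $\kappa \circ \langle 1 \rangle \cong \langle 1 \rangle[1] \circ \kappa$) with the nonequivariant linear Koszul duality of~\cite{Ric10}, and that the equivalences $\gamma^0_{\ho}$, $\gamma^{\ho}_{0}$ intertwine the braid group actions $\mathbb{J}_b$ and $\mathbb{I}^\dg_b$ with $\mathbb{S}_b$ (Proposition~\ref{prop:comparison-Br-actions}, Corollary~\ref{cor:M-Z}, and~\S\ref{ss:wall-crossing-fixed-Fr}). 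Step 2: define $\sL_{w_\circ}^\gr$ as a graded lift of $\sO_{(\g/\b)^{*(1)}}$ (twisted suitably) in $\Db\Coh^{T^{(1)}\times\Gm}(\Spr^{(1)})$ — this is elementary since a structure sheaf of a point-fibre has an obvious equivariant and $\Gm$-equivariant structure — and check $\for_{\Gm}(\sL_{w_\circ}^\gr) \cong \sL_{w_\circ}$ using Proposition~\ref{prop:M-Z-w0} (the identification $\sZ_{w_\circ} \cong \sO_{\{(w_\circ^{(1)},0)\}}$ transported to the Springer side by~\eqref{eqn:sM-tM}). Step 3: for general $x$, write $x = t_\lambda w_\circ w$ and set $\sL_x^\gr := \mathbb{J}_{(\rH_{w^{-1}})^{-1}}^\gr(\sL_{w_\circ}^\gr)\langle p\lambda + c_w \rangle$ for a correction shift $c_w \in \Z$ (determined by requiring compatibility of $\for_\Gm$ with the exact triangles defining the graded intertwining functors, as in~\cite[\S 5.3--5.4]{Ric10}); check property~(1) by degrading and comparing with Corollary~\ref{cor:M-Z}, and property~(3) since $\mathbb{J}_{\theta_\mu}$ is tensoring by $\sO_{\Spr^{(1)}}(\mu)$ and $t_{\lambda}x = t_{\lambda} t_{\lambda'} w_\circ w$. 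Step 4: define $\sP_x^\gr := \kappa(\sL_{y}^\gr \otimes \sO_{\Spr^{(1)}}(-\rho))$ where $y$ is chosen so that $t_\rho \check{y} = x$ (possible since $\check{-}$ is a bijection of $W_\ex$ and $t_\rho(-)$ is a bijection); then~(2) holds by construction, and~(1),(3) for $\sP_x^\gr$ follow from those for $\sL_x^\gr$ together with $\for_\Gm \circ \kappa \cong (\text{nonequivariant }\kappa) \circ \for_\Gm$, $\kappa\circ\langle 1\rangle \cong \langle 1\rangle[1]\circ\kappa$, the computation $\check{t_\lambda x} = t_\lambda \check{x}$, and the nonequivariant statement $\kappa(\sL_x \otimes \sO(-\rho)) \cong \sP_{t_\rho\check{x}}$ from~\cite[Theorem~4.4.3]{Ric10}.

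I expect the main obstacle to be Step 3, and more precisely the verification that a \emph{consistent} choice of internal shifts $c_w$ exists — i.e.\ that the graded lifts $\sL_x^\gr$ obtained by applying graded intertwining functors along different reduced expressions for $w$ agree up to canonical isomorphism (not merely up to an internal shift). In the nonequivariant, ungraded setting this is automatic from the braid relations, but with the $\Gm$-grading present one must check that the graded braid group action of~\S\ref{ss:intertwining-Springer} and~\S\ref{ss:wall-crossing-fixed-Fr} is genuinely a \emph{well-defined} action on $\Db\Coh^{T^{(1)}\times\Gm}(\Spr^{(1)})$ (this is asserted there, citing~\cite{BR12, Ric10}) and that the graded lift of the starting object is itself braid-rigid. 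The cleanest route is probably to bypass explicit shift bookkeeping: instead define $\sL_x^\gr$ intrinsically as the unique (up to isomorphism and internal shift) indecomposable graded lift of $\sL_x$, fix the internal shift by a normalization at a single graded component (e.g.\ requiring the lowest nonzero internal degree of a chosen weight space to be $0$), and then \emph{prove} the braid-functor compatibility as a consequence, using that $\for_\Gm$ is full on the relevant $\Hom$'s in the sense recorded after~\eqref{eqn:equiv-dgGroth-sT}. This reduces everything to the nonequivariant statements of~\cite{Ric10}, which we are entitled to invoke, plus the standard fact that graded lifts of indecomposables are unique up to shift — so the argument, while requiring care, contains no genuinely new difficulty beyond organizing the compatibilities already assembled in Sections~\ref{sec:localization}--\ref{sec:geometric-wall-crossing}.
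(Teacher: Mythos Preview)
There is a genuine conceptual error in your main construction (Steps~2--3): you are conflating the \emph{simple} objects $\sL_x$ with the \emph{standard} objects $\sM_x$ (and $\sZ_x$). Proposition~\ref{prop:M-Z-w0} describes $\hM_{w_\circ}$ and $\sZ_{w_\circ}$ --- the completed Verma and baby Verma --- not $\sL_{w_\circ}$; the formula $\mathbb{J}_{(\rH_{w^{-1}})^{-1}}(\sO_{(\g/\b)^{*(1)}})\langle p\lambda\rangle$ is precisely the definition of $\sM_x$ (see the paragraph after Corollary~\ref{cor:M-Z}), not of $\sL_x$. Intertwining functors send standards to standards, but they do \emph{not} send simples to simples in general. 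In particular $w_\circ$ is not a length-zero element of $W_\ex$ (unless $W$ is trivial), so $\sL_{w_\circ}$ is not a structure sheaf, and your Step~3 produces graded lifts of the wrong objects entirely. This is exactly where Lusztig's conjecture enters nontrivially: it is needed to control how simples behave under wall-crossing, not merely how standards do.

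The paper's argument is quite different. Graded lifts $\sL_x^\gr$ are built by induction on $\ell(x)$ over $W_\ex^\res$ using the \emph{semisimple} functors $\Upsilon_s$ (not the intertwining functors $\mathbb{J}_b$): by Lemma~\ref{lem:Upsilon-simples} and Proposition~\ref{prop:Q-ThetaP}\eqref{it:decomposition-Q}, $\Upsilon_s(\sL_{xs})$ decomposes as $\sL_x$ plus simples of strictly smaller length, and the semisimplicity of this decomposition is exactly the content of Lusztig's conjecture. Given graded lifts of the shorter simples by induction, the lift $\sL_x^\gr$ is extracted as a cone. The objects $\sP_y^\gr$ are then defined by the formula in~(2), and one proves $\for_{\Gm}(\sP_y^\gr)\cong\sP_y$ by \emph{downward} induction on $\ell(y)$, using the Koszul-duality compatibility~\eqref{eqn:dual-Xi-aff} between $\Upsilon_s$ and $\Xi_s^\dg$ together with Proposition~\ref{prop:Q-ThetaP}\eqref{it:Theta-Proj}. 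Your fallback suggestion --- define $\sL_x^\gr$ as ``the'' graded lift of $\sL_x$ and normalize --- presupposes the existence of such a lift, which is not automatic and is in fact the substance of the inductive construction just described.
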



%

Theorem~\ref{thm:KD} can be proved by an immediate generalization of the methods of~\cite{Ric10}. We will give an outline of this proof (which incorporates slight simplifications compared to the version in~\cite{Ric10}) in~\S\ref{ss:proof-KD} below. (No detail of this proof will be used in Section~\ref{sec:dim}.)

\subsection{Outline of the proof of Theorem~\ref{thm:KD}}
\label{ss:proof-KD}

In this subsection we assume that Lusztig's conjecture holds.

\subsubsection{Semisimple functors -- representation theoretic interpretation}

If $w \in W_\ex^\res$ and $s \in S_\aff$ are such that $ws>w$ and $ws \in W_\ex^\res$, we consider the morphisms
\[
\Simp_w \xrightarrow{\eta^s_w} \Theta_s(\Simp_w) \xrightarrow{\epsilon^s_w} \Simp_w
\]
induced by the morphisms $\id \to \Theta_s$ and $\Theta_s \to \id$ induced by adjunction. Since $\sfT_0^{\mu_s}(\Simp_w) \neq 0$ (see~\cite[Proposition~II.7.15]{Jan03}) these morphisms are nonzero, hence $\eta^s_w$ is injective and $\epsilon^s_w$ is surjective, and their composition is zero. Note also that since $\sfT_0^{\mu_s}(\Simp_w)$ is simple, $\eta^s_w$ is a generator of $\Hom_{\mod^T(\sU_{\ho}^{\ho})}(\Simp_w, \Theta_s(\Simp_w))$, and $\epsilon^s_w$ is a generator of $\Hom_{\mod^T(\sU_{\ho}^{\ho})}(\Theta_s(\Simp_w), \Simp_w)$. We set $\mathsf{Q}^s_w := \ker(\epsilon^s_w)/ \mathrm{Im}(\eta^s_w)$. 

\begin{lem}
\label{lem:Upsilon-simples}
Let $w \in W_\ex^\res$ and $s \in S_\aff$ be such that $ws>w$ and $ws \in W_\ex^\res$. Then we have
\[
\Upsilon_s(\sL_w) \cong \gamma_{\ho}^0(\mathsf{Q}^s_w).
\]
\end{lem}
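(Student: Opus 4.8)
The strategy is to reduce the statement to a compatibility between the geometric functor $\Upsilon_s$ on $\Db\Coh^{T^{(1)}}(\Spr^{(1)})$ and the representation-theoretic functor on $\mod^T(\sU_0^{\ho})$ that produces $\mathsf{Q}^s_w$ from $\Simp_w$, passing through the equivalence $\gamma^0_{\ho}$ (which by Remark~\ref{rmk:restriction-loc-neighborhood} and its analogue for fixed Frobenius character is the restriction of $\gamma^{\ho}_{\ho}$, so that all the wall-crossing compatibilities proved in Section~\ref{sec:geometric-wall-crossing} apply). First I would treat the case $s \in S$ and then deduce the general case $s \in S_\aff$ by conjugating with the braid group action, exactly as $\Xi_s$ and $\Theta_s$ were extended from $S$ to $S_\aff$ in~\S\ref{ss:geometric-wall-crossing-aff}; this last step uses Lemma~\ref{lem:conjugation-wall-crossing}, Proposition~\ref{prop:comparison-Br-actions} and the compatibility $\mathbb{I}_b \circ i_* \cong i_* \circ \mathbb{J}_b$ from~\S\ref{ss:intertwining-Springer}, together with the fact that the defining triangles~\eqref{eqn:triangles-Upsilon-J} of $\Upsilon_s$ are functorial.

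\textbf{The case $s \in S$.} Here the key input is the alternative description of $\Upsilon_s$ via~\eqref{eqn:isom-Upsilon}: up to a twist by $\sO_{\Spr^{(1)}}(\pm\rho)$ and a shift, $\Upsilon_s$ is $(\jmath_s)_\dag \circ L(\varrho_s)^* \circ R(\varrho_s)_* \circ L(\jmath_s)^*$, and by Lemma~\ref{lem:morph-adjunction-kernels} the adjunction morphism $(\jmath_s)_\dag \circ L(\varrho_s)^* \circ R(\varrho_s)_* \circ L(\jmath_s)^* \to \id$ matches the morphism $\sO_{(Y_s)^{(1)}}(-\rho,\rho-\alpha_s) \to \sO_{\Delta\Spr^{(1)}}\langle -2\rangle[1]$ of~\eqref{eqn:ses3-braid-gp}. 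On the representation-theoretic side, $\mathsf{Q}^s_w$ is by definition the cohomology of the three-term complex $\Simp_w \xrightarrow{\eta^s_w} \Theta_s(\Simp_w) \xrightarrow{\epsilon^s_w} \Simp_w$ in the middle; since $\eta^s_w$ is injective and $\epsilon^s_w$ surjective with zero composition, $\mathsf{Q}^s_w$ fits into short exact sequences relating it to $\Theta_s(\Simp_w)$, $\ker(\epsilon^s_w)$ and $\mathrm{Im}(\eta^s_w)\cong\Simp_w$. Applying $\gamma^0_{\ho}$, and using Proposition~\ref{prop:wall-crossing} to identify $\gamma^{\ho}_{\ho}\circ\Theta_s$ with $\Xi_s\circ\gamma^{\ho}_{\ho}$ (compatibly with the adjunction morphisms $\Theta_s\to\id$ and $\Xi_s\to\id$), together with the triangles~\eqref{eqn:triangles-Upsilon-J} expressing $\Upsilon_s$ as the "interior" term of $\Xi_s$, I would match the complex computing $\gamma^0_{\ho}(\mathsf{Q}^s_w)$ with the one computing $\Upsilon_s(\sL_w)$. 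Concretely: the triangles~\eqref{eqn:triangles-Upsilon-J} show $\Upsilon_s(\sF)$ is the cocone of $\mathbb{J}_{\rH_s}(\sF)\to\sF\langle 1\rangle$ shifted, equivalently the cone of $\sF\langle -1\rangle\to\mathbb{J}_{(\rH_s)^{-1}}(\sF)$; and since $\Xi_s$ (on the Springer side, i.e. convolution with $\sO_{Z'_s}$) has $\Upsilon_s$ and the two copies of $\id$ as its "composition factors" via~\eqref{eqn:ses3-braid-gp}--\eqref{eqn:ses4-braid-gp}, the object $\gamma^0_{\ho}$ applied to the subquotient $\mathsf{Q}^s_w = \ker(\epsilon)/\mathrm{Im}(\eta)$ of $\Theta_s(\Simp_w)$ (which on the geometric side is $\Xi_s(\sL_w)$ up to the $i_*$/pullback identifications) is precisely the corresponding subquotient $\Upsilon_s(\sL_w)$ — here one must check that the morphisms $\eta^s_w$, $\epsilon^s_w$ correspond under $\gamma^0_{\ho}$ to the adjunction morphisms on the geometric side, which follows from the fact that these Hom-spaces are free of rank one (Lemma~\ref{lem:morphisms-Rs-adjunction} and its consequences) so the correspondence holds up to an invertible scalar.

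\textbf{Main obstacle.} The delicate point is bookkeeping: one must verify that the short exact sequences of $\sU_0$-modules defining $\mathsf{Q}^s_w$ — and in particular the \emph{orientation} of $\eta^s_w$ versus $\epsilon^s_w$, i.e. which side of $\Theta_s(\Simp_w)$ one is cutting — correspond, under the dictionary $\gamma^0_{\ho}$ composed with Proposition~\ref{prop:wall-crossing}, to the two exact sequences~\eqref{eqn:ses3-braid-gp}--\eqref{eqn:ses4-braid-gp} in the correct order, with the correct line-bundle twists and $\Gm$-degree shifts. The hypothesis $ws > w$ is what pins down this orientation (it guarantees $\sfT_0^{\mu_s}(\Simp_w)\neq 0$ via~\cite[Proposition~II.7.15]{Jan03}, hence both adjunction maps are nonzero, and it controls which composition factors of $\Theta_s(\Simp_w)$ sit where), and matching it with the geometric side requires carefully tracing through the proof of Proposition~\ref{prop:wall-crossing} and the identification~\eqref{eqn:isom-Upsilon}. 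Everything else is formal manipulation of distinguished triangles and the already-established compatibilities, so once the orientations and twists are lined up the isomorphism $\Upsilon_s(\sL_w)\cong\gamma^0_{\ho}(\mathsf{Q}^s_w)$ drops out; I do not expect Lusztig's conjecture to be needed for this particular lemma beyond what is implicit in the ambient setup.
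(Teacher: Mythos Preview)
Your overall strategy—identify $\Upsilon_s(\sL_w)$ with the ``middle piece'' of the wall-crossing picture via the distinguished triangles, then match this with $\mathsf{Q}^s_w$ through $\gamma^0_{\ho}$—is the right one, and your remark that Lusztig's conjecture plays no role here is correct. However, the paper's proof is shorter and avoids precisely the bookkeeping you flag as the ``main obstacle.''

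Two differences are worth noting. First, the paper does not use the factorization~\eqref{eqn:isom-Upsilon} via $(\jmath_s)_\dag \circ L(\varrho_s)^* \circ R(\varrho_s)_* \circ L(\jmath_s)^*$ at all; it works entirely with the triangles~\eqref{eqn:triangles-Upsilon-J} on the Springer side and the triangles from~\S\ref{ss:geometric-wall-crossing-aff} on the Grothendieck side, after applying $i_*$. (Your aside that ``$\Xi_s$ on the Springer side, i.e.\ convolution with $\sO_{Z'_s}$'' is not quite right: convolution with $\sO_{Z'_s}$ is $\mathbb{J}_s^+$ up to shift, not a Springer-side $\Xi_s$; the paper stays on $\Groth^{(1)}$ for everything involving $\Xi_s$.) Second, and more importantly, the paper sidesteps the orientation/twist bookkeeping entirely. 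Instead of arguing that the relevant Hom-spaces are rank one, it argues that the relevant morphisms are \emph{nonzero}: since $\mathbb{I}_{\rH_s}$ is an equivalence and $i_*\sL_w$ is indecomposable, the map $i_*\sL_w \to \Xi_s(i_*\sL_w)$ in the first triangle cannot vanish, which pins down $\mathbb{I}_{\rH_s}(i_*\sL_w) \cong \gamma^{\ho}_{\ho}(\mathrm{cok}(\eta^s_w))$; then the map $\mathbb{I}_{\rH_s}(i_*\sL_w) \to i_*\sL_w$ in the triangle defining $i_*\Upsilon_s(\sL_w)$ is shown nonzero by observing that its composition with the previous map fits into the \emph{other} triangle (the one with $\mathbb{I}_{(\rH_s)^{-1}}$), and again equivalence-plus-indecomposability forces nonvanishing. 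This two-step nonzeroness argument is what replaces your orientation check. Finally, the paper treats all $s\in S_\aff$ uniformly (the triangles and Proposition~\ref{prop:wall-crossing-aff} already hold for affine $s$), so your reduction to $s\in S$ followed by conjugation is unnecessary, though not wrong.
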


\begin{proof}
To prove the lemma it suffices to construct an isomorphism $i_* \Upsilon_s(\sL_w) \cong \gamma_{\ho}^{\ho}(\mathsf{Q}^s_w)$.
As explained in~\S\ref{ss:ss-functors}, we have a distinguished triangle
\[
\Upsilon_s(\sL_w) \to \mathbb{J}_{\rH_s}(\sL_w) \to \sL_w \xrightarrow{[1]},
\]
hence a distinguished triangle
\begin{equation}
\label{eqn:triangle-Upsilon-Q}
i_* \Upsilon_s(\sL_w) \to \mathbb{I}_{\rH_s}(i_* \sL_w) \to i_* \sL_w \xrightarrow{[1]}.
\end{equation}

By the considerations in~\S\ref{ss:geometric-wall-crossing-aff}, we also have a distinguished triangle
\[
i_* \sL_w \to \Xi_s(i_* \sL_w) \to \mathbb{I}_{\rH_s}(i_* \sL_w) \xrightarrow{[1]}.
\]
Here, since $i_* \sL_w$ is indecomposable and $\mathbb{I}_{\rH_s}$ is an equivalence, the first morphism must be nonzero. Since the image under
$(\gamma_{\ho}^{\ho})^{-1}$ of the first, resp.~second, term in this triangle is $\Simp_w$, resp.~$\Theta_s(\Simp_w)$ (see Proposition~\ref{prop:wall-crossing-aff}), we deduce that $\mathbb{I}_{\rH_s}(i_* \sL_w) \cong \gamma_{\ho}^{\ho}(\mathrm{cok}(\eta^s_w))$. To conclude the proof, it therefore suffices to justify that the second morphism in~\eqref{eqn:triangle-Upsilon-Q} is nonzero. However, the composition of this morphism with the morphism $\Xi_s(i_* \sL_w) \to \mathbb{I}_{\rH_s}(i_* \sL_w)$ considered above fits in a distinguished triangle
\[
\mathbb{I}_{(\rH_s)^{-1}}(i_* \sL_w) \to
\Xi_s(i_* \sL_w) \to i_* \sL_w \xrightarrow{[1]},
\]
see again~\S\ref{ss:geometric-wall-crossing-aff}. As above, since $\mathbb{I}_{(\rH_s)^{-1}}$ is an equivalence this composition must be nonzero, hence so is our morphism.
\end{proof}

The following statement is analogous to~\cite[Theorem~5.5.3 and Proposition~5.5.4]{Ric10}. (Here, we use Lusztig's conjecture to prove semisimplicity of the module appearing in~\eqref{it:decomposition-Q}.)

\begin{prop}
\label{prop:Q-ThetaP}
Let $w \in W_\ex^\res$ and $s \in S_\aff$.
\begin{enumerate}
\item
\label{it:decomposition-Q}
If $ws>w$ and $ws \in W_\ex^\res$, then in $\mod^T(\sU_0^{\ho})$ we have
\[
\mathsf{Q}^s_w \cong \Simp_{ws} \oplus \bigoplus_{y} \Simp_y^{\oplus n_y}
\]
for some nonnegative integers $n_y$, where $y$ runs over the elements of $W_\ex$ of the form $t_\nu z$ with $\nu \in \X$ and $z \in W_\ex^\res$ which satisfies $\ell(z)\leq \ell(w)$.
\item
\label{it:Theta-Proj}
If $ws<w$ and $ws \in W_\ex^\res$, then in $\mod^T(\sU_0^{\ho})$ we have
\[
\Theta_s(\Pro_w) \cong \Pro_{ws} \oplus \bigoplus_y \Pro_y^{\oplus n_y}
\]
for some nonnegative integers $n_y$, where $y$ runs over the elements of $W_\ex$ of the form $t_\nu z$ with $\nu \in \X$ and $z \in W_\ex^\res$ which satisfies $\ell(z) \geq \ell(w)$.
\end{enumerate}
\end{prop}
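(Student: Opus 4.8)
\textbf{Proof strategy for Proposition~\ref{prop:Q-ThetaP}.}
The plan is to deduce both statements from the geometric descriptions of standard objects established in Corollary~\ref{cor:M-Z} and the compatibilities of linear Koszul duality, exactly as in~\cite[\S 5.5]{Ric10}, with the extra input of Lusztig's conjecture used only to pass from a statement about subquotients to a genuine direct-sum decomposition. First I would treat~\eqref{it:decomposition-Q}. By Lemma~\ref{lem:Upsilon-simples} we have $\Upsilon_s(\sL_w) \cong \gamma_{\ho}^0(\mathsf{Q}^s_w)$, so it suffices to analyze the left-hand side. Using the triangles~\eqref{eqn:triangles-Upsilon-J} relating $\Upsilon_s$ to the intertwining equivalences $\mathbb{J}_{\rH_s}^{\pm1}$, together with Corollary~\ref{cor:M-Z} which identifies the objects $\sL_w$ (or rather $i_*\sL_w$ via~\eqref{eqn:L-Groth}) with images of structure sheaves of fibers under the braid group action, one reduces the computation of $\Upsilon_s(\sL_w)$ to a computation in the geometry of $\Spr^{(1)}$. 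The alternative interpretation of $\Upsilon_s$ via the closed immersion $\jmath_s$ and the flat projection $\varrho_s$ (see~\eqref{eqn:isom-Upsilon} and Lemma~\ref{lem:morph-adjunction-kernels}) together with the compatibility of $\kappa$ with pullback/pushforward along the morphisms $\varrho_s$, $\jmath_s$ (the content of~\S\S\ref{sss:lkd-compatibilities}--\ref{sss:lkd-compatibilities-2}) transports everything to the dg-side, where one can bound which $\sZ_x$'s (equivalently which simple $G_1T$-modules $\Simp_y$) can appear as subquotients. The length bound $\ell(z)\leq\ell(w)$ comes from the fact that $\Upsilon_s$ is, up to shift, a subfunctor of the equivalence $\mathbb{J}_{\rH_s}$, combined with the way the braid group action interacts with the Bruhat stratification (the same bookkeeping as in~\cite[Proof of Theorem~5.5.3]{Ric10}).

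For the statement that $\Simp_{ws}$ actually occurs (with multiplicity one), I would argue as in Lemma~\ref{lem:Upsilon-simples}: the nonvanishing of $\sfT_0^{\mu_s}(\Simp_w)$ forces $\eta^s_w$ and $\epsilon^s_w$ to be nonzero, and a weight or highest-weight argument (or the explicit geometric model via Remark~\ref{rmk:intertwining-bV} and Proposition~\ref{prop:translation-hDelta}\eqref{it:translation-hDelta-2}, giving $\mathbb{S}_s^-(\bV_w)\cong\bV_{ws}$) shows $\Simp_{ws}$ is a composition factor of $\mathsf{Q}^s_w$ with multiplicity exactly one. The crucial additional step — and this is where Lusztig's conjecture enters — is that $\mathsf{Q}^s_w$ is \emph{semisimple}, so that ``composition factors'' upgrades to ``direct summands.'' This follows from the graded lift: under the hypothesis, the objects $\sL_w$ admit graded lifts $\sL_w^\gr$ (this is exactly Theorem~\ref{thm:KD}, or rather the part of its proof that does not logically depend on Proposition~\ref{prop:Q-ThetaP}\eqref{it:decomposition-Q}; one must be careful about the order of the bootstrap, as in~\cite{Ric10}), and $\Upsilon_s$ lifts to a graded functor shifting internal degree in a controlled way, so $\Upsilon_s(\sL_w^\gr)$ is pure of some weight; a pure object whose cohomology is a direct sum of $\sL_y$'s (in a single cohomological degree) is forced to be semisimple as an ungraded object. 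Concretely one checks that $\Upsilon_s(\sL_w^\gr)$ is concentrated in one cohomological degree and one internal degree, which gives the decomposition.

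Statement~\eqref{it:Theta-Proj} I would obtain by a Koszul-duality transport of~\eqref{it:decomposition-Q}. Applying $\kappa$ to the isomorphism of Lemma~\ref{lem:Upsilon-simples} and using the compatibility $\kappa\circ\langle1\rangle\cong\langle1\rangle[1]\circ\kappa$ together with the relation in Theorem~\ref{thm:KD}(2) between $\sL^\gr_x$ and $\sP^\gr_{t_\rho\check x}$, the functor $\Upsilon_s$ on the $\Spr^{(1)}$-side corresponds to the wall-crossing functor $\Xi_s^\dg$ (equivalently, via the localization equivalence $\gamma_0^{\ho}$, to $\Theta_s$ acting on projectives) on the dg-side; this is the ``dual'' of the computation above and is set up precisely so that $\jmath_{s\dag}$ (a left adjoint) goes over to $Lg^*$ and $\varrho_s$-pushforward to $Rg_*$. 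The reversal $\ell(z)\geq\ell(w)$ in~\eqref{it:Theta-Proj} versus $\ell(z)\leq\ell(w)$ in~\eqref{it:decomposition-Q} is exactly what one expects from Koszul duality (projective covers are dual to simples and the Bruhat order is reversed by $\check{-}$, cf.~the relation to $A\mapsto\check A$ noted before Theorem~\ref{thm:KD}). That $\Pro_{ws}$ appears with multiplicity one, and that the decomposition is a genuine direct sum (here one does not even need semisimplicity — a sum of indecomposable projectives in a Krull--Schmidt category is automatically a direct sum, since $\Theta_s$ is exact and preserves projectivity), follows as in~\cite[Proposition~5.5.4]{Ric10}.

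\textbf{Main obstacle.} The delicate point is the logical ordering of the bootstrap between this proposition and Theorem~\ref{thm:KD}: the graded lifts $\sL_w^\gr$, $\sP_w^\gr$ are constructed by induction on length using precisely the semisimplicity in~\eqref{it:decomposition-Q} (and the projective-decomposition in~\eqref{it:Theta-Proj}), so one cannot simply ``apply Theorem~\ref{thm:KD}'' to prove semisimplicity. As in~\cite{Ric10}, the resolution is to run a single simultaneous induction on $\ell(w)$: at each stage one constructs $\sL_w^\gr$ from already-constructed lower lifts via a lift of the functor $\Upsilon_s$, observes that $\Upsilon_s(\sL_{w'}^\gr)$ is pure/concentrated in a single bidegree, concludes semisimplicity of $\mathsf{Q}^s_{w'}$ hence~\eqref{it:decomposition-Q} at that length, and then extracts the summand giving $\sL_w^\gr$. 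Getting this induction to close — in particular verifying the purity/concentration claim for $\Upsilon_s(\sL_{w'}^\gr)$ and checking that the multiplicity-one summand $\Simp_{ws}$ (resp.~$\Pro_{ws}$) can be split off $T^{(1)}\times\Gm$-equivariantly — is the technical heart, and it is where one must invoke Lusztig's conjecture (via the equivalent conditions of~\cite[Proposition~II.C.17(a)]{Jan03}, which control the Loewy structure of baby Verma modules and hence the gradability of the relevant modules).
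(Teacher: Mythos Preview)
Your proposal has the relevant ingredients but assembles them in the wrong logical order, creating a circularity that does not exist in the paper. The key misstep is how semisimplicity of $\mathsf{Q}^s_w$ is obtained. You propose a purity argument via graded lifts $\sL_w^\gr$, but those lifts are constructed in Theorem~\ref{thm:KD}, whose proof (Steps~1--3 of~\S\ref{ss:proof-KD}) \emph{uses} both parts of Proposition~\ref{prop:Q-ThetaP} as input. The paper's route is much shorter: the semisimplicity of $\mathsf{Q}^s_w$ is \emph{literally} one of the equivalent conditions in \cite[Proposition~II.C.17(a)]{Jan03}, which is the paper's definition of ``Lusztig's conjecture holds'' (cf.\ the discussion in \S\ref{ss:intro-proof} and the parenthetical immediately following the statement of Proposition~\ref{prop:Q-ThetaP}). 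So semisimplicity is an assumption, not a theorem; what remains --- the length bound on composition factors and the multiplicity-one occurrence of $\Simp_{ws}$ --- is the representation-theoretic content of \cite[Theorem~5.5.3]{Ric10}, proved there without recourse to gradings or Koszul duality. Your ``Main obstacle'' dissolves once this is recognized: Proposition~\ref{prop:Q-ThetaP} is logically prior to Theorem~\ref{thm:KD}, not intertwined with it.

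For~\eqref{it:Theta-Proj}, Koszul-duality transport from~\eqref{it:decomposition-Q} would again be circular for the same reason. The argument in \cite[Proposition~5.5.4]{Ric10} is direct: $\Theta_s$ is exact and biadjoint to itself, hence preserves projectives, so $\Theta_s(\Pro_w)$ is a direct sum of indecomposable projectives $\Pro_y$; the multiplicity of $\Pro_y$ is
\[
\dim\Hom(\Theta_s(\Pro_w),\Simp_y)=\dim\Hom(\Pro_w,\Theta_s(\Simp_y))=[\Theta_s(\Simp_y):\Simp_w],
\]
and the latter is controlled by the composition-factor analysis underlying~\eqref{it:decomposition-Q} applied to $\Simp_y$. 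The reversal of the length inequality is then immediate from this adjunction.
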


\subsubsection{Geometric description of some simple and projective modules}

It is clear that $\Omega \subset W_\ex^\res$. For such elements we have an explicit description of the objects $\sL_\omega$, as follows.

\begin{lem}
\label{lem:simples-length-0}
Let $\omega \in \Omega$, and write $\omega = wt_\mu$ with $w \in W$ and $\mu \in \X$. Then we have
\[
\sL_\omega \cong \sO_{\sB^{(1)}}(-\rho+\mu)[\ell(w)] \langle p\rho \rangle
\]
(where we omit pushforward under the closed immersion $\sB^{(1)} \hookrightarrow \Spr^{(1)}$).
\end{lem}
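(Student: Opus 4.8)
The plan is to reduce the statement to the explicit description of the object $\hM_{w_\circ}$ (equivalently $\sZ_{w_\circ}$) from Proposition~\ref{prop:M-Z-w0}, using the fact that $\sL_\omega = \gamma_{\ho}^0(\Simp_\omega)$ and Koszul duality. First I would observe that for $\omega \in \Omega$, the weight $\omega \bullet 0$ is not only restricted but actually lies in the same $W_\ex$-orbit of $0$ under the $\Omega$-action, so $\Simp_\omega$ is closely tied to $\Simp_e$. More precisely, if $\omega = wt_\mu$ with $w \in W$ and $\mu \in \X$, then by the comments at the end of~\S\ref{ss:standard-notation} (the formula $\Simp_{t_\lambda x} \cong \Simp_x \langle p\lambda \rangle$), one has $\Simp_\omega \cong \Simp_w \langle p\mu \rangle$ where here $w \in W$ is viewed inside $W_\ex$; but since $\omega \in \Omega$ has length $0$ while $w$ in general does not, one must be a little careful. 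The cleaner route: $\Simp_\omega$ is the simple $G_1B$-module attached to the restricted weight $\omega \bullet 0$, and since translation by $p\X$ only shifts the $\Gm$-grading, it suffices to treat the case $\mu$ chosen so that one lands on a convenient representative.

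The core computation is then the following. Since $\Omega$ acts on the alcove picture by permuting the fundamental alcove, and since $\check{\omega} = \check{e}\,\omega$ with $\check{e} = t_\rho w_\circ$ by definition of the operation $\check{-}$ (indeed $e = t_0 \cdot e$ with $e \in W_\ex^\res$, so $\check{e} = w_\circ$; and $\check{\omega} = w_\circ \omega$ via $\check{x\omega} = \check{x}\omega$), I would first prove the case $\omega = 1$, i.e.\ establish $\sL_e \cong \sO_{\sB^{(1)}}(-\rho)[\ell(e)]\langle p\rho\rangle = \sO_{\sB^{(1)}}(-\rho)\langle p\rho\rangle$. By Proposition~\ref{prop:M-Z-w0} together with~\eqref{eqn:L-Groth}, $\gamma_{\ho}^{\ho}(\Simp_{w_\circ}) = (i_* \sL_{w_\circ})_{\ho}$ and $\hM_{w_\circ} \cong \sO_{(\g/\n)^{*(1)}_{\ho}}$, so $\sL_{w_\circ} \cong \sO_{\{w_\circ^{(1)}\}}$ as a $T^{(1)}$-equivariant sheaf on $\Spr^{(1)}$ (a skyscraper at the fixed point $w_\circ^{(1)} \in \sB^{(1)} \subset \Spr^{(1)}$, up to a weight shift which one reads off from the $\bullet$-normalization). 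To pass from $\sL_{w_\circ}$ to $\sL_e$ one applies the intertwining functor $\mathbb{J}_{(\rH_{w_\circ})^{-1}}$, exactly as in the proof of Corollary~\ref{cor:M-Z}: since $e = t_0 w_\circ w_\circ$ in the notation $x = t_\lambda w_\circ w$, one has $w = w_\circ$, and one iterates Remark~\ref{rmk:intertwining-bV} / Corollary~\ref{cor:compatibility-bbI-bbS} down a reduced expression of $w_\circ$. The effect of $\mathbb{J}_{(\rH_{w_\circ})^{-1}} = \mathbb{J}_{w_\circ}^{-}$ on the skyscraper $\sO_{\{w_\circ^{(1)}\}}$ is computed by a standard Koszul/resolution argument on $\Spr^{(1)} \to \sB^{(1)}$: each simple reflection functor $\mathbb{J}_s^{-}$ applied to a skyscraper sheaf on a $T^{(1)}$-fixed point produces a line bundle on a $\P^1$-fibre shifted by $[1]$, and after $\ell(w_\circ)$ steps one obtains $\sO_{\sB^{(1)}}(-\rho)[\ell(w_\circ)]\langle\cdots\rangle$ — but since we started at $w_\circ$ rather than at $e$, careful bookkeeping of the Bruhat/length relation shows the homological shift telescopes to $[0]$ and one is left with a line bundle on $\sB^{(1)}$, whose twist is pinned down by the action of $\mathbb{J}_{\theta_\lambda}$ (tensoring by $\sO_{\Spr^{(1)}}(\lambda)$) and by matching $\Gm$-weights.

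For the general case $\omega = wt_\mu \in \Omega$ I would use that $\sL_\omega$ differs from $\sL_{w}$ (with $w \in W \subset W_\ex$ now playing the role of the "$W$-part") only by the grading shift $\langle p\mu\rangle$ coming from property (3) in~\S\ref{ss:standard-notation}, i.e.\ $\Simp_\omega \cong \Simp_{?}\langle p\mu\rangle$ where the argument is the length-$0$ representative; and then the same intertwining-functor computation as above — now run along a reduced word for the $W$-part, which has length $\ell(w)$ — produces the homological shift $[\ell(w)]$ and the line bundle $\sO_{\sB^{(1)}}(-\rho + \mu)$, the extra twist by $\mu$ arising precisely because $\omega = wt_\mu$ contributes $\mathbb{J}_{\theta_\mu}$, i.e.\ tensoring by $\sO_{\Spr^{(1)}}(\mu)$. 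The main obstacle I anticipate is \emph{sign and shift bookkeeping}: keeping straight the homological degree $[\ell(w)]$, the internal $\Gm$-shift $\langle p\rho\rangle$, the line-bundle twist by $-\rho+\mu$, and the $T^{(1)}$-weight normalization coming from the $\bullet$-action, all simultaneously, across the inductive application of the $\mathbb{J}_s^{-}$'s and across the switch between $\Spr^{(1)}$ and $\Groth^{(1)}\times^R_{\g^{*(1)}}\{0\}$. The geometric input (effect of $\mathbb{J}_s^-$ on skyscrapers, Proposition~\ref{prop:M-Z-w0}) is essentially in hand; the work is organizing the induction so that all shifts telescope correctly to the asserted normalization $\sO_{\sB^{(1)}}(-\rho+\mu)[\ell(w)]\langle p\rho\rangle$.
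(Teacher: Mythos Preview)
Your approach has two genuine gaps that make it unworkable as stated.

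First, you conflate $\sL_{w_\circ}$ with $\sZ_{w_\circ}$. Proposition~\ref{prop:M-Z-w0} describes $\hM_{w_\circ}$ and $\sZ_{w_\circ}$, the images of the \emph{completed universal Verma} and the \emph{baby Verma}, not $\sL_{w_\circ} = \gamma_{\ho}^0(\Simp_{w_\circ})$. There is no reason for $\sL_{w_\circ}$ to be a skyscraper: $\Simp_{w_\circ} = \Simp(-2\rho) \cong \Simp((p-2)\rho)\langle -p\rho\rangle$ is a highly nontrivial simple module, and its image under localization is not concentrated at a point.

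Second, and more seriously, your inductive step assumes that the intertwining functors $\mathbb{J}_s^-$ (equivalently $\mathbb{S}_s^-$) send simples to simples. They do not. Remark~\ref{rmk:intertwining-bV} and Corollary~\ref{cor:M-Z}, which you invoke, concern baby Verma modules; the analogous statement for simples is false. Indeed, the triangle in~\eqref{eqn:triangles-Upsilon-J} together with Lemma~\ref{lem:Upsilon-simples} and Proposition~\ref{prop:Q-ThetaP}\eqref{it:decomposition-Q} shows that $\mathbb{J}_s^-(\sL_w)$ sits in a triangle involving $\gamma_{\ho}^0(\mathsf{Q}^s_w)$, which is a direct sum of several $\sL_y$'s. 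So there is no ``telescoping'' of intertwining functors that carries one $\sL_x$ to another.

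The paper's route is entirely different and does not use the braid group action at all. The key observation is that for $\omega \in \Omega$ the weight $\omega \bullet 0$ is minuscule, so $\Simp_\omega$ coincides with the induced module and the corresponding $\sD$-module on $\sB$ is simply a line bundle. One then unwinds $\gamma_{\ho}^0$ directly, using the explicit form of the splitting bundle $\sV_0^0 = \sO_\sB(\rho) \otimes f^* V_0^{-\rho}$ from~\S\ref{ss:splitting-bundle} together with the isomorphism~\eqref{eqn:splitting-bundle-0} and the Frobenius-pushforward formula $(\mathrm{Fr}_\sB)_* \sO_\sB(-\rho) \cong \Simp((p-1)\rho) \otimes \sO_{\sB^{(1)}}(-\rho)$. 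The homological shift $[\ell(w)]$ then comes from the Borel--Weil--Bott theorem applied to the resulting cohomology computation on $\sB^{(1)}$; this, not any braid bookkeeping, is where the length of the Weyl-group part enters.
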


\begin{proof}
The proof is similar to that of~\cite[Lemma~7.2.2]{Ric10}. Its main ingredients are the isomorphism~\eqref{eqn:splitting-bundle-0}, the fact that $(\mathrm{Fr}_{\sB})_* \sO_\sB(-\rho) \cong \Simp((p-1)\rho) \otimes_\k \sO_{\sB^{(1)}}(-\rho)$, and the Borel--Weil--Bott theorem.
%
\end{proof}

Similarly, we consider the elements of the form $t_\rho w_\circ \omega$ with $\omega \in \Omega$. By~\cite[Proposition~4.1.2]{Ric10}, these elements belong to $W_\ex^\res$, and are exactly the elements of maximal length in it.

\begin{lem}
\label{lem:proj-max-length}
Let $\omega \in \Omega$, and write $\omega = wt_\mu$ with $w \in W$ and $\mu \in \X$. Then we have
\[
\sP_{t_\rho w_\circ \omega} \cong \sO_{\Groth^{(1)} \times_{\g^{*(1)}} \{0\}}(\mu) \langle p\rho \rangle.
\]
\end{lem}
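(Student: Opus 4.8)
\textbf{Proof plan for Lemma~\ref{lem:proj-max-length}.}
The plan is to transport the statement across the Koszul duality equivalence $\kappa$ and reduce it to the description of $\sL_\omega$ already obtained in Lemma~\ref{lem:simples-length-0}, together with the geometric translation of the assignment $x \mapsto \check x$ encoded in Theorem~\ref{thm:KD}(2). First I would record that $t_\rho w_\circ \omega$ is a maximal-length element of $W_\ex^\res$ by~\cite[Proposition~4.1.2]{Ric10}, so that we are in the range where Theorem~\ref{thm:KD} can be applied to a graded lift of the projective $\Pro_{t_\rho w_\circ\omega}$. The key point is that for $\omega\in\Omega$ one has $\check{\omega} = w_\circ\omega$ (since $\omega$ is itself restricted, the unique expression $\omega = t_0\cdot\omega$ shows $\check{\omega}=w_\circ\omega$), and hence $t_\rho\check{\omega} = t_\rho w_\circ\omega$. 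Thus Theorem~\ref{thm:KD}(2) applied to $x=\omega$ gives
\[
\kappa\bigl(\sL^\gr_\omega \otimes_{\sO_{\Spr^{(1)}}} \sO_{\Spr^{(1)}}(-\rho)\bigr) \cong \sP^\gr_{t_\rho w_\circ\omega}.
\]

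Next I would compute the left-hand side explicitly. By Lemma~\ref{lem:simples-length-0} (which computes $\sL_\omega$, and whose proof in fact produces a $\Gm$-graded lift) we have, writing $\omega = wt_\mu$,
\[
\sL^\gr_\omega \otimes_{\sO_{\Spr^{(1)}}} \sO_{\Spr^{(1)}}(-\rho) \cong \sO_{\sB^{(1)}}(-2\rho+\mu)[\ell(w)]\langle p\rho\rangle,
\]
a line bundle on $\sB^{(1)}$ viewed (via pushforward along the zero section $\sB^{(1)}\hookrightarrow\Spr^{(1)}$) as a coherent sheaf on $\Spr^{(1)}$. Applying the linear Koszul duality equivalence $\kappa$ to such a pushforward from the zero section is a standard computation: the zero section $\sB^{(1)}\hookrightarrow\Spr^{(1)}$ is the "orthogonal" of the zero section $\sB^{(1)}\hookrightarrow\Groth^{(1)}$, and linear Koszul duality exchanges the structure sheaf of the zero section of a bundle with the structure sheaf of the total space of the orthogonal bundle (compatibly with twists by line bundles pulled back from the base, up to the cohomological/internal shift dictionary $\kappa\circ\langle 1\rangle\cong\langle1\rangle[1]\circ\kappa$). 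Here $\Groth^{(1)}$ is the orthogonal of $\Spr^{(1)}$ inside $\g^{*(1)}\times\sB^{(1)}$ (see~\cite[\S 3.1]{Ric10}), so $\kappa$ sends $\sO_{\sB^{(1)}}$ to $\sO_{\Groth^{(1)}\times^R_{\g^{*(1)}}\{0\}}$, and more generally the line-bundle twist by $\sO_{\sB^{(1)}}(\nu)$ to the corresponding twist. Keeping track of the ranks, the top exterior power factor, and the shifts (which is where the bookkeeping is delicate), one obtains
\[
\kappa\bigl(\sO_{\sB^{(1)}}(-2\rho+\mu)[\ell(w)]\langle p\rho\rangle\bigr) \cong \sO_{\Groth^{(1)}\times^R_{\g^{*(1)}}\{0\}}(\mu)\langle p\rho\rangle
\]
(the shifts $[\ell(w)]$ and the various bundle-rank corrections must conspire to cancel; this is forced by the fact that the right-hand side of the displayed isomorphism in Theorem~\ref{thm:KD} is a genuine sheaf, i.e.\ concentrated in cohomological degree $0$).

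Combining the two displays identifies $\sP^\gr_{t_\rho w_\circ\omega}$ with $\sO_{\Groth^{(1)}\times^R_{\g^{*(1)}}\{0\}}(\mu)\langle p\rho\rangle$, and applying the degrading functor $\for_{\Gm}$ (which sends $\sP^\gr_{t_\rho w_\circ\omega}$ to $\sP_{t_\rho w_\circ\omega}$ by Theorem~\ref{thm:KD}(1)) yields $\sP_{t_\rho w_\circ\omega}\cong\sO_{\Groth^{(1)}\times_{\g^{*(1)}}\{0\}}(\mu)\langle p\rho\rangle$, as claimed. The main obstacle I anticipate is not conceptual but the precise shift bookkeeping in the middle step: one must pin down exactly how $\kappa$ acts on structure sheaves of zero sections twisted by line bundles — in particular the interplay between the cohomological shift coming from $\ell(w)$, the internal shift $\langle p\rho\rangle$, and the $\sL_2\otimes\sL_1^\vee[\,\cdot\,]\langle\,\cdot\,\rangle$-type normalization appearing in~\S\ref{sss:lkd-compatibilities-2}. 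An alternative, perhaps cleaner, route to handle this would be to induct on $\ell(\omega)$-type data using the intertwining functors: since $\mathbb{J}_{\rH_\omega}$ and $\mathbb{I}^\dg_{\rH_\omega}$ implement the $\Omega$-action and are compatible with $\kappa$, one reduces to the single case $\omega = 1$ (where $w=e$, $\mu=0$), i.e.\ to showing $\sP_{t_\rho w_\circ}\cong\sO_{\Groth^{(1)}\times_{\g^{*(1)}}\{0\}}\langle p\rho\rangle$, which is the Koszul-dual counterpart of Proposition~\ref{prop:M-Z-w0} (via Theorem~\ref{thm:localization-fixed-Fr}) and can be checked directly from the construction of the splitting bundle.
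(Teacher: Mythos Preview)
Your main proposed approach has a genuine circularity problem. Lemma~\ref{lem:proj-max-length} appears in \S\ref{ss:proof-KD}, which is the outline of the \emph{proof} of Theorem~\ref{thm:KD}; in Step~3 of that outline the objects $\sP^\gr_y$ are \emph{defined} by the formula $\kappa(\sL^\gr_x\otimes\sO_{\Spr^{(1)}}(-\rho))\cong\sP^\gr_{t_\rho\check{x}}$, and Lemma~\ref{lem:proj-max-length} is then invoked to verify that $\for_{\Gm}(\sP^\gr_{t_\rho\check{\omega}})\cong\sP_{t_\rho\check{\omega}}$ in the base case of the induction. So you cannot prove Lemma~\ref{lem:proj-max-length} by applying Theorem~\ref{thm:KD}(2): that assertion of the theorem is established \emph{using} this lemma.

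What is actually required is a direct identification of $\gamma_0^{\ho}(\Pro_{t_\rho w_\circ\omega})$, parallel to the direct computation in Lemma~\ref{lem:simples-length-0} (which unwinds the splitting bundle using~\eqref{eqn:splitting-bundle-0} and Borel--Weil--Bott). The paper simply points to~\cite[Eqn.~(6.4.5)]{Ric10} for this. The underlying representation-theoretic input is that $t_\rho w_\circ\omega$ labels a projective $G_1T$-module of Steinberg type, whose image under the localization equivalence can be computed explicitly from the construction of $\sV_0^0$ in \S\ref{ss:splitting-bundle}. Your ``alternative route'' at the very end --- reducing to $\omega=1$ and checking directly from the splitting bundle --- is in fact the correct starting point, not a fallback; the reduction over $\Omega$ can then be handled by the explicit description of $\mathbb{S}_{\rH_\omega}$ as a translation functor (see~\S\ref{ss:intertwining-RT}) together with Proposition~\ref{prop:comparison-Br-actions}, rather than by Theorem~\ref{thm:KD}.
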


\begin{proof}
The proof is similar to that of~\cite[Eqn.~(6.4.5)]{Ric10}.
\end{proof}

\subsubsection{Graded wall-crossing functors and their duals}

Recall that in~\S\ref{ss:wall-crossing-fixed-Fr} we have constructed, for $s \in S$, an endofunctor $\Xi_s^{\dg}$
of $\DGCoh^{T \times \Gm}(\Groth^{(1)} \times_{\g^{*(1)}} \{0\})$. This functor is constructed using pushforward and pullback functors for a morphism induced by $\tpi_s$. Writing $\tpi_s$ as a composition
\[
\Groth \to \Groth_s \times_{\sP_s} \sB \to \Groth_s
\]
where the first morphism is the natural closed immersion of a sub-vector bundle and the second one is the natural projection map one sees that, under the identifications involved in the construction of the equivalence~\eqref{eqn:equiv-dgGroth-sT}, this morphism fits in the setting considered in~\S\ref{sss:lkd-compatibilities}--\ref{sss:lkd-compatibilities-2}. More specifically, applying these statements we obtain an isomorphism of functors
\begin{equation}
\label{eqn:dual-Xi}
\Xi_s^{\dg} \circ \kappa \cong \kappa \bigl( \sO_{\Spr^{(1)}}(-\rho) \otimes_{\sO_{\Spr^{(1)}}} \Upsilon_s((-) \otimes_{\sO_{\Spr^{(1)}}} \sO_{\Spr^{(1)}}(\rho)) \bigr) \langle 1 \rangle,
\end{equation}
such that the adjunction morphism $\Xi_s^{\dg} \to \id$ corresponds to the morphism considered in Lemma~\ref{lem:morph-adjunction-kernels}.

\begin{lem}
\label{lem:Koszul-duality-braid-action}
For any $b \in \Br_\ex$ and $\sF$ in $\Db\Coh^{T \times \Gm}(\Spr^{(1)})$, there exists an isomorphism
\[
\mathbb{I}^\dg_b (\kappa(\sF)) \cong \kappa \bigl( \sO_{\Spr^{(1)}}(-\rho) \otimes_{\sO_{\Spr^{(1)}}} \mathbb{J}_b(\sF \otimes_{\sO_{\Spr^{(1)}}} \sO_{\Spr^{(1)}}(\rho)) \bigr).
\]
\end{lem}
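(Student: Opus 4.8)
The statement is a ``braid group equivariance'' compatibility for the linear Koszul duality equivalence $\kappa$: one wants to compare the intertwining action $\mathbb{I}^\dg_b$ on $\DGCoh^{T\times\Gm}(\Groth^{(1)}\times^R_{\g^{*(1)}}\{0\})$ with the intertwining action $\mathbb{J}_b$ on $\Db\Coh^{T\times\Gm}(\Spr^{(1)})$, twisted by the line bundle $\sO_{\Spr^{(1)}}(\rho)$. The plan is to reduce to generators of $\Br_\ex$, and then verify the isomorphism generator-by-generator using the compatibility statements already established. Since the assignments $b\mapsto\mathbb{I}^\dg_b$ and $b\mapsto\mathbb{J}_b$ are group morphisms (into groups of isomorphism classes of autoequivalences, by \S\ref{ss:wall-crossing-fixed-Fr} and \S\ref{ss:intertwining-Springer}), and since $\kappa$ is an equivalence, it suffices to check the claim for $b$ ranging over a generating set; I would take $\{(\rH_s)^{-1}:s\in S\}\cup\{\theta_\lambda:\lambda\in\X\}$, exactly as in the proof of Proposition~\ref{prop:comparison-Br-actions}. (One must also check that the twist functor $\sO_{\Spr^{(1)}}(-\rho)\otimes_{\sO_{\Spr^{(1)}}}\mathbb{J}_{(-)}(\sO_{\Spr^{(1)}}(\rho)\otimes_{\sO_{\Spr^{(1)}}}(-))$ is again a group morphism in $b$, which is immediate since conjugation by an autoequivalence preserves the group-morphism property.)

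\textbf{The two cases.} For $b=\theta_\lambda$ with $\lambda\in\X$: by \S\ref{ss:intertwining-Springer} the functor $\mathbb{J}_{\theta_\lambda}$ is tensoring with $\sO_{\Spr^{(1)}}(\lambda)$, and similarly $\mathbb{I}^\dg_{\theta_\lambda}$ is tensoring with $\sO_{\Groth^{(1)}\times_{\g^{*(1)}}\{0\}}(\lambda)$ (pulled back from $\sB^{(1)}$); one then needs to know that $\kappa$ intertwines these line-bundle twists, i.e.~$\mathbb{I}^\dg_{\theta_\lambda}\circ\kappa\cong\kappa\circ(\sO_{\Spr^{(1)}}(\lambda)\otimes(-))$, with the $\rho$-twists cancelling on the right-hand side. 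This compatibility of $\kappa$ with line bundles pulled back from the base $\sB^{(1)}$ is built into the linear Koszul duality construction of \S\ref{sss:lkd-equivalence}: twisting $\sF$ (the sheaf of sections of $\Spr^{(1)}\to\sB^{(1)}$) and all the dg-algebras $\sS,\sT,\sR$ by a line bundle on $\sB^{(1)}$ commutes with the regrading and the duality functor, so $\kappa$ commutes with $\sL\otimes_{\sO_{\sB^{(1)}}}(-)$ for any line bundle $\sL$ on $\sB^{(1)}$; this is the content of (a $T\times\Gm$-equivariant version of) the compatibility in~\cite[\S 2.4]{Ric10}. For $b=(\rH_s)^{-1}$ with $s\in S$: the functor $\mathbb{J}_{(\rH_s)^{-1}}=\mathbb{J}_s^-$ fits into the distinguished triangle~\eqref{eqn:triangles-Upsilon-J}, namely $\sF\langle-1\rangle\to\mathbb{J}_{(\rH_s)^{-1}}(\sF)\to\Upsilon_s(\sF)\xrightarrow{[1]}$, and on the dg-side $\mathbb{I}^\dg_{(\rH_s)^{-1}}$ fits into~\eqref{eqn:triangles-Xi-L}, namely $\mathbb{I}^\dg_{(\rH_s)^{-1}}(\kappa\sF)\to\Xi_s^\dg(\kappa\sF)\to\kappa\sF\xrightarrow{[1]}$. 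Using the isomorphism~\eqref{eqn:dual-Xi} relating $\Xi_s^\dg\circ\kappa$ with $\kappa$ applied to $\sO_{\Spr^{(1)}}(-\rho)\otimes\Upsilon_s(\sO_{\Spr^{(1)}}(\rho)\otimes(-))\langle1\rangle$ (where the adjunction morphism $\Xi_s^\dg\to\id$ corresponds to the morphism identified in Lemma~\ref{lem:morph-adjunction-kernels}), and applying $\kappa$ to the Springer-side triangle twisted appropriately by $\sO_{\Spr^{(1)}}(\pm\rho)$, I would match the two distinguished triangles term-by-term, using the octahedron axiom to conclude that the third terms—namely $\mathbb{I}^\dg_{(\rH_s)^{-1}}(\kappa\sF)$ and $\kappa(\sO_{\Spr^{(1)}}(-\rho)\otimes\mathbb{J}_{(\rH_s)^{-1}}(\sO_{\Spr^{(1)}}(\rho)\otimes\sF))$—are isomorphic.

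\textbf{The main obstacle.} The delicate point is not the reduction to generators nor the line-bundle case, but making sure the two distinguished triangles in the $(\rH_s)^{-1}$ case are \emph{genuinely} identified—i.e.~that the morphisms $\Xi_s^\dg(\kappa\sF)\to\kappa\sF$ and the $\kappa$-image of $\mathbb{J}_{\rH_s}(\sF\otimes\sO(\rho))\to\sF\otimes\sO(\rho)\langle1\rangle$ (after the $\rho$-untwist) correspond under~\eqref{eqn:dual-Xi}. This is precisely where Lemma~\ref{lem:morph-adjunction-kernels} is needed: it pins down the adjunction morphism $\Xi_s^\dg\to\id$ as coming from the kernel morphism $\sO_{(Y_s)^{(1)}}(-\rho,\rho-\alpha_s)\to\sO_{\Delta\Spr^{(1)}}\langle-2\rangle[1]$ in~\eqref{eqn:ses3-braid-gp}, while on the Springer side the triangle~\eqref{eqn:triangles-Upsilon-J} defining $\Upsilon_s$ and $\mathbb{J}_s^\pm$ is built from exactly the exact sequences~\eqref{eqn:ses3-braid-gp}--\eqref{eqn:ses4-braid-gp}; so the compatibility of the isomorphism~\eqref{eqn:dual-Xi} with the adjunction morphisms (asserted in the text following~\eqref{eqn:dual-Xi}, via the compatibility-with-adjunctions clauses at the ends of \S\ref{sss:lkd-compatibilities} and \S\ref{sss:lkd-compatibilities-2}) gives the needed matching. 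Once this compatibility of morphisms is in hand, the identification of cones is automatic and the proof closes; without it one would only get an abstract isomorphism of functors up to a non-canonical choice, which would not suffice for the inductive arguments in Corollary~\ref{cor:M-Z} and beyond. I expect the bulk of the write-up to consist of carefully tracking the various grading shifts $\langle\pm1\rangle$ and $[\pm1]$ and the $\rho$-twists through~\eqref{eqn:dual-Xi} and the two triangles, which is routine but error-prone.
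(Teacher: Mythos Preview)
Your proposal is correct and follows essentially the same route as the paper's proof: reduce to the generators $\{\theta_\lambda\}\cup\{(\rH_s)^{-1}:s\in S\}$, dispatch the line-bundle case directly from the construction of $\kappa$, and for $(\rH_s)^{-1}$ identify the triangle from~\eqref{eqn:triangles-Xi-L} with the one obtained by applying $\kappa$ to the first triangle in~\eqref{eqn:triangles-Upsilon-J} (after the $\rho$-twist), using~\eqref{eqn:dual-Xi} and Lemma~\ref{lem:morph-adjunction-kernels} to match the morphisms. One minor remark: the step you call ``octahedron axiom'' is really just the uniqueness-of-cones axiom (TR3), not the octahedron; once the two morphisms $\Xi_s^\dg(\kappa\sF)\langle-1\rangle\to\kappa(\sF)\langle-1\rangle$ are identified, their cocones are isomorphic without any further input.
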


\begin{proof}
It suffices to prove the lemma for $b$ running over a subset of generators of $\Br_\ex$. When $b= \theta_x$ for some $x \in \X$ the statement is obvious from definitions (and we even have an isomorphism of functors). Next we consider the case $b=(\rH_s)^{-1}$ for some $s \in S$. In this case, the claim follows from the identification of the distinguished triangle
\[
\mathbb{I}^\dg_{(\rH_s)^{-1}}(\kappa(\sF)) \to \Xi_s^\dg(\kappa(\sF)) \langle -1 \rangle \to \kappa(\sF) \langle -1 \rangle \xrightarrow{[1]}
\]
(see~\eqref{eqn:triangles-Xi-L}) with the distinguished triangle induced by the first triangle in~\eqref{eqn:triangles-Upsilon-J}, based on~\eqref{eqn:dual-Xi} and Lemma~\ref{lem:morph-adjunction-kernels}.
\end{proof}

With Lemma~\ref{lem:Koszul-duality-braid-action} at hand, in view of definitions we obtain the following generalization of~\eqref{eqn:dual-Xi}: for any $s \in S_\aff$ and $\sF$ in $\Db\Coh^{T \times \Gm}(\Spr^{(1)})$ we have an isomorphism
\begin{equation}
\label{eqn:dual-Xi-aff}
\Xi_s^{\dg} \circ \kappa(\sF) \cong \kappa \bigl( \sO_{\Spr^{(1)}}(-\rho) \otimes_{\sO_{\Spr^{(1)}}} \Upsilon_s(\sF \otimes_{\sO_{\Spr^{(1)}}} \sO_{\Spr^{(1)}}(\rho)) \bigr) \langle 1 \rangle.
\end{equation}

\subsubsection{Completion of the proof}

We can now finish the description of the proof of Theorem~\ref{thm:KD}. Here we will use the following easy fact from the combinatorics of $W_\ex$, proved in~\cite[Proposition~4.1.2]{Ric10}: the map $x \mapsto t_\rho \check{x}$ restricts to an involution of $W_\ex^\res$, and moreover $\ell(t_\rho \check{x}) = \ell(t_\rho w_\circ) - \ell(x)$ for any $x \in W_\ex^\res$.

We divide the arguments into 3 steps.

\emph{Step 1}. 
We first construct, for any $x \in W_\ex$, a ``graded lift'' of $\sL_x$, i.e.~an object $\sL_x^\gr \in \Db\Coh^{T^{(1)}\times\Gm}(\widetilde{\sN}^{(1)})$ such that $\for_{\Gm}(\sL_x^\gr) \cong \sL_x$.
Since $\sL_{t_\lambda x}\cong \sL_x \langle p\lambda \rangle$ for any $x\in W_\ex$ and $\lambda\in \X$, it is enough to construct these objects when $x\in W_\ex^\res$. 
We will do this by induction on $\ell(x)$. 
If $\ell(x)=0$, then writing $x=w t_\mu$ with $w \in W$ and $\mu \in \X$, by Lemma~\ref{lem:simples-length-0} we can set 
\[
\sL^\gr_{x} := \sO_{\sB^{(1)}}(-\rho+\mu)[\ell(w)]\langle (p\rho, \ell(w_0)-\ell(w))\rangle.
\]

Suppose now that $x\in W^\res_\ex$ satisfies $\ell(x)>0$, and that we have constructed objects $\sL^\gr_y$ for all $y\in W^\res_\ex$ such that $\ell(y)<\ell(x)$. 
There exists $s \in S_\aff$ such that $xs \in W_\ex^\res$ and $xs<x$.
By Lemma~\ref{lem:Upsilon-simples} and Proposition~\ref{prop:Q-ThetaP}\eqref{it:decomposition-Q}
there is a decomposition 
\[
\Upsilon_s (\sL_{xs})=\sL_{x}\oplus \bigoplus_{\substack{y\in W^\res_\ex,\\ \ell(y)<\ell(x)}} \sL_y \otimes M_y 
\]
for some finite-dimensional $T^{(1)}$-modules $M_y$. Here, for any $y$ we have
\[
M_y = \Hom_{\Db\Coh(\Spr^{(1)})}(\sL_y, \Upsilon_s (\sL_{xs})),
\]
and the morphism
\[
\bigoplus_{\substack{y\in W^\res_\ex,\\ \ell(y)<\ell(x)}} \sL_y \otimes M_y \to \Upsilon_s (\sL_{xs})
\]
is the natural morphism obtained from these identifications.
Since we have fixed ``lifts'' $\sL_y^\gr$ of $\sL_y$ and $\sL_{xs}^\gr$ of $\sL_{xs}$, each vector space $M_y$ acquires a canonical action of $\Gm$ (i.e.~a grading), and we will denote by $M_y^\gr$ the corresponding $(T^{(1)} \times \Gm)$-module. Then we have a canonical morphism
\[
\bigoplus_{\substack{y\in W^\res_\ex,\\ \ell(y)<\ell(x)}} \sL_y^\gr \otimes M_y^\gr \to \Upsilon_s (\sL_{xs}^\gr).
\]
The cone $\sL^\gr_x$ of this morphism is then a graded lift of $\sL_x$.



\emph{Step 2}. We will now show that, in the setting of Step~1, we have
\begin{equation}
\label{eqn:Upsilon-L-proof-main}
\Upsilon_s (\sL_{xs}^\gr) \cong \sL_{x}^\gr \oplus \bigoplus_{\substack{y\in W^\res_\ex,\\ \ell(y)<\ell(x)}} \sL_y^\gr \otimes M_y^\gr.
\end{equation}
In fact, since $\End(\sL_x)=\k$, we have $\Hom(\sL_x,\Upsilon_s (\sL_{xs}))=\k$, hence there exists a unique $m \in \Z$ and a unique (up to scalar) nonzero morphism $\sL_x^\gr \langle m \rangle \to \Upsilon_s (\sL_{xs}^\gr)$. Similarly, there is a unique $m'$ and a unique (up to scalar) nonzero morphism $\Upsilon_s (\sL_{xs}^\gr) \to \sL_x^\gr \langle m' \rangle$, and moreover the composition $\sL_x^\gr \langle m \rangle \to \Upsilon_s (\sL_{xs}^\gr) \to \sL_x^\gr \langle m' \rangle$ is nonzero. Hence we must have $m=m'$, and by construction of $\sL_x^\gr$ we have $m'=0$. This morphism $\sL_x^\gr \langle m \rangle \to \Upsilon_s (\sL_{xs}^\gr)$ provides a splitting of our given morphism $\Upsilon_s (\sL_{xs}^\gr) \to \sL_x^\gr$, which shows the desired decomposition.

\emph{Step 3}.
We will finally show that, defining the objects $(\sP_y^\gr : y \in W_\ex)$ in such a way that $\kappa(\sL^\gr_x\otimes_{\sO_{\Spr^{(1)}}} \sO_{\Spr^{(1)}}(-\rho)) \cong \sP^\gr_{t_\rho\check{x}}$ for any $x \in W_\ex^\res$, we obtain graded lifts of the objects $\sP_y$ in the sense that $\for_{\Gm}(\sP_y^\gr) \cong \sP_y$ for any $y$.
 
As in Step 1, it is enough to prove this property when $y \in W_\ex^\res$, which will be done by downward induction on $\ell(y)$.
By the comments above,
the maximal possible length is obtained when
$y=t_\rho\check{\omega}$ for some $\omega\in \Omega$. In this case, writing $\omega=wt_\mu$ with $w\in W$ and $\mu\in \X$, one checks by explicit computation that
\[
\sP^\gr_{t_\rho\check{\omega}} = \sO_{\Groth^{(1)}\times^R_{\g^{*(1)}} \{0\}}(\mu)\langle (p\rho, -\ell(w_0)-\ell(w)) \rangle.
\]
Hence the desired claim follows from Lemma~\ref{lem:proj-max-length}.

Now, suppose that $y\in W^\res_\ex$ is not of maximal length, and that the claim is known for all strictly longer elements in $W^\res_\ex$.
Write $y = t_\rho \check{x}$ with $x \in W_\ex^\res \smallsetminus \Omega$,
and choose as in Step~1 an element $s \in S_\aff$ such that $xs \in W_\ex^\res$ and $xs<x$.
Applying $\kappa(-\otimes_{\sO_{\Spr^{(1)}}} \sO_{\Spr^{(1)}}(-\rho))$ in~\eqref{eqn:Upsilon-L-proof-main} and using~\eqref{eqn:dual-Xi-aff},
we obtain a decomposition 
\begin{equation}
\label{eqn:Xi-Pgr}
\Xi^\dg_s(\sP^\gr_{y s})\langle -1\rangle = \sP_y^\gr \oplus \bigoplus_{\substack{z \in W^\res_\ex,\\ \ell(z) > \ell(y)}} \sP^\gr_{z} \otimes M^{\gr,\prime}_z
\end{equation}
for some finite-dimensional $(T^{(1)}\times\Gm)$-modules $M^{\gr,\prime}_z$.
We have isomorphisms 
\[
\End(\for_{\Gm}(\sP^\gr_y))
\cong \bigoplus_{i \in \Z} \Hom(\sP_y^\gr, \sP_y^\gr\langle -i\rangle) \cong
\bigoplus_{i \in \Z} \Hom(\sL^\gr_x, \sL^\gr_x \langle -i\rangle[i]), 
\]
and the rightmost term embeds in $\bigoplus_{i \in \Z} \Ext^i(\Simp_x,\Simp_x)$.
Hence $\End(\for_{\Gm}(\sP_y^\gr))$ is a non-negatively graded finite-dimensional algebra with $\k$ as its degree-$0$ component;
in particular it 
local. 
Hence $\for_{\Gm}(\sP_y^\gr)$ is indecomposable. 
Comparing the image under $\for_{\Gm}$ of~\eqref{eqn:Xi-Pgr} with the decomposition in Proposition~\ref{prop:Q-ThetaP}\eqref{it:Theta-Proj} (using the induction hypothesis and Proposition~\ref{prop:wall-crossing-aff}) we deduce that $\for_{\Gm}(\sP_y^\gr) \cong \sP_y$, which finishes the proof.

\subsection{Duals of Verma modules}
\label{ss:duals-Verma}

%
%

To conclude this section we explain the effect of Koszul duality on standard objects. We equip $\sO_{(\g/\b)^{*(1)}}$ and $\sO_{(w_\circ^{(1)},0)}$ with their natural $\Gm$-equivariant structures. Then, for $x \in W_\ex$, we write
$x=t_\lambda w_\circ w$ with $\lambda\in \X$ and $w\in W$, and set 
\[
{\sM}^\gr_x:=\J_{(\rH_{w^{-1}})^{-1}} (\sO_{(\g/\b)^{*(1)}}) \langle p\lambda \rangle, \qquad
\sZ^\gr_x:=\mathbb{I}^\dg_{(\rH_{w^{-1}})^{-1}} (\sO_{(w_\circ^{(1)},0)}) \langle p\lambda \rangle.
\]
Then, by definition and Corollary~\ref{cor:M-Z} we have
\[
\for_{\Gm}(\sM^\gr_x) \cong \sM_x, \quad \for_{\Gm}(\sZ^\gr_x) \cong \sZ_x.
\]

\begin{lem}
\label{lem:Koszul-standards}
For any $x\in W_\ex$ we have
\[
\kappa(\sM^\gr_x\otimes_{\sO_{\Spr^{(1)}}} \sO_{\Spr^{(1)}}(-\rho))\ \cong \sZ^\gr_{t_\rho x}.
\]
\end{lem}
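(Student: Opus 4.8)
The plan is to reduce the statement to a combination of the compatibilities of $\kappa$ with the braid group actions (Lemma~\ref{lem:Koszul-duality-braid-action}) and the ``base case'' computation for the element $t_\rho w_\circ$ (or rather its associated objects $\sM_{w_\circ}^\gr$ and $\sZ_{w_\circ}^\gr$), exactly parallel to how Corollary~\ref{cor:M-Z} was deduced from Proposition~\ref{prop:M-Z-w0}. Writing $x = t_\lambda w_\circ w$ with $\lambda \in \X$, $w \in W$, we have by definition
\[
\sM^\gr_x = \J_{(\rH_{w^{-1}})^{-1}}(\sO_{(\g/\b)^{*(1)}}) \langle p\lambda \rangle, \qquad \sZ^\gr_{t_\rho x} = \mathbb{I}^\dg_{(\rH_{w^{-1}})^{-1}}(\sO_{(w_\circ^{(1)},0)}) \langle p\lambda + p\rho \rangle,
\]
where in the second expression I have used that $t_\rho x = t_{\lambda+\rho} w_\circ w$. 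So it suffices to prove the case $\lambda = 0$, i.e.
\[
\kappa\bigl( \J_{(\rH_{w^{-1}})^{-1}}(\sO_{(\g/\b)^{*(1)}}) \otimes_{\sO_{\Spr^{(1)}}} \sO_{\Spr^{(1)}}(-\rho) \bigr) \cong \mathbb{I}^\dg_{(\rH_{w^{-1}})^{-1}}(\sO_{(w_\circ^{(1)},0)}) \langle p\rho \rangle,
\]
for all $w \in W$, after which tensoring by $\sO_{\Spr^{(1)}}(p\lambda)$ (which corresponds under $\kappa$ to $\langle p\lambda\rangle$ up to the regrading bookkeeping, and commutes with the $\J$-action since $\sO(p\lambda)$ is pulled back from $\sB^{(1)}$) gives the general case.

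First I would establish the base case $w = e$: I claim $\kappa(\sO_{(\g/\b)^{*(1)}} \otimes \sO_{\Spr^{(1)}}(-\rho)) \cong \sO_{(w_\circ^{(1)},0)} \langle p\rho \rangle$ (with an appropriate cohomological shift absorbed into the grading, to be tracked). This should follow from the construction of $\kappa$ in~\S\ref{sss:lkd-equivalence} together with the explicit description in Proposition~\ref{prop:M-Z-w0}: the objects $\hM_{w_\circ} \cong \sO_{(\g/\n)^{*(1)}_{\ho}}$ and $\sZ_{w_\circ} \cong \sO_{\{(w_\circ^{(1)},0)\}}$ are exactly the images of $\hDelta_{w_\circ}$ and $\bV_{w_\circ}$, and the Koszul-duality relation $\kappa(\sL_x^\gr \otimes \sO(-\rho)) \cong \sP_{t_\rho \check x}^\gr$ of Theorem~\ref{thm:KD} is the analogue at the level of simple/projective modules; here one checks directly on the relevant skyscraper and structure sheaves that linear Koszul duality sends $\sO_{(\g/\b)^{*(1)}}(-\rho)$ (a sheaf on $\Spr^{(1)}$ supported on the fiber over $w_\circ^{(1)}$) to $\sO_{(w_\circ^{(1)},0)}$, using the description of $\kappa$ as the tensor of a Koszul resolution and the fact that $\Groth^{(1)}$ is the orthogonal of $\Spr^{(1)}$ (cf.~\cite[\S 3.1]{Ric10}). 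The $\Gm$-equivariant structures and the shift $\langle p\rho \rangle$ are pinned down by compatibility with $\for_\Gm$ and Corollary~\ref{cor:M-Z}.

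Then I would propagate from the base case to all $w$ via Lemma~\ref{lem:Koszul-duality-braid-action}. Applying that lemma with $b = (\rH_{w^{-1}})^{-1}$ and $\sF = \sO_{(\g/\b)^{*(1)}}$, we get
\[
\mathbb{I}^\dg_{(\rH_{w^{-1}})^{-1}}\bigl( \kappa(\sO_{(\g/\b)^{*(1)}}) \bigr) \cong \kappa\bigl( \sO_{\Spr^{(1)}}(-\rho) \otimes_{\sO_{\Spr^{(1)}}} \J_{(\rH_{w^{-1}})^{-1}}(\sO_{(\g/\b)^{*(1)}} \otimes_{\sO_{\Spr^{(1)}}} \sO_{\Spr^{(1)}}(\rho)) \bigr);
\]
but since $\sO_{\Spr^{(1)}}(\rho)$ is pulled back from $\sB^{(1)}$, the functor $\J_{(\rH_{w^{-1}})^{-1}}$ commutes with $\otimes \sO(\rho)$ up to the twist, so the right-hand side is $\kappa(\sM^\gr_x \otimes \sO_{\Spr^{(1)}}(-\rho))$ after the substitution of definitions, while the left-hand side is $\mathbb{I}^\dg_{(\rH_{w^{-1}})^{-1}}(\sZ^\gr_{w_\circ}\langle p\rho \rangle) = \sZ^\gr_{t_\rho x}$ by the base case and the definition of $\sZ^\gr_{t_\rho x}$. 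Care is needed with the interaction of $\otimes \sO(\rho)$ and $\otimes \sO(-\rho)$ on the two sides and with the various internal/cohomological shifts introduced by $\kappa$ and by the line-bundle twists on $Z_s$; I expect the main obstacle to be precisely this grading bookkeeping — verifying that no residual shift survives — together with checking that the conjugation $\mathbb{I}^\dg_s := \mathbb{I}^\dg_{\imath(b)} \circ \mathbb{I}^\dg_t \circ \mathbb{I}^\dg_{\imath(b)^{-1}}$ used to define $\Xi^\dg_s$ for affine $s$ does not introduce spurious twists, so that Lemma~\ref{lem:Koszul-duality-braid-action} applies uniformly for $b$ in the chosen generating set $\{(\rH_s)^{-1} : s \in S\} \cup \{\theta_\lambda : \lambda \in \X\}$. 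Applying $\for_\Gm$ and comparing with Corollary~\ref{cor:M-Z} then serves as a consistency check that the lift has the correct underlying object.
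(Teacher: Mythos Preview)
Your overall strategy is exactly the paper's: establish the base case $x=w_\circ$ by a direct linear Koszul duality computation, then propagate to general $x$ via Lemma~\ref{lem:Koszul-duality-braid-action} and the twist by $\langle p\lambda\rangle$. However, your propagation step contains a genuine error. You apply Lemma~\ref{lem:Koszul-duality-braid-action} with $\sF=\sO_{(\g/\b)^{*(1)}}$ and then claim that ``$\J_{(\rH_{w^{-1}})^{-1}}$ commutes with $\otimes\,\sO(\rho)$ up to the twist.'' This is false: tensoring by $\sO_{\Spr^{(1)}}(\rho)$ is the action of $\theta_\rho\in\Br_\ex$, and $(\rH_s)^{-1}$ does not commute with $\theta_\rho$ in $\Br_\ex$ (since $\langle\rho,\alpha_s^\vee\rangle=1\neq 0$), so the corresponding functors do not commute either. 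Your identification of the right-hand side with $\kappa(\sM^\gr_x\otimes\sO(-\rho))$ therefore does not follow as written.

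The fix is simply to choose $\sF$ correctly: take $\sF=\sM^\gr_{w_\circ}\otimes\sO_{\Spr^{(1)}}(-\rho)$ in Lemma~\ref{lem:Koszul-duality-braid-action}. Then $\sF\otimes\sO(\rho)=\sM^\gr_{w_\circ}$, so the right-hand side of the lemma is literally $\kappa(\sO(-\rho)\otimes\J_{(\rH_{w^{-1}})^{-1}}(\sM^\gr_{w_\circ}))=\kappa(\sM^\gr_{w_\circ w}\otimes\sO(-\rho))$, and the left-hand side is $\mathbb{I}^\dg_{(\rH_{w^{-1}})^{-1}}(\kappa(\sF))=\mathbb{I}^\dg_{(\rH_{w^{-1}})^{-1}}(\sZ^\gr_{t_\rho w_\circ})=\sZ^\gr_{t_\rho w_\circ w}$ by the base case and the definition of $\sZ^\gr$. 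No commutation claim is needed. Your base case is also more direct than you suggest: since $\sO_{(\g/\b)^{*(1)}}$ is supported over the single point $w_\circ^{(1)}\in\sB^{(1)}$, tensoring by $\sO_{\Spr^{(1)}}(-\rho)$ just gives the character twist $\langle p\rho\rangle$, and then $\kappa$ sends the structure sheaf of a fiber of $\Spr^{(1)}\to\sB^{(1)}$ to the skyscraper at the corresponding point of $\Groth^{(1)}\times^R_{\g^{*(1)}}\{0\}$ by the very definition of linear Koszul duality.
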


\begin{proof}
First we consider the case $x=w_\circ$ (i.e.~$w=e$ and $\lambda=0$).
We compute that 
\[
\kappa(\sM^\gr_{w_\circ} \otimes_{\sO_{\Spr^{(1)}}} \sO_{\Spr^{(1)}}(-\rho))
= \kappa\big(\sO_{(\g/\b)^{*(1)}} \langle {p\rho} \rangle \big)
\cong \sO_{(w_\circ^{(1)},0)} \langle {p\rho} \rangle=\sZ^\gr_{t_\rho w_\circ}.
\]
Using the compatibility of $\kappa$ with the $\Br_\ex$-actions (see Lemma~\ref{lem:Koszul-duality-braid-action}) and the twist functors we deduce the desired isomorphism for a general $x \in W_\ex$.
\end{proof}


\section{Dimensions of extension spaces}
\label{sec:dim}


In this section, we finally prove Theorem~\ref{thm:main}. 
From now on we assume that Lusztig's conjecture holds, so that in particular Theorem~\ref{thm:KD} can be applied.
By Proposition~\ref{prop:Ext-gB-gT},
for any $x,y\in W_\aff$ and $m \in \Z$ we have 
\begin{equation}
\label{eqn:Ext-gps-Section-proof} 
\Ext^m_{\Mod(\g,B)}(\Simp_x, \nabla_y) \cong
\Ext^m_{\Mod^T(\sU(\g))}(\tDelta_y, \Simp_x), 
\end{equation}
and we now want to compute the dimension of these spaces.

\subsection{The Koszul grading} 

Let $\E$ be the $(p\X)$-graded algebra
whose opposite algebra is
\begin{align*}
\E^\op &=
\bigoplus_{\lambda\in p\X} 
\Hom_{G_1 T} \left(\bigoplus_{w\in W} \Pro_{w} \langle \lambda \rangle,\bigoplus_{w\in W} \Pro_{w} \right)\\ 
&\cong \bigoplus_{\lambda\in p\X} 
\Hom_{\DGCoh^{T^{(1)}}(\Groth^{(1)}\times^R_{\g^{*(1)}} \{0\})} \left(\bigoplus_{w\in W} \sP_{w} \langle \lambda \rangle,\bigoplus_{w\in W} \sP_{w} \right),
\end{align*} 
where the isomorphism is induced by the equivalence $\gamma_0^{\ho}$, see~\S\ref{ss:localization-fixed}. It is easily seen that, neglecting the grading, we have
\begin{equation}
\label{eqn:E-no-grading}
\E = \End_{\sU_0^{\ho}} \left( \bigoplus_{w\in W} \Pro_{w} \right)^\op.
\end{equation}
In particular, this algebra is finite-dimensional.

We will denote by $\mod^{p\X}(\E)$ the category of $(p\X)$-graded finitely generated (equivalently, finite-dimensional) $\E$-modules. For any $\lambda \in p\X$ we have an autoequivalence
\[
\langle \lambda \rangle : \mod^{p\X}(\E) \simto \mod^{p\X}(\E),
\]
see~\S\ref{sss:grading-shift}.

\begin{rmk}
The center $\mathrm{Z}(G) \subset T$ acts on $\Pro_{w} \langle \lambda \rangle$ via the character corresponding to the image of $p\lambda$ in $\X/\Z\Phi$. Hence the $\lambda$-graded component of $\E$ vanishes unless $p\lambda \in \Z\Phi$, which under our running assumption that $p>h$ is equivalent to $\lambda \in \Z\Phi$. Hence the grading on $\E$ is concentrated in $\Z\Phi$.
\end{rmk}

Recall the category
$\mod^{T,0}(\sU_0^{\ho})=\mod^{T}(\sU_0^{\ho}) \cap \mod^{T,0}(\sU_0)$, see~\S\ref{ss:U0-modules}.
By Remark~\ref{rmk:G1T}, $\mod^{T,0}(\sU_0^{\ho})$ identifies with the sum of the ``blocks'' of the category of finite-dimensional $G_1T$-modules corresponding to weights in $\Omega \bullet 0$ in the sense of~\cite[\S II.9.22]{Jan03}. The simple objects in $\mod^{T,0}(\sU_0^{\ho})$ are (up to isomorphism) the objects $(\Simp_w : w \in W_\ex)$.

We have a natural functor
\begin{equation}
\label{eqn:Morita-equiv}
\bigoplus_{\lambda \in p\X} \Hom_{\mod^T(\sU_0^{\ho})} \left( \bigoplus_{w \in W} \Pro_{w} \langle \lambda \rangle,- \right) : \mod^{T,0}(\sU_0^{\ho}) \to \mod^{p\X}(\E),
\end{equation}
where to define the action of $\E$ we use the canonical isomorphisms
\[
\Hom_{\mod^T(\sU_0^{\ho})} ( \Pro_{w} \langle \lambda \rangle, \Pro_{y}) \cong \Hom_{\mod^T(\sU_0^{\ho})} ( \Pro_{ w} \langle \lambda+\mu \rangle, \Pro_{y} \langle \mu \rangle)
\]
for $y,w \in W$ and $\lambda,\mu \in p\X$ induced by the equivalence $\langle \mu \rangle$. 
By standard arguments (see e.g.~\cite[Proposition~E.4]{AJS94}) this functor is an equivalence of categories.
For any $x\in W_\ex$
we will denote by 
\[
\bfP_x, \quad \bfZ_x, \quad \bfL_x
\]
the images under this equivalence of $\Pro_x$, $\bV_x$, $\Simp_x$ respectively. 

Consider, for $x \in W_\ex$, the ``lift'' $\sP^\gr_x$ of the object $\sP_x$ to $\DGCoh^{T^{(1)} \times \Gm}(\widetilde{\g}^{(1)}\times^R_{{\g}^{*(1)}} \{0\})$ from Theorem~\ref{thm:KD}. For any $w,y \in W$ and $\lambda \in p\X$, the functor $\for_{\Gm}$ induces an isomorphism
\begin{multline*}
 \bigoplus_{n \in \Z} \Hom_{\DGCoh^{T^{(1)} \times \Gm}(\Groth^{(1)}\times^R_{\g^{*(1)}} \{0\})} ( \sP^\gr_{w} \langle (\lambda,n) \rangle, \sP^\gr_{y} ) \\
 \simto \Hom_{\DGCoh^{T^{(1)}}(\Groth^{(1)}\times^R_{\g^{*(1)}} \{0\})} ( \sP_{w} \langle \lambda \rangle, \sP_{y} ).
\end{multline*}
These decompositions define an additional $\Z$-grading on $\E$, which makes it $(p\X \times \Z)$-graded; this $(p\X \times \Z)$-graded ring will be denoted $\underline{\E}$. The results of~\cite[\S 9]{Ric10} show that, as a $\Z$-graded ring, $\underline{\E}$ is a Koszul ring. In particular, $\underline{\E}$ is concentrated in nonnegative degrees, and its degree-$0$ component is a semisimple ring.

We will denote by $\mod^{p\X \times \Z}(\underline{\E})$ the category of $(p\X \times \Z)$-graded finite-dimensional $\underline{\E}$-modules.
For $x \in W_\ex$ we set
\[
\bfP^\gr_{x} = \bigoplus_{(\lambda,n) \in p\X \times \Z} \Hom_{\DGCoh^{T^{(1)} \times \Gm}(\Groth^{(1)}\times^R_{\g^{*(1)}} \{0\})} \left( \bigoplus_{w \in W} \sP^\gr_{w} \langle (\lambda,n) \rangle, \sP^\gr_{x} \right).
\]
Then $\bfP^\gr_{x}$ is a $(p\X \times \Z)$-graded $\underline{\E}$-module, isomorphic to $\bfP_x$ as a $p\X$-graded $\E$-module. Moreover, for any $x \in W_\ex$ and $\lambda \in \X$ we have
$\bfP^\gr_{t_\lambda x}\cong \bfP^\gr_x \langle p\lambda \rangle$.

We have
\[
\underline{\E} = \bigoplus_{w \in W} \bfP^\gr_w
\]
as $(p\X \times \Z)$-graded $\underline{\E}$-modules; in particular, for the $\Z$-grading, each $\bfP^\gr_x$ ($x \in W_\ex$) is concentrated in nonnegative degrees, and its head, denoted $\bfL^\gr_{x}$, is concentrated in degree $0$. Note that $\bfL^\gr_{x}$ is isomorphic to $\bfL_x$ as a $(p\X)$-graded $\E$-module, and that the assignment $(x,n) \mapsto \bfL^\gr_x \langle n \rangle$ induces a bijection between $W_\ex \times \Z$ and the set of isomorphism classes of simple objects in $\mod^{p\X \times \Z}(\underline{\E})$. For any $x \in W_\ex$ and $\lambda \in \X$ we also have
$\bfL^\gr_{t_\lambda x}\cong \bfL^\gr_x \langle p\lambda\rangle$.


\begin{lem}
\label{lem:socle-Px}
For any $x \in W_\ex$, $\bfP^\gr_x$ is the injective hull of $\bfL_x^\gr \langle 2\ell(w_\circ) \rangle$ in the category $\mod^{p\X \times \Z}(\underline{\E})$.
\end{lem}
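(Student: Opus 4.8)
The statement asserts two things about the indecomposable projective $\bfP^\gr_x$ in $\mod^{p\X\times\Z}(\underline{\E})$: first, that it is injective; and second, that its socle is the simple object $\bfL_x^\gr$ shifted by the internal degree $\langle 2\ell(w_\circ)\rangle$. The natural strategy is to transport everything through Koszul duality (Theorem~\ref{thm:KD}) to the category $\DGCoh^{T^{(1)}\times\Gm}(\Spr^{(1)})$ (or rather the associated graded module category), where projectives and injectives are interchanged, and where the objects $\sP^\gr_x$ correspond, via $\kappa$ and the formula $\kappa(\sL^\gr_x\otimes\sO_{\Spr^{(1)}}(-\rho))\cong\sP^\gr_{t_\rho\check x}$, to the simple-type objects $\sL^\gr$. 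Concretely, the $p\X\times\Z$-graded $\underline{\E}$-module $\bfP^\gr_x$ is (by definition of $\underline{\E}$) $\Hom(\bigoplus_w\sP^\gr_w\langle-\rangle,\sP^\gr_x)$; applying the equivalence $\kappa^{-1}$ and the identification $\sP^\gr_{t_\rho\check x}\leftrightarrow\sL^\gr_x\otimes\sO(-\rho)$, this becomes $\Hom$ into (a twist of) $\sL^\gr_{y}$ for the appropriate $y$ with $t_\rho\check y = x$.

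\textbf{Key steps.} First I would recall the general Koszul-ring formalism from~\cite{Ric10,abg} (as invoked earlier in this section): for a Koszul ring $\underline{\E}$ with $\underline{\E}_0$ semisimple, the graded module category $\mod^{\Z}(\underline{\E})$ carries a Koszul duality, and the indecomposable projective cover $P(L)$ of a simple $L$ is dual to the indecomposable injective hull of the Koszul dual simple; more to the point here, one has the ``numerical'' fact that $\underline{\E}$ being Koszul and (as follows from the geometry, $\Spr^{(1)}$ being smooth of dimension $2\ell(w_\circ)$, or from finite global dimension of the relevant category) Gorenstein/self-dual of the appropriate degree, forces each $\bfP^\gr_x$ to be injective with one-dimensional socle in a prescribed degree. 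Second, I would pin down the socle degree: the top of $\bfP^\gr_x$ is $\bfL^\gr_x$ in degree $0$; the socle should be $\bfL^\gr_x\langle 2\ell(w_\circ)\rangle$ because the longest ``length'' appearing is $\ell(t_\rho w_\circ)=\ell(w_\circ)$ on both the $\sL$-side and the $\sP$-side, and these add — this matches the explicit computations in Lemma~\ref{lem:simples-length-0}, Lemma~\ref{lem:proj-max-length} and in Step 3 of~\S\ref{ss:proof-KD}, where the top-length projective $\sP^\gr_{t_\rho\check\omega}$ carries the internal shift $\langle(p\rho,-\ell(w_\circ)-\ell(w))\rangle$ while the length-$0$ simple $\sL^\gr_\omega$ carries $\langle(p\rho,\ell(w_\circ)-\ell(w))\rangle$, a discrepancy of $2\ell(w_\circ)$ in the $\Z$-grading. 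Third, I would verify injectivity directly: since $\underline{\E}$ is Koszul, it has finite global dimension, and one can use the self-duality of $\Db\Coh^{T^{(1)}\times\Gm}(\Spr^{(1)})$ under Grothendieck–Serre duality (which, because the dualizing sheaf of $\Spr^{(1)}$ is trivial up to a shift by $2\ell(w_\circ)$ and a character, shifts internal degree by exactly $2\ell(w_\circ)$) to match $\bfP^\gr_x$ with the injective hull of the corresponding simple. Alternatively, and perhaps more cleanly, I would cite the general principle that for a positively-graded Koszul algebra arising as the Ext-algebra of the $\sP_w$'s over a category with a ``Serre functor'' shifting by $[d]\langle d\rangle$ (here $d=2\ell(w_\circ)$), projectives are injectives with socle shifted by $\langle d\rangle$.

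\textbf{The main obstacle.} The delicate point is making the Serre-duality / Gorenstein argument precise in the equivariant dg-setting: one must know that $\Db\Coh^{T^{(1)}\times\Gm}(\Spr^{(1)})$ — or the dg-scheme $\Groth^{(1)}\times^R_{\g^{*(1)}}\{0\}$ — admits a Serre functor of the form $(-)\otimes\sO_{\Spr^{(1)}}(-2\rho)\,[2\ell(w_\circ)]\langle 2\ell(w_\circ)\rangle$ (the twist by $\sO(-2\rho)$ being invisible at the level of the algebra $\underline{\E}$ since it commutes with the $\mathbb{J}_b$ up to relabelling, or is absorbed by restricting to the block), and that this is compatible with the $\Br_\ex$-action used to define the $\sP^\gr_x$. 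The cleanest route is probably to avoid Serre duality altogether and instead argue purely on the module side: use that $\underline{\E}$, being Koszul with $\underline{\E}_0$ semisimple, has a well-understood structure, show $\underline{\E}\cong\underline{\E}^{!!}$ is such that its regular representation is also injective (i.e.\ $\underline{\E}$ is self-injective after the grading shift), by exhibiting a nondegenerate graded bilinear form on $\underline{\E}$ of degree $(0,2\ell(w_\circ))$ — this form coming from composition $\bfP^\gr_x\times\bfP^\gr_x\to\bfP^\gr_x/\rad\to\text{(top degree)}$, whose nondegeneracy is exactly the content of the earlier geometric computations. I expect verifying the nondegeneracy of this pairing, hence the precise socle, to require invoking Lemma~\ref{lem:proj-max-length} together with the fact (from Step 3 of~\S\ref{ss:proof-KD}) that every $\bfP^\gr_x$ embeds into a sum of the top-length projectives via the $\Xi^\dg_s$ recursion, which controls the socle.
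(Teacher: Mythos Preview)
Your approach is far more elaborate than necessary, and the ``main obstacle'' you identify is a genuine one that you have not resolved. The paper's proof is essentially three lines, because the two facts you are working hard to establish are already available from elementary representation theory.

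First, injectivity of $\bfP^\gr_x$ does not require Serre duality or a graded Frobenius-algebra argument. Forgetting the $\Z$-grading, $\bfP_x$ is the image under the Morita equivalence~\eqref{eqn:Morita-equiv} of $\Pro_x$, which is both the projective cover and the injective hull of $\Simp_x$ in $\Rep(G_1T)$ (this is standard: categories of $G_1T$-modules are Frobenius, see~\cite[\S II.11.5]{Jan03}). So $\bfP_x$ is injective in $\mod^{p\X}(\E)$ with simple socle $\bfL_x$. Since $\underline{\E}$ is positively graded with semisimple degree-$0$ part, this immediately implies $\bfP^\gr_x$ is injective in $\mod^{p\X\times\Z}(\underline{\E})$; the only question is the $\Z$-degree of the socle.

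Second, that degree is determined by general Koszul-ring combinatorics, not by any explicit geometric computation. By~\cite[Proposition~2.4.1]{BGS96}, for a module over a Koszul ring the grading filtration coincides with the radical filtration; hence the socle of $\bfP^\gr_x$ sits in degree $\ell\ell(\bfP_x)-1$, where $\ell\ell$ is the Loewy length. And the Loewy length of indecomposable projectives in the principal block of $G_1T$ is classically known (under Lusztig's conjecture) to be $2\ell(w_\circ)+1$; see~\cite[Proposition~II.D.8]{Jan03}. This gives the shift $\langle 2\ell(w_\circ)\rangle$ directly.

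Your heuristic about the shift (comparing the internal grading of $\sL^\gr_\omega$ and $\sP^\gr_{t_\rho\check\omega}$) is not wrong, but it only treats the extreme cases and does not by itself establish the socle degree for general $x$; and the Serre-duality route, as you note yourself, requires compatibilities in the equivariant dg-setting that you have not verified. The point you are missing is that both ingredients---injectivity and Loewy length---are available off the shelf from the $G_1T$-module side.
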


\begin{proof}
We already know that, as a $(p\X)$-graded $\E$-module, $\bfP_x$ is injective, with socle $\bfL_x$; see the equivalence~\eqref{eqn:Morita-equiv} and the discussion in~\S\ref{ss:standard-notation}. What remains to be understood is the $\Z$-degree in which this socle occurs in $\bfP^\gr_x$. Now by~\cite[Proposition~2.4.1]{BGS96}, this degree is $\ell\ell(\bfP_x^\gr)-1$ where $\ell\ell$ means the Loewy length. We conclude by~\cite[Proposition~II.D.8]{Jan03}.
\end{proof}



Below we will need the following statement.

\begin{lem}
\label{lem:Morita-equiv-graded}
Let $\sF,\sG$ in $\DGCoh^{T^{(1)} \times \Gm}(\Groth^{(1)}\times^R_{\g^{*(1)}} \{0\})$, and assume that the images $\sF'$ and $\sG'$ of $\sF$ and $\sG$ in $\DGCoh^{T^{(1)}}(\Groth^{(1)}\times^R_{\g^{*(1)}} \{0\})$ correspond, under the equivalence $\gamma_0^{\ho}$, to objects in $\mod^{T,0}(\sU_0^{\ho}) \subset \Db \mod^T(\sU_0^{\ho})$. Set
\begin{align*}
M &= \bigoplus_{(\lambda,n) \in p\X \times \Z} \Hom_{\DGCoh^{T^{(1)} \times \Gm}(\Groth^{(1)}\times^R_{\g^{*(1)}} \{0\})} \left( \bigoplus_{w \in W} \sP^\gr_{w} \langle (\lambda,n) \rangle, \sF \right), \\
N &= \bigoplus_{(\lambda,n) \in p\X \times \Z} \Hom_{\DGCoh^{T^{(1)} \times \Gm}(\Groth^{(1)}\times^R_{\g^{*(1)}} \{0\})} \left( \bigoplus_{w \in W} \sP^\gr_{w} \langle (\lambda,n) \rangle, \sG \right),
\end{align*}
seen as $(p\X \times \Z)$-graded $\underline{\E}$-modules.
Then the functor
\[
\bigoplus_{(\lambda,n) \in p\X \times \Z} \Hom_{\DGCoh^{T^{(1)} \times \Gm}(\Groth^{(1)}\times^R_{\g^{*(1)}} \{0\})} \left( \bigoplus_{w \in W} \sP^\gr_{w} \langle (\lambda,n) \rangle, - \right)
\]
induces an isomorphism
\[
\Hom_{\DGCoh^{T^{(1)} \times \Gm}(\Groth^{(1)}\times^R_{\g^{*(1)}} \{0\})} (\sF,\sG) \simto \Hom_{\mod^{p\X \times \Z}(\underline{\E})}(M,N).
\]
\end{lem}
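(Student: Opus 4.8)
The statement is a ``graded refinement'' of the Morita-type equivalence~\eqref{eqn:Morita-equiv}, and the natural strategy is to reduce it to a known nonequivariant statement, namely the fact that for the Koszul ring $\underline{\E}$ (with its $\Z$-grading coming from~\cite[\S 9]{Ric10}) the functor $\Hom(\bigoplus_{w} \sP^\gr_w \langle(\lambda,n)\rangle,-)$ realizes an equivalence between an appropriate triangulated category and the derived category of graded $\underline{\E}$-modules, or at least is fully faithful on the relevant objects. First I would observe that the objects $\sF$ and $\sG$ both lie in the additive subcategory generated by the $\sP^\gr_w\langle(\lambda,n)\rangle$: indeed, by hypothesis $\sF'$ and $\sG'$ correspond under $\gamma_0^{\ho}$ to objects of $\mod^{T,0}(\sU_0^{\ho})$, which (via the equivalence~\eqref{eqn:Morita-equiv}, combined with the discussion in~\S\ref{ss:standard-notation} that $\Pro_w$ is projective and injective) are direct summands of direct sums of shifts of the $\Pro_w$; since $\for_{\Gm}$ identifies graded and ungraded Hom-spaces (as recorded just before Theorem~\ref{thm:KD} and again in the construction of $\underline{\E}$), and $\DGCoh^{T^{(1)}\times\Gm}$ is idempotent-complete, $\sF$ and $\sG$ are themselves direct summands of finite direct sums of objects $\sP^\gr_w\langle(\lambda,n)\rangle$.

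The key step is then purely formal: if $\mathsf{A}$ is an additive category, $\{\mathsf{P}_i\}$ a family of objects, $\mathsf{E}$ the (graded) endomorphism ring of $\bigoplus_i \mathsf{P}_i$ in the relevant sense, and $\sF,\sG$ are direct summands of finite direct sums of the $\mathsf{P}_i$ (and their shifts), then the functor $\Hom(\bigoplus_i \mathsf{P}_i\langle-\rangle,-)$ induces an isomorphism $\Hom_{\mathsf{A}}(\sF,\sG)\simto \Hom_{\mathsf{E}\text{-gr}}(M,N)$ where $M,N$ are the images of $\sF,\sG$. This is the standard argument underlying projectivization/Morita theory: it is immediate when $\sF,\sG$ are themselves among the $\mathsf{P}_i\langle(\lambda,n)\rangle$ (this is the \emph{definition} of $\underline{\E}$ as a graded ring), it extends to finite direct sums by additivity on both sides, and it descends to direct summands because taking the image under an additive functor commutes with splitting idempotents — one checks that an idempotent $e\in\Hom_{\mathsf{A}}(\bigoplus\mathsf{P}_i\langle-\rangle,\bigoplus\mathsf{P}_i\langle-\rangle)$ and its image $\bar e$ in $\End_{\underline{\E}\text{-gr}}$ cut out matching summands, using that $\mod^{p\X\times\Z}(\underline{\E})$ is also idempotent-complete. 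I would carry this out by first proving the isomorphism when $\sF=\sP^\gr_w\langle(\lambda,n)\rangle$ and $\sG=\sP^\gr_y\langle(\mu,m)\rangle$ (tautological from the definition of $\underline{\E}$ and the formula $\bfP^\gr_{t_\lambda x}\cong\bfP^\gr_x\langle p\lambda\rangle$), then extending to finite biproducts, then to summands.

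The main obstacle, and the only genuinely non-formal point, is verifying that $\sF$ and $\sG$ really do belong to the additive closure of $\{\sP^\gr_w\langle(\lambda,n)\rangle\}$ — i.e. that the hypothesis ``$\sF'$, $\sG'$ correspond to objects of $\mod^{T,0}(\sU_0^{\ho})$'' genuinely forces this, and in particular that passing from the ungraded summand decomposition (which exists because each object of $\mod^{T,0}(\sU_0^{\ho})$ is a summand of a sum of $\Pro_w$'s, and $\gamma_0^{\ho}$ sends $\Pro_w$ to $\sP_w$) to a \emph{graded} one is legitimate. For this I would invoke the compatibility of $\for_{\Gm}$ with Hom-spaces together with the fact that $\for_{\Gm}$ is essentially surjective on the relevant objects (every $\sP_w$ lifts to $\sP^\gr_w$ by Theorem~\ref{thm:KD}), plus a lifting-of-idempotents argument: an idempotent in the ungraded endomorphism ring of $\bigoplus\sP_w^{\oplus}$ automatically lies in degree $0$ of the graded endomorphism ring (since the latter is concentrated in nonnegative $\Z$-degrees with semisimple degree-$0$ part, any idempotent must be homogeneous of degree $0$), hence defines a graded summand. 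Once this is in place the isomorphism of the lemma follows formally, and I would phrase the final write-up as: reduce to summands of $\bigoplus\sP^\gr_w\langle-\rangle$, then apply the projectivization principle.
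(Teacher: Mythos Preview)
Your reduction has a genuine gap. You assert that objects of $\mod^{T,0}(\sU_0^{\ho})$ are ``direct summands of direct sums of shifts of the $\Pro_w$,'' but this is false: the $\Pro_w$ are the indecomposable projectives (equivalently injectives) in that category, so only projective-injective modules lie in their additive closure. An arbitrary object---for instance a simple $\Simp_w$ or a baby Verma $\bV_w$---does not. This matters, because the only application of the lemma in the paper (Proposition~\ref{prop:dim-Ext-multiplicities}) takes $\sF=\sZ^\gr_y\langle m\rangle$, whose ungraded image corresponds to the baby Verma $\bV_y$, which is not projective. So your projectivization argument, which is entirely sound when both $\sF$ and $\sG$ sit in $\mathrm{add}\{\sP^\gr_w\langle(\lambda,n)\rangle\}$, does not cover the case actually needed.

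The paper's proof circumvents this by never reducing to the projective case. Instead it forgets the $\Z$-grading first: after applying $\for_{\Gm}$, the map in question becomes the composite of $\gamma_0^{\ho}$ with the ungraded Morita functor~\eqref{eqn:Morita-equiv}, and the latter is an \emph{equivalence} on all of $\mod^{T,0}(\sU_0^{\ho})$, not just on projectives. This yields an isomorphism
\[
\Hom_{\DGCoh^{T^{(1)}}}(\sF',\sG') \simto \Hom_{\mod^{p\X}(\E)}(M',N').
\]
Both sides decompose as $\bigoplus_{n\in\Z}$ of their graded counterparts (using the degrading property of $\for_{\Gm}$ on the left and the obvious decomposition on the right), and the desired map is the $n=0$ summand of this isomorphism. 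No structural hypothesis on $\sF,\sG$ beyond ``concentrated in a single cohomological degree'' is needed. Your approach could be repaired by passing to projective resolutions and arguing at the derived level, but that is considerably more work than the paper's two-line summand trick.
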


\begin{proof}
Let $M'$ and $N'$ be the images of $M$ and $N$ in $\mod^{p\X}(\E)$. Then the functor
\[
\bigoplus_{\lambda \in \X} \Hom_{\DGCoh^{T^{(1)} \times \Gm}(\Groth^{(1)}\times^R_{\g^{*(1)}} \{0\})} \left( \bigoplus_{w \in W} \sP^\gr_{w} \langle \lambda \rangle, - \right)
\]
induces an isomorphism
\[
\Hom_{\DGCoh^{T^{(1)}}(\Groth^{(1)}\times^R_{\g^{*(1)}} \{0\})} (\sF',\sG') \simto \Hom_{\mod^{p\X}(\E)}(M',N').
\]
In fact, this morphism identifies with the composition of the morphism induced by $\gamma_0^{\ho}$ and the one induced by~\eqref{eqn:Morita-equiv}. Since both functors are equivalences, this justifies the claim.

Now we have isomorphisms
\[
\Hom_{\DGCoh^{T^{(1)}}(\Groth^{(1)}\times^R_{\g^{*(1)}} \{0\})} (\sF',\sG') \cong
\bigoplus_{n \in \Z} \Hom_{\DGCoh^{T^{(1)} \times \Gm}(\Groth^{(1)}\times^R_{\g^{*(1)}} \{0\})} (\sF,\sG \langle n \rangle)
\]
and
\[
\Hom_{\mod^{p\X}(\E)}(M',N') \cong
\bigoplus_{n \in \Z} \Hom_{\mod^{p\X \times \Z}(\underline{\E})}(M,N \langle n \rangle),
\]
so that the morphism of the lemma is a direct summand of the isomorphism considered above. The claim follows.
\end{proof}



\subsection{Graded baby Verma modules} 

For $x \in W_\ex$ we set
\[
\bfZ^\gr_{x} = \bigoplus_{(\lambda,n) \in p\X \times \Z} \Hom_{\DGCoh^{T^{(1)} \times \Gm}(\Groth^{(1)}\times^R_{\g^{*(1)}} \{0\})} \left( \bigoplus_{w \in W} \sP^\gr_{w} \langle (\lambda,n + \ell(w_\circ)) \rangle, \sZ^\gr_{x} \right),
\]
where $\sZ^\gr_x$ is as in~\S\ref{ss:duals-Verma}.
Then $\bfZ^\gr_{x}$ is a $(p\X \times \Z)$-graded $\underline{\E}$-module isomorphic to $\bfZ_x$ as a $(p\X)$-graded $\E$-module. 

In any finite length category, given an object $M$ and a simple object $L$, we denote by $[M:L]$ the multiplicity of $L$ as a composition factor of $M$, see e.g.~\cite[\href{https://stacks.math.columbia.edu/tag/0FCD}{Tag 0FCD}]{stacks-project}.

\begin{prop}
\label{prop:dim-Ext-multiplicities}
For any $x,y \in W_\aff$ and $m \in \Z$ we have 
\begin{equation*}
\dim_\k \Ext^m_{\mod^T(\sU(\g))}(\tDelta_y, \Simp_x)
=[\bfZ^\gr_{y}\langle m-\ell(w_\circ) \rangle:\bfL^\gr_{\check{x}}]. 
\end{equation*}
\end{prop}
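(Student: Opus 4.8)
The strategy is to convert the $\Ext$-computation on the representation-theoretic side into a multiplicity computation in the graded category $\mod^{p\X \times \Z}(\underline{\E})$, using the localization equivalences of Section~\ref{sec:localization} together with the Koszul grading of~\cite{Ric10}. The key point is that the localization functor $\gamma_0^{\ho}$ sends $\tDelta_y$ (more precisely $\bV_y$, via the completed Verma module) to $\sZ_y$ and $\Simp_x$ to $\sL_x$, so that $\Ext$-groups in $\mod^T(\sU(\g))$ between such objects become morphisms in the dg-category $\DGCoh^{T^{(1)}}(\Groth^{(1)}\times^R_{\g^{*(1)}}\{0\})$, and the $\Gm$-equivariant lifts $\sZ^\gr_y$, $\sL^\gr_x$ provided by Theorem~\ref{thm:KD} and~\S\ref{ss:duals-Verma} encode precisely the Koszul grading that resolves each $\Ext^m$-term into a single graded piece.

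First I would reduce the left-hand side. Using~\eqref{eqn:Ext-gps-Section-proof}, we already have $\Ext^m_{\Mod(\g,B)}(\Simp_x,\nabla_y) \cong \Ext^m_{\Mod^T(\sU(\g))}(\tDelta_y,\Simp_x)$, so it suffices to treat the right-hand side. Since $\Simp_x$ is annihilated by the Frobenius central character $0$ (as a $G_1T$-module) and $\Simp_x$ has a restricted-weight highest weight, I would pass from $\tDelta_y$ to its completion $\hDelta_y$ (Lemma~\ref{lem:properties-hDelta}\eqref{it:central-char-hDelta} expresses $\tDelta_y$ after completion as a sum of $\hDelta_\mu$'s up to grading shift), and then further to $\bV_y$ via the canonical surjection $\Delta(\lambda)\twoheadrightarrow\bV(\lambda)$; one must check that $\Ext^\bullet(\tDelta_y, \Simp_x)$ agrees with $\Ext^\bullet$ of the appropriate completed/baby version, using that $\Simp_x$ is a $G_1T$-module and the adjunction/fully-faithfulness statements of~\S\ref{ss:U0-modules} and Proposition~\ref{prop:Ext-gB-gT}. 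Once we land on $\bV_y$ and $\Simp_x$ inside $\mod^{T,0}(\sU_0^{\ho})$, I apply the equivalence $\gamma_0^{\ho}$ of Theorem~\ref{thm:localization-fixed-Fr} (restricted to the $0$-summand) to identify
\[
\Ext^m_{\mod^T(\sU(\g))}(\tDelta_y,\Simp_x) \cong \Hom_{\DGCoh^{T^{(1)}}(\Groth^{(1)}\times^R_{\g^{*(1)}}\{0\})}(\sZ_y, \sL_x[m])
\]
up to a grading twist coming from the shifts built into the definitions of $\sZ_y$, $\sL_x$ and the $\langle w_\circ \rangle$-type shifts appearing in~\S\ref{ss:standard-notation} and~\S\ref{ss:duals-Verma}. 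The shift bookkeeping — in particular tracking the internal $\langle m - \ell(w_\circ)\rangle$ versus cohomological $[m]$ and the $\ell(w_\circ)$-offsets in the definitions of $\bfZ^\gr$ and $\bfP^\gr$ — is where I expect the main accounting effort to lie.

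Next I would lift everything to the $\Gm$-equivariant (graded) level. By Theorem~\ref{thm:KD} and~\S\ref{ss:duals-Verma} we have graded lifts $\sP^\gr_w$, $\sL^\gr_x$, $\sZ^\gr_y$ with $\for_{\Gm}(\sZ^\gr_y)\cong\sZ_y$ etc. Since $\underline{\E}$ is a Koszul ring (so concentrated in nonnegative degrees, with semisimple degree-$0$ part), the crucial Koszulity input is that for simple modules $\bfL^\gr$ and projective modules $\bfP^\gr$ the graded $\Hom$-spaces between them are concentrated in a single internal degree equal to the homological degree; concretely, $\Hom_{\mod^{p\X\times\Z}(\underline{\E})}(\bfP^\gr_w, \bfL^\gr_x\langle n\rangle)$ is one-dimensional exactly when $\bfL^\gr_x$ occurs in the degree-$n$ layer of the radical filtration of the (dual) $\bfP^\gr_w$, which by the Koszul property is governed by $\Ext$-degree. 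Combining this with Lemma~\ref{lem:Morita-equiv-graded} (which says the functor $\bigoplus\Hom(\bigoplus_w \sP^\gr_w\langle(\lambda,n)\rangle,-)$ is fully faithful on objects coming from $\mod^{T,0}(\sU_0^{\ho})$), I would translate
\[
\Hom_{\DGCoh^{T^{(1)}\times\Gm}}(\sZ^\gr_y, \sL^\gr_x\langle\text{shift}\rangle) \cong \Hom_{\mod^{p\X\times\Z}(\underline{\E})}(\bfZ^\gr_y, \bfL^\gr_x\langle\text{shift}\rangle),
\]
and then use injectivity of the $\bfP^\gr_w$'s together with Lemma~\ref{lem:socle-Px} (which pins down that $\bfP^\gr_x$ is the injective hull of $\bfL^\gr_x\langle 2\ell(w_\circ)\rangle$) to rewrite $\Hom$ into $\bfL^\gr_{\check x}$ plus a grading shift: this is where the index $\check x$ enters, via the Koszul-duality identification $\kappa(\sL^\gr_x(-\rho))\cong\sP^\gr_{t_\rho\check x}$ of Theorem~\ref{thm:KD}(2) and Lemma~\ref{lem:Koszul-standards} for the standard objects. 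Finally, since $\mod^{p\X\times\Z}(\underline{\E})$ has finite length and $\bfZ^\gr_y$ has a well-defined composition series, $\dim_\k\Hom(\bfZ^\gr_y,\bfL^\gr_{\check x}\langle m-\ell(w_\circ)\rangle)$ computes the multiplicity $[\bfZ^\gr_y\langle m-\ell(w_\circ)\rangle : \bfL^\gr_{\check x}]$ (using that $\bfL^\gr_{\check x}$ is a simple object and the graded $\Hom$ from a module to a simple detects its multiplicity in the relevant degree), which is the desired formula.

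\textbf{Main obstacle.} The genuinely delicate part will not be any single deep theorem — all the heavy lifting (localization equivalences, Koszulity of $\underline{\E}$, the braid-group compatibilities) is already available — but rather the careful coordination of three layers of grading shifts: (i) the internal $\Gm$-shifts $\langle p\lambda\rangle$ and $\langle w_\circ\bullet 0\rangle$-type twists in the definitions of $\sP_x$, $\sZ_x$, $\sM_x$, $\bfP^\gr_x$, $\bfZ^\gr_x$; (ii) the homological-to-internal degree conversion $\kappa\circ\langle 1\rangle \cong \langle 1\rangle[1]\circ\kappa$ forced by linear Koszul duality; and (iii) the $\ell(w_\circ)$-offsets in $\bfZ^\gr$ and in Lemma~\ref{lem:socle-Px}. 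Getting these to line up so that the right-hand side reads exactly $[\bfZ^\gr_y\langle m-\ell(w_\circ)\rangle : \bfL^\gr_{\check x}]$ — with the precise shift $m-\ell(w_\circ)$ and the precise index $\check x$ (rather than $x$, $\check x$ twisted by $t_\rho$, $w_\circ x$, etc.) — is the step that requires genuine care, and I would verify it on the extreme cases ($y=w_\circ$, $x=w_\circ$, and elements of $\Omega$) where Proposition~\ref{prop:M-Z-w0}, Lemma~\ref{lem:simples-length-0} and Lemma~\ref{lem:proj-max-length} give explicit answers, then propagate by the braid-group action.
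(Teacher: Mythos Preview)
Your overall trajectory—localize, lift to the $\Gm$-graded category, use Koszulity and Lemma~\ref{lem:socle-Px} to convert a $\Hom$ into a multiplicity—matches the paper's, and you are right that the hard part is the shift bookkeeping. But there is a genuine gap in how you reach the geometric side, and your displayed isomorphism $\Ext^m \cong \Hom_{\DGCoh}(\sZ_y,\sL_x[m])$ is a category error that conceals it.

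First, you cannot ``pass from $\tDelta_y$ to $\bV_y$'': the surjection $\hDelta_y \twoheadrightarrow \bV_y$ has an infinite-dimensional kernel that contributes nontrivially to $\Ext^\bullet(-,\Simp_x)$, so $\Ext^m(\tDelta_y,\Simp_x)$ and $\Ext^m(\bV_y,\Simp_x)$ differ in general. The paper never makes this reduction. It completes $\tDelta_y$ and uses~\eqref{eqn:completion-tDelta-1} to write the completion as a direct sum of shifts of $\hDelta_{wy}$, then argues via the decomposition~\eqref{eqn:decomp-Uhat0mod} that only the summand $w=e$ survives, and applies $\gamma_{\ho}^{\ho}$ (not $\gamma_0^{\ho}$) to land on $\Groth^{(1)}_{\ho}$ with the object $\hM_y$.

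Second, $\sL_x = \gamma_{\ho}^0(\Simp_x)$ lives in $\Db\Coh^{T^{(1)}}(\Spr^{(1)})$ while $\sZ_y = \gamma_0^{\ho}(\bV_y)$ lives in $\DGCoh^{T^{(1)}}(\Groth^{(1)}\times^R_{\g^{*(1)}}\{0\})$; there is no $\Hom(\sZ_y,\sL_x)$ to speak of. The actual route passes through $\Spr^{(1)}$: decomplete $\hM_y$ to $\tM_y$ on $\Groth^{(1)}$ via~\eqref{eqn:hM-tM} and Lemma~\ref{lem:completion-Ext-inf-neighborhoods}, then use~\eqref{eqn:L-Groth}, the adjunction $(Li^*,i_*)$ and~\eqref{eqn:sM-tM} to obtain $\Hom_{\Db\Coh^{T^{(1)}}(\Spr^{(1)})}(\sM_y,\sL_x[m])$. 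Only now, with both arguments on $\Spr^{(1)}$, can one apply $\kappa$: by Theorem~\ref{thm:KD} and Lemma~\ref{lem:Koszul-standards} the pair $(\sM^\gr_y,\sL^\gr_x)$ is sent (after the $(-\rho)$-twist and a $t_\rho$-shift) to $(\sZ^\gr_y,\sP^\gr_{\check x})$ on the dg-scheme. Injectivity of $\Pro_{\check x}$ forces the internal and cohomological degrees to coincide, Lemma~\ref{lem:Morita-equiv-graded} translates to $\underline{\E}$-modules, and Lemma~\ref{lem:socle-Px} yields the multiplicity. Your final paragraph has all the right ingredients once one has $\Hom(\sZ^\gr_y\langle m\rangle,\sP^\gr_{\check x})$; the missing step is the passage through $\Spr^{(1)}$ that makes this $\Hom$ well-posed.

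A smaller point: $\dim\Hom(\bfZ^\gr_y,\bfL^\gr_{\check x}\langle\cdot\rangle)$ only sees the head of $\bfZ^\gr_y$, not composition multiplicities. It is $\dim\Hom(\bfZ^\gr_y,\bfP^\gr_{\check x})$, with $\bfP^\gr_{\check x}$ the injective hull of $\bfL^\gr_{\check x}\langle 2\ell(w_\circ)\rangle$, that equals $[\bfZ^\gr_y:\bfL^\gr_{\check x}\langle 2\ell(w_\circ)\rangle]$.
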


\begin{proof}
Fix $x,y \in W_\aff$ and $m \in \Z$.
By Lemma~\ref{lem:completion-Ext-inf-neighborhoods} and~\eqref{eqn:completion-tDelta-1} we have an isomorphism
\[
\Ext^m_{\mod^T(\sU(\g))}(\tDelta_y, \Simp_x) \cong \bigoplus_{w\in W}
\Ext^m_{\mod^T(\sU^{\widehat{0}}_{\widehat{0}})} \left( \hDelta_{wy} \langle y\bullet 0-wy\bullet 0 \rangle, \Simp_x \right).
\]
We claim that the summand in the right-hand side parametrized by $w$ can be nonzero only if $w=e$. 
In fact, by \eqref{eqn:decomp-Uhat0mod} it is nonzero only if $y\bullet 0-wy\bullet 0\in p\X$. 
Since $y\bullet 0-wy\bullet 0\in \Z\Phi$, if this condition is satisfied then we have $y\bullet 0-wy\bullet 0\in p\X \cap \Z\Phi = p\Z\Phi$, namely there exists $\nu \in \Z\Phi$ such that $wy\bullet 0 = y \bullet 0 + p\nu$, i.e.~$wy\bullet 0=t_\nu y\bullet 0$. Since $0$ is regular this indeed implies that $w=e$ (and $\nu=0$).

We have isomorphisms 
\begin{align*}
\Ext^m_{\mod^T(\sU^{\widehat{0}}_{\widehat{0}})}(\hDelta_y, \Simp_x) 
&\cong \Hom_{\Db\Coh^{T^{(1)}}(\widetilde{\g}^{(1)}_{\widehat{0}})}(\hM_y , (i_*\sL_x)_{\ho}[m]) \\ 
&\cong \Hom_{\Db\Coh^{T^{(1)}}(\widetilde{\g}^{(1)})}(\tM_y, i_* \sL_x[m]) \\ 
&\cong \Hom_{\Db\Coh^{T^{(1)}}(\widetilde{\sN}^{(1)})}({\sM}_y ,\sL_x[m]), 
\end{align*}
where the first isomorphism is induced by the equivalence $\gamma_{\ho}^{\ho}$ (and we use~\eqref{eqn:L-Groth}),
the second one follows from Lemma~\ref{lem:completion-Ext-inf-neighborhoods} (using~\eqref{eqn:hM-tM}), and the last one follows from adjunction and~\eqref{eqn:sM-tM}.

Now we have
\[
\Hom_{\Db\Coh^{T^{(1)}}(\widetilde{\sN}^{(1)})}({\sM}_y,\sL_x[m]) \cong
\bigoplus_{n \in \Z} \Hom_{\Db\Coh^{T^{(1)} \times \Gm}(\Spr^{(1)})}({\sM}_y^\gr \langle n \rangle,\sL^\gr_x [m]),
\]
and using Theorem~\ref{thm:KD} and Lemma~\ref{lem:Koszul-standards}, for any $n \in \Z$ we have isomorphisms  
\begin{multline*}
 \Hom_{\DGCoh^{T^{(1)} \times \Gm}(\Spr^{(1)})}({\sM}^\gr_y \langle n \rangle,\sL_x^\gr [m]) \\
\cong \Hom_{\DGCoh^{T^{(1)} \times \Gm}(\Groth^{(1)}\times^R_{{\g}^{*(1)}} \{0\})}(\sZ^\gr_{t_\rho y} \langle n \rangle,\sP_{t_\rho \check{x}}^\gr [m-n]) \\ 
\cong \Hom_{\DGCoh^{T^{(1)} \times \Gm}(\Groth^{(1)}\times^R_{{\g}^{*(1)}} \{0\})}(\sZ^\gr_{y} \langle n \rangle,\sP_{\check{x}}^\gr [m-n]).
\end{multline*}
We claim that the space
\[
\Hom_{\DGCoh^{T^{(1)} \times \Gm}(\Groth^{(1)}\times^R_{{\g}^{*(1)}} \{0\})}(\sZ^\gr_{y} \langle n \rangle,\sP_{\check{x}}^\gr [m-n])
\]
vanishes unless $m=n$. 
In fact, as e.g.~in the proof of Lemma~\ref{lem:Morita-equiv-graded}, this space is a direct summand in
\[
\Hom_{\DGCoh^{T^{(1)}}(\Groth^{(1)}\times^R_{{\g}^{*(1)}} \{0\})}(\sZ_{y},\sP_{\check{x}}[m-n]),
\]
which via the equivalence $\gamma_0^{\ho}$ identifies with
\[
\Hom_{\Db \Mod^T(\sU_0^{\ho})}(\bV_y, \Pro_{\check{x}}[m-n]).
\]
Since $\bV_y$ is a module and $\Pro_{\check{x}}$ is an injective module, this space is zero unless $m=n$. 

By the discussions above, we obtain an isomorphism
\[
\Ext^m_{\mod^T(\sU(\g))}(\tDelta_y, \Simp_x) \cong
\Hom_{\DGCoh^{T^{(1)} \times \Gm}(\Groth^{(1)}\times^R_{{\g}^{*(1)}} \{0\})}(\sZ^\gr_{y}\langle m \rangle,\sP_{\check{x}}^\gr).
\]
By Lemma~\ref{lem:Morita-equiv-graded}, the right-hand side identifies with the space $\Hom_{\mod^{p\X \times \Z}(\underline{\E})}(\bfZ_y^\gr \langle m+\ell(w_\circ) \rangle, \bfP_{\check{x}}^\gr)$. Here $\bfP_{\check{x}}^\gr$ is the injective hull of $\bfL_{\check{x}}^\gr \langle 2\ell(w_\circ) \rangle$ by Lemma~\ref{lem:socle-Px}, so this space has dimension $[\bfZ^\gr_y \langle m - \ell(w_\circ) \rangle: \bfL^\gr_{\check{x}}]$, which finishes the proof.
\end{proof}

\subsection{Conclusion of the proof} 

We are now almost ready to conclude the proof of Theorem~\ref{thm:main}. The last ingredient we need to introduce is the description of composition factors in the Loewy series of baby Verma modules, due to Andersen--Kaneda~\cite{ak}. (This result can also be deduced from~\cite[Proposition~18.19]{AJS94}.) We consider, for $x \in W_\aff$, the radical filtration $(\mathrm{rad}^m(\bfZ_x) : m \geq 0)$ of $\bfZ_x$.

\begin{prop}
\label{prop:loewy-filtration-bV}
For any $y,w \in W_\aff$ we have
\[
\sum_{m \geq 0} [\mathrm{rad}^m(\bfZ_w)/\mathrm{rad}^{m+1}(\bfZ_w) : \bfL_y] \cdot v^m = p_{w_\circ w, w_\circ y}.
\]
\end{prop}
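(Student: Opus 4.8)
The statement relates the Loewy filtration of a baby Verma module $\bfZ_w$ to periodic Kazhdan--Lusztig polynomials, and the key point is that the $\Z$-grading on $\underline{\E}$ is a Koszul grading (by the results of~\cite[\S 9]{Ric10}), so that the radical filtration of $\bfZ_w$ coincides with its grading filtration once we know that $\bfZ_w$ admits a graded lift concentrated in consecutive degrees. The plan is therefore to first show that the graded module $\bfZ_w^\gr$ introduced above is \emph{pure}, in the sense that $[\mathrm{rad}^m(\bfZ_w)/\mathrm{rad}^{m+1}(\bfZ_w) : \bfL_y]$ equals the multiplicity of $\bfL_y^\gr \langle -m \rangle$ (up to an overall shift) as a composition factor of $\bfZ_w^\gr$. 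This is where Koszulity of $\underline{\E}$ enters: for a Koszul ring, any object that is "generated in a single degree" (here, $\bfZ_w^\gr$ has its head $\bfL_w^\gr$ in a single degree, since $\sZ^\gr_w = \mathbb{I}^\dg_{(\rH_{w^{-1}})^{-1}}(\sO_{(w_\circ^{(1)},0)}) \langle p\lambda \rangle$ is obtained from a sheaf concentrated in one $\Gm$-degree by braid functors which are $\langle 1 \rangle$-equivariant in a controlled way) has its radical filtration equal to its grading filtration. I would check that $\bfZ_w^\gr$ has head $\bfL_w^\gr$ concentrated in a single $\Z$-degree, using the geometric description of $\sZ^\gr_w$ and the fact that $\sZ_{w_\circ} \cong \sO_{\{(w_\circ^{(1)},0)\}}$ is the "lowest" object (Proposition~\ref{prop:M-Z-w0}), then propagate through the $\Br_\ex$-action.

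\textbf{Key steps.} First, using Proposition~\ref{prop:dim-Ext-multiplicities} together with $\eqref{eqn:Ext-gps-Section-proof}$, I would reinterpret the graded multiplicities $[\bfZ^\gr_w \langle \bullet \rangle : \bfL^\gr_y]$ as dimensions of $\Ext$-groups $\Ext^m_{\mod^T(\sU(\g))}(\tDelta_w, \Simp_{\check y})$; but in fact the cleaner route is to work entirely on the graded algebra side. Second, invoking Koszulity of $\underline{\E}$ and purity of $\bfZ_w^\gr$, identify $\sum_m [\mathrm{rad}^m \bfZ_w / \mathrm{rad}^{m+1} \bfZ_w : \bfL_y] v^m$ with the Poincaré polynomial $\sum_m [\bfZ_w^\gr : \bfL_y^\gr \langle -m \rangle] v^m$ (after the appropriate normalization fixing the top of $\bfZ_w^\gr$ in degree $0$). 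Third, compute this Poincaré polynomial. By Proposition~\ref{prop:dim-Ext-multiplicities}, the graded multiplicity $[\bfZ^\gr_w \langle m - \ell(w_\circ) \rangle : \bfL^\gr_{\check x}]$ equals $\dim \Ext^m_{\mod^T(\sU(\g))}(\tDelta_w, \Simp_x)$, so it suffices to show $\sum_m \dim \Ext^m_{\mod^T(\sU(\g))}(\tDelta_w, \Simp_x) \cdot v^m = p_{w_\circ w, w_\circ \check x}$ with the right indexing. The final identification with $p_{w_\circ w, w_\circ y}$ then follows from the combinatorial fact (used already in~\S\ref{ss:proof-KD}, from~\cite[Proposition~4.1.2]{Ric10}) that $\check{-}$ corresponds to the alcove operation $A \mapsto \check A$ of~\cite[End of \S 4]{Soe97}, together with the behavior of periodic Kazhdan--Lusztig polynomials under this operation and under $w \mapsto w_\circ w$ (the self-duality $p_{y,w} = p_{\check w, \check y}$-type symmetry recorded in~\cite[Theorem 4.3 and \S 5]{Soe97}).

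\textbf{The main obstacle.} The hard part will be the purity statement for $\bfZ_w^\gr$ and the precise bookkeeping of grading shifts. In the ungraded world, the semisimplicity of the Loewy layers is controlled by Andersen--Kaneda~\cite{ak}, but to get the polynomial (rather than just its value at $1$, which is Lusztig's character formula) one genuinely needs that the Koszul grading refines the radical filtration. I would argue this by showing that $\bfZ_w^\gr$ is a "linear complex"-type object: since $\sZ^\gr_w$ is built from $\sO_{\{(w_\circ^{(1)},0)\}}$ — whose image $\bfZ^\gr_{w_\circ}$ is (a shift of) a simple module, hence trivially pure — by applying the functors $\mathbb{I}^\dg_{(\rH_s)^{-1}}$, and since these intertwining functors correspond on the module side (via $\kappa$ and Koszul duality) to derived-category analogues of wall-crossing that shift gradings by $[1]\langle 1\rangle$, each application preserves purity. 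Concretely I expect to need a lemma of the shape: "if $M^\gr$ is pure and $\Xi_s^\dg$ or the relevant braid functor applied to it has cohomology concentrated in one degree, the result is pure," which can be extracted from the linearity of the Koszul grading together with the triangles $\eqref{eqn:triangles-Xi-L}$. Alternatively — and this may be the safest route — one can avoid an independent purity argument by directly comparing: the ungraded multiplicities of $\bfZ_w$ are given by Lusztig's formula as $p_{w_\circ w, w_\circ y}(1)$; the total graded multiplicity must sum to this; and the grading degree in which $\bfL_y^\gr$ appears in $\bfZ_w^\gr$ is forced by the $\Ext$-computation of Proposition~\ref{prop:dim-Ext-multiplicities} to be exactly the one recording $\Ext^m$, which matches the degree of the coefficient of $v^m$ in $p_{w_\circ w, w_\circ y}$ by the Koszulity of $\underline{\E}$ (Koszul implies $\Ext^m$ lives in internal degree $m$). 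This last observation — $\Ext^m$ concentrated in internal degree $m$, a hallmark of Koszulity — is really what makes the radical filtration and the $\Ext$-grading agree, and I would make it the technical heart of the argument, deducing that $[\mathrm{rad}^m \bfZ_w/\mathrm{rad}^{m+1}\bfZ_w : \bfL_y] = \dim\Ext^m_{\underline{\E}\text{-gr}}(\text{appropriate projective cover}, \ldots)$ and then quoting Proposition~\ref{prop:dim-Ext-multiplicities}.
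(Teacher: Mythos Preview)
Your proposal has a fundamental circularity problem. In the paper, Proposition~\ref{prop:loewy-filtration-bV} is not derived from the machinery developed in the paper; it is an \emph{external input}, namely the Andersen--Kaneda theorem on Loewy layers of baby Verma modules for $G_1T$. The paper's proof is one line: via the Morita equivalence~\eqref{eqn:Morita-equiv}, the statement about $\bfZ_w$ and $\bfL_y$ translates directly to the corresponding statement about $\bV_w$ and $\Simp_y$ in $\Rep(G_1T)$, which is~\cite[Eqn.~(1) in~\S D.13]{Jan03} (originally~\cite{ak}).

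The logical flow of \S\ref{sec:dim} is: Proposition~\ref{prop:loewy-filtration-bV} (Andersen--Kaneda, cited) $\Rightarrow$ Corollary~\ref{cor:filtrations-rad-deg} (grading filtration $=$ radical filtration, with the shift pinned down) $\Rightarrow$ Theorem~\ref{thm:main} (Ext groups computed by periodic KL polynomials). Your proposal runs this chain backwards. You want to identify the radical layers with graded pieces of $\bfZ_w^\gr$ using Koszulity, then compute those graded multiplicities via Proposition~\ref{prop:dim-Ext-multiplicities}, and then conclude by identifying the resulting Ext groups with coefficients of $p_{w_\circ w, w_\circ y}$. But that last identification is exactly the main theorem of the paper, which is proved \emph{using} Proposition~\ref{prop:loewy-filtration-bV}. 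Even the determination of the shift in ``grading filtration $=$ radical filtration up to shift'' (the content of Corollary~\ref{cor:filtrations-rad-deg}) already relies on Proposition~\ref{prop:loewy-filtration-bV}. So none of your proposed routes can close without assuming what is to be proved. The purity argument you sketch for $\bfZ_w^\gr$ via the braid action is interesting but does not help: purity alone gives you the graded multiplicities, not their identification with KL coefficients.
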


\begin{proof}
In view of the equivalence~\eqref{eqn:Morita-equiv}, this statement is~\cite[Eqn.~(1) in~\S D.13]{Jan03}.
\end{proof}

\begin{corollary}
\label{cor:filtrations-rad-deg}
For any $x \in W_\aff$ and $m \in \Z$ we have
\[
\mathrm{rad}^m(\bfZ_x) = \bigoplus_{j \geq m} (\bfZ_x^\gr)^j,
\]
where in the right-hand side we consider the $\Z$-grading components.
\end{corollary}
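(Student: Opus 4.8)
The plan is to show that the $\Z$-grading on $\bfZ_x^\gr$ coincides with its radical filtration, using the Koszulity of $\underline{\E}$ together with the identification, provided by Proposition~\ref{prop:dim-Ext-multiplicities}, of graded multiplicities in $\bfZ_x^\gr$ with dimensions of $\Ext$-groups, which by the general theory of Koszul rings are concentrated in a single cohomological degree matching the internal degree.

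More precisely, recall from~\cite{BGS96} that for a Koszul ring $\underline{\E}$, if $M$ is a $\Z$-graded $\underline{\E}$-module which is \emph{linear} in the sense that $M^j$ is generated over $\underline{\E}$ by $M^0$ together with the property that $\Ext^i_{\underline{\E}}(M, \bfL_y^\gr \langle n \rangle) = 0$ unless $i = -n$ (equivalently, the minimal graded projective resolution of $M$ is linear), then $M$ has its radical filtration equal to its grading filtration, i.e.~$\mathrm{rad}^m(M) = \bigoplus_{j \geq m} M^j$. So the first step is to record this general fact (or its dual version for modules with linear injective resolution, which is the relevant one here since $\bfZ_x^\gr$ is a submodule of an injective rather than a quotient of a projective — I would use that $\bfZ_x^\gr$ embeds in $\bfP_{\check{z}}^\gr$-type objects, or rather work with the Koszul-dual statement). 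Second, I would verify that $\bfZ_x^\gr$ satisfies the required $\Ext$-vanishing: by Proposition~\ref{prop:dim-Ext-multiplicities} combined with Proposition~\ref{prop:loewy-filtration-bV} (which says the radical filtration of the ungraded $\bfZ_w$ realizes the periodic Kazhdan--Lusztig polynomial coefficients), the ungraded multiplicity $[\bfZ_x : \bfL_y]$ in radical layer $m$ matches $\dim \Ext^m(\tDelta_{?}, \Simp_{?})$, and the computation in the proof of Proposition~\ref{prop:dim-Ext-multiplicities} shows that the $\Z$-grading component in degree $m$ of $\Hom(\bfZ_x^\gr, \bfP_{\check{y}}^\gr)$-type spaces (hence the degree-$m$ part of $\bfZ_x^\gr$) carries exactly the $\bfL_y^\gr$ that appear in $\Ext^m$, and these are concentrated in one internal degree because $\underline{\E}$ is Koszul.

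The cleanest route is probably: combine Proposition~\ref{prop:dim-Ext-multiplicities} (giving $\dim \Ext^m_{\mod^T(\sU(\g))}(\tDelta_y, \Simp_x) = [\bfZ_y^\gr \langle m - \ell(w_\circ)\rangle : \bfL^\gr_{\check{x}}]$, where the left side depends only on the total degree $m$) with Proposition~\ref{prop:loewy-filtration-bV} (giving $\sum_m [\mathrm{rad}^m(\bfZ_w)/\mathrm{rad}^{m+1}(\bfZ_w) : \bfL_y] v^m = p_{w_\circ w, w_\circ y}$, i.e.~the radical layer index equals the $\Ext$-degree). Matching these two, one sees that for each simple $\bfL_y$ appearing in $\bfZ_w$ with multiplicity counted in $v^m$, it appears both in radical layer $m$ \emph{and} in $\Z$-degree $m$ of $\bfZ_w^\gr$ (the latter because $\bfL_{\check{x}}^\gr$ contributes to $(\bfZ_y^\gr)^m$ in total degree $m - \ell(w_\circ)$ shifted appropriately, and simples in a Koszul module's graded pieces live in the degree equal to their cohomological position). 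Since $\mathrm{rad}^m(\bfZ_w) \supseteq \mathrm{rad}^{m+1}(\bfZ_w)$ and the grading filtration $\bigoplus_{j \geq m}(\bfZ_w^\gr)^j$ is also decreasing and exhaustive, and both filtrations have the \emph{same} graded pieces (as semisimple modules, with matching multiplicities by the above), while the grading filtration is automatically a filtration by $\underline{\E}$-submodules contained in the radical filtration (because $\underline{\E}$ is concentrated in nonnegative degrees, so $\underline{\E}_{>0} \cdot (\bfZ_w^\gr)^{\geq m} \subseteq (\bfZ_w^\gr)^{> m}$, forcing $\mathrm{rad}(\bfZ_w^\gr) \subseteq \bigoplus_{j \geq 1}(\bfZ_w^\gr)^j$ and inductively $\mathrm{rad}^m \subseteq \bigoplus_{j\geq m}$), an equality of the total lengths of the layers forces equality of the filtrations term by term.

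The main obstacle will be pinning down the \emph{degree bookkeeping}: the shifts $\langle m - \ell(w_\circ)\rangle$ and $\langle \ell(w_\circ)\rangle$ in the definitions of $\bfZ_x^\gr$ and in Proposition~\ref{prop:dim-Ext-multiplicities}, together with the appearance of $w_\circ w$ versus $w$ and $\check{x}$ versus $x$, must all be tracked carefully to confirm that ``$\Ext$-degree $m$'' really corresponds to ``internal degree $m$'' in $\bfZ_x^\gr$ and not to some shifted index. Once the one-sided inclusion $\mathrm{rad}^m(\bfZ_x^\gr) \subseteq \bigoplus_{j \geq m}(\bfZ_x^\gr)^j$ is established (which is formal from nonnegativity of the grading on $\underline{\E}$) and the reverse inequality of multiplicities in each layer is obtained from the two cited propositions, the conclusion is immediate. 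I would therefore structure the proof as: (1) the formal inclusion; (2) the multiplicity count in graded layers via Proposition~\ref{prop:dim-Ext-multiplicities}; (3) the multiplicity count in radical layers via Proposition~\ref{prop:loewy-filtration-bV}; (4) comparison and conclusion by a length/dimension argument.
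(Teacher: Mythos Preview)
Your proposal has a circularity that the paper avoids. In your steps (2)--(4) you want to match the multiplicities in the graded layers $(\bfZ_y^\gr)^j$ (which Proposition~\ref{prop:dim-Ext-multiplicities} identifies with dimensions $\dim\Ext^m_{\mod^T(\sU(\g))}(\tDelta_y,\Simp_x)$) against the multiplicities in the radical layers (which Proposition~\ref{prop:loewy-filtration-bV} identifies with coefficients of $p_{w_\circ y, w_\circ z}$). But equating these two quantities is precisely Theorem~\ref{thm:main}, and the present corollary is one of the ingredients used to \emph{prove} that theorem. Your parenthetical gloss on Proposition~\ref{prop:loewy-filtration-bV}, ``i.e., the radical layer index equals the $\Ext$-degree,'' is not what that proposition asserts; it asserts that the radical layer index equals the exponent of $v$ in a periodic Kazhdan--Lusztig polynomial. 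The formal inclusion $\mathrm{rad}^m \subseteq \bigoplus_{j\geq m}(\bfZ_x^\gr)^j$ (which in any case only holds relative to the lowest nonzero degree, not yet shown to be $0$) together with equality of \emph{total} multiplicities does not force the filtrations to coincide layer by layer.

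The paper proceeds differently. It first invokes \cite[Proposition~2.4.1]{BGS96} to conclude that the radical and grading filtrations of $\bfZ_x$ agree \emph{up to an unknown shift}; this structural fact requires no knowledge of the individual multiplicities. It then pins down the shift by locating a \emph{single} composition factor in both filtrations: from Proposition~\ref{prop:dim-Ext-multiplicities} with $m=0$ (using only the elementary fact $\dim\Hom(\tDelta_x,\Simp_x)=1$) one gets $[\bfZ_x^\gr : \bfL^\gr_{\check{x}}\langle\ell(w_\circ)\rangle]=1$, so $\bfL_{\check{x}}$ sits in grading degree $\ell(w_\circ)$; while the identity $p_{w_\circ x, w_\circ\check{x}}=v^{\ell(w_\circ)}$ from \cite[Lemma~4.21]{Soe97} together with Proposition~\ref{prop:loewy-filtration-bV} places that same (unique) occurrence of $\bfL_{\check{x}}$ in radical layer $\ell(w_\circ)$. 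Hence the shift is zero.
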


\begin{proof}
In view of~\cite[Proposition~2.4.1]{BGS96}, we know that the radical filtration of $\bfZ_x$ coincides with the grading filtration $\left( \bigoplus_{j \geq m} (\bfZ_x^\gr)^j : m \geq 0 \right)$ up to a shift; the corollary amounts to saying that this shift is $0$. 

To prove this fact we will consider the multiplicity of the module $\bfL_{\check{x}}$. In fact, by Proposition~\ref{prop:dim-Ext-multiplicities} we have
\[
[\bfZ_x^\gr : \bfL_{\check{x}}^\gr \langle \ell(w_\circ) \rangle]=1;
\]
in particular, $\bfL_{\check{x}}$ is a composition factor of $(\bfZ_x^\gr)^{\ell(w_\circ)}$ (identified as a subquotient of $\bfZ_x$ via the grading filtration).
On the other hand, by~\cite[Lemma~4.21]{Soe97} we have
\[
p_{w_\circ x, w_\circ \check{x}} = v^{\ell(w_\circ)}
\]
since $\check{w_\circ \check{x}} = w_\circ x$. By Proposition~\ref{prop:loewy-filtration-bV}, this shows that $\bfL_{\check{x}}$ has multiplicity one in $\bfZ_x$, and that it occurs in $\mathrm{rad}^{\ell(w_\circ)}(\bfZ_w)/\mathrm{rad}^{\ell(w_\circ)+1}(\bfZ_w)$, which finishes the proof.
\end{proof}

\begin{proof}[Proof of Theorem~\ref{thm:main}]
By~\eqref{eqn:Ext-gps-Section-proof}, Proposition~\ref{prop:dim-Ext-multiplicities} and Corollary~\ref{cor:filtrations-rad-deg} we have
\begin{multline*}
\sum_{m \in \Z} \dim_\k \Ext^m_{\Mod(\g,B)}(\Simp_w, \nabla_y) \cdot v^m = \sum_{m \in \Z} [\bfZ^\gr_{y}\langle m-\ell(w_\circ) \rangle:\bfL^\gr_{\check{w}}] \cdot v^m \\
= \sum_{m \in \Z} [\mathrm{rad}^{\ell(w_\circ)-m}(\bfZ_y)/\mathrm{rad}^{\ell(w_\circ)-m+1}(\bfZ_y):\bfL_{\check{w}}] \cdot v^m
\end{multline*}
(where, by convention, $\mathrm{rad}^n(\bfZ_y)=\bfZ_y$ if $n \leq 0$).
Using Proposition~\ref{prop:loewy-filtration-bV} we deduce that
\[
\sum_{m \geq 0} \dim_\k \Ext^m_{\Mod(\g,B)}(\Simp_w, \nabla_y) \cdot v^m = v^{\ell(w_\circ)} \cdot p_{w_\circ y,w_\circ \check{w}}(v^{-1}).
\]
Finally, by an equality in~\cite[Proof of Theorem~6.1(2) on p.~105]{Soe97}, the right-hand side identifies with $p_{y,w}$, which finishes the proof.
\end{proof}

\begin{rmk}
One can phrase the arguments above in a slightly different way as follows. Using~\cite[\S 2.5]{BGS96} one can show that $\underline{\E}$ is isomorphic to the graded ring $A$ of~\cite[\S 18.17]{AJS94} (up to the identification of $\X$ with $p\X$), and then replace the reference to Proposition~\ref{prop:loewy-filtration-bV} by a reference to~\cite[Proposition~18.19(c)]{AJS94}.
\end{rmk}

\appendix

\section{Equivariant sheaves on completions}
\label{app:equiv-sheaves} 

Given a scheme over some base equipped with an action of a flat affine group scheme, there is a convenient notion of equivariant quasi-coherent sheaves, reviewed in detail in~\cite[Appendix]{MR16}.
In this section, we explain how to adapt this definition for some ``formal completions'' of schemes.
(We insist that formal completion is used with quotation marks; all the geometric objects considered below are \emph{schemes} and not formal schemes; see Remark~\ref{rmk:IndMod}\eqref{it:comments-formal-schemes} for comments on these aspects.)

\subsection{Setting} 
\label{ss:app-setting}

We fix a commutative ring $k$. (For the applications in the body of the paper, the case when $k$ is an algebraically closed field would suffice; we treat the case of general base rings because it does not really require more work.) Below, undecorated tensor products (resp.~fiber products) are taken over $k$ (resp.~over $\Spec(k)$).

Let $R$ be a commutative $k$-algebra and $I\subset R$ be an ideal.
For a scheme $X$ over $R$, we abbreviate 
\[
X_{\widehat{I}}:=X\times_{\Spec (R)} \Spec (R_{\widehat{I}}).
\]
(See~\S\ref{sss:rings-modules} for the notation $R_{\widehat{I}}$.)
If $R$ is noetherian, then $R_{\widehat{I}}$ is also noetherian, see~\cite[\href{https://stacks.math.columbia.edu/tag/05GH}{Tag 05GH}]{stacks-project}. In this case, the completion $\widehat{I}$ of $I$ as an $R$-module identifies with the ideal in $R_{\widehat{I}}$ generated by $I$, see~\cite[\href{https://stacks.math.columbia.edu/tag/05GG}{Tag 05GG}]{stacks-project}. Moreover, if $X$ is of finite type over $R$, then $X_{\widehat{I}}$ is of finite type over $R_{\widehat{I}}$, hence is noetherian too, see~\cite[\href{https://stacks.math.columbia.edu/tag/01T6}{Tag 01T6}]{stacks-project}.

Let now $H$ be a flat affine group scheme over $k$. 
Assume that $R$ is endowed with an action of $H$ compatible with the ring structure, and that $I\subset R$ is $H$-stable. 
Assume also that $X$ is equipped with an action of $H$ (as a scheme over $k$), and that the structure morphism 
$X\rightarrow \Spec (R)$
is $H$-equivariant.
We have action and projection maps $H\times X \rightrightarrows X$ which are compatible in the obvious way with the action and projection maps $H\times \Spec(R) \rightrightarrows \Spec(R)$. Each of the latter maps restricts to a morphism from $H\times \Spec(R/I)$ to $\Spec(R/I)$, hence they induce natural maps
\[
\Spec \left( (\sO(H) \otimes R)_{\widehat{\sO(H)\otimes I}} \right) \rightrightarrows \Spec \left( R_{\widehat{I}} \right).
\]
Note that $\sO(H)\otimes I$ identifies with an ideal in $\sO(H)\otimes R$ because of our flatness assumption on $H$. Combining these constructions, we deduce canonical morphisms
\[
\act, \pr: (H\times X)_{\widehat{\sO(H)\otimes I}} \rightrightarrows X_{\widehat{I}},
\]
where $H\times X$ is considered as a scheme over $\sO(H)\otimes R$ in the obvious way.

\begin{rmk}
Let us insist that the action of $H$ on $X$ does \emph{not} induce an action on $X_{\widehat{I}}$, since completion does not commute with tensor products; the maps $\act$ and $\pr$ will be used as a substitute for such an action.
\end{rmk}

\begin{lem}
\label{lem:flatness}
Let $R$, $I$, $X$, $H$ be as above.
Assume that $R$ is noetherian and that $H$ is of finite type over $k$. Then
the morphisms $\act$ and $\pr$ are flat.
\end{lem}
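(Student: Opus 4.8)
The statement asserts that the two natural maps
\[
\act, \pr : (H\times X)_{\widehat{\sO(H)\otimes I}} \rightrightarrows X_{\widehat{I}}
\]
are flat, under the noetherian/finite-type hypotheses. The first point I would stress is that both maps factor through a diagram that is entirely obtained by base change from the original action/projection setup, so the proof should reduce to a single flatness assertion about a completion of a base change. Concretely, I would first treat the projection $\pr$, which is the easy case: it is simply the map $(H\times X)_{\widehat{\sO(H)\otimes I}} \to X_{\widehat{I}}$ deduced from the projection $H\times X \to X$; since $H$ is flat over $k$, the projection $H\times X \to X$ is flat, and flatness is preserved by the base change $\Spec(R_{\widehat{I}}) \to \Spec(R)$ once one checks that $(H\times X)_{\widehat{\sO(H)\otimes I}}$ really is the base change $(H\times X)\times_X X_{\widehat{I}}$. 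This last identification is a statement about completions commuting with the relevant base changes, and it holds because $R$ is noetherian: $(\sO(H)\otimes R)_{\widehat{\sO(H)\otimes I}} \cong (\sO(H)\otimes R)\otimes_R R_{\widehat{I}}$ by flatness of completion over a noetherian ring (\cite[Tag 00MA]{stacks-project} applied to the finitely generated $R$-module... actually to the fact that for a noetherian ring completion is exact and commutes with finite base change, or more simply that $\sO(H)\otimes R$ is noetherian since $H$ is of finite type, and its $(\sO(H)\otimes I)$-adic completion agrees with $(\sO(H)\otimes R)\otimes_R R_{\widehat{I}}$).

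For the action map $\act$, the key observation is that $\act : H\times X \to X$ is flat: indeed, it fits into the standard commutative square with the isomorphism $H\times X \simto H\times X$ given by $(h,x)\mapsto (h, h\cdot x)$, under which $\act$ corresponds to $\pr$; since that map is an isomorphism and $\pr$ is flat (as $H$ is flat over $k$), $\act$ is flat. Then I would argue exactly as for $\pr$: the morphism $\act : (H\times X)_{\widehat{\sO(H)\otimes I}} \to X_{\widehat{I}}$ is the base change of $\act : H\times X \to X$ along $\Spec(R_{\widehat{I}})\to \Spec(R)$, once one knows that $(H\times X)_{\widehat{\sO(H)\otimes I}}$ is the fiber product $(H\times X)\times_{X,\act} X_{\widehat{I}}$. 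Here one must be a little careful: the completion on the left is with respect to $\sO(H)\otimes I$, which is the pullback of $I$ along the composition $H\times X \to \Spec(\sO(H)\otimes R)\to\Spec(R)$ corresponding to the \emph{second} projection, whereas $\act$ maps $X$-ward; so the identification uses that the ideal of $R_{\widehat I}$ defining the completion pulls back correctly, together with the fact that for a morphism of finite type between noetherian schemes, completion along the preimage of a closed subscheme commutes with base change to the completion of the target (\cite[Tag 00MA]{stacks-project}, or the discussion of completion and flat base change). The main technical nuisance, and the step I expect to take the most care, is precisely this bookkeeping: verifying that $(H\times X)_{\widehat{\sO(H)\otimes I}}$ is simultaneously the base change of $H\times X$ along $X_{\widehat I}\to X$ for \emph{both} the action and the projection morphisms. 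This is where the hypotheses that $R$ is noetherian and $H$ is of finite type over $k$ enter in an essential way, since they guarantee $\sO(H)\otimes R$ is noetherian and all completions in sight are well behaved and compatible with the finite-type base changes involved.

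Once these identifications are in place, the conclusion is immediate: flatness of a morphism is stable under base change, so from flatness of $\act$ and $\pr : H\times X \to X$ over $R$ we obtain flatness of $\act$ and $\pr : (H\times X)_{\widehat{\sO(H)\otimes I}} \to X_{\widehat{I}}$. I would organize the write-up as: (i) reduce both maps to base changes of $\act, \pr : H\times X \to X$; (ii) prove $\act$ and $\pr$ are flat using flatness of $H$ over $k$ and the shear isomorphism for $\act$; (iii) invoke stability of flatness under base change. The only real content is step (i), and I would isolate the needed compatibility of completion with base change as a short lemma (or cite \cite[Tag 00MA]{stacks-project} together with the identification $R_{\widehat I}\otimes_R(\sO(H)\otimes R)\cong (\sO(H)\otimes R)_{\widehat{\sO(H)\otimes I}}$, valid because $\sO(H)\otimes R$ is module-finite... no, finitely generated as an algebra over the noetherian ring $R$, hence noetherian, and $R_{\widehat I}$ is $R$-flat, so tensoring the $I$-adic completion statement through is fine).
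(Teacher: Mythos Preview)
Your overall plan---reduce $\act$ to $\pr$ via the shear isomorphism and then prove $\pr$ is flat---matches the paper's, but the key identification you rely on for $\pr$ is false. You claim
\[
(\sO(H)\otimes R)_{\widehat{\sO(H)\otimes I}} \;\cong\; \sO(H)\otimes_k R_{\widehat I},
\]
citing \cite[Tag~00MA]{stacks-project}. That tag says completion commutes with tensor product for \emph{finitely generated modules}; here $\sO(H)\otimes R$ is only a finitely generated $R$-\emph{algebra}. The isomorphism fails already for $k$ a field, $R=k[t]$, $I=(t)$, $\sO(H)=k[x]$: the completion $k[x,t]_{\widehat{(t)}}=k[x][[t]]$ contains series $\sum_i a_i(x)t^i$ with $\deg a_i$ unbounded, whereas $k[x]\otimes_k k[[t]]$ only contains those of bounded $x$-degree. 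So $(H\times X)_{\widehat{\sO(H)\otimes I}}$ is genuinely \emph{not} the fibre product $(H\times X)\times_X X_{\widehat I}$, and your appeal to ``flatness is stable under base change'' does not apply as stated. (You seem to sense the problem in your parenthetical remarks but do not resolve it.)

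The paper's fix is to insert exactly this missing completion step: one factors
\[
(H\times X)_{\widehat{\sO(H)\otimes I}} \;\longrightarrow\; H\times X_{\widehat I} \;\longrightarrow\; X_{\widehat I},
\]
where the second arrow is flat since $H$ is flat over $k$, and the first is the base change along the completion map $\sO(H)\otimes R_{\widehat I} \to (\sO(H)\otimes R)_{\widehat{\sO(H)\otimes I}}$. The point is that $\sO(H)\otimes R_{\widehat I}$ is noetherian (here is where finite type of $H$ is used) and $(\sO(H)\otimes R)_{\widehat{\sO(H)\otimes I}}$ is its completion along $\sO(H)\otimes\widehat I$, hence flat over it by \cite[Tag~00MB]{stacks-project}. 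For $\act$, the paper observes that the shear automorphism of $H\times\Spec(R)$ preserves $H\times\Spec(R/I)$ (because $I$ is $H$-stable), so it induces an automorphism of the \emph{completion} $(H\times X)_{\widehat{\sO(H)\otimes I}}$ intertwining $\act$ and $\pr$; this avoids your separate bookkeeping for $\act$ altogether.
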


\begin{proof}
The isomorphism $H \times X \simto H \times X$ defined by $(h,x) \mapsto (h,h \cdot x)$ intertwines the projection and action maps to $X$. We have a similar isomorphism for $H \times \Spec(R)$, which stabilizes the closed subscheme $H \times \Spec(R/I)$, hence induces an automorphism of $(\sO(H) \otimes R)_{\widehat{\sO(H) \otimes I}}$. Combining these maps we deduce an automorphism of $(H\times X)_{\widehat{\sO(H)\otimes I}}$ intertwining the maps $\act$ and $\pr$, which reduces the proof to the case of $\pr$.

In this case we have
\begin{multline*}
(H\times X)_{\widehat{\sO(H)\otimes I}} = (H\times X) \times_{\Spec(\sO(H) \otimes R)} \Spec \left( (\sO(H) \otimes R)_{\widehat{\sO(H) \otimes I}} \right) \\
= (H\times X) \times_{\Spec(\sO(H) \otimes R)} \Spec \left( \sO(H) \otimes R_{\widehat{I}} \right) \times_{\Spec(\sO(H) \otimes R_{\widehat{I}})} \Spec \left( (\sO(H) \otimes R)_{\widehat{\sO(H) \otimes I}} \right) \\
= \left( H\times X_{\widehat{I}} \right) \times_{\Spec(\sO(H) \otimes R_{\widehat{I}})} \Spec \left( (\sO(H) \otimes R)_{\widehat{\sO(H) \otimes I}} \right).
\end{multline*}
Here the ring $\sO(H) \otimes R_{\widehat{I}}$ is noetherian because it is finitely generated over the noetherian ring $R_{\widehat{I}}$, and $(\sO(H) \otimes R)_{\widehat{\sO(H) \otimes I}}$ identifies with the completion of that ring with respect to the ideal $\sO(H) \otimes \widehat{I}$. Hence the map
\[
\Spec \left( (\sO(H) \otimes R)_{\widehat{\sO(H) \otimes I}} \right) \to \Spec \left( \sO(H) \otimes R_{\widehat{I}} \right)
\]
is flat, see~\cite[\href{https://stacks.math.columbia.edu/tag/00MB}{Tag 00MB}]{stacks-project}. It follows that the natural morphism
\[
(H\times X)_{\widehat{\sO(H)\otimes I}} \to H\times X_{\widehat{I}}
\]
is flat, and then that so is its composition with projection to $X_{\widehat{I}}$, which coincides with $\pr$.
%
\end{proof}

\subsection{Equivariant quasi-coherent sheaves} 
\label{ss:def-equiv-sheaves}

We continue with the data above, assuming from now on that $R$ is noetherian and that $H$ is of finite type over $k$. 
The following definition is used implictly in~\cite{BM13}, and is made more explicit in~\cite{Tan21}. Here we denote by
\[
p_{2,3}, \, \id \times \act, \, m \times \id : (H\times H\times X)_{(\widehat{\sO(H)\otimes\sO(H)\otimes I})} \to (H\times X)_{\widehat{\sO(H)\otimes I}}
\]
the maps induced by projection on the second and third factors, by the action of $H$ on $X$ (on the second and third factors), and by multiplication in $H$ respectively.

\begin{definition}
\label{def:equiv-coh-sheaves} 
An $H$-equivariant quasi-coherent 
sheaf on $X_{\widehat{I}}$ is a pair $(\sF,\theta)$ where $\sF$ is a quasi-coherent 
sheaf on $X_{\widehat{I}}$ and $\theta$ is an isomorphism $\theta: \act^*\sF \simto \pr^*\sF$ satisfying the usual cocycle condition 
\[
\bigl( (p_{2,3})^* \theta \bigr) \circ \bigl( (\id \times \act)^* \theta \bigr) = (m \times \id)^* \theta
\]
on $(H\times H\times X)_{(\widehat{\sO(H)\otimes\sO(H)\otimes I})}$.

If $(\sF,\theta)$ and $(\sF',\theta')$ are $H$-equivariant quasi-coherent sheaves on $X_{\widehat{I}}$, then a morphism from $(\sF,\theta)$ to $(\sF',\theta')$ is a morphism of quasi-coherent sheaves $\varphi : \sF \to \sF'$ such that $(\pr^* \varphi) \circ \theta = \theta' \circ (\act^* \varphi)$.
\end{definition}

It is clear that the data in Definition~\ref{def:equiv-coh-sheaves} define a category of $H$-equivariant quasi-coherent sheaves on $X_{\widehat{I}}$, which will be denoted $\QCoh^H(X_{\widehat{I}})$. Lemma~\ref{lem:flatness} guarantees that if $\varphi : (\sF,\theta) \to (\sF',\theta')$ is a morphism of $H$-equivariant quasi-coherent sheaves on $X_{\widehat{I}}$, then the kernel and cokernel of $\varphi$, considered as a morphism of quasi-coherent sheaves, admit canonical structures of $H$-equivariant quasi-coherent sheaves. As a consequence, the category $\QCoh^H(X_{\widehat{I}})$ is abelian. It is also equipped with a natural symmetric monoidal structure, with monoidal product given by tensor product over $\sO_{X_{\widehat{I}}}$. By definition we have a canonical exact, faithful and monoidal ``forgetful'' functor
\[
\for : \QCoh^H(X_{\widehat{I}}) \to \QCoh(X_{\widehat{I}}).
\]
Recall that a colimit of quasi-coherent sheaves is always quasi-coherent, see~\cite[\href{https://stacks.math.columbia.edu/tag/01LA}{Tag 01LA}]{stacks-project}, and that pullback commutes with colimits (because it has a right adjoint). Hence the category $\QCoh^H(X_{\widehat{I}})$ admits colimits, and the functor $\for$ commutes with them.

By compatibility of pullback functors with composition, pullback under the (flat) projection map $X_{\widehat{I}} \to X$ induces a monoidal exact functor
\begin{equation}
\label{eqn:QCoh-pullback-completion}
\QCoh^H(X) \to \QCoh^H(X_{\widehat{I}}),
\end{equation}
where in the left-hand side we consider the usual category of $H$-equivariant quasi-coherent sheaves on $X$. For any 
$V \in \Rep(H)$ one can consider the equivariant quasi-coherent sheaf $V \otimes \sO_X$ on $X$. By pullback we deduce that $V \otimes \sO_{X_{\widehat{I}}}$ defines an object in $\QCoh^H(X_{\widehat{I}})$. For $\sF \in \QCoh^H(X_{\widehat{I}})$, we will usually write $V \otimes \sF$ for $(V \otimes \sO_{X_{\widehat{I}}}) \otimes_{\sO_{X_{\widehat{I}}}} \sF$.

\begin{rmk}
\label{rmk:QCohX-over-quotient}
Assume that the morphism $X \to \Spec(R)$ factors through $\Spec(R/I^n)$, for some $n \geq 0$. Then we have
\begin{multline*}
X_{\widehat{I}} = X \times_{\Spec(R)} \Spec(R_{\widehat{I}}) = X \times_{\Spec(R/I^n)} \Spec(R/I^n) \times_{\Spec(R)} \Spec(R_{\widehat{I}}) \\
= X \times_{\Spec(R/I^n)} \Spec(R_{\widehat{I}} / (I^n \cdot R_{\widehat{I}})) = X,
\end{multline*}
where the last identification uses~\cite[\href{https://stacks.math.columbia.edu/tag/05GG}{Tag 05GG}]{stacks-project}. We have similar identifications for $H \times X$ and $H\times H\times X$, so that in this case~\eqref{eqn:QCoh-pullback-completion} is an equivalence of categories.
\end{rmk}


More generally, if $\sC$ is a quasi-coherent sheaf of $\sO_{X_{\widehat{I}}}$-algebras equipped with a compatible structure of $H$-equivariant quasi-coherent sheaf, we define an $H$-equivariant $\sC$-module as an $H$-equivariant quasi-coherent sheaf $\sM$ equipped with a $\sC$-action such that the structure map $\sC \otimes_{\sO_{X_{\widehat{I}}}} \sM\rightarrow \sM$ is a morphism of $H$-equivariant quasi-coherent sheaves. These objects naturally form an abelian category, which will be denoted $\Mod^H(\sC)$. There exists a canonical exact faithful functor 
\begin{equation}
\label{eqn:forget-action-C}
\Mod^H(\sC) \to \QCoh^H(X_{\widehat{I}})
\end{equation}
and a tensor product bifunctor
\[
(-) \otimes_\sC (-) : \Mod^H(\sC^{\op}) \times \Mod^H(\sC) \to \QCoh^H(X_{\widehat{I}})
\] 
where $\sC^{\op}$ is the opposite sheaf of algebras.

\begin{exm}
\label{ex:equiv-qcoh-alg}
If $\sA$ is an $H$-equivariant quasi-coherent sheaf of $\sO_X$-algebras, then its pullback ${\sA}_{\widehat{I}}$ to $X_{\widehat{I}}$ satisfies the conditions above. Moreover, as for~\eqref{eqn:QCoh-pullback-completion}, pullback under the canonical morphism $X_{\widehat{I}} \to X$ induces an exact functor
\begin{equation}
\label{eqn:pullback-completion}
\Mod^H(\sA) \to \Mod^H(\sA_{\widehat{I}}).
\end{equation}
As in Remark~\ref{rmk:QCohX-over-quotient}, in case the morphism $X \to \Spec(R)$ factors through $\Spec(R/I^n)$ for some $n$, this functor is an equivalence.
\end{exm}

\subsection{Infinitesimal action} 
\label{ss:infinitesimal}

We continue with the assumptions of~\S\ref{ss:def-equiv-sheaves}.
In the usual theory of equivariant quasi-coherent sheaves on schemes, any sheaf equivariant for the action of a flat affine group scheme acquires a canonical action of its Lie algebra, see e.g.~\cite[\S 3.1]{kashiwara}. Here we explain the counterpart of this construction for our notion of equivariance.


Let $I_e\subset \sO(H)$ be the ideal of functions vanishing on the unit $e\in H(k)$, and denote by $\h$ the Lie algebra of $H$.
Recall that by definition we have $\mathfrak{h}=\Hom_k(I_e/I^2_e,k)$.
We consider the closed subscheme $H_{(1)}:=\Spec(\sO(H)/I_e^2)$ of $H$. (This subscheme should not be confused with the Frobenius kernel $H_1$ in case $k$ is a field of characteristic $p$.) 
The action and projection morphisms $(H\times X)_{\widehat{\sO(H)\otimes I}} \rightrightarrows X_{\widehat{I}}$ induce morphisms 
\[
\act_{(1)}, \pr_{(1)}: H_{(1)}\times X_{\widehat{I}} \cong (H_{(1)}\times X)_{\widehat{\sO(H_{(1)})\otimes I}} \rightrightarrows X_{\widehat{I}}. 
\]
Note that the unit morphism $\Spec(k)\rightarrow H$ factors through $H_{(1)}$ and induces a homeomorphism of topological spaces $X_{\widehat{I}}\rightarrow H_{(1)}\times X_{\widehat{I}}$. 
Hence the morphism $\act_{(1)}:H_{(1)}\times X_{\widehat{I}}\rightarrow X_{\widehat{I}}$ is a homeomorphism, whose restriction to any open subscheme $U\subset X_{\widehat{I}}$ gives a morphism $H_{(1)}\times U\rightarrow U$. 

Let $(\sF,\theta)\in \QCoh^H(X_{\widehat{I}})$. 
The isomorphism $\theta: \act^*\sF \xs \pr^*\sF$ restricts to an isomorphism $\theta_{(1)}: \act^*_{(1)}\sF \xs \pr^*_{(1)}\sF$. 
For any open subscheme $U\subset X_{\widehat{I}}$, we consider the composition 
\begin{align*}
\sigma^\#: \Gamma(U,\sF)\xrightarrow{\act^*_{(1)}}  \Gamma(H_{(1)}\times U, \act^*_{(1)} &\sF) \xrightarrow[\sim]{\theta_{(1)}} \Gamma(H_{(1)}\times U, \pr^*_{(1)}\sF)\\ 
&\cong \sO(H_{(1)})\otimes \Gamma(U,\sF) \twoheadrightarrow I_e/I^2_e \otimes \Gamma(U,\sF), 
\end{align*}
where the last map is induced by the projection morphism associated with the canonical decomposition $\sO(H_{(1)})\cong k \oplus I_e/I^2_e$.
We obtain the following map induced by $-\sigma^\#$:
\[
\sigma: \mathfrak{h} \rightarrow \End(\Gamma(U,\sF)).
\]
One can show that $\sigma$ is a Lie algebra homomorphism.
It thus defines an action of $\h$ on $\sF$.

\subsection{Equivariant coherent sheaves and colimits of such} 
\label{ss:equiv-coh-sheaves}

In this subsection we assume (in addition to the conditions above) that $X$ is of finite type over $R$; then (as explained in~\S\ref{ss:app-setting}) $X_{\widehat{I}}$ is a noetherian scheme. We define an $H$-equivariant coherent sheaf on $X_{\widehat{I}}$ to be an $H$-equivariant quasi-coherent sheaf $(\sF,\theta)$ such that $\sF$ is coherent. The abelian full subcategory of $\QCoh^H(X_{\widehat{I}})$ whose objects are the $H$-equivariant coherent sheaves will be denoted $\Coh^H(X_{\widehat{I}})$.

A difficulty with this setting is that it is not clear (and probably not true in general) that any object of $\QCoh^H(X_{\widehat{I}})$ is the union of its subobjects which are coherent. To bypass this difficulty, we define
\[
\IndCoh^H(X_{\widehat{I}})
\]
as the full subcategory of $\QCoh^H(X_{\widehat{I}})$ whose objects are the $H$-equivariant quasicoherent sheaves that are the colimit of their subobjects belonging to $\Coh^H(X_{\widehat{I}})$. Equivalently, an object in $\QCoh^H(X_{\widehat{I}})$ belongs to $\IndCoh^H(X_{\widehat{I}})$ if and only if it is a filtered colimit of objects in $\Coh^H(X_{\widehat{I}})$. The embedding $\IndCoh^H(X_{\widehat{I}}) \to \QCoh^H(X_{\widehat{I}})$ admits a right adjoint, which sends an object to the colimit of its coherent subobjects. It is clear that $\Coh^H(X_{\widehat{I}})$ and $\IndCoh^H(X_{\widehat{I}})$ are monoidal subcategories of $\QCoh^H(X_{\widehat{I}})$ (with respect to tensor product).

Similarly, given an $H$-equivariant coherent sheaf of $\sO_{X_{\widehat{I}}}$-algebras $\sC$, the full subcategory of $\Mod^H(\sC)$ whose objects are coherent over $\sO_{X_{\widehat{I}}}$ will be denoted $\mod^H(\sC)$.
We also define the full subcategory
\[
\IndMod^H(\sC)
\]
of $\Mod^H(\sC)$ as consisting of objects which are the colimit of their coherent subobjects.
(In fact, it is easily seen that an object of $\Mod^H(\sC)$ belongs to $\IndMod^H(\sC)$ if and only if its image in $\QCoh^H(X_{\widehat{I}})$ belongs to $\IndCoh^H(X_{\widehat{I}})$.)
This subcategory is closed under taking subobjects and quotients, hence is an abelian subcategory. 
Of course it contains $\mod^H(\sC)$; moreover since the latter is a Serre subcategory in $\mod^H(\sC)$, it is also a Serre subcategory in $\IndMod^H(\sC)$. 
By \cite[\href{https://stacks.math.columbia.edu/tag/0FCL}{Tag 0FCL}]{stacks-project}, the natural functor 
\[
D^- \mod^H(\sC) \rightarrow D^- \IndMod^H(\sC) 
\]
induces an equivalence of categories between $D^-\mod^H(\sC)$ and the full subcategory in $D^- \IndMod^H(\sC)$ consisting of complexes all of whose cohomology objects are in $\mod^H(\sC)$. 

\begin{rmk}
\phantomsection
\label{rmk:IndMod}
\begin{enumerate}
\item
It is not clear to us if the subcategory $\IndMod^H(\sC) \subset \Mod^H(\sC)$ is stable under extensions.
\item
\label{it:IndMod-completion}
Consider the setting of Example~\ref{ex:equiv-qcoh-alg}, with our current additional assumption that $X$ is of finite type over $R$. Then the functor~\eqref{eqn:pullback-completion} restricts to a functor from $\mod^H(\sA)$ to $\mod^H(\sA_{\widehat{I}})$. Assuming in addition that $\sA$ is coherent, using the fact that any $H$-equivariant quasi-coherent sheaf on $X$ is the colimit of its coherent $H$-equivariant submodules (see~\cite[Lemma~1.4]{thomason}) one sees that a similar property holds for $\sA$-modules, which implies that the functor~\eqref{eqn:pullback-completion} takes values in $\IndMod^H(\sA_{\widehat{I}})$.
\item
\label{it:comments-formal-schemes}
Assume that the structure morphism $X \to \Spec(R)$ is projective (hence, in particular, of finite type). In this case, by~\cite[Corollaire~5.1.6]{ega3}, the datum of a coherent sheaf on $X_{\widehat{I}}$ is equivalent to the datum of a projective system $(\sF_n : n \geq 1)$ where each $\sF_n$ is a coherent sheaf on the scheme $X \times_{\Spec(R)} \Spec(R/I^n)$ and for $m \leq n$ the restriction of $\sF_n$ to $X \times_{\Spec(R)} \Spec(R/I^m)$ coincides with $\sF_m$. A similar comment applies to coherent sheaves on $(H\times X)_{\widehat{\sO(H)\otimes I}}$. Moreover, in these terms the pullback functors
\[
\pr^*, \act^* : \Coh(X_{\widehat{I}}) \to \Coh((H\times X)_{\widehat{\sO(H)\otimes I}})
\]
are induced by the pullback functors associated with the induced morphisms
\[
H \times \left( X \times_{\Spec(R)} \Spec(R/I^n) \right) \to X \times_{\Spec(R)} \Spec(R/I^n).
\]
As a consequence, the datum of an $H$-equivariant coherent sheaf on $X_{\widehat{I}}$ is equivalent to the datum of a projective system $(\sF_n : n \geq 1)$ where each $\sF_n$ is an $H$-equivariant coherent sheaf on the scheme $X \times_{\Spec(R)} \Spec(R/I^n)$ (in the usual sense) and for $m \leq n$ the restriction of $\sF_n$ to $X \times_{\Spec(R)} \Spec(R/I^m)$ coincides with $\sF_m$.
\end{enumerate}
\end{rmk}



\subsection{Functorialities}
\label{ss:functorialities}

We continue with the setting of~\S\ref{ss:def-equiv-sheaves}.
%
Let $Y$ be another scheme over $R$ equipped with an action of $H$ such that the structure morphism $Y \to \Spec(R)$ is $H$-equivariant, and let $f: Y\rightarrow X$ be an $H$-equivariant morphism of $R$-schemes. Then we have a natural morphism of schemes
\[
f_{\widehat{I}} : Y_{\widehat{I}} \to X_{\widehat{I}},
\]
and commutative diagrams
\begin{equation}
\label{eqn:diagrams-morphisms-completions}
\begin{tikzcd}
(H\times Y)_{\widehat{\sO(H)\otimes I}} \arrow[r] \arrow[d] & (H\times X)_{\widehat{\sO(H)\otimes I}} \arrow[d] \\ 
Y_{\widehat{I}} \arrow[r] & X_{\widehat{I}}
\end{tikzcd}
\end{equation}
where the horizontal arrows are induced by $f$ and the vertical ones are either the maps $\pr$ or the maps $\act$. Using compatibility of pullback functors with composition, we obtain a natural pullback functor
\[
(f_{\widehat{I}})^* : \QCoh^H(X_{\widehat{I}}) \to \QCoh^H(Y_{\widehat{I}})
\]
which is compatible in the natural way with its nonequivariant variant and with the pullback functor $f^* : \QCoh^H(X) \to \QCoh^H(Y)$. In case $X$ and $Y$ are of finite type over $R$, the functor $(f_{\widehat{I}})^*$ restricts to functors
\[
\Coh^H(X_{\widehat{I}}) \to \Coh^H(Y_{\widehat{I}}), \quad
\IndCoh^H(X_{\widehat{I}}) \to \IndCoh^H(Y_{\widehat{I}}).
\]

Now, assume that $f$ is quasi-compact and quasi-separated. Then so is $f_{\widehat{I}}$, hence we have a pushforward functor
\[
(f_{\widehat{I}})_* : \QCoh(Y_{\widehat{I}}) \to \QCoh(X_{\widehat{I}}),
\]
see~\cite[\href{https://stacks.math.columbia.edu/tag/01LC}{Tag 01LC}]{stacks-project}. Using once again the commutative diagrams~\eqref{eqn:diagrams-morphisms-completions} and the flat base change theorem (see~\cite[\href{https://stacks.math.columbia.edu/tag/02KH}{Tag 02KH}]{stacks-project}) we obtain that this functor induces a functor
\[
 \QCoh^H(Y_{\widehat{I}}) \to \QCoh^H(X_{\widehat{I}}),
\]
which will also be denoted $(f_{\widehat{I}})_*$. It is clear that $(f_{\widehat{I}})_*$ is right adjoint to $(f_{\widehat{I}})^*$. 

Assume now that $X$ and $Y$ are of finite type over $R$. In general the functor $(f_{\widehat{I}})_*$ might not send $\IndCoh^H(Y_{\widehat{I}})$ into $\IndCoh^H(X_{\widehat{I}})$. However the functor $(f_{\widehat{I}})^* : \IndCoh^H(X_{\widehat{I}}) \to \IndCoh^H(Y_{\widehat{I}})$ still has a right adjoint, given by the composition of the restriction of $(f_{\widehat{I}})_*$ to $\IndCoh^H(Y_{\widehat{I}})$ with the right adjoint of the embedding $\IndCoh^H(X_{\widehat{I}}) \to \QCoh^H(X_{\widehat{I}})$, see~\S\ref{ss:equiv-coh-sheaves}.
In case $f$ is proper the functor $(f_{\widehat{I}})_*$ sends coherent sheaves to coherent sheaves, see~\cite[\href{https://stacks.math.columbia.edu/tag/02O5}{Tag 02O5}]{stacks-project}. Since this functor commutes with colimits (see e.g.~the discussion in~\cite[\S 3.9.3]{lipman}), in this case it therefore also sends $\IndCoh^H(Y_{\widehat{I}})$ into $\IndCoh^H(X_{\widehat{I}})$.


More generally, let $\sC_X$, resp. $\sC_Y$, be an $H$-equivariant quasi-coherent sheaf of $\sO_{X_{\widehat{I}}}$-algebras, resp. $\sO_{Y_{\widehat{I}}}$-algebras. 
Suppose we are also given an $H$-equivariant morphism of quasi-coherent sheaves of algebras 
$(f_{\widehat{I}})^{*}\sC_X \rightarrow \sC_Y$. 
Then we have a natural pullback functor
$(f_{\widehat{I}})^{*} : \Mod^H(\sC_X) \to \Mod^H(\sC_Y)$. Note that, contrary to what the (slightly abusive) notation might suggest, this functor does not only depend on the morphism $f_{\widehat{I}}$, but also on the given morphism $(f_{\widehat{I}})^{*}\sC_X \rightarrow \sC_Y$. In case this morphism is an isomorphism, the following diagram commutes, where the lower line is as above and the vertical arrows are the forgetful functors (see~\eqref{eqn:forget-action-C}):
\[
\begin{tikzcd}[column sep=large]
\Mod^H(\sC_X) \ar[r, "(f_{\widehat{I}})^{*}"] \ar[d] & \Mod^H(\sC_Y) \ar[d] \\
\QCoh^H(X_{\widehat{I}}) \ar[r, "(f_{\widehat{I}})^{*}"] & \QCoh^H(Y_{\widehat{I}}).
\end{tikzcd}
\]

If $f$ is quasi-compact and quasi-separated, we have a pushforward functor $(f_{\widehat{I}})_{*} : \Mod^H(\sC_Y) \to \Mod^H(\sC_X)$ which is right adjoint to $(f_{\widehat{I}})^{*}$, and such that the following diagram commutes, where the lower line is as above and the vertical arrows are the forgetful functors (see~\eqref{eqn:forget-action-C}):
\[
\begin{tikzcd}[column sep=large]
\Mod^H(\sC_Y) \ar[r, "(f_{\widehat{I}})_{*}"] \ar[d] & \Mod^H(\sC_X) \ar[d] \\
\QCoh^H(Y_{\widehat{I}}) \ar[r, "(f_{\widehat{I}})_{*}"] & \QCoh^H(X_{\widehat{I}}).
\end{tikzcd}
\]

If $X$, $Y$ are of finite type over $R$ and $\sC_X$, $\sC_Y$ are coherent, then $(f_{\widehat{I}})^{*}$ restricts to functors $\mod^H(\sC_X) \to \mod^H(\sC_Y)$ and $\IndMod^H(\sC_X) \to \IndMod^H(\sC_Y)$. If $f$ is moreover proper, the functor $(f_{\widehat{I}})_{*}$ restricts to functors $\mod^H(\sC_Y) \to \mod^H(\sC_X)$ and $\IndMod^H(\sC_Y) \to \IndMod^H(\sC_X)$.


\begin{rmk}
\label{rmk:noetherian-morphisms}
In practice all the schemes we will encounter will be noetherian. Recall that any noetherian scheme is quasi-separated, see~\cite[\href{https://stacks.math.columbia.edu/tag/01OY}{Tag 01OY}]{stacks-project}, hence any morphism between such schemes is quasi-separated, see~\cite[\href{https://stacks.math.columbia.edu/tag/01KV}{Tag 01KV}]{stacks-project}. Similarly, morphisms between noetherian schemes are automatically quasi-compact, see~\cite[\href{https://stacks.math.columbia.edu/tag/01P0}{Tag 01P0}]{stacks-project}.
\end{rmk}


\begin{exm}
\label{ex:infinitesimal-neighborhoods}
Consider a scheme $X$ as above, choose some $n \geq 1$, and set $Y = X \times_{\Spec(R)} \Spec(R/I^n)$. Then by Remark~\ref{rmk:QCohX-over-quotient} we have $Y_{\widehat{I}}=Y$, and $\QCoh^H(Y_{\widehat{I}})$ identifies with the familiar category $\QCoh^H(Y)$. The morphism $Y_{\widehat{I}} \to X_{\widehat{I}}$ is a closed immersion, and the associated pushforward functor $\QCoh^H(X \times_{\Spec(R)} \Spec(R/I^n)) \to \QCoh^H(X_{\widehat{I}})$ is fully faithful and exact, with essential image consisting of objects annihilated by $I^n$. It admits as left adjoint the functor $\QCoh^H(X_{\widehat{I}}) \to \QCoh^H(X \times_{\Spec(R)} \Spec(R/I^n))$ given by restriction.

More generally, if $\sC$ is an $H$-equivariant quasi-coherent sheaf of algebras on $X_{\widehat{I}}$, its pullback $\sC_n$ to $X \times_{\Spec(R)} \Spec(R/I^n)$ is an $H$-equivariant quasi-coherent sheaf of algebras in the usual sense; we have a fully faithful and exact pushforward functor $\Mod^H(\sC_n) \to \Mod^H(\sC)$, which admits as left adjoint the pullback functor $\Mod^H(\sC) \to \Mod^H(\sC_n)$.
\end{exm}

\subsection{Invariants} 
\label{ss:app-invariants}



In this subsection we consider the case $X=\Spec(R)$. In this case, given an $H$-equivariant coherent sheaf of algebras $\sC$, with global sections $C$, we will write
$\Mod^H(C)$, resp.~$\IndMof^H(C)$, resp.~$\mod^H(C)$,
for $\Mod^H(\sC)$, resp. $\IndMod^H(\sC)$, resp.~$\mod^H(\sC)$. 

We first consider the case $\sC=\sO_{X_{\widehat{I}}}$, so that $C=R_{\widehat{I}}$.
An object of $\Mod^H(R_{\widehat{I}})$ is an $R_{\widehat{I}}$-module $M$ endowed with an isomorphism of $(\sO(H)\otimes R)_{\widehat{\sO(H)\otimes I}}$-modules
\begin{equation}
\label{eqn:equiv-structure-affine}
(\sO(H)\otimes R)_{\widehat{\sO(H)\otimes I}} \otimes_{R_{\widehat{I}}} M \simto (\sO(H)\otimes R)_{\widehat{\sO(H)\otimes I}} \otimes_{R_{\widehat{I}}} M
\end{equation}
satisfying an appropriate cocycle condition; here, in the left-hand side the morphism $R_{\widehat{I}} \to (\sO(H)\otimes R)_{\widehat{\sO(H)\otimes I}}$ is induced by the coaction morphism $R \to \sO(H) \otimes R$, and in the right-hand side it is induced by the obvious morphism (sending $r$ to $1 \otimes r$). In particular, in this setting we have a ``coaction morphism''
\[
\Delta_M: M\rightarrow (\sO(H)\otimes R)_{\widehat{\sO(H)\otimes I}} \otimes_{R_{\widehat{I}}} M
\]
(where in the right-hand side the tensor product is taken with respect to the ``obvious'' morphism $R_{\widehat{I}} \to (\sO(H)\otimes R)_{\widehat{\sO(H)\otimes I}}$)
defined as the composition of the obvious morphism $M \to (\sO(H)\otimes R)_{\widehat{\sO(H)\otimes I}} \otimes_{R_{\widehat{I}}} M$ (sending $m$ to $1 \otimes m$) with~\eqref{eqn:equiv-structure-affine}.

Given $M\in \Mod^H(R_{\widehat{I}})$, we set
\[
M^H = \{m \in M \mid \Delta_M(m)=1 \otimes m\}.
\]
It is clear that the assignment $M \mapsto M^H$ defines a left exact functor
\[
(-)^H : \Mod^H(R_{\widehat{I}})\rightarrow \Mod(k).
\]
In case $M$ is finitely generated over $R_{\widehat{I}}$, we have canonical identifications
\[
M=\varprojlim_n M/I^n M, \quad (\sO(H)\otimes R)_{\widehat{\sO(H)\otimes I}} \otimes_{R_{\widehat{I}}} M = \varprojlim_n \, (\sO(H) \otimes (M/I^n M)),
\]
see~\cite[\href{https://stacks.math.columbia.edu/tag/00MA}{Tag 00MA}]{stacks-project} (applied to the noetherian rings $R_{\widehat{I}}$ and $(\sO(H)\otimes R)_{\widehat{\sO(H)\otimes I}}$). Since taking projective limits is left exact, we deduce that we have
\begin{equation}
\label{eqn:invariants-fg-module}
M^H= \varprojlim_n \, (M/I^n M)^H
\end{equation}
where in the right-hand side we use the fixed points functor for representations of an affine group scheme, as in~\cite[\S I.2.10]{Jan03}.

\begin{rmk}
\label{rmk:inv-colimits}
It is easily checked that the functor $(-)^H$ commutes with filtered colimits. (See~\cite[Lemma~I.4.17]{Jan03} for the similar property in the usual setting of $H$-modules.)
\end{rmk}




Let now $\Lambda$ be a finitely generated abelian group, and assume that $H=\mathrm{Diag}_k(\Lambda)$ is the associated diagonalizable group scheme, see~\S\ref{sss:grading-shift}.

\begin{lem}
\label{lem:exactness-fixedpts-diag}
If $H=\mathrm{Diag}_k(\Lambda)$ as above, then the functor 
\[
(-)^H : \IndMof^H(R_{\widehat{I}}) \rightarrow \Mod(k)
\]
is exact. 
\end{lem}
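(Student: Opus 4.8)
The plan is to reduce the exactness statement to the well-known fact that taking $\Lambda$-invariants (equivalently, extracting the degree-$0$ component of a $\Lambda$-graded module) is an exact functor on $\Lambda$-graded $k$-modules. Since $H = \mathrm{Diag}_k(\Lambda)$, an $H$-action on a $k$-module is the same as a $\Lambda$-grading, and $M^H = M_0$ for such a module; the subtlety here is only that our objects carry the extra structure of an $R_{\widehat{I}}$-module and the equivariance is in the ``completed'' sense of~\S\ref{ss:def-equiv-sheaves}--\S\ref{ss:app-invariants}.

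First I would reduce to the coherent case. By Remark~\ref{rmk:inv-colimits}, $(-)^H$ commutes with filtered colimits, and every object of $\IndMof^H(R_{\widehat{I}})$ is by definition a filtered colimit of its coherent ($R_{\widehat{I}}$-finitely generated) subobjects; moreover filtered colimits are exact in module categories, so it suffices to prove that a short exact sequence of objects of $\mod^H(R_{\widehat{I}})$ stays exact after applying $(-)^H$, i.e.\ that $(-)^H$ is right exact on $\mod^H(R_{\widehat{I}})$ (left exactness being automatic). So let $0 \to M' \to M \to M'' \to 0$ be exact in $\mod^H(R_{\widehat{I}})$; I want surjectivity of $M^H \to (M'')^H$.

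The key step is to invoke the identification~\eqref{eqn:invariants-fg-module}: for a finitely generated $R_{\widehat{I}}$-module $N$ equipped with its completed $H$-equivariant structure, $N^H = \varprojlim_n (N/I^n N)^H$, where $(N/I^n N)$ is a genuine $H$-module and $(-)^H$ on the right is the ordinary fixed-points functor. Since $H$ is diagonalizable, this ordinary functor is exact (it is extraction of the trivial isotypic component; see~\cite[\S I.2.11]{Jan03}). Now by the Artin--Rees lemma, applied in the noetherian ring $R_{\widehat{I}}$ to the submodule $M' \subset M$, the $I$-adic filtration on $M'$ and the filtration induced from $M$ are cofinal, so the projective system $\bigl( 0 \to M'/(I^n M \cap M') \to M/I^n M \to M''/I^n M'' \to 0 \bigr)_n$ is a system of short exact sequences of $H$-modules in which the left-hand terms form a system with surjective (indeed eventually stabilizing on each piece, hence Mittag--Leffler) transition maps. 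Applying the exact functor $(-)^H$ term-by-term yields short exact sequences of $k$-modules, and the left-hand system $\bigl( (M'/(I^n M \cap M'))^H \bigr)_n$ satisfies the Mittag--Leffler condition; therefore $\varprojlim^1$ of it vanishes, and passing to the inverse limit over $n$ preserves exactness. Combined with~\eqref{eqn:invariants-fg-module} this gives exactness of $0 \to (M')^H \to M^H \to (M'')^H \to 0$, in particular the desired surjectivity.

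I expect the main obstacle to be the bookkeeping around~\eqref{eqn:invariants-fg-module}: one must be careful that the $H$-module structure on $M/I^n M$ used there is exactly the one for which ordinary diagonalizable-group exactness applies, and that the Artin--Rees comparison of filtrations is compatible with the coaction maps $\Delta_{M'}, \Delta_M, \Delta_{M''}$ so that the truncated sequences really are sequences of $H$-modules. Once this compatibility is in place, the Mittag--Leffler/$\varprojlim^1$ argument is routine. An alternative, perhaps cleaner, route would be to avoid $\varprojlim^1$ entirely by noting that for $H = \mathrm{Diag}_k(\Lambda)$ the invariants functor on $\QCoh^H$ is a direct summand projection (onto the weight-$0$ part) and hence exact already at the level of quasi-coherent sheaves, then checking it restricts appropriately to $\IndMof^H(R_{\widehat{I}})$; I would present whichever of these is shorter given the machinery already set up in the appendix.
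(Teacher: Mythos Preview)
Your proof is correct, but it takes a heavier route than the paper's. The paper's argument is essentially the ``alternative, perhaps cleaner, route'' you mention at the end: after the same reduction to the finitely generated case, it fixes $x\in N^H$, chooses \emph{any} preimage $y\in M$, and then replaces each $y_n\in M/I^nM$ by its degree-$0$ component $y'_n$ for the $\Lambda$-grading. Because the transition maps $M/I^{n+1}M\to M/I^nM$ and the surjections $M/I^nM\to N/I^nN$ are all $\Lambda$-graded, the $(y'_n)$ form a compatible system in $\varprojlim_n (M/I^nM)^H=M^H$ lifting $x$. This avoids Artin--Rees, Mittag--Leffler, and $\varprojlim^1$ entirely; your approach, by contrast, packages the argument as a derived-limit vanishing and is more robust in situations where one does not have a canonical splitting of invariants, but here the diagonalizable hypothesis makes the direct lift both shorter and more transparent. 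Your reduction to the coherent case via filtered colimits is also slightly more abstract than the paper's (which just picks a coherent submodule whose image contains the given element), but the content is the same.
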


\begin{proof}
We have to show that the functor preserves surjectivity. 
We therefore consider a surjection $M \twoheadrightarrow N$ of modules in $\IndMof^H(R_{\widehat{I}})$; then we need to show that any $x \in N^H$ belongs to the image of the induced morphism $M^H \to N^H$. 
Replacing $M$ by a finitely generated $H$-equivariant submodule whose image contains $x$, and $N$ by its image, we can assume that $M$ and $N$ are finitely generated.
Then $M^H$ and $N^H$ can be described as in~\eqref{eqn:invariants-fg-module}. In particular $x$ corresponds to a compatible sequence $(x_n : n \geq 1)$ of elements $x_n \in (N/I^n N)^H$. 
Each $N/I^n N$ is an $H$-module, i.e.~admits a canonical $\Lambda$-grading, and $x_n$ belongs to the component of $N/I^n N$ corresponding to the neutral element $0 \in \Lambda$. Choose a preimage $y$ of $x$ in $M$; then $y$ corresponds to a family $(y_n : n \geq 1)$ of elements $y_n \in M/I^n M$. As for $N$, each $M/I^n M$ admits a canonical $\Lambda$-grading; then denoting by $y'_n$ the component of $y_n$ in degree $0$, we obtain an element $(y'_n : n \geq 1)$ in $\varprojlim_n (M/I^n M)^H = M^H$ which is a preimage of $x$.
%
\end{proof}



We record the following consequence for later use. Here we consider an $H$-equiva\-riant coherent sheaf of algebras $\sC$ on $\Spec(R_{\widehat{I}})$, and denote its global sections by $C$.

\begin{lem}
\label{lem:proj-resolution-affine}
Assume that $H=\mathrm{Diag}_k(\Lambda)$ as above. 
For any object $M \in \mod^H(C)$, there exists a projective object $P \in \mod^H(C)$ which is free of finite rank as a $C$-module, and a surjection $P \twoheadrightarrow M$ in $\mod^H(C)$.
\end{lem}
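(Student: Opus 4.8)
The statement is a Nakayama-type lemma for graded modules over the completed equivariant coherent algebra $\sC$, where $H = \mathrm{Diag}_k(\Lambda)$. The plan is to start from $M \in \mod^H(C)$, which amounts to an $H$-equivariant coherent $\sO_{\Spec(R_{\widehat I})}$-module with a compatible $\sC$-action; since $H$ is diagonalizable, the $H$-equivariant structure is just a $\Lambda$-grading on $M$ compatible with the $\Lambda$-grading on $C$ (which itself comes from the $H$-action on $R_{\widehat I}$; note $I$ and all the completions are $H$-stable by hypothesis, so everything in sight is graded). First I would reduce modulo the ideal generated by $I$: set $M_1 = M / I M$, an $H$-equivariant finitely generated module over $C_1 = C/IC$, i.e.\ a finitely generated $\Lambda$-graded module over the $\Lambda$-graded ring $C_1$.

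Next I would choose finitely many homogeneous generators $\bar m_1, \dots, \bar m_r$ of $M_1$ over $C_1$, say of degrees $\mu_1, \dots, \mu_r \in \Lambda$, and lift them to homogeneous elements $m_1, \dots, m_r \in M$ of the same degrees (possible since the reduction map $M \to M_1$ is a surjection of graded modules, hence surjective in each degree). Form the free graded $\sC$-module $P = \bigoplus_{i=1}^r \sC \langle \mu_i \rangle$ (using the grading-shift notation of~\S\ref{sss:grading-shift}), which is an object of $\mod^H(\sC)$ because $\sC$ is coherent, and which is projective in $\mod^H(\sC)$: indeed $\Hom_{\mod^H(\sC)}(\sC\langle\mu\rangle, -)$ computes the degree-$\mu$ part of the underlying module, and by Lemma~\ref{lem:exactness-fixedpts-diag} applied to the shifted module (equivalently, since exactness of a sequence of $\Lambda$-graded modules can be checked degree by degree, and $\mod^H(C) \subset \IndMof^H(R_{\widehat I})$ so the fixed-point description applies) this functor is exact on $\mod^H(\sC)$. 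The $m_i$ define a morphism $\varphi : P \to M$ in $\mod^H(\sC)$.

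Finally I would prove $\varphi$ is surjective. Let $N = \mathrm{cok}(\varphi)$, an object of $\mod^H(\sC)$; by construction $N / IN = 0$, i.e.\ $IN = N$. Since $N$ is finitely generated over the $I$-adically complete (hence $I$-adically separated, as $R_{\widehat I}$ is noetherian) ring $C$, with the ideal generated by $I$ contained in the Jacobson-type radical in the graded-local sense, the usual Nakayama argument forces $N = 0$: concretely, $N = \varprojlim_n N/I^n N$ by~\cite[\href{https://stacks.math.columbia.edu/tag/00MA}{Tag 00MA}]{stacks-project} applied to the noetherian ring $C = R_{\widehat I}$-algebra, and $IN = N$ together with $N$ finitely generated gives $I^n N = N$ for all $n$, hence every quotient $N/I^n N$ vanishes and so does $N$. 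The main obstacle I anticipate is bookkeeping the equivariance/grading through the completion: one must be sure that $M/IM$ is genuinely a $\Lambda$-graded $C_1$-module and that the chosen homogeneous lifts are compatible with the $H$-equivariant structure in the sense of Definition~\ref{def:equiv-coh-sheaves}, for which the clean route is~\eqref{eqn:invariants-fg-module} and Lemma~\ref{lem:exactness-fixedpts-diag}, expressing $H$-equivariant data on finitely generated complete modules as inverse limits of honest $\Lambda$-gradings on the Artinian quotients $M/I^n M$ and using that a compatible system of homogeneous lifts in the quotients assembles to a homogeneous element of $M$. Once that is set up, projectivity of $P$ and the Nakayama step are routine.
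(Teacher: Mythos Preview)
Your approach is essentially the paper's: pick homogeneous generators of $M/IM$, lift them compatibly through the tower $M/I^nM$, form $P=\bigoplus_i C\langle\mu_i\rangle$, check projectivity via Lemma~\ref{lem:exactness-fixedpts-diag}, and conclude surjectivity by Nakayama. The paper chooses generators of $M/IM$ as an $R/I$-module rather than a $C/IC$-module and cites \cite[\href{https://stacks.math.columbia.edu/tag/0315}{Tag 0315}]{stacks-project} for the Nakayama step, but this is cosmetic.

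One correction to your framing, however: your opening claim that ``the $H$-equivariant structure is just a $\Lambda$-grading on $M$'' (and likewise on $C$ and $R_{\widehat I}$) is \emph{false}, and this is the entire reason the paper's Appendix formalism exists. For instance, $R_{\widehat I}$ itself need not be $\Lambda$-graded (think $k[[x]]$ with $\deg x=1$). So your parenthetical justification for lifting --- ``the reduction map $M\to M_1$ is a surjection of graded modules, hence surjective in each degree'' --- does not make sense as stated. What is true, and what you correctly identify in your final paragraph, is that each $M/I^nM$ is an honest $\Lambda$-graded module, and one lifts $\bar m_i$ by taking any preimage in $M$, projecting to degree $\mu_i$ in each $M/I^nM$, and reassembling via $M=\varprojlim_n M/I^nM$; this is exactly the argument from the proof of Lemma~\ref{lem:exactness-fixedpts-diag} that the paper invokes. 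With that understood, your proof is complete and coincides with the paper's.
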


\begin{proof}
Let $M\in \mod^H(C)$. 
Since $M$ is finitely generated over $R_{\widehat{I}}$, as above we have $M=\varprojlim_{n}M/I^nM$. 
We can choose a finite family $(m_a)_{a \in A}$ of generators of the $R/I$-module $M/IM$, such that $m_a$ is a weight vector of the $H$-module $M/I$, say of weight $\lambda_a$. As in the proof of Lemma~\ref{lem:exactness-fixedpts-diag}
we can choose for any $a \in A$ a lift $\widetilde{m}_a \in M$ of $m_a$ whose image in $M/I^nM$ belongs to the $\lambda_a$-weight space of $M/I^nM$, for any $n \geq 1$. Denoting, for any $\lambda \in \Lambda$, by $k_H(\lambda)$ the free rank-$1$ $k$-module with the action of $H$ determined by $\lambda$,
these elements determine a morphism of $H$-equivariant $R_{\widehat{I}}$-modules $\bigoplus_a k_H(\lambda_a) \otimes R_{\widehat{I}} \rightarrow M$.
Using Item~(1) in~\cite[\href{https://stacks.math.columbia.edu/tag/0315}{Tag 0315}]{stacks-project} (for the ring $R_{\widehat{I}}$) we see that this morphism is surjective.
We deduce a surjection of $H$-equivariant $C$-modules $\bigoplus_a k_H(\lambda_a) \otimes C \twoheadrightarrow M$. Clearly the left-hand side is free over $C$. To conclude the proof, it suffices to show that $k_H(\lambda) \otimes C$ is projective in $\mod^H(C)$ for any $\lambda \in \Lambda$.
This follows from the obvious isomorphism
\[
\Hom_{\mod^H(C)}(k_H(\lambda) \otimes C,-) \cong ( k_H(-\lambda) \otimes - )^H
\]
and Lemma \ref{lem:exactness-fixedpts-diag}. 
\end{proof}

\subsection{Morphism spaces} 

We come back to the setting of~\S\ref{ss:equiv-coh-sheaves}.
In particular we assume that $X$ is of finite type over $R$.

\begin{lem}
\label{lem:Hom-equiv}
For $\sF\in \Coh^H(X_{\widehat{I}})$ and $\sG\in \QCoh^H(X_{\widehat{I}})$, the sheaf 
\[
\sHom_{\sO_{X_{\widehat{I}}}}(\sF,\sG)
\]
admits a canonical structure of object in $\QCoh^H(X_{\widehat{I}})$, 
and the $R_{\widehat{I}}$-module 
\[
\Hom_{\QCoh(X_{\widehat{I}})}(\sF,\sG)
\]
admits a canonical structure of object in $\Mod^H(R_{\widehat{I}})$.
Moreover we have a canonical isomorphism 
\[
\Hom_{\QCoh(X_{\widehat{I}})}(\sF,\sG)^H=\Hom_{\QCoh^H(X_{\widehat{I}})}(\sF,\sG).
\]
\end{lem}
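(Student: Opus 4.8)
The plan is to establish all three assertions of Lemma~\ref{lem:Hom-equiv} by transporting the classical construction of equivariant $\sHom$-sheaves on schemes (as in~\cite[\S 3]{MR16} or~\cite[Appendix]{MR16}) to the present ``completed'' setting, using the flat base change theorem made available by Lemma~\ref{lem:flatness}. The only genuinely new ingredient compared with the usual situation is that the action and projection maps $\act, \pr : (H\times X)_{\widehat{\sO(H)\otimes I}} \rightrightarrows X_{\widehat{I}}$ are not two maps into a scheme carrying an $H$-action in the strict sense, but the flatness of these maps is all one uses in the standard argument.

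First I would recall that, since $\sF$ is coherent and $X_{\widehat I}$ is noetherian, the sheaf $\sHom_{\sO_{X_{\widehat I}}}(\sF,\sG)$ is quasi-coherent, and that for any flat morphism $f$ there is a canonical base-change isomorphism $f^* \sHom_{\sO_{X_{\widehat I}}}(\sF,\sG) \cong \sHom_{\sO}(f^*\sF, f^*\sG)$ (this uses coherence of $\sF$ and flatness of $f$; see e.g.~\cite[\href{https://stacks.math.columbia.edu/tag/02KH}{Tag 02KH}]{stacks-project} for the analogous statement for pushforward, and the standard local computation for $\sHom$). Applying this to $f = \act$ and $f = \pr$, the equivariant structures $\theta_{\sF} : \act^*\sF \simto \pr^*\sF$ and $\theta_{\sG} : \act^*\sG \simto \pr^*\sG$ induce an isomorphism
\[
\act^* \sHom_{\sO_{X_{\widehat I}}}(\sF,\sG) \cong \sHom_{\sO}(\act^*\sF,\act^*\sG) \xrightarrow[\sim]{\ \theta_{\sG}\circ(-)\circ\theta_{\sF}^{-1}\ } \sHom_{\sO}(\pr^*\sF,\pr^*\sG) \cong \pr^* \sHom_{\sO_{X_{\widehat I}}}(\sF,\sG),
\]
and one checks the cocycle condition on $(H\times H\times X)_{\widehat{\sO(H)\otimes\sO(H)\otimes I}}$ by the same bookkeeping as in the non-completed case (again using flat base change to identify the relevant triple pullbacks). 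This gives the object of $\QCoh^H(X_{\widehat I})$. Taking global sections $\Gamma(X_{\widehat I},-)$, which by Lemma~\ref{lem:flatness} and flat base change commutes with $\act^*$ and $\pr^*$ in the appropriate sense, the $R_{\widehat I}$-module $\Hom_{\QCoh(X_{\widehat I})}(\sF,\sG) = \Gamma(X_{\widehat I}, \sHom(\sF,\sG))$ inherits a coaction map and hence becomes an object of $\Mod^H(R_{\widehat I})$ in the sense of~\S\ref{ss:app-invariants}.

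Finally, for the last identity I would unwind the definition of $(-)^H$ from~\S\ref{ss:app-invariants}: an element $\varphi \in \Hom_{\QCoh(X_{\widehat I})}(\sF,\sG)$ lies in the invariants exactly when its image under the coaction map equals $1\otimes\varphi$, which, after applying flat base change to pull back to $(H\times X)_{\widehat{\sO(H)\otimes I}}$, is precisely the condition $(\pr^*\varphi)\circ\theta_{\sF} = \theta_{\sG}\circ(\act^*\varphi)$ appearing in Definition~\ref{def:equiv-coh-sheaves}; that is, $\varphi$ is a morphism in $\QCoh^H(X_{\widehat I})$. I expect the main obstacle to be purely bookkeeping: checking carefully that the base-change isomorphisms for $\sHom$, for $\Gamma$, and for the triple pullbacks entering the cocycle identity are mutually compatible, so that the equivariant structure on $\sHom$ is well-defined and the coaction on global sections matches the one implicitly used to define $(-)^H$. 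No deep input beyond flatness (Lemma~\ref{lem:flatness}), coherence of $\sF$, and noetherianity of $X_{\widehat I}$ is needed.
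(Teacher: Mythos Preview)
Your proposal is correct and follows essentially the same approach as the paper: quasi-coherence of $\sHom$ from coherence of $\sF$, the base-change isomorphism for $\sHom$ along the flat maps $\act$ and $\pr$ (the paper cites \cite[\href{https://stacks.math.columbia.edu/tag/01XZ}{Tag 01XZ} and \href{https://stacks.math.columbia.edu/tag/0C6I}{Tag 0C6I}]{stacks-project} for this), and then the induced equivariant structure built from $\theta_{\sF}$ and $\theta_{\sG}$. For the second claim the paper is slightly slicker: it observes that $\Hom_{\QCoh(X_{\widehat I})}(\sF,\sG)$ is the pushforward of $\sHom(\sF,\sG)$ along $X_{\widehat I}\to\Spec(R_{\widehat I})$ and invokes the equivariant pushforward already constructed in \S\ref{ss:functorialities}, rather than redoing the flat-base-change bookkeeping by hand; and for the final identity it simply declares it ``clear from the definitions'' where you spell out the unwinding explicitly.
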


\begin{proof}
Since $\sF$ is coherent, the sheaf $\sHom_{\sO_{X_{\widehat{I}}}}(\sF,\sG)$ is quasi-coherent. Now by~\cite[\href{https://stacks.math.columbia.edu/tag/01XZ}{Tag 01XZ} \& \href{https://stacks.math.columbia.edu/tag/0C6I}{Tag 0C6I}]{stacks-project}, since the morphisms $\pr$ and $\act$ are flat (see Lemma~\ref{lem:flatness}) we have canonical isomorphisms
\begin{align*}
\act^*\sHom_{\sO_{X_{\widehat{I}}}}(\sF,\sG) &\cong \sHom_{\sO_{(H\times X)_{\widehat{\sO(H)\otimes I}}}}(\act^* \sF, \act^* \sG), \\
\pr^*\sHom_{\sO_{X_{\widehat{I}}}}(\sF,\sG) &\cong \sHom_{\sO_{(H\times X)_{\widehat{\sO(H)\otimes I}}}}(\pr^* \sF, \pr^* \sG).
\end{align*}
Hence from the given isomorphisms $\act^* \sF \simto \pr^* \sF$ and $\act^* \sG \simto \pr^* \sG$ we deduce an isomorphism
\[
\act^*\sHom_{\sO_{X_{\widehat{I}}}}(\sF,\sG) \simto \pr^*\sHom_{\sO_{X_{\widehat{I}}}}(\sF,\sG),
\]
which is easily seen to endow $\sHom_{\sO_{X_{\widehat{I}}}}(\sF,\sG)$ with the structure of an $H$-equivariant quasi-coherent sheaf. Since $\Hom_{\QCoh(X_{\widehat{I}})}(\sF,\sG)$ is the pushforward of $\sHom_{\sO_{X_{\widehat{I}}}}(\sF,\sG)$ under the canonical morphism $X_{\widehat{I}} \to \Spec(R_{\widehat{I}})$, we deduce the second claim using the considerations of~\S\ref{ss:functorialities}.
The final claim is clear from the definitions.
%
\end{proof}

Using this lemma, we therefore have a bifunctor
\[
\sHom_{\sO_{X_{\widehat{I}}}}(-,-) :  \Coh^H(X_{\widehat{I}}) \times \QCoh^H(X_{\widehat{I}}) \to \QCoh^H(X_{\widehat{I}}).
\]

\begin{rmk}
Since $X_{\widehat{I}}$ is a noetherian scheme, coherent sheaves are compact objects in $\QCoh(X_{\widehat{I}})$. Combining this observation with Lemma~\ref{lem:Hom-equiv} and Remark~\ref{rmk:inv-colimits}, we see that objects of $\Coh^H(X_{\widehat{I}})$ are compact in $\QCoh^H(X_{\widehat{I}})$. Since any object of $\IndCoh^H(X_{\widehat{I}})$ is by definition a filtered colimit of objects in $\Coh^H(X_{\widehat{I}})$, it follows as in~\cite[Corollary~6.3.5]{ks} that $\IndCoh^H(X_{\widehat{I}})$ identifies with the category of ind-objects in $\Coh^H(X_{\widehat{I}})$.
\end{rmk}


\subsection{Flat resolutions and derived pullback} 
\label{ss:derived-pullback}


We continue with the setting introduced in~\S\ref{ss:equiv-coh-sheaves}, and
let $\sC$ be an $H$-equivariant coherent sheaf of $\sO_{X_{\widehat{I}}}$-algebras. 

\begin{lem}
\label{lem:flat-res}
Assume that $H=\mathrm{Diag}_k(\Lambda)$ for some finitely generated abelian group $\Lambda$, and that $X$ is projective over $R$ and admits an $H$-equivariant ample line bundle.

\begin{enumerate}
\item
\label{it:flat-res-1}
For any $\sF\in \mod^H(\sC)$ there exists a surjection $\sG\twoheadrightarrow \sF$ in $\Mod^H(\sC)$ where $\sG$ is a locally free $\sC$-module of finite rank. 
\item
\label{it:flat-res-2}
For any $\sF\in \IndMod^H(\sC)$ there exists a surjection $\sG\twoheadrightarrow \sF$ in $\Mod^H(\sC)$ where $\sG$ is a locally free $\sC$-module (possibly not of finite rank)
\end{enumerate}
\end{lem}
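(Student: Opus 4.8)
The key point is that, under the stated hypotheses, $X_{\widehat{I}}$ comes equipped with enough ``equivariant global generators.'' More precisely, let $\sO_{X}(1)$ be the chosen $H$-equivariant ample line bundle, and denote by $\sO_{X_{\widehat{I}}}(1)$ its pullback to $X_{\widehat{I}}$, an $H$-equivariant invertible sheaf. The first step is to show that for any $\sF \in \mod^H(\sC)$ there is an integer $n$, a finite set $A$, weights $(\lambda_a)_{a \in A}$ in $\Lambda$, and a surjection of $H$-equivariant $\sO_{X_{\widehat{I}}}$-modules
\[
\bigoplus_{a \in A} \bigl( k_H(\lambda_a) \otimes \sO_{X_{\widehat{I}}}(-n) \bigr) \twoheadrightarrow \sF,
\]
where I write $k_H(\lambda)$ for the rank-one $k$-module with the $H$-action given by $\lambda$. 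This follows the standard pattern: by ampleness $\sF(n)$ is globally generated for $n$ large, and using Lemma~\ref{lem:exactness-fixedpts-diag} (exactness of $(-)^H$ for diagonalizable $H$ on objects of $\IndMof^H$) one can choose the generating sections to be $H$-weight vectors, exactly as in the proof of Lemma~\ref{lem:proj-resolution-affine}. Tensoring this surjection with $\sC$ over $\sO_{X_{\widehat{I}}}$ gives a surjection $\sG \twoheadrightarrow \sF$ in $\Mod^H(\sC)$ with $\sG = \bigoplus_{a} k_H(\lambda_a) \otimes \sO_{X_{\widehat{I}}}(-n) \otimes_{\sO_{X_{\widehat{I}}}} \sC$, which is a finite-rank locally free $\sC$-module. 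This proves~\eqref{it:flat-res-1}.

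For~\eqref{it:flat-res-2}, let $\sF \in \IndMod^H(\sC)$. By definition $\sF$ is the filtered colimit of its coherent $H$-equivariant $\sC$-submodules $(\sF_i)_{i \in J}$. For each $i$ apply part~\eqref{it:flat-res-1} to obtain a surjection $\sG_i \twoheadrightarrow \sF_i$ with $\sG_i$ a finite-rank locally free $\sC$-module. Composing with the inclusion $\sF_i \hookrightarrow \sF$ and taking the direct sum over $i \in J$ gives a morphism $\sG := \bigoplus_{i \in J} \sG_i \to \sF$ in $\Mod^H(\sC)$. This morphism is surjective because already the family of maps $\sF_i \to \sF$ is jointly surjective (the $\sF_i$ cover $\sF$), and a direct sum of locally free $\sC$-modules is locally free. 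Here one should note that this uses that $\Mod^H(\sC)$ admits arbitrary direct sums and that direct sum is exact and commutes with the forgetful functor to $\QCoh^H(X_{\widehat{I}})$, which in turn follows from the corresponding facts for $\QCoh(X_{\widehat{I}})$ together with the fact that the equivariance isomorphism $\theta$ of a direct sum is the direct sum of the individual $\theta$'s (pullback commutes with colimits).

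\textbf{Main obstacle.} The only genuinely delicate point is the equivariant lifting in step one: one must produce sections of $\sF(n)$ that are simultaneously global generators \emph{and} $H$-weight vectors. Over the affine base this was handled in Lemma~\ref{lem:proj-resolution-affine} by choosing weight-vector generators of $M/IM$ and lifting through the tower $M/I^nM$ using exactness of $(-)^H$; in the projective setting the analogous argument works, but one has to be slightly careful that ``global generation of $\sF(n)$'' for a coherent sheaf on $X_{\widehat{I}}$ can be checked, via Remark~\ref{rmk:IndMod}\eqref{it:comments-formal-schemes}, on the compatible system of coherent sheaves $\sF_m(n)$ on the schemes $X \times_{\Spec(R)} \Spec(R/I^m)$, to each of which ordinary ($H$-equivariant) ampleness applies. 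Granting this translation, which is routine given the cited results on formal completions, the rest is a standard diagram chase, and I do not anticipate any further difficulty.
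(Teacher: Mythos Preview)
Your approach is essentially the same as the paper's: use ampleness to surject a direct sum of twisted line bundles onto $\sF$, lift the generators equivariantly via Lemma~\ref{lem:proj-resolution-affine}, then tensor up to $\sC$; for part~\eqref{it:flat-res-2}, sum over coherent pieces. Two small corrections are in order.

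First, your ``main obstacle'' is not one. The paper does not pass through the tower $(X \times_{\Spec(R)} \Spec(R/I^m))_m$ at all: since $X_{\widehat{I}} \to X$ is affine, the pullback $\widehat{\sL}$ is genuinely ample on $X_{\widehat{I}}$, and ordinary Serre theory applies directly to the projective morphism $p : X_{\widehat{I}} \to \Spec(R_{\widehat{I}})$. One gets the surjection $\widehat{\sL}^{\otimes(-m)} \otimes p^*p_*(\sF \otimes \widehat{\sL}^{\otimes m}) \twoheadrightarrow \sF$ without any inverse-limit argument; the equivariant lift of generators then happens on the affine base via Lemma~\ref{lem:proj-resolution-affine}, exactly as you say.

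Second, in part~\eqref{it:flat-res-2} you assert that ``a direct sum of locally free $\sC$-modules is locally free.'' This is not true in general for an infinite direct sum: one needs a \emph{single} open cover trivializing all summands simultaneously. The paper makes this explicit, noting that the modules $\sG_a$ produced in part~\eqref{it:flat-res-1} are all of the form $\sC \otimes \widehat{\sL}^{\otimes(-m_a)} \otimes p^*(\sM_a)$ with $\sM_a$ free, so any cover trivializing $\widehat{\sL}$ trivializes every $\sG_a$ at once. In your construction the same observation applies (your $\sG_i$'s are trivialized wherever $\sO_{X_{\widehat{I}}}(1)$ is), but you should say so rather than invoke a false general principle.
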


\begin{proof}
If $\sL$ is an $H$-equivariant ample line bundle on $X$, since the projection morphism $X_{\widehat{I}} \to X$ is affine, the pullback $\widehat{\sL}$ of $\sL$ to $X_{\widehat{I}}$ is also ample by~\cite[\href{https://stacks.math.columbia.edu/tag/0892}{Tag 0892}]{stacks-project}, and admits a canonical $H$-equivariant structure (see Example~\ref{ex:equiv-qcoh-alg}). Fix $\sF\in \mod^H(\sC)$.
It follows from our assumption that the canonical morphism $p : X_{\widehat{I}} \to \Spec(R_{\widehat{I}})$ is projective, so that for any $m \in \Z$ the pushforward $p_*(\sF \otimes_{\sO_{X_{\widehat{I}}}} \widehat{\sL}^{\otimes m})$ is a coherent sheaf,
see e.g.~\cite[\href{https://stacks.math.columbia.edu/tag/02O5}{Tag 02O5}]{stacks-project}.
For $m>0$ large enough the canonical map
\[
\widehat{\sL}^{\otimes (-m)} \otimes_{\sO_{X_{\widehat{I}}}} p^* p_*( \sF \otimes_{\sO_{X_{\widehat{I}}}} \widehat{\sL}^{\otimes m}) \to \sF
\]
is surjective,
see~\cite[\href{https://stacks.math.columbia.edu/tag/01Q3}{Tag 01Q3}]{stacks-project}.
Here the left-hand side has a canonical $H$-equivariant structure by the constructions of~\S\ref{ss:functorialities}, 
and our morphism is a morphism of $H$-equivariant quasi-coherent sheaves.
Fix such an $m$.
By Lemma~\ref{lem:proj-resolution-affine} there exists an object $\sM$ in $\QCoh^H(\Spec(R_{\widehat{I}}))$ which is free of finite rank as $\sO_{\Spec(R_{\widehat{I}})}$-module and a surjective morphism $\sM \twoheadrightarrow p_*( \sF \otimes_{\sO_{X_{\widehat{I}}}} \widehat{\sL}^{\otimes m})$. We deduce a surjection of $H$-equivariant $\sC$-modules from $\sG := \sC\otimes_{\sO_{X_{\widehat{I}}}} \widehat{\sL}^{\otimes (-m)} \otimes_{\sO_{X_{\widehat{I}}}} p^*(\sM)$ to $\sF$. Here $\sG$ is locally free of finite rank over $\sC$, hence we have proved~\eqref{it:flat-res-1}.

If $\sF$ is now a general object of $\IndMod^H(\sC)$, one can write $\sF = \varinjlim_{a} \sF_a$ where each $\sF_a$ is coherent. By~\eqref{it:flat-res-1} and its proof, there exist objects $(\sG_a)_a$ and surjections $(\sG_a \twoheadrightarrow \sF_a)_a$ where each $\sG_a$ is locally free of finite rank over $\sC$, and moreover there is an open cover of $X_{\widehat{I}}$ which trivializes all of these modules. Then we obtain a surjection $\bigoplus_a \sG_a \twoheadrightarrow \sF$ where the left-hand side is locally free.
\end{proof}

Using~\cite[\href{https://stacks.math.columbia.edu/tag/05T7}{Tag 05T7}]{stacks-project}, one deduces from Lemma~\ref{lem:flat-res} that (under the assumptions of the lemma) any bounded above complex of objects of $\mod^H(\sC)$, resp.~$\IndMod^H(\sC)$, is quasi-isomorphic to a bounded above complex of objects which are locally free of finite rank, resp.~locally free.
As a corollary (for $\sC=\sO_{X_{\widehat{I}}}$), we deduce the following claim.

\begin{corollary}
Assume that $H=\mathrm{Diag}_k(\Lambda)$ for some finitely generated abelian group $\Lambda$, and that $X$ is projective over $R$ and admits an $H$-equivariant ample line bundle.
The bifunctors $-\otimes_{\sO_{X_{\widehat{I}}}}-$ and $\sHom_{\sO_{X_{\widehat{I}}}}(-,-)$ admit derived bifunctors 
\[
-\otimes^L_{\sO_{X_{\widehat{I}}}}-: D^-\IndCoh^H(X_{\widehat{I}})\times D^-\QCoh^H(X_{\widehat{I}})\rightarrow D^-\QCoh^H(X_{\widehat{I}}),
\]
\[
R\sHom_{\sO_{X_{\widehat{I}}}}(-,-): D^-\Coh^H(X_{\widehat{I}})\times D^+\QCoh^H(X_{\widehat{I}})\rightarrow D^+\QCoh^H(X_{\widehat{I}}).
\]
Moreover these bifunctors are compatible in the natural way with their non-equivariant counterparts.
\end{corollary}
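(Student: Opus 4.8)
The plan is to construct the two derived bifunctors by resolving the first (or only) variable using the resolutions provided by Lemma~\ref{lem:flat-res}, following the standard pattern for derived tensor products and $R\sHom$ on noetherian schemes, and then checking compatibility by comparing resolutions with their non-equivariant images.

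First I would treat the derived tensor product. Given the hypotheses ($H=\mathrm{Diag}_k(\Lambda)$ and $X$ projective over $R$ with an $H$-equivariant ample line bundle), Lemma~\ref{lem:flat-res}\eqref{it:flat-res-2}, applied with $\sC=\sO_{X_{\widehat{I}}}$, together with the Tag~05T7 observation recorded just after it, shows that every object of $D^-\IndCoh^H(X_{\widehat{I}})$ is represented by a bounded above complex of locally free $\sO_{X_{\widehat{I}}}$-modules in $\IndCoh^H(X_{\widehat{I}})$. Such complexes are K-flat (a bounded above complex of flat modules is K-flat, see e.g.~\cite[\href{https://stacks.math.columbia.edu/tag/06Y4}{Tag 06Y4}]{stacks-project}), so one defines $\sM \otimes^L_{\sO_{X_{\widehat{I}}}} \sN$ by replacing $\sM$ by such a resolution $\sP \to \sM$ and forming $\sP \otimes_{\sO_{X_{\widehat{I}}}} \sN$ termwise; the termwise tensor product of an $H$-equivariant locally free sheaf with an arbitrary object of $\QCoh^H(X_{\widehat{I}})$ is again in $\QCoh^H(X_{\widehat{I}})$ (using that pullback under $\act$, $\pr$ commutes with tensor products), so the result lies in $D^-\QCoh^H(X_{\widehat{I}})$. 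Independence of the resolution up to canonical isomorphism is the usual argument: any two such resolutions admit a common refinement, or one invokes that K-flat complexes compute the derived tensor product; exactness of $\for$ and the fact that a locally free $H$-equivariant sheaf is sent by $\for$ to a locally free sheaf give the compatibility with the non-equivariant derived tensor product. For $R\sHom_{\sO_{X_{\widehat{I}}}}(\sF,\sG)$ with $\sF\in D^-\Coh^H(X_{\widehat{I}})$ and $\sG\in D^+\QCoh^H(X_{\widehat{I}})$, one resolves $\sF$ by a bounded above complex of $H$-equivariant locally free sheaves of finite rank (using Lemma~\ref{lem:flat-res}\eqref{it:flat-res-1} and again Tag~05T7), applies $\sHom_{\sO_{X_{\widehat{I}}}}(-,-)$ termwise — which by Lemma~\ref{lem:Hom-equiv} lands in $\QCoh^H(X_{\widehat{I}})$, and for a finite-rank locally free first argument is exact in the second argument — and obtains a bounded below complex, hence an object of $D^+\QCoh^H(X_{\widehat{I}})$. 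One could alternatively resolve $\sG$ by injectives, but resolving $\sF$ avoids the question of existence of enough injectives in $\QCoh^H(X_{\widehat{I}})$ and keeps the boundedness control; I would phrase it this way. Compatibility with the non-equivariant $R\sHom$ follows once more from exactness and monoidality of $\for$ together with the last assertion of Lemma~\ref{lem:Hom-equiv}.

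The one point that needs a little care — and is the main (mild) obstacle — is verifying that these termwise constructions are genuinely \emph{functorial} and well-defined on the derived category, i.e.\ that a quasi-isomorphism between bounded above complexes of $H$-equivariant locally free sheaves (resp.\ finite-rank locally free sheaves) induces a quasi-isomorphism after $(-)\otimes_{\sO_{X_{\widehat{I}}}}\sN$ (resp.\ $\sHom_{\sO_{X_{\widehat{I}}}}(-,\sG)$). This reduces, via the forgetful functor $\for$ (which is exact, faithful, monoidal, and detects quasi-isomorphisms since it detects cohomology), to the corresponding non-equivariant statements over the noetherian scheme $X_{\widehat{I}}$, which are standard: a bounded above complex of flat (resp.\ finite-rank locally free) $\sO_{X_{\widehat{I}}}$-modules is K-flat (resp.\ the functor $\sHom(-,\sG)$ applied to such is well-behaved), so this is routine. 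Thus the whole statement is a formal consequence of Lemma~\ref{lem:flat-res}, Lemma~\ref{lem:Hom-equiv}, Remark~\ref{rmk:inv-colimits}, and the exactness/faithfulness of $\for$; I would write it out in exactly this order, keeping the verifications brief and pointing to the non-equivariant analogues in \cite{stacks-project} for the homotopy-invariance inputs.
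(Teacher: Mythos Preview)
Your proposal is correct and follows essentially the same approach as the paper: the paper's entire argument is the sentence preceding the corollary, which says that from Lemma~\ref{lem:flat-res} (with $\sC=\sO_{X_{\widehat{I}}}$) and \cite[\href{https://stacks.math.columbia.edu/tag/05T7}{Tag 05T7}]{stacks-project} one obtains resolutions by locally free (finite-rank for $\Coh^H$) equivariant sheaves, and the corollary follows. You spell out the standard homotopy-invariance and compatibility checks that the paper leaves implicit, but the route is identical; the only superfluous citation is Remark~\ref{rmk:inv-colimits}, which is not actually needed here.
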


Consider now the setting of~\S\ref{ss:functorialities}; in particular we have two $R$-schemes $X$ and $Y$ endowed with actions of $H$, and an $H$-equivariant morphism of $R$-schemes $f : Y \to X$. We also let $\sC_X$, resp. $\sC_Y$, be an $H$-equivariant coherent sheaf of $\sO_{X_{\widehat{I}}}$-algebras, resp. $\sO_{Y_{\widehat{I}}}$-algebras, and
suppose we are given an $H$-equivariant morphism of quasicoherent sheaves of algebras 
$(f_{\widehat{I}})^{*}\sC_X \rightarrow \sC_Y$. 

\begin{corollary}
\label{cor:derived-pullback}
Assume that $H=\mathrm{Diag}_k(\Lambda)$ for some finitely generated abelian group $\Lambda$, and that $X$ is projective over $R$ and admits an $H$-equivariant ample line bundle.
Then the functor
\[
(f_{\widehat{I}})^* : \mod^H(\sC_X) \to \mod^H(\sC_Y), \ \text{resp.} \ (f_{\widehat{I}})^* : \IndMod^H(\sC_X) \to \IndMod^H(\sC_Y),
\]
admits a derived functor 
\begin{multline*}
L(f_{\widehat{I}})^*: D^-\mod^H(\sC_X) \rightarrow D^-\mod^H(\sC_Y), \ \text{resp.} \\
L(f_{\widehat{I}})^* : D^-\IndMod^H(\sC_X) \to D^-\IndMod^H(\sC_Y).
\end{multline*}
Moreover, these functors are compatible with each other in the obvious sense and the following diagram commutes, where the lower arrow is the usual (derived) pullback functor for quasi-coherent modules:
\[
\begin{tikzcd}[column sep=large]
D^-\IndMod^H(\sC_X) \ar[r, "L(f_{\widehat{I}})^*"] \ar[d, "\for"'] & D^-\IndMod^H(\sC_Y) \ar[d, "\for"] \\
D^-\Mod(\sC_X) \ar[r, "L(f_{\widehat{I}})^*"] & D^-\Mod(\sC_Y).
\end{tikzcd}
\]
\end{corollary}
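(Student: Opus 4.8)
The plan is to construct the derived pullback functor $L(f_{\widehat I})^*$ by resolving source complexes by complexes of locally free (equivariant) $\sC_X$-modules, using the resolution statement already available from Lemma~\ref{lem:flat-res}. First I would record the key input: under the hypotheses $H = \mathrm{Diag}_k(\Lambda)$ and $X$ projective over $R$ with an $H$-equivariant ample line bundle, Lemma~\ref{lem:flat-res} and the remark following it (citing \cite[\href{https://stacks.math.columbia.edu/tag/05T7}{Tag 05T7}]{stacks-project}) show that every bounded above complex in $\mathsf{C}^-\mod^H(\sC_X)$, resp.~$\mathsf{C}^-\IndMod^H(\sC_X)$, admits a quasi-isomorphism from a bounded above complex of locally free $\sC_X$-modules of finite rank, resp.~locally free $\sC_X$-modules. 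Such complexes are adapted to the (underived) pullback functor $(f_{\widehat I})^*$: the pullback of a locally free $\sC_X$-module is a locally free $\sC_Y$-module, and pullback is exact on such modules (it is even exact at the level of underlying $\sO$-modules, being a localization-type operation). Hence by the standard machinery of derived functors relative to an adapted class (see e.g.~\cite[Chap.~I, \S5]{Hart66}), $(f_{\widehat I})^*$ admits a left derived functor on each of $D^-\mod^H(\sC_X)$ and $D^-\IndMod^H(\sC_X)$, computed by applying $(f_{\widehat I})^*$ termwise to a locally free resolution, and this is independent of the chosen resolution.

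Next I would check that $L(f_{\widehat I})^*$ lands in the asserted target categories. For $\sF$ coherent, a finite-rank locally free resolution pulls back to a complex of finite-rank locally free $\sC_Y$-modules, whose cohomology is therefore coherent over $\sO_{Y_{\widehat I}}$; so $L(f_{\widehat I})^*$ preserves $D^-\mod^H$. For $\sF$ in $\IndMod^H(\sC_X)$, writing $\sF = \varinjlim_a \sF_a$ with each $\sF_a$ coherent and using that the resolutions produced in the proof of Lemma~\ref{lem:flat-res} can be taken compatibly (the proof exhibits a single open cover trivializing all the relevant finite-rank pieces), one sees the cohomology of $L(f_{\widehat I})^*(\sF)$ is a filtered colimit of coherent sheaves, hence lies in $\IndCoh^H(Y_{\widehat I})$; equivalently the underlying object lies in $\IndMod^H(\sC_Y)$. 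Compatibility of the two functors is immediate from the observation that a finite-rank locally free resolution of a coherent complex is also a valid resolution in the larger category $\IndMod^H$, so the same termwise pullback computes both.

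Finally, for the commuting square with the non-equivariant (derived) pullback, I would invoke the last sentence of \S\ref{ss:functorialities}: the forgetful functor $\for : \Mod^H(\sC_X) \to \Mod(\sC_X)$ intertwines $(f_{\widehat I})^*$ with its non-equivariant counterpart, and sends locally free equivariant $\sC_X$-modules to locally free $\sC_X$-modules. Since a complex of (equivariant) locally free $\sC_X$-modules is also adapted to the non-equivariant $(f_{\widehat I})^*$, applying $\for$ to an equivariant locally free resolution of $M \in D^-\IndMod^H(\sC_X)$ yields a resolution that computes $L(f_{\widehat I})^*\for(M)$; comparing termwise pullbacks gives the required isomorphism $\for \circ L(f_{\widehat I})^* \cong L(f_{\widehat I})^* \circ \for$, and naturality is clear. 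The main obstacle I anticipate is not any single deep step but the bookkeeping in the ind-coherent case: ensuring that locally free resolutions exist \emph{compatibly} across a filtered system so that the colimit of the pulled-back resolutions is again (termwise) locally free, and that this is genuinely a resolution of the colimit — this is exactly what the "single open cover" device in the proof of Lemma~\ref{lem:flat-res}\eqref{it:flat-res-2} is designed to handle, and I would lean on it rather than reprove it.
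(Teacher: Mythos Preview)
Your proposal is correct and follows essentially the same approach as the paper, which states this corollary without proof as an immediate consequence of Lemma~\ref{lem:flat-res} and the resolution remark preceding it (Tag 05T7). You have filled in the standard details that the paper leaves implicit.

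One minor over-complication: your worry about building resolutions ``compatibly across a filtered system'' in the $\IndMod^H$ case is unnecessary. Lemma~\ref{lem:flat-res}\eqref{it:flat-res-2} already gives, for any object of $\IndMod^H(\sC_X)$, a surjection from a locally free $\sC_X$-module, so one resolves the object directly; the pulled-back complex has terms that are locally free $\sC_Y$-modules (hence in $\IndMod^H(\sC_Y)$), and its cohomology objects, being subquotients, remain in $\IndMod^H(\sC_Y)$ since that subcategory is closed under subobjects and quotients.
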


In case $\sC_X$ is flat over $\sO_{X_{\widehat{I}}}$ and the morphism $(f_{\widehat{I}})^{*}\sC_X \rightarrow \sC_Y$ is an isomorphism, we have a commutative diagram
\[
\begin{tikzcd}[column sep=large]
D^-\IndMod^H(\sC_X) \ar[r, "L(f_{\widehat{I}})^*"] \ar[d, "\for"'] & D^-\IndMod^H(\sC_Y) \ar[d, "\for"] \\
D^-\IndCoh^H(X_{\widehat{I}}) \ar[r, "L(f_{\widehat{I}})^*"] & D^-\IndCoh^H(Y_{\widehat{I}}).
\end{tikzcd}
\]

\subsection{Derived pushforward} 
\label{ss:derived-pushforward}

We now consider two separated $R$-schemes of finite type $X$, $Y$ endowed with actions of $H$ compatible with the projection to $\Spec(R)$, and an $H$-equivariant morphism of $R$-schemes $f : Y \to X$. 
Let $\sC_X$, resp. $\sC_Y$, be an $H$-equivariant coherent sheaf of $\sO_{X_{\widehat{I}}}$-algebras, resp. $\sO_{Y_{\widehat{I}}}$-algebras, and
suppose we are given an $H$-equivariant morphism of quasicoherent sheaves of algebras 
$(f_{\widehat{I}})^{*}\sC_X \rightarrow \sC_Y$. 

In case $f$ is affine, the functor $(f_{\widehat{I}})_* : \Mod^H(\sC_Y) \to \Mod^H(\sC_X)$ is exact, hence induces a t-exact functor between the associated derived categories. If $f$ is a closed immersion, then $(f_{\widehat{I}})_*$ sends coherent sheaves to coherent sheaves, so that we have similar functors for derived categories of coherent equivariant sheaves, and colimits of such, which are all compatible in the obvious way. This covers in particular the situation considered in Example~\ref{ex:infinitesimal-neighborhoods}.

For a general morphism $f$ it is more complicated to construct a derived functor $R(f_{\widehat{I}})_*$, mainly because we do not know if the category $\QCoh^H(Y_{\widehat{I}})$ (or, more generally, $\Mod^H(\sC_Y)$) as enough injective objects. Below we will consider this question, but will obtain an answer only under rather strong assumptions on the actions and morphism.



\begin{lem}
\label{lem:acyclic-resolution}
Assume that the following condition is satisfied. There is an affine open cover $\{V_a\}_{a \in A}$ of $X$ by $H$-stable subschemes, such that for each $a \in A$ there is an affine open cover $\{U_b\}_{b\in B_a}$ of $f^{-1}(V_a)$ by $H$-stable subschemes. 

For any $\sF\in \Mod^H(\sC_Y)$, there exists an injective morphism $\sF\hookrightarrow \sG$ in $\Mod^H(\sC_Y)$ where the image of $\sG$ in $\QCoh(Y_{\widehat{I}})$ is $(f_{\widehat{I}})_*$-acyclic. Moreover, in case $\sF$ belongs to $\IndMod^H(\sC_Y)$ one can take $\sG$ in $\IndMod^H(\sC_Y)$.
\end{lem}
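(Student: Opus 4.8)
The statement is a standard ``enough acyclics'' lemma, so the plan is to build $\sG$ by a \v Cech-type construction with respect to the two compatible affine covers. First I would reduce to the non-equivariant heart of the matter: the issue is to produce, for an $H$-equivariant quasi-coherent sheaf $\sF$ on $Y_{\widehat{I}}$, an $H$-equivariant embedding into something whose underlying quasi-coherent sheaf is $(f_{\widehat{I}})_*$-acyclic. Recall from~\S\ref{ss:functorialities} that for an \emph{affine} morphism the pushforward is exact, hence every quasi-coherent sheaf is acyclic for it. Thus, writing $j_b : U_b \hookrightarrow f^{-1}(V_a) \hookrightarrow Y$ for the open immersions (which are affine since the $U_b$ and the $V_a$ are affine and $Y$, $X$ are separated, so the inclusions $f^{-1}(V_a)\hookrightarrow Y$ are affine as well), the sheaf $\prod_{a\in A}\prod_{b\in B_a} (j_{b,\widehat{I}})_* (j_{b,\widehat{I}})^* \sF$ has the property that its pushforward along $f_{\widehat{I}}$ is computed by iterated affine pushforwards, hence is $(f_{\widehat{I}})_*$-acyclic (using that $(f_{\widehat I})_*\circ (j_{b,\widehat I})_* = ((f\circ j_b)_{\widehat I})_*$ and $f\circ j_b$ factors through the affine open $V_a$). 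The canonical morphism $\sF \to \prod_{a,b}(j_{b,\widehat I})_*(j_{b,\widehat I})^*\sF$ is injective because the $U_b$ cover $Y$ after pullback.

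The next point is equivariance and the module structure. Each $j_b$ is $H$-equivariant by hypothesis ($U_b$ and $V_a$ are $H$-stable), so by the functoriality of~\S\ref{ss:functorialities} the pullback $(j_{b,\widehat I})^*$ and the pushforward $(j_{b,\widehat I})_*$ both lift to the equivariant categories, and moreover to $\sC_Y$-modules once we use the given morphism $(f_{\widehat I})^*\sC_X \to \sC_Y$ and the analogous local structure maps; concretely $(j_{b,\widehat I})^*\sF$ is a module over $(j_{b,\widehat I})^*\sC_Y$, and $(j_{b,\widehat I})_*$ of such a module is naturally a $\sC_Y$-module via the unit $\sC_Y \to (j_{b,\widehat I})_*(j_{b,\widehat I})^*\sC_Y$. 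Hence the product $\sG := \prod_{a,b}(j_{b,\widehat I})_*(j_{b,\widehat I})^*\sF$ is an object of $\Mod^H(\sC_Y)$ (products exist there: they are computed on underlying quasi-coherent sheaves, which form an abelian category with products since $Y_{\widehat I}$ is noetherian, and the equivariant structure and $\sC_Y$-action pass to products), and $\sF \hookrightarrow \sG$ is a monomorphism in $\Mod^H(\sC_Y)$ whose image in $\QCoh(Y_{\widehat I})$ is $(f_{\widehat I})_*$-acyclic.

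For the last sentence, suppose $\sF \in \IndMod^H(\sC_Y)$, so $\sF$ is the filtered colimit of its coherent equivariant subsheaves. The product $\sG$ above need not lie in $\IndMod^H(\sC_Y)$, but one can replace it by the subobject $\sG' \subset \sG$ obtained as the colimit of the coherent subobjects of $\sG$ (equivalently, the image of $\sG$ under the right adjoint of $\IndMod^H(\sC_Y)\hookrightarrow\Mod^H(\sC_Y)$ described in~\S\ref{ss:equiv-coh-sheaves}); since $\sF$ already belongs to $\IndMod^H(\sC_Y)$, the monomorphism $\sF \hookrightarrow \sG$ factors through $\sG'$, and $\sG'$ lies in $\IndMod^H(\sC_Y)$ by construction. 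It remains to check that $\sG'$ is still $(f_{\widehat I})_*$-acyclic as a quasi-coherent sheaf; here I would use that $(f_{\widehat I})_*$ commutes with filtered colimits of quasi-coherent sheaves (as in~\cite[\S 3.9.3]{lipman}, valid since $f_{\widehat I}$ is quasi-compact and quasi-separated, cf.~Remark~\ref{rmk:noetherian-morphisms}) and that higher pushforwards of a filtered colimit are the colimit of the higher pushforwards, together with the fact that $\sG'$ is a filtered colimit of coherent subsheaves of the acyclic sheaf $\sG$ — wait, that last point is the delicate one, since a subsheaf of an acyclic sheaf is not automatically acyclic.

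\textbf{The main obstacle.} The genuinely delicate step is precisely this last acyclicity claim for $\sG'$: one cannot simply invoke ``subobject of acyclic'' or ``colimit of acyclics'' in isolation. The clean way around it, which I would adopt, is \emph{not} to pass to $\sG'$ after the fact but to run the \v Cech construction colimit-compatibly from the start: write $\sF = \varinjlim_\alpha \sF_\alpha$ with $\sF_\alpha$ coherent equivariant, apply the functorial embedding $\sF_\alpha \hookrightarrow \prod_{a,b}(j_{b,\widehat I})_*(j_{b,\widehat I})^*\sF_\alpha$ to each term (noting this is a finite product locally, and $(j_{b,\widehat I})_*(j_{b,\widehat I})^*\sF_\alpha$ is coherent since $j_b$ is affine of finite type and $\sF_\alpha$ coherent — here I must be careful, as a pushforward along an open affine immersion of a coherent sheaf need not be coherent, so I would instead take the \emph{sum} $\bigoplus$ over a \emph{locally finite} refinement, or else accept $\sG$ non-coherent and only demand $\sG'\in\IndMod$), and then set $\sG := \varinjlim_\alpha \sG_\alpha$ where $\sG_\alpha$ is the image in $\IndMod^H(\sC_Y)$ of the $\alpha$-th \v Cech object. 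Then $\sG \in \IndMod^H(\sC_Y)$ automatically, $\sF \hookrightarrow \sG$ since filtered colimits are exact, and $(f_{\widehat I})_*\sG = \varinjlim_\alpha (f_{\widehat I})_*\sG_\alpha$ with each term acyclic and the transition maps behaving well, whence acyclicity of $\sG$ by the colimit-compatibility of $R(f_{\widehat I})_*$. Sorting out which variant (product vs. locally finite sum, coherence bookkeeping) makes both the acyclicity and the Ind-coherence go through simultaneously — given the ``rather strong assumptions on the actions and morphism'' the authors flag — is where the real work lies; the rest is bookkeeping with the functorialities already established in~\S\ref{ss:functorialities}.
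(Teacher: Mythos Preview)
Your argument for the general $\Mod^H(\sC_Y)$ case is essentially the paper's: take the covers finite, set $\sG=\bigoplus_b (\imath_{b,\widehat I})_*(\imath_{b,\widehat I})^*\sF$, and use that $\imath_b$ and $f\circ\imath_b$ are affine.

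For the $\IndMod$ case there is a genuine gap. Your first attempt (pass to the colimit $\sG'$ of coherent subobjects of $\sG$) fails for exactly the reason you name: a subobject of an acyclic sheaf need not be acyclic. Your second attempt (write $\sF=\varinjlim_\alpha\sF_\alpha$ and set $\sG=\varinjlim_\alpha\sG_\alpha$) runs into the problem you also flag and do not resolve: $(\imath_{b,\widehat I})_*(\imath_{b,\widehat I})^*\sF_\alpha$ is not coherent, and you give no argument that it lies in $\IndMod^H(\sC_Y)$, so you cannot conclude $\sG_\alpha$ (hence $\sG$) does.

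The paper sidesteps all of this with a single observation you are missing: the projection formula gives
\[
(\imath_{b,\widehat I})_*(\imath_{b,\widehat I})^*\sF \;\cong\; (\imath_{b,\widehat I})_*\sO_{(U_b)_{\widehat I}}\otimes_{\sO_{Y_{\widehat I}}}\sF.
\]
By flat base change $(\imath_{b,\widehat I})_*\sO_{(U_b)_{\widehat I}}$ is the pullback to $Y_{\widehat I}$ of $(\imath_b)_*\sO_{U_b}$, and any $H$-equivariant quasi-coherent sheaf on the honest scheme $Y$ pulls back into $\IndCoh^H(Y_{\widehat I})$ (Remark~\ref{rmk:IndMod}\eqref{it:IndMod-completion}). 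Tensoring an object of $\IndMod^H(\sC_Y)$ by an object of $\IndCoh^H(Y_{\widehat I})$ stays in $\IndMod^H(\sC_Y)$. Hence the \emph{same} $\sG$ you built in the first paragraph already belongs to $\IndMod^H(\sC_Y)$ whenever $\sF$ does; no colimit bookkeeping or alternative construction is needed.
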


\begin{proof}
We fix affine open covers as in the statement, which we assume are finite (which is possible since our schemes are noetherian) and set $B = \sqcup_a B_a$. 
For any $b \in B$ we denote by $\imath_b :U_b \hookrightarrow Y$ the open embedding. 
Then for any $\sF\in \Mod^H(\sC_Y)$ we have an injective morphism $\sF\hookrightarrow \bigoplus_{b} (\imath_{b,\widehat{I}})_*(\imath_{b,\widehat{I}})^*\sF$, each $(\imath_{b,\widehat{I}})_*(\imath_{b,\widehat{I}})^*\sF$ has a natural structure of object in $\Mod^H(\sC_Y)$ by the constructions of~\S\ref{ss:functorialities}, and their images in $\QCoh(Y_{\widehat{I}})$ are $(f_{\widehat{I}})_*$-acyclic, since $\imath_{b}$ and $f \circ \imath_b$ are affine morphisms. 

To conclude the proof, it now suffices to prove that if $\sF\in \IndMod^H(\sC_Y)$, then each $(\imath_{b,\widehat{I}})_*(\imath_{b,\widehat{I}})^*\sF$ belongs to $\IndMod^H(\sC_Y)$. 
By the projection formula (see~\cite[\href{https://stacks.math.columbia.edu/tag/08EU}{Tag 08EU}]{stacks-project}) we have
\[
(\imath_{b,\widehat{I}})_*(\imath_{b,\widehat{I}})^*\sF \cong (\imath_{b,\widehat{I}})_* \sO_{(U_b)_{\widehat{I}}} \otimes_{\sO_{Y_{\widehat{I}}}} \sF.
\]
Now by the flat base change theorem, $(\imath_{b,\widehat{I}})_* \sO_{(U_b)_{\widehat{I}}}$ is the pullback to $Y_{\widehat{I}}$ of $(\imath_{b})_* \sO_{U_b}$, hence by Remark~\ref{rmk:IndMod}\eqref{it:IndMod-completion} this object belongs to $\IndCoh^H(X_{\widehat{I}})$.
It follows that $(\imath_{b,\widehat{I}})_*(\imath_{b,\widehat{I}})^*\sF$ belongs to $\IndMod^H(\sC)$, as desired.
\end{proof}

Using~\cite[\href{https://stacks.math.columbia.edu/tag/05T6}{Tag 05T6}]{stacks-project} one deduces from Lemma~\ref{lem:acyclic-resolution} that (under the assumptions of the lemma) any bounded below complex of objects in $\Mod^H(\sC_Y)$, resp.~in $\IndMod^H(\sC_Y)$, is quasi-isomorphic to a bounded below complex of objects in $\Mod^H(\sC_Y)$, resp.~in $\IndMod^H(\sC_Y)$, whose images in $\QCoh(Y_{\widehat{I}})$ are $(f_{\widehat{I}})_*$-acyclic. 


\begin{corollary}
\label{cor:derived-pushforward}
Assume that the condition in Lemma~\ref{lem:acyclic-resolution} is satisfied.

\begin{enumerate}
\item
\label{it:derived-pushforward-1}
The functor $(f_{\widehat{I}})_* : \Mod^H(\sC_Y)\rightarrow \Mod^H(\sC_X)$ admits a derived functor 
\[
R(f_{\widehat{I}})_*: D^+\Mod^H(\sC_Y)\rightarrow D^+\Mod^H(\sC_X),
\]
and the diagram
\[
\begin{tikzcd}[column sep=large] 
 D^+\Mod^H(\sC_Y) \ar[r, "R(f_{\widehat{I}})_*"] \ar[d] & D^+\Mod^H(\sC_X) \ar[d] \\
  D^+\QCoh(Y_{\widehat{I}}) \ar[r, "R(f_{\widehat{I}})_*"] & D^+\QCoh(Y_{\widehat{I}})
\end{tikzcd}
\]
commutes, where the lower line is the usual (derived) pushforward for quasi-coherent sheaves, and the vertical arrows are the obvious forgetful functors.
\item
\label{it:derived-pushforward-2}
Assume furthermore that $f$ is proper. The functor $(f_{\widehat{I}})_* : \IndMod^H(\sC_Y)\rightarrow \IndMod^H(\sC_X)$ admits a derived functor 
\[
R(f_{\widehat{I}})_*: D^+\IndMod^H(\sC_Y)\rightarrow D^+\IndMod^H(\sC_X),
\]
which restricts to a functor
\[
R(f_{\widehat{I}})_*: \Db\mod^H(\sC_Y) \rightarrow \Db\mod^H(\sC_X).
\]
These functors are compatible with the functor in~\eqref{it:derived-pushforward-1} in the obvious sense.
\end{enumerate}
\end{corollary}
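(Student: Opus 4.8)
The plan is to construct $R(f_{\widehat{I}})_*$ by the standard machinery of right derived functors relative to a class of adapted objects, taking as that class the \emph{admissible} objects supplied by Lemma~\ref{lem:acyclic-resolution} — namely those $\sG \in \Mod^H(\sC_Y)$ whose image in $\QCoh(Y_{\widehat{I}})$ is $(f_{\widehat{I}})_*$-acyclic. (Recall that, our schemes being noetherian, $f$ is automatically quasi-compact and quasi-separated, so the underived functors $(f_{\widehat{I}})_* : \Mod^H(\sC_Y) \to \Mod^H(\sC_X)$ and, when $f$ is proper, $(f_{\widehat{I}})_* : \IndMod^H(\sC_Y) \to \IndMod^H(\sC_X)$ are already available from~\S\ref{ss:functorialities}.) First I would record, using Lemma~\ref{lem:acyclic-resolution} together with~\cite[\href{https://stacks.math.columbia.edu/tag/05T6}{Tag 05T6}]{stacks-project}, that every object of $D^+\Mod^H(\sC_Y)$ is quasi-isomorphic to a bounded below complex of admissible objects.

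Next I would check that the class of admissible objects is adapted to $(f_{\widehat{I}})_*$. Given a short exact sequence $0 \to \sG' \to \sG \to \sG'' \to 0$ in $\Mod^H(\sC_Y)$ with $\sG'$ and $\sG$ admissible, one passes to $\QCoh(Y_{\widehat{I}})$ via the exact forgetful functor and invokes the long exact sequence for the ordinary derived pushforward of quasi-coherent sheaves to conclude that $\sG''$ is admissible and that $(f_{\widehat{I}})_*$ preserves exactness of this sequence; here one uses that the forgetful functors intertwine equivariant and non-equivariant pushforward, as recorded in the two commuting squares of~\S\ref{ss:functorialities}. One then defines $R(f_{\widehat{I}})_* \sF^\bullet := (f_{\widehat{I}})_* \sG^\bullet$ for a bounded below admissible resolution $\sG^\bullet$ of $\sF^\bullet$, and verifies independence of the resolution in the usual way. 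Commutativity of the displayed square is then immediate, since the same complex $\sG^\bullet$, viewed in $\QCoh(Y_{\widehat{I}})$, computes the quasi-coherent derived pushforward.

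For part~\eqref{it:derived-pushforward-2} I would run the same construction inside the subcategories of $\IndMod$: the second assertion of Lemma~\ref{lem:acyclic-resolution} provides admissible resolutions whose terms lie in $\IndMod^H(\sC_Y)$, and properness of $f$ guarantees that $(f_{\widehat{I}})_*$ carries $\IndMod^H(\sC_Y)$ into $\IndMod^H(\sC_X)$, so the construction lands in $D^+\IndMod^H(\sC_X)$ and is compatible with~\eqref{it:derived-pushforward-1} via the forgetful functors. To see that it restricts to a functor $\Db\mod^H(\sC_Y) \to \Db\mod^H(\sC_X)$, I would combine two facts: $R^i(f_{\widehat{I}})_* \sF$ is coherent for $\sF$ coherent, by~\cite[\href{https://stacks.math.columbia.edu/tag/02O5}{Tag 02O5}]{stacks-project} and properness; and $f_{\widehat{I}}$ being a proper morphism of noetherian schemes, there is an integer $d$ with $R^i(f_{\widehat{I}})_* = 0$ on $\QCoh(Y_{\widehat{I}})$ for $i > d$, whence the same bound equivariantly since cohomology is computed after the faithful exact forgetful functor. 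A routine truncation/induction on cohomological amplitude then shows that $R(f_{\widehat{I}})_*$ preserves boundedness and coherence of cohomology; the remaining compatibilities again hold because one and the same admissible resolution computes all the functors involved.

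The hard part — the only step that is not formal — is establishing in~\eqref{it:derived-pushforward-1} that admissible objects genuinely form a class adapted to $(f_{\widehat{I}})_*$: that there are enough of them, and that $(f_{\widehat{I}})_*$ is exact on short exact sequences of admissibles. This is precisely what Lemma~\ref{lem:acyclic-resolution} (whose proof exploits the hypothesis of $H$-stable affine covers of $Y$ refining those of $X$, via the projection formula and flat base change) and the exactness of the forgetful functors are designed to supply; once these are in hand, everything else is the standard formalism of derived functors for bounded below complexes in an abelian category that need not have enough injectives.
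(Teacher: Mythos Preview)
Your proposal is correct and follows essentially the same approach as the paper: use Lemma~\ref{lem:acyclic-resolution} together with~\cite[\href{https://stacks.math.columbia.edu/tag/05T6}{Tag 05T6}]{stacks-project} to supply adapted resolutions, derive $(f_{\widehat{I}})_*$ from them, and read off the compatibilities from the fact that the same resolutions compute the non-equivariant derived pushforward. The paper's proof is terser---it simply cites the preceding comments and, for the coherent statement in~\eqref{it:derived-pushforward-2}, appeals directly to~\cite[\href{https://stacks.math.columbia.edu/tag/08E2}{Tag 08E2}]{stacks-project} rather than unpacking it into~\cite[\href{https://stacks.math.columbia.edu/tag/02O5}{Tag 02O5}]{stacks-project} plus a cohomological-dimension bound as you do---but the substance is the same. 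One small point you leave implicit: for the phrase ``restricts to a functor $\Db\mod^H(\sC_Y) \to \Db\mod^H(\sC_X)$'' to make sense, you need that $\Db\mod^H(\sC_Y)$ sits fully faithfully inside $\Db\IndMod^H(\sC_Y)$; the paper records this in~\S\ref{ss:equiv-coh-sheaves} via~\cite[\href{https://stacks.math.columbia.edu/tag/0FCL}{Tag 0FCL}]{stacks-project}.
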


\begin{proof}
Item~\eqref{it:derived-pushforward-1} is a direct consequence of Lemma~\ref{lem:acyclic-resolution} and the comments above. In~\eqref{it:derived-pushforward-2}, the existence of the first functor follows similarly from the comments above, and its compatibility with the functor in~\eqref{it:derived-pushforward-1} is clear. For the categories of coherent modules, we note first that, as explained in~\S\ref{ss:equiv-coh-sheaves}, the category $\Db\mod^H(\sC_Y)$ identifies with a full subcategory of $\Db\IndMod^H(\sC_Y)$, hence of $D^-\IndMod^H(\sC_Y)$, so that the claim makes sense. The the stated property follows from the commutativity of the diagram in~\eqref{it:derived-pushforward-1} and the similar property of usual pushforward of coherent sheaves, see~\cite[\href{https://stacks.math.columbia.edu/tag/08E2}{Tag 08E2}]{stacks-project}.
\end{proof}

Using~\cite[\href{https://stacks.math.columbia.edu/tag/08D5}{Tag 08D5}]{stacks-project}, we deduce from the commutativity of the diagram in Corollary~\ref{cor:derived-pushforward}\eqref{it:derived-pushforward-1} that the functor $R(f_{\widehat{I}})_*$ in this statement restricts to a functor from $\Db\Mod^H(\sC_Y)$ to $\Db\Mod^H(\sC_X)$. Similarly, the functor in Corollary~\ref{cor:derived-pushforward}\eqref{it:derived-pushforward-1} restricts to a functor from $\Db\IndMod^H(\sC_Y)$ to $\Db\IndMod^H(\sC_X)$.

\subsection{Adjointness}
\label{ss:adjointness}

In this subsection we assume that both the constructions of~\S\ref{ss:derived-pullback} and those of~\S\ref{ss:derived-pushforward} are available. So, we consider $R$-schemes of finite type $X$, $Y$ endowed with compatible actions of $H$, an $H$-equivariant morphism of $R$-schemes $f : Y \to X$, and we assume that:
\begin{itemize}
\item
$X$, $Y$ are separated;
\item
$f$ is proper;
\item
$H=\mathrm{Diag}_k(\Lambda)$ for some finitely generated abelian group $\Lambda$;
\item
$X$ is projective over $R$ and admits an $H$-equivariant ample line bundle;
\item
there is an affine open cover $\{V_a\}_{a \in A}$ of $X$ by $H$-stable subschemes, such that for each $a \in A$ there is an affine open cover $\{U_b\}_{b\in B_a}$ of $f^{-1}(V_a)$ by $H$-stable subschemes.
\end{itemize}
We also let $\sC_X$, resp. $\sC_Y$, be an $H$-equivariant coherent sheaf of $\sO_{X_{\widehat{I}}}$-algebras, resp. $\sO_{Y_{\widehat{I}}}$-algebras, and
suppose we are given an $H$-equivariant morphism of quasicoherent sheaves of algebras 
$(f_{\widehat{I}})^{*}\sC_X \rightarrow \sC_Y$. 

Under these assumptions, we have functors
\begin{gather*}
L(f_{\widehat{I}})^* : D^-\IndMod^H(\sC_X) \to D^-\IndMod^H(\sC_Y), \\
R(f_{\widehat{I}})_*: D^+\IndMod^H(\sC_Y) \rightarrow D^+\IndMod^H(\sC_X),
\end{gather*}
which restrict to functors between derived categories of coherent modules as specified above. By general results on derived functors (see in particular~\cite[\href{https://stacks.math.columbia.edu/tag/05T5}{Tag 05T5}]{stacks-project} and~\cite[\href{https://stacks.math.columbia.edu/tag/0DVC}{Tag 0DVC}]{stacks-project}), for $\sF$ in $D^-\IndMod^H(\sC_X)$ and $\sG$ in $D^+\IndMod^H(\sC_Y)$ we have a canonical bifunctorial isomorphism
\[
\Hom_{D\IndMod^H(\sC_Y)}(L(f_{\widehat{I}})^* \sF, \sG) \cong \Hom_{D\IndMod^H(\sC_X)}(\sF, R(f_{\widehat{I}})_* \sG).
\]
In particular, if we assume that $L(f_{\widehat{I}})^*$ sends the subcategory $\Db\IndMod^H(\sC_X)$ into $\Db\IndMod^H(\sC_Y)$, then the restricted functors
\begin{gather*}
L(f_{\widehat{I}})^* : \Db\IndMod^H(\sC_X) \to \Db\IndMod^H(\sC_Y), \\
R(f_{\widehat{I}})_*: \Db\IndMod^H(\sC_Y) \rightarrow \Db\IndMod^H(\sC_X)
\end{gather*}
form an adjoint pair.

\begin{rmk}
\label{rmk:adjunction-inf-neighborhoods}
Assume that $H=\mathrm{Diag}_k(\Lambda)$ for some finitely generated abelian group $\Lambda$, and that $X$ is projective over $R$ and admits an $H$-equivariant ample line bundle. Consider also an $H$-equivariant coherent sheaf of algebras $\sC$ on $X$. Fix some $n \geq 1$, and
consider the setting of Example~\ref{ex:infinitesimal-neighborhoods}.
Then, as explained in~\S\ref{ss:derived-pushforward}, we have a derived pushforward functor $D \Mod^H(\sC_n) \to D\Mod^H(\sC)$, without further assumptions, and compatible functors $D \IndMod^H(\sC_n) \to D\IndMod^H(\sC)$ and $D \mod^H(\sC_n) \to D\mod^H(\sC)$ which restrict to functors
\begin{equation}
\label{eqn:derived-push-inf-neighborhood}
D^- \IndMod^H(\sC_n) \to D^- \IndMod^H(\sC), \quad D^- \mod^H(\sC_n) \to D^- \mod^H(\sC).
\end{equation}

On the other hand, the constructions of~\S\ref{ss:derived-pullback} provide derived pullback functors
\begin{equation}
\label{eqn:derived-pull-inf-neighborhood}
D^- \IndMod^H(\sC) \to D^-\IndMod^H(\sC_n), \quad D^- \mod^H(\sC) \to D^-\mod^H(\sC_n).
\end{equation}

The same considerations as above show that each functor in~\eqref{eqn:derived-push-inf-neighborhood} is right adjoint to the corresponding functor in~\eqref{eqn:derived-pull-inf-neighborhood}.
\end{rmk}

\subsection{Completion and complexes supported on an infinitesimal neighborhood}

In this subsection we consider the setting of Remark~\ref{rmk:adjunction-inf-neighborhoods}, in the special case when $\sC=\sA_{\widehat{I}}$ for some $H$-equivariant coherent sheaf of algebras $\sA$ on $X$. (See Example~\ref{ex:equiv-qcoh-alg} for the notation.) In particular we have the exact functor~\eqref{eqn:pullback-completion}, which induces a functor
$D^- \Mod^H(\sA) \to \Mod^H(\sA_{\widehat{I}})$, which will be denoted $\sF \mapsto \sF_{\widehat{I}}$. We fix some $n \geq 1$, and denote by $\sA_n$ the restriction of $\sA$ to $X \times_{\Spec(R)} \Spec(R/I^n)$.

Let $\sF, \sG$ be objects of $D^- \mod^H(\sA)$, and assume that $\sG$ belongs to the essential image of the natural pushforward functor $D^- \mod^H(\sA_n) \to D^- \mod^H(\sA)$.

\begin{lem}
\label{lem:completion-Ext-inf-neighborhoods}
The morphism
\[
\Hom_{D^- \mod^H(\sA)}(\sF,\sG) \to \Hom_{D^- \mod^H(\sA_{\widehat{I}})}(\sF_{\widehat{I}},\sG_{\widehat{I}})
\]
induced by the functor $(-)_{\widehat{I}}$ is an isomorphism.
\end{lem}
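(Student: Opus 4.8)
The statement is a ``completion commutes with Hom'' assertion, and the natural strategy is to reduce to the corresponding statement for modules over the affine ring $R$ (that is, for categories of the form $\Mod^H(C)$) by working locally, or alternatively to unwind the adjunction between pullback and pushforward along the closed immersion $X \times_{\Spec(R)} \Spec(R/I^n) \hookrightarrow X_{\widehat{I}}$. I would pursue the second route, since Remark~\ref{rmk:adjunction-inf-neighborhoods} already provides the relevant adjoint pair in derived form. First I would observe that, since $\sG$ lies in the essential image of the pushforward functor $D^- \mod^H(\sA_n) \to D^- \mod^H(\sA)$, we may write $\sG \cong (j_n)_* \sG_0$ for some $\sG_0$ in $D^- \mod^H(\sA_n)$, where $j_n : X \times_{\Spec(R)} \Spec(R/I^n) \to X$ is the closed immersion; and since this $j_n$ factors through $X_{\widehat{I}}$ (because $X \times_{\Spec(R)} \Spec(R/I^n) = X_{\widehat{I}} \times_{\Spec(R_{\widehat{I}})} \Spec(R_{\widehat{I}}/I^n R_{\widehat{I}})$ by the identity used in Remark~\ref{rmk:QCohX-over-quotient}), we have $\sG_{\widehat{I}} \cong (\widehat{\jmath}_n)_* \sG_0$, where $\widehat{\jmath}_n$ is the closed immersion $X \times_{\Spec(R)} \Spec(R/I^n) \hookrightarrow X_{\widehat{I}}$. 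Both pushforwards are exact (they are pushforwards along closed immersions), so no derived-functor subtleties arise here.

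Next I would use adjunction on both sides. On the $X$-side, pushforward along $j_n$ is right adjoint to the restriction (pullback) functor $L(j_n)^* : D^- \mod^H(\sA) \to D^- \mod^H(\sA_n)$ (here $j_n$ is a closed immersion, so this pullback is the non-derived restriction, which is exact); hence
\[
\Hom_{D^- \mod^H(\sA)}(\sF, \sG) \cong \Hom_{D^- \mod^H(\sA_n)}(\sF|_n, \sG_0),
\]
where $\sF|_n := L(j_n)^* \sF$ denotes the restriction. On the $X_{\widehat{I}}$-side, Remark~\ref{rmk:adjunction-inf-neighborhoods} gives that $(\widehat{\jmath}_n)_*$ is right adjoint to the derived pullback $L(\widehat{\jmath}_n)^* : D^- \mod^H(\sA_{\widehat{I}}) \to D^- \mod^H(\sA_n)$, so
\[
\Hom_{D^- \mod^H(\sA_{\widehat{I}})}(\sF_{\widehat{I}}, \sG_{\widehat{I}}) \cong \Hom_{D^- \mod^H(\sA_n)}(L(\widehat{\jmath}_n)^* \sF_{\widehat{I}}, \sG_0).
\]
Both $\Hom$-groups are thus expressed as morphism spaces in $D^- \mod^H(\sA_n)$, and one checks that the map of the lemma is identified, under these adjunction isomorphisms, with the map induced by a canonical morphism $L(\widehat{\jmath}_n)^* \sF_{\widehat{I}} \to \sF|_n$ (the base-change comparison). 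So the whole statement reduces to showing that this comparison morphism is an isomorphism in $D^- \mod^H(\sA_n)$.

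Finally I would prove that $L(\widehat{\jmath}_n)^* \sF_{\widehat{I}} \cong \sF|_n$ canonically. This is a statement about pullbacks along the composite $X \times_{\Spec(R)} \Spec(R/I^n) \xrightarrow{\widehat{\jmath}_n} X_{\widehat{I}} \xrightarrow{c} X$, and the composite equals $j_n$; so $L(\widehat{\jmath}_n)^* \circ L c^* \cong L(j_n)^*$. Since $c : X_{\widehat{I}} \to X$ is flat, $Lc^*$ is just the exact functor $(-)_{\widehat{I}}$, i.e.\ $L c^* \sF = \sF_{\widehat{I}}$ (on bounded-above complexes of coherent modules this is the same as the underived pullback because flat); and $L(j_n)^*$ is the restriction $\sF|_n$. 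Hence $L(\widehat{\jmath}_n)^* \sF_{\widehat{I}} \cong \sF|_n$, and the comparison morphism is indeed an isomorphism, which concludes the argument. The main obstacle, and the point requiring care, is verifying that the map in the lemma really does correspond to the canonical base-change morphism under the two adjunction isomorphisms; this is a compatibility-of-units-and-counits check, handled by writing out the triangle identities for the two adjoint pairs and using that the unit $\sF \to (j_n)_*(j_n)^* \sF$ completes, but it is otherwise routine. (One also needs the auxiliary hypotheses of Remark~\ref{rmk:adjunction-inf-neighborhoods} — namely $H = \mathrm{Diag}_k(\Lambda)$ and $X$ projective over $R$ with an $H$-equivariant ample line bundle — which are in force in all applications in the body of the paper, so I would simply state that these are assumed.)
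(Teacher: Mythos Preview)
Your approach is essentially identical to the paper's: write $\sG$ as a pushforward from the $n$-th infinitesimal neighborhood, apply the $(L(\text{pullback}),\text{pushforward})$ adjunction on both sides to reduce to a comparison of derived pullbacks in $D^-\mod^H(\sA_n)$, and conclude via compatibility of pullback with composition (since the projection $X_{\widehat{I}}\to X$ is flat, so $(-)_{\widehat{I}}$ is already the derived pullback). One small correction: your parenthetical remark that pullback along the closed immersion $j_n$ is ``the non-derived restriction, which is exact'' is wrong --- $j_n^*$ involves tensoring with $\sO_X/I^n\sO_X$ over $\sO_X$, which is only right exact, so $L(j_n)^*$ genuinely needs to be derived. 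This does not damage your argument, since you use the derived notation throughout; just delete the misleading aside.
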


\begin{proof}
Let $i : X \times_{\Spec(R)} \Spec(R/I^n) \to X$ be the natural closed immersion. Then by assumption there exists a complex $\sG'$ in $D^- \mod^H(\sA_n)$ such that $\sG=i_* \sG'$. Recall from Example~\ref{ex:equiv-qcoh-alg} that the analogue of the functor $(-)_{\widehat{I}}$ for the scheme $X \times_{\Spec(R)} \Spec(R/I^n)$ is an equivalence. Identifying the corresponding categories, by the flat base change theorem we have $\sG_{\widehat{I}} = (i_{\widehat{I}})_* \sG'$. Using adjunction (see Remark~\ref{rmk:adjunction-inf-neighborhoods} for the ``completed'' case) we have isomorphisms
\begin{align*}
\Hom_{D^- \mod^H(\sA)}(\sF,\sG) &\cong \Hom_{D^- \mod^H(\sA_n)}(Li^* \sF,\sG'), \\
\Hom_{D^- \mod^H(\sA_{\widehat{I}})}(\sF_{\widehat{I}},\sG_{\widehat{I}}) &\cong \Hom_{D^- \mod^H(\sA_n)}(L(i_{\widehat{I}})^*(\sF_{\widehat{I}}),\sG').
\end{align*}
By compatibility of pullback functors with composition, we have $L(i_{\widehat{I}})^*(\sF_{\widehat{I}}) \cong Li^* \sF$, and under these identifications our morphism is the identity; it is therefore an isomorphism.
\end{proof}

\bibliographystyle{plain} 
\bibliography{MyBibtex}

\begin{thebibliography}{10}

\bibitem{adr}
Pramod Achar, Gurbir Dhillon, and Simon Riche.
\newblock Semiinfinite sheaves on affine flag varieties.
\newblock Preprint {\href{https://arxiv.org/pdf/2503.18412}{arXiv:2503.18412}},
  2025.

\bibitem{andersen-inversion}
Henning~Haahr Andersen.
\newblock An inversion formula for the {K}azhdan--{L}usztig polynomials for
  affine {W}eyl groups.
\newblock {\em Adv. in Math.}, 60(2):125--153, 1986.

\bibitem{AJS94}
Henning~Haahr Andersen, Jens~Carsten Jantzen, and Wolfgang Soergel.
\newblock Representations of quantum groups at a {$p$}th root of unity and of
  semisimple groups in characteristic {$p$}: independence of {$p$}.
\newblock {\em Ast\'{e}risque}, 220:321, 1994.

\bibitem{ak}
Henning~Haahr Andersen and Masaharu Kaneda.
\newblock Loewy series of modules for the first {F}robenius kernel in a
  reductive algebraic group.
\newblock {\em Proc. London Math. Soc. (3)}, 59(1):74--98, 1989.

\bibitem{BGS96}
Alexander Beilinson, Victor Ginzburg, and Wolfgang Soergel.
\newblock Koszul duality patterns in representation theory.
\newblock {\em J. Amer. Math. Soc.}, 9(2):473--527, 1996.

\bibitem{bernstein-lunts}
Joseph Bernstein and Valery Lunts.
\newblock {\em Equivariant sheaves and functors}, volume 1578 of {\em Lecture
  Notes in Mathematics}.
\newblock Springer-Verlag, Berlin, 1994.

\bibitem{BM13}
Roman Bezrukavnikov and Ivan Mirkovi\'{c}.
\newblock Representations of semisimple {L}ie algebras in prime characteristic
  and the noncommutative {S}pringer resolution.
\newblock {\em Ann. of Math. (2)}, 178(3):835--919, 2013.
\newblock With an appendix by Eric Sommers.

\bibitem{BMR06}
Roman Bezrukavnikov, Ivan Mirkovi\'{c}, and Dmitriy Rumynin.
\newblock Singular localization and intertwining functors for reductive {L}ie
  algebras in prime characteristic.
\newblock {\em Nagoya Math. J.}, 184:1--55, 2006.

\bibitem{BMR08}
Roman Bezrukavnikov, Ivan Mirkovi\'{c}, and Dmitriy Rumynin.
\newblock Localization of modules for a semisimple {L}ie algebra in prime
  characteristic.
\newblock {\em Ann. of Math. (2)}, 167(3):945--991, 2008.
\newblock With an appendix by Roman Bezrukavnikov and Simon Riche.

\bibitem{BR12}
Roman Bezrukavnikov and Simon Riche.
\newblock Affine braid group actions on derived categories of {S}pringer
  resolutions.
\newblock {\em Ann. Sci. \'{E}c. Norm. Sup\'{e}r. (4)}, 45(4):535--599 (2013),
  2012.

\bibitem{br-Hecke}
Roman Bezrukavnikov and Simon Riche.
\newblock Hecke action on the principal block.
\newblock {\em Compos. Math.}, 158(5):953--1019, 2022.

\bibitem{br-two}
Roman Bezrukavnikov and Simon Riche.
\newblock On two modular geometric realizations of an affine {H}ecke algebra.
\newblock Preprint {\href{https://arxiv.org/abs/2402.08281}{arXiv:2402.08281}},
  2024.

\bibitem{bourbaki}
Nicolas Bourbaki.
\newblock {\em \'{E}l\'{e}ments de math\'{e}matique. {F}asc. {XXXVIII}:
  {G}roupes et alg\`ebres de {L}ie, {C}hapitres 7 et 8}.
\newblock Actualit\'{e}s Scientifiques et Industrielles, No. 1364. Hermann,
  Paris, 1975.

\bibitem{BG01}
Kenneth~A. Brown and Iain Gordon.
\newblock The ramification of centres: {L}ie algebras in positive
  characteristic and quantised enveloping algebras.
\newblock {\em Math. Z.}, 238(4):733--779, 2001.

\bibitem{cps}
Edward Cline, Brian Parshall, and Leonard Scott.
\newblock Infinitesimal {K}azhdan--{L}usztig theories.
\newblock In {\em Kazhdan--{L}usztig theory and related topics ({C}hicago,
  {IL}, 1989)}, volume 139 of {\em Contemp. Math.}, pages 43--73. Amer. Math.
  Soc., Providence, RI, 1992.

\bibitem{fiebig}
Peter Fiebig.
\newblock Lusztig's conjecture as a moment graph problem.
\newblock {\em Bull. Lond. Math. Soc.}, 42(6):957--972, 2010.

\bibitem{fiebig-upper}
Peter Fiebig.
\newblock An upper bound on the exceptional characteristics for {Lusztig}'s
  character formula.
\newblock {\em J. Reine Angew. Math.}, 673:1--31, 2012.

\bibitem{ega3}
Alexander Grothendieck.
\newblock \'{E}l\'{e}ments de g\'{e}om\'{e}trie alg\'{e}brique. {III}.
  \'{E}tude cohomologique des faisceaux coh\'{e}rents. {I}.
\newblock {\em Inst. Hautes \'{E}tudes Sci. Publ. Math.}, 11:167, 1961.

\bibitem{haboush}
William~J. Haboush.
\newblock Central differential operators on split semisimple groups over fields
  of positive characteristic.
\newblock In {\em S\'{e}minaire d'{A}lg\`ebre {P}aul {D}ubreil et
  {M}arie-{P}aule {M}alliavin, 32\`eme ann\'{e}e ({P}aris, 1979)}, volume 795
  of {\em Lecture Notes in Math.}, pages 35--85. Springer, Berlin, 1980.

\bibitem{Hart66}
Robin Hartshorne.
\newblock {\em Residues and duality}.
\newblock Lecture Notes in Mathematics, No. 20. Springer-Verlag, Berlin-New
  York, 1966.

\bibitem{Hum08}
James~E. Humphreys.
\newblock {\em Representations of semisimple {L}ie algebras in the {BGG}
  category {$\mathcal{O}$}}, volume~94 of {\em Graduate Studies in
  Mathematics}.
\newblock American Mathematical Society, Providence, RI, 2008.

\bibitem{Jan03}
Jens~Carsten Jantzen.
\newblock {\em Representations of algebraic groups}, volume 107 of {\em
  Mathematical Surveys and Monographs}.
\newblock American Mathematical Society, Providence, RI, second edition, 2003.

\bibitem{kashiwara}
Masaki Kashiwara.
\newblock Equivariant derived category and representation of real semisimple
  {L}ie groups.
\newblock In {\em Representation theory and complex analysis}, volume 1931 of
  {\em Lecture Notes in Math.}, pages 137--234. Springer, Berlin, 2008.

\bibitem{ks}
Masaki Kashiwara and Pierre Schapira.
\newblock {\em Categories and sheaves}, volume 332 of {\em Grundlehren der
  mathematischen Wissenschaften}.
\newblock Springer-Verlag, Berlin, 2006.

\bibitem{lipman}
Joseph Lipman.
\newblock Notes on derived functors and {G}rothendieck duality.
\newblock In {\em Foundations of {G}rothendieck duality for diagrams of
  schemes}, volume 1960 of {\em Lecture Notes in Math.}, pages 1--259.
  Springer, Berlin, 2009.

\bibitem{losev}
Ivan Losev.
\newblock On modular {S}oergel bimodules, {H}arish-{C}handra bimodules, and
  category {O}.
\newblock Preprint {\href{https://arxiv.org/pdf/2302.05782}{arXiv:2302.05782}},
  2023.

\bibitem{lusztig-generic}
George Lusztig.
\newblock Hecke algebras and {J}antzen's generic decomposition patterns.
\newblock {\em Adv. in Math.}, 37(2):121--164, 1980.

\bibitem{lusztig-pbs}
George Lusztig.
\newblock Some problems in the representation theory of finite {C}hevalley
  groups.
\newblock In {\em The {S}anta {C}ruz {C}onference on {F}inite {G}roups ({U}niv.
  {C}alifornia, {S}anta {C}ruz, {C}alif., 1979)}, volume~37 of {\em Proc.
  Sympos. Pure Math.}, pages 313--317. Amer. Math. Soc., Providence, RI, 1980.

\bibitem{MR16}
Carl Mautner and Simon Riche.
\newblock On the exotic t-structure in positive characteristic.
\newblock {\em Int. Math. Res. Not. IMRN}, 2016(18):5727--5774, 2016.

\bibitem{mr-lkd}
Ivan Mirkovi\'{c} and Simon Riche.
\newblock Linear {K}oszul duality.
\newblock {\em Compos. Math.}, 146(1):233--258, 2010.

\bibitem{mr}
Ivan Mirkovi\'{c} and Simon Riche.
\newblock Iwahori--{M}atsumoto involution and linear {K}oszul duality.
\newblock {\em Int. Math. Res. Not. IMRN}, 2015(1):150--196, 2015.

\bibitem{mr-lkd2}
Ivan Mirkovi\'{c} and Simon Riche.
\newblock Linear {K}oszul duality, {II}: coherent sheaves on perfect sheaves.
\newblock {\em J. Lond. Math. Soc. (2)}, 93(1):1--24, 2016.

\bibitem{Ric08}
Simon Riche.
\newblock Geometric braid group action on derived categories of coherent
  sheaves.
\newblock {\em Represent. Theory}, 12:131--169, 2008.
\newblock With a joint appendix with Roman Bezrukavnikov.

\bibitem{r-Frobenius}
Simon Riche.
\newblock Koszul duality and {F}robenius structure for restricted enveloping
  algebras.
\newblock Preprint {\href{https://arxiv.org/abs/1010.0495}{arXiv:1010.0495}},
  2010.

\bibitem{Ric10}
Simon Riche.
\newblock Koszul duality and modular representations of semisimple {L}ie
  algebras.
\newblock {\em Duke Math. J.}, 154(1):31--134, 2010.

\bibitem{situ}
Quan Situ.
\newblock Category $\mathcal{O}$ for hybrid quantum groups and non-commutative
  {S}pringer resolutions.
\newblock Preprint {\href{https://arxiv.org/abs/2308.07028}{arXiv:2308.07028}},
  2024.

\bibitem{Soe97}
Wolfgang Soergel.
\newblock {K}azhdan--{L}usztig polynomials and a combinatoric for tilting
  modules.
\newblock {\em Represent. Theory}, 1(6):83--114, 1997.

\bibitem{soergel}
Wolfgang Soergel.
\newblock On the relation between intersection cohomology and representation
  theory in positive characteristic.
\newblock {\em J. Pure Appl. Algebra}, 152(1-3):311--335, 2000.
\newblock Commutative algebra, homological algebra and representation theory
  (Catania/Genoa/Rome, 1998).

\bibitem{stacks-project}
The {Stacks project authors}.
\newblock The stacks project.
\newblock \url{https://stacks.math.columbia.edu}, 2024.

\bibitem{Tan21}
Toshiyuki Tanisaki.
\newblock Quantized flag manifolds and non-restricted modules over quantum
  groups at roots of unity.
\newblock Preprint {\href{https://arxiv.org/abs/2109.03319}{arXiv:2109.03319}},
  2021.

\bibitem{thomason}
Robert~W. Thomason.
\newblock Equivariant resolution, linearization, and {H}ilbert's fourteenth
  problem over arbitrary base schemes.
\newblock {\em Adv. in Math.}, 65(1):16--34, 1987.

\bibitem{verma}
Daya-Nand Verma.
\newblock The r\^{o}le of affine {W}eyl groups in the representation theory of
  algebraic {C}hevalley groups and their {L}ie algebras.
\newblock In {\em Lie groups and their representations ({P}roc. {S}ummer
  {S}chool, {B}olyai {J}\'{a}nos {M}ath. {S}oc., {B}udapest, 1971)}, pages
  653--705. Halsted Press, New York-Toronto, Ont., 1975.

\end{thebibliography}

\end{document}